\documentclass[a4paper,12pt]{article}
\usepackage[utf8]{inputenc}  
\usepackage[T1]{fontenc}
\usepackage[english]{babel}
\usepackage[top=1.5cm, bottom=1.5cm, left=1.5cm, right=1.5cm]{geometry}
\usepackage{xcolor}
\usepackage{xfrac}
\usepackage{amsthm}
\usepackage{amsmath}
\usepackage{amssymb}
\usepackage{mathrsfs}
\usepackage{ulem}
\usepackage{hyperref}
\usepackage{fourier}
\usepackage{graphicx}
\usepackage{caption} 
\usepackage{algorithm}
\usepackage{algorithmic}
\usepackage{nicematrix}
\usepackage{textcomp}
\usepackage{multido}
\usepackage{framed}
\usepackage{comment}
\usepackage{enumitem}
\usepackage{bm}
\usepackage{nicematrix}
\usepackage{booktabs}
\usepackage{multirow}
\usepackage{float}

\usepackage{tkz-tab} 
\usepackage{pgfplots}
\usepackage{tikz}
\pgfplotsset{compat=1.15}
\usepackage{mathrsfs}
\usetikzlibrary{arrows}

\DeclareMathOperator{\e}{e}
\DeclareMathOperator{\tr}{Tr}
\DeclareMathOperator{\T}{T}
\DeclareMathOperator{\hess}{Hess}

\DeclareMathOperator{\id}{Id}
\DeclareMathOperator{\sign}{sign}
\DeclareMathOperator{\Rel}{Re}

\newcommand{\Z}{\mathbb{Z}}
\newcommand{\R}{\mathbb{R}}
\newcommand{\C}{\mathbb{C}}

\newcommand{\oo}[2]{ \mathopen{ ( } #1, #2 \mathclose{ ) } }
\newcommand{\fo}[2]{\mathopen{ [ } #1,#2 \mathclose{ ) }}
\newcommand{\ff}[2]{\mathopen{ [ } #1, #2 \mathclose{ ] }}

\newcommand{\co}{\mathcal{C}}

\newcommand{\Hm}{\mathcal{H}}
\newcommand{\I}{\mathcal{I}}
\newcommand{\Em}{\mathcal{E}}
\newcommand{\la}{\mathcal{L}}
\DeclareMathOperator{\leg}{L}
\DeclareMathOperator{\A}{\mathbf{A}}
\DeclareMathOperator{\Al}{\mathcal{A}}

\newcommand{\p}{\partial}

\newcommand{\dd}{\mathop{}\mathopen{}\mathrm{d}} 
\newcommand{\n}{\nabla}
\newcommand{\ntot}{\n_{\text{tot}}}

\newcommand{\ve}{\varepsilon}
\newcommand{\lm}{\bm{\lambda}}
\newcommand{\mug}{\bm{\mu}}
\newcommand{\sig}{\sigma}
\newcommand{\ka}{\kappa}

\newcommand{\w}{\mathcal{W}}
\newcommand{\V}{\mathcal{V}}
\newcommand{\y}{\mathcal{Y}}
\newcommand{\mo}{\mathcal{M}}
\newcommand{\Ps}{\mathbb{P}_s}
\newcommand{\sm}{\mathbb{S}}

\newcommand{\go}{\mathcal{O}}

\newcommand{\intsigp}{\int_{0}^{\frac{\eta}{\ve}}}
\newcommand{\intsign}{\int_{\frac{-\eta}{\ve}}^{0}}
\newcommand{\intwp}{\int_{0}^{\eta}}
\newcommand{\intwn}{\int_{-\eta}^{0}}

\newcommand{\norm}[1]{\left\Vert #1\right\Vert}
\newcommand{\abs}[1]{\left| #1 \right|}

\newcommand{\fonction}[5]{\begin{array}{lrcl}
#1 \colon & #2 & \longrightarrow & #3 \\
& #4 & \longmapsto & #5 \end{array}}

\newcommand{\tend}[2]{\underset{#1 \to #2}{\longrightarrow}} 

\newcommand{\ti}[1]{\Tilde{#1}}
\newcommand{\un}[1]{\underline{#1}}
    
\newtheorem{theorem}{Theorem}[subsection]
\newtheorem{proposition}{Proposition}[subsection]
\newtheorem{corollaire}[theorem]{Corollary}
\newtheorem{lemma}{Lemma}[subsection]
\newtheorem{definition}{Definition}[subsection]
\newtheorem{def,prop}{Definition/Proposition}[subsection]
\newtheorem{remarque}{Remark}[subsection]
\newtheorem{def,th}{Definition/Theorem}[subsection]

\newtheorem{notation}{Notation}
\newtheorem{hypothèse}{Assumption}

\title{Instability of two-pulse periodic waves with long wavelength in some Hamiltonian PDEs}
\author{Thomas Courant
\thanks{Research of Thomas Courant was partially supported by Centre Henri Lebesgue ANR-11-LABX-0020-01.\\
Univ Rennes, CNRS, IRMAR - UMR 6625, F-35000 Rennes, France.
Email address: thomas.courant@univ-rennes.fr}}

\begin{document}

\maketitle

\begin{abstract}
    We consider quasilinear generalizations of the Korteweg–de Vries equation and dispersive perturbations of the Euler equations for compressible fluids, either in Lagrangian or in Eulerian coordinates. In particular, our framework includes hydrodynamic formulation of the nonlinear Schrödinger equations. The periodic waves we study exhibit on each period two pulses, one converging to a bright soliton and one converging to a dark soliton, when wavelength goes to infinity. We show that such waves, for sufficiently large periods, are spectrally unstable. To do so, we combine two approaches. The first one is to calculate the asymptotic expansion of the Hessian matrix of the action integral and concludes using \cite{benzoni-gavageCoperiodicStabilityPeriodic2016} as in \cite{benzoni-gavageStabilityPeriodicWaves2020}. This shows instability when both limiting solitary waves are stable. The second approach studies the convergence of the spectrum when the period goes to infinity and is applied in remaining cases, when one of the solitary waves is unstable. To carry out the latter, we prove the convergence of an appropriate renormalization of the periodic Evans function as in \cite{yangConvergencePeriodGoes2019}.
\end{abstract}

{\small \paragraph {\bf Keywords:} Hamiltonian dynamics; periodic traveling waves; spectral stability; periodic Evans function; soliton asymptotics; homoclinic limit; abbreviated action integral.}

{\small \paragraph {\bf AMS Subject Classifications:} 35B35; 35B10; 35Q35; 35Q51; 35Q53; 37K45; 35P15; 34E10. 
}

\tableofcontents

\section{Introduction}

The present paper studies stability/instability criteria for periodic traveling waves of Hamiltonian PDEs. For general background on nonlinear waves we refer the reader to \cite{kapitulaSpectralDynamicalStability2013} and to \cite{angulopavaNonlinearDispersiveEquations2009,haragusSpectraPeriodicWaves2008,kapitulaCountingEigenvaluesKrein2005} for specific result in Hamiltonian systems.

Because of the relatively large dimension of family of periodic traveling waves such stability/instability  criteria typically fail to set a neat dichotomy. This is in strong contrast with what happens for solitary waves where for a large class of equations the family of waves with a fixed endstate is one-dimensional, parametrized by velocity, and the sign of the second-order derivative of the Boussinesq momentum determines the stability/instability. Yet, as proved in \cite{benzoni-gavageStabilityPeriodicWaves2020}, following \cite{benzoni-gavageCoperiodicStabilityPeriodic2016}, one may expect a reduction of the complexity of periodic waves stability in some asymptotic regimes.

In \cite{benzoni-gavageCoperiodicStabilityPeriodic2016} the authors show some stability/instability criteria for periodic traveling waves of large classes of Hamiltonian PDEs. The goal of \cite{benzoni-gavageStabilityPeriodicWaves2020} was to see what happened for those criteria in two different limits. The first one, the harmonic limit, considers periodic waves with small amplitude around a constant state. The second one, the soliton limit, studies periodic waves with large period that converge to a solitary wave. The outcome is that in the harmonic limit, periodic waves were always orbitally stable, and in the soliton limit, periodic waves share the same stability properties as their limiting soliton. Here we are interested in periodic waves of large periods,  asymptotically described on each periodic cell by the superposition of two distinct solitary waves sharing the same endstate. We will call such waves \emph{two-pulse periodic waves}. In the end we show that they are always spectrally unstable.

For the classes of equations that we consider, wave profiles are in correspondence with trajectories of a two-dimensional Hamiltonian differential equation, hence with level sets of a two-dimensional function. As a consequence the three asymptotic regimes mentioned above, the two studied in \cite{benzoni-gavageStabilityPeriodicWaves2020} and the one studied here, contain all the generic ways in which a family of periodic waves may end.

The instability we prove is at the spectral and linear level, and holds both under co-periodic and localized perturbations. To our knowledge, the conversion of spectral instability into nonlinear instability for periodic traveling waves is a largely open question for quasilinear dispersive PDEs like the ones we are considering. In the semilinear case however we expect nonlinear orbital instability under localized perturbations to follow from the analysis of \cite{jinNonlinearModulationalInstability2019}. Yet we mention that based on studies for parabolic systems \cite{johnsonBehaviorPeriodicSolutions2014}, space-modulated stability and not orbital stability is expected to be the sharp notion of stability for periodic waves subject to localized perturbations. Incidentally we also stress that, even for the semilinear case, nonlinear stability with respect to localized perturbations for periodic waves of Hamiltonian systems is a largely open question. For a recent significant step in this direction see \cite{bukiedaOrbitalStabilityPlane2025}.

Half of our result is obtained by adapting to our present asymptotic regime the calculations of \cite{benzoni-gavageStabilityPeriodicWaves2020}. In this way, we show that if the two solitons are orbitally stable then the corresponding two-pulse waves of sufficiently large period are spectrally unstable. Actually this part of the proof shows that this is the case when the sum of the second-order derivatives of the Boussinesq momenta is positive, which holds in particular when both are positive. At the basis of the result lies a co-periodic instability result \cite{benzoni-gavageCoperiodicStabilityPeriodic2016}.

Let us mention that the other half of the instability result cannot be obtained by studying modulational instability. For an introduction to the latter we refer to  \cite{bronskiModulationalInstabilityEquations2016}. The idea is quite natural and the authors of \cite{benzoni-gavageStabilityPeriodicWaves2020} have also studied in their companion paper \cite{benzoni-gavageModulatedEquationsHamiltonian2021} what happens in the same regimes to the modulational criteria derived in \cite{benzoni-gavageSlowModulationsPeriodic2014}. Nevertheless, as we develop in Remark~\ref{rmq_stabilité_modulationnelle}, in our regime we show that when the arguments mentioned do not conclude to instability the modulational system is hyperbolic, hence the absence of modulational instability.

Instead, we complete our study with a direct spectral perturbation analysis. When doing so we draw inspiration from single-pulse soliton analyses in \cite{gardnerSpectralAnalysisLong1997, sandstedeStabilityPeriodicTravelling2001, yangConvergencePeriodGoes2019}. More precisely, adapting \cite{yangConvergencePeriodGoes2019} to our current regime, we show that a suitable rescaled version of the periodic Evans functions converges, at an exponential rate, to the product of the Evans functions of the two solitons. With this result in hands, we conclude that if at least one of the solitary waves is unstable then the corresponding two-pulse waves of sufficiently large period are spectrally unstable.

The situation analyzed in the latter part of the argument is probably easier to guess: the two solitary waves being superposed far away from each other, there is some room for solitary-wave instabilities to develop. The former situation, when both solitary waves are stable, is intuitively less clear. A possible heuristic scenario is that perturbations could change velocities of each solitary wave somewhat independently hence break the synchronized superposition.

The present paper is organized as follows. In the next section, we recall the general framework of \cite{benzoni-gavageStabilityPeriodicWaves2014,benzoni-gavageCoperiodicStabilityPeriodic2016,benzoni-gavageStabilityPeriodicWaves2020} and state our main result. Section~\ref{double_bosse} is devoted to the calculation of the asymptotic expansion of the action in the same spirit as \cite{benzoni-gavageStabilityPeriodicWaves2020} and Section~\ref{section_evans_function} is for the asymptotic expansion of the Evans function, in the same spirit as \cite{yangConvergencePeriodGoes2019}.

{\paragraph {\bf Acknowledgment} The author thanks Sylvie Benzoni-Gavage, Dmitry Pelinovsky and Björn de Rjik for the very interesting discussions we had about the results of this article. He also particularly thanks Louise Gassot and Miguel Rodrigues for supervising this work and more generally his PhD thesis.
}

\section{Framework and main results}

\subsection{A class of Hamiltonian PDEs}

We consider the same abstract systems as in \cite{benzoni-gavageStabilityPeriodicWaves2014,benzoni-gavageCoperiodicStabilityPeriodic2016,benzoni-gavageStabilityPeriodicWaves2020}. So we are interested in the following Hamiltonian PDE,
\begin{equation}\label{eq_hamiltonienne}
    \p_t U = \p_x\left(B\delta\Hm[U]\right),
\end{equation}
where $U$ takes values in $\R^N$, $B$ is a symmetric and nonsingular matrix, and the Hamiltonian $\Hm$ depends on $U$ and $U_x$, the notation $[\cdot]$ is used to indicate this dependence. Furthermore, $\delta$ stands for the \emph{Euler} operator, which can be defined as
\begin{equation*}
    \delta\Hm[U] = \n_{U} \Hm(U,U_x) - \p_x\left(\n_{U_x}\Hm(U,U_x)\right).
\end{equation*}
In practice, in the case $N=1$ we are interested in quasilinear, generalized versions of the Korteweg-de Vries equation
\begin{equation}\label{qKdV}\tag{qKdV}
    \p_t v =  \p_x\left(\delta e[v]\right),
\end{equation}
and in the case $N=2$, by the Euler-Korteweg system, either in Eulerian coordinates,
\begin{equation} \label{EKE}\tag{EKE}
    \left\{\begin{array}{ll}
            \p_t \rho + \p_x\left(\rho u\right) = 0, \\
            \p_t u + u\p_x u + \p_x\left(\delta\Em[\rho]\right) = 0,
            \end{array} \right.
\end{equation}
and in mass Lagrangian coordinates,
\begin{equation*}\label{EKL}\tag{EKL}
    \left\{\begin{array}{ll}
            \p_t v = \p_x u , \\
            \p_t u = \p_x\left(\delta e[v]\right).
            \end{array} \right.
\end{equation*}
For discussion on the physics meanings of the equation, we refer to the introduction of \cite{benzoni-gavagePlanarTravelingWaves2013, benzoni-gavageCoperiodicStabilityPeriodic2016,benzoni-gavageStabilityPeriodicWaves2020}. The system case includes hydrodynamic formulation of the non-linear Schrödinger equation; for similar results on the Schrödinger equation, see \cite{audiardPlanePeriodicWaves2022}. We also point out Section 5.2 of \cite{benzoni-gavageCoperiodicStabilityPeriodic2016} for a discussion on the different coordinates for the Euler-Korteweg system.\newline

We do the same assumption on our abstract system as in \cite{benzoni-gavageStabilityPeriodicWaves2020}. The idea is to have a framework that meets our examples,~\eqref{qKdV},~\eqref{EKE} and~\eqref{EKL}, and in which periodic traveling waves arise as families parametrized by their speed and $(N+1)$ constants of integration. We recall the assumption here.

\begin{hypothèse}\label{hypothèse_Hm}
    We enforce that $N=1$ or $N=2$.
    \begin{itemize}
        \item If $N=1$ then we assume that $U=v$, $B=b\in \R^\ast$ and $\Hm[v] = \Em(v,v_x)$.
        \item If $N=2$ then we assume that $U=(v,u)^{\T}$, $B = \left(\begin{smallmatrix}0&b\\b&0\end{smallmatrix} \right)$ with $b \neq 0$ and
        \begin{equation*}
            \Hm[U] = \I(v,u) + \Em(v,v_x).
        \end{equation*}
    \end{itemize}
    In both cases, we assume that $\Em$ is quadratic in $v_x$ with $\ka := \p_{v_x}\Em > 0$, and, if $N=2$, that $\I$ is quadratic in $u$ with $\tau := \p_{u}\I > 0$.
\end{hypothèse}

Note that~\eqref{eq_hamiltonienne} is a conservation law on $U$, but we also have additional conservation laws: one, arising, from the time translation invariance, on the Hamiltonian $\Hm$, 
\begin{equation*}
    \p_t\Hm[U] = \p_x\left(\frac{1}{2}\delta\Hm[U]\cdot B\delta\Hm[U] + \n_{U_x}\Hm[U]\cdot\p_x\left(B\delta\Hm[U]\right)\right),
\end{equation*}
and one, stemming from the space translation invariance, on the \emph{impulse}, which is defined as 
\begin{equation*}
    Q(U) := \frac{1}{2} U \cdot B^{-1}U,
\end{equation*}
where $\cdot$ is the scalar product on $\R^N$. The latter conservation law is 
\begin{equation*}
    \p_t Q(U) = \p_x\left(U\cdot\delta\Hm[U] + \leg\Hm[U]\right),
\end{equation*} 
where $\leg$ is the formal \emph{Legendre} transform, which is
\begin{equation*}
    \leg\Hm[U] = U_x\cdot \n_{U_x}\Hm[U] - \Hm[U].
\end{equation*}

\subsection{Traveling waves}

We are interested in the traveling waves of~\eqref{eq_hamiltonienne}. We recall from Section 2.2 of \cite{benzoni-gavageStabilityPeriodicWaves2020} that, up to a translation, for a traveling wave $U(t,x) = \un{U}(x-c t)$ of speed $c$, possible profiles $\un{U}$ are parametrized by $(\mu, \lm) \in \R\times \R^N$, and determined by the equations
\begin{equation}\label{eq_1_onde_progressive}
    \delta\left(\Hm + c Q \right)[\un{U}] + \lm = 0,
\end{equation}
\begin{equation}\label{eq_2_onde_progressive}
    \leg\left(\Hm + c Q + \lm\cdot \right)[\un{U}] = \mu.
\end{equation}
These equations are ODEs, so using Assumption~\ref{hypothèse_Hm} we can make them more explicit. This is done in Section 3.1 of \cite{benzoni-gavageStabilityPeriodicWaves2020}, and we recall here the results.\newline

If $N=2$, the profile of a traveling wave, $\un{U} = \left(\begin{smallmatrix}\un{v} \\ \un{u} \end{smallmatrix}\right)$, is parametrized by $(\mu, \lm, c) \in \R \times \R^2 \times \R$, and verify
\begin{equation}\label{eq_onde_périodique_2}
    \left\{\begin{array}{ll}
        \un{u} = g\left(\un{v};\lambda_2,c\right), \\
        \frac{1}{2}\ka(\un{v})\un{v}_x^2 + \w(\un{v};\lm,c) =\mu,
    \end{array} \right.
\end{equation}
where $g\left(v;\lambda_2,c\right) = \frac{-1}{\tau(v)}\left(\frac{c}{b}v + \lambda_2\right)$ and $\w(v;\lm,c) = -f(v) + \frac{1}{2}\tau(v)g(v;\lambda_2,c)^2 - \lambda_1v$.\newline

In the scalar case, $N=1$, the profile of a traveling wave, $\un{v}$, is parametrized by $(\mu, \lm, c) \in \R \times \R \times \R$, and verify
\begin{equation}\label{eq_onde_périodique_1}
    \frac{1}{2}\ka(\un{v})\un{v}_x^2 + \w(\un{v};\lambda,c) =\mu,
\end{equation}
where $\w(v;\lambda,c)= -f(v) - \frac{1}{2}\frac{c}{b}v^2-\lambda v$.\newline

System~\eqref{eq_onde_périodique_2} consists of an Hamiltonian ODE for $v$ with potential $\w$ and an algebraic equation deducing $u$ from $v$. Thus, to understand the profile of traveling waves, we only have to focus on the equation on $v$, even if $N=2$. So, if we fix $(\lm,c) \in \R^N\times \R$, the profile of a traveling wave is determined by the energy level of $\w$. If we fix $\mu$, we are interested in two different cases.
\begin{itemize}
    \item If $v_\ell<v_r$ are two roots of $\mu - \w(\cdot;\lm,c)$, such that $\p_v\w(v_\ell;\lm,c) < 0$, $\p_v\w(v_r;\lm,c) > 0$ and for $v\in \oo{v_\ell}{v_r},~\mu-\w(v;\lm,c) > 0$. The corresponding solution $\un{v}$ of~\eqref{eq_onde_périodique_1}, respectively $\un{U} = (\un{v},\un{u})^{\T}$ of~\eqref{eq_onde_périodique_2}, is periodic. These are, therefore, profiles of periodic waves.

    \item If $v_s$ is a root of $\mu-\w(.;\lm,c)$ and a saddle point of $\w$, that is to say 
    $$\p_v\w(v_s;\lm,c) = 0,\quad \p_v^2\w(v_s;\lm,c) < 0,$$
    and if there exists another root $v^s$ such that $\p_v\w(v^s;\lm,c) > 0$ if $v_s<v^s$ or $\p_v\w(v^s;\lm,c) < 0$ if $v_s>v^s$, up to translation, we have a unique bounded non-constant solution. This solution of~\eqref{eq_onde_périodique_1} if $N=1$ (of~\eqref{eq_onde_périodique_2} if $N=2$), is the profile of a solitary wave, called a bright soliton when $v^s > v_s$, respectively a dark soliton when $v^s < v_s$.
\end{itemize}

In this paper, we are interested in the case where there exists a periodic wave with two pulses in one period, each converging to a different soliton when the period goes to infinity. For this, we need to suppose that there exists an energy level $\mu_s$ of $\w$ such that we have the existence of the profile of two different solitons, a bright and a dark soliton. We are interested in a periodic wave with an energy level $\mu > \mu_s$ close to $\mu_s$. This means that $\w$ needs to have variation like in Table~\ref{fig:variation_potentiel} and Figure~\ref{fig:graphe_potentiel}. 

\begin{table}[H]
$$\begin{array}{|c|ccccccccccccccccc|}
\hline   
v        & &         & v_\ell &          & v_\ell^s &          &         &          & v_s &          &         &          & v_r^s &        &v_r&         & \\ \hline
\p_v^2\w & &         &        &          &          &          &         &          &  -  &          &         &          &       &        &   &         &\\  \hline
\p_v\w   & &         &   -    &          &    -     &          &         &          &  0  &          &         &          &   +   &        & + &         &\\  \hline
         & &\searrow &        &          &          &          &         &          &     &          &         &          &       &        &   & \nearrow& \\
         & &         &   \mu  &          &          &          &         &          &     &          &         &          &       &        &\mu&         & \\ [-5pt]
         & &         &        & \searrow &          &          &         &          &     &          &         &          &       &\nearrow&   &         & \\
\w       & &         &        &          &  \mu_s   &          &         &          &\mu_s&          &         &          & \mu_s &        &   &         & \\
         & &         &        &          &          & \searrow &         & \nearrow &     & \searrow &         & \nearrow &       &        &   &         & \\ 
         & &         &        &          &          &          & \cdots  &          &     &          & \cdots  &          &       &        &   &         & \\ \hline
\end{array}$$
\caption{Variations of the potential $\w$}\label{fig:variation_potentiel} 
\end{table}

\begin{figure}[H]
\begin{center}
\includegraphics[width=110mm]{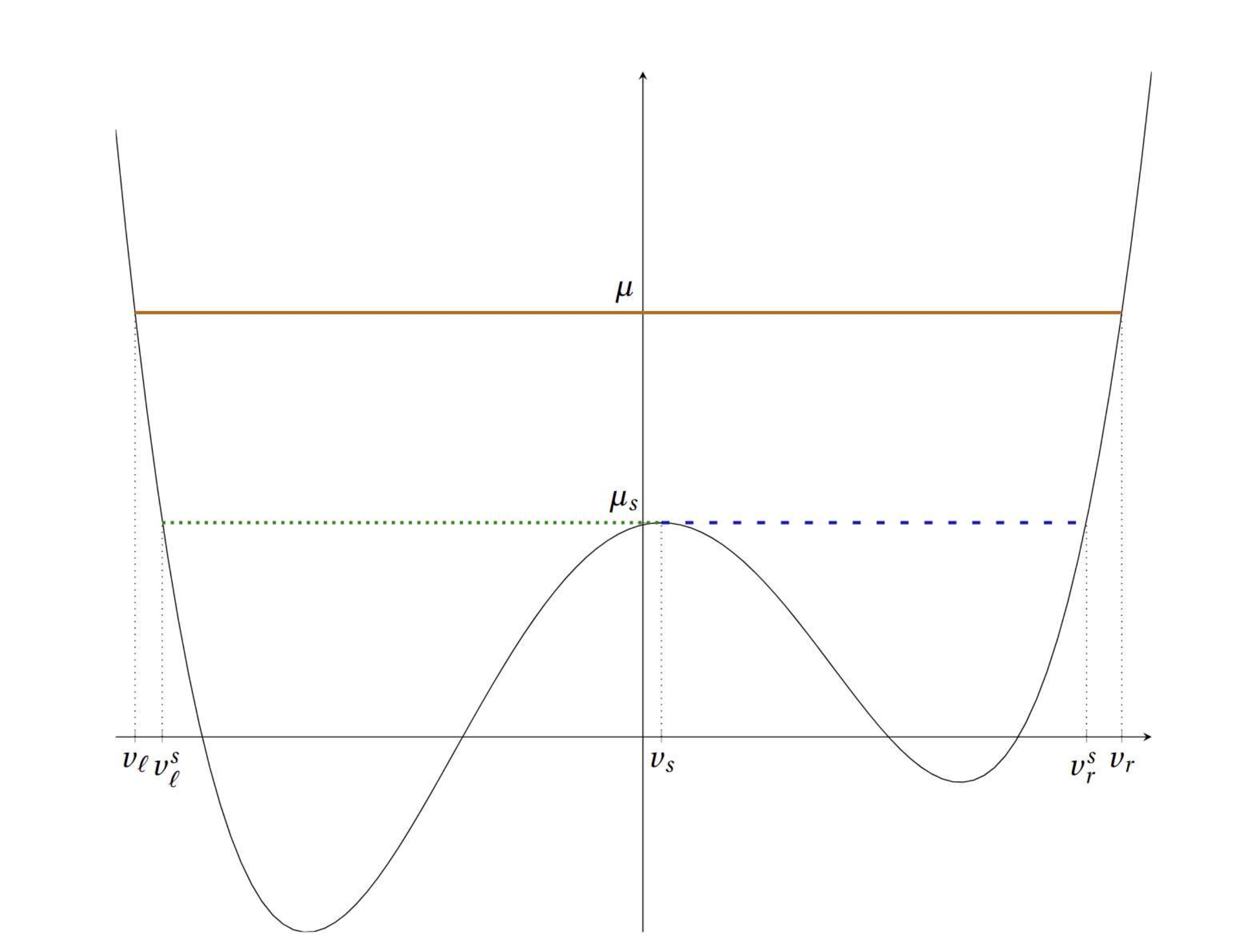}
\caption{"Generic" potential $\w$ that we consider} 
\label{fig:graphe_potentiel} 
\end{center}
\end{figure}

In Figure~\ref{fig:graphe_potentiel}, dashed blue corresponds to the energy levels of the bright soliton, and dotted blue corresponds to the dark soliton. The orange line corresponds to the energy level of the periodic waves under consideration. Those periodic waves, with an energy level $\mu>\mu_s$, are the so-called \emph{two-pulse periodic waves}. For such a potential, Figure~\ref{fig:portrait_de_phase} shows the phase portraits associated with a bright soliton, a dark soliton and a two-pulse periodic wave.

\begin{figure}[H]
\begin{center}
\includegraphics[width=110mm]{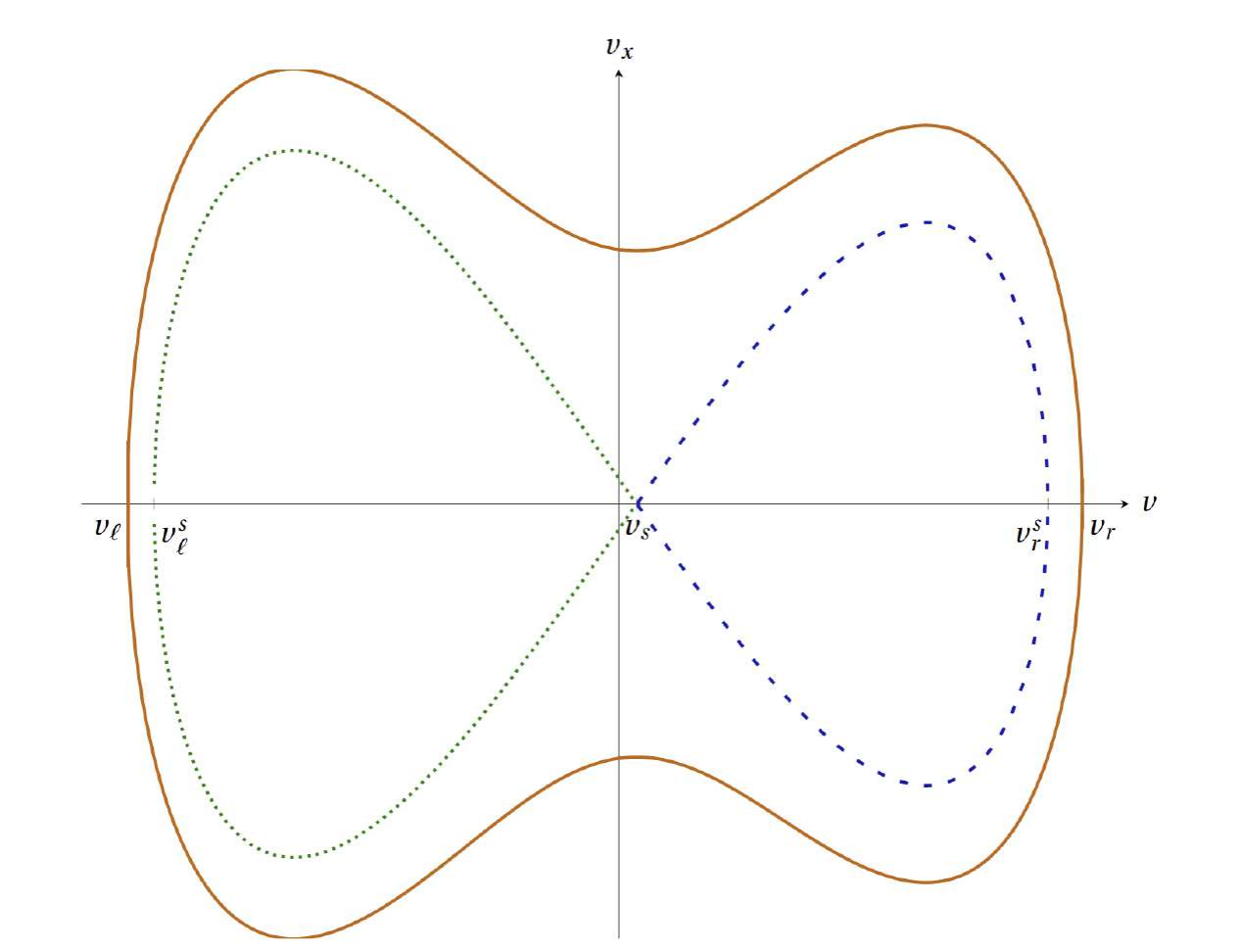}
\caption{Phase portrait for a "generic" potential $\w$} 
\label{fig:portrait_de_phase} 
\end{center}
\end{figure}

Asymptotic cases within homoclinic orbits, periodic profiles with periods that go to infinity, and periodic profiles with amplitudes that go to zero have already been dealt with in \cite{benzoni-gavageStabilityPeriodicWaves2020}. We are interested in orbits approximating the two homoclinics from outside. We draw on Figure~\ref{fig:graphe_onde_periodique} the generic "shape" of a period of such a periodic wave. The part of the profile converging to the bright soliton (resp. dark) is placed on the right (resp. left).

\begin{figure}[H]
\centering
\begin{tikzpicture}
\begin{axis}[
    axis x line=bottom,
    axis y line = left,
    axis lines=middle,
    xlabel=$x$,
    ylabel=$v(x)$,
    every axis y label/.style={at=(current axis.above origin),anchor=south},
    every axis x label/.style={at=(current axis.right of origin),anchor=west},
    xtick = \empty,
    ytick= {-1.21,1.2},
    yticklabels = {$v^\ve_\ell$,$v_r^\ve$}, 
    ymin = -1.5,
    ymax = 1.5,
    scale = 1
]
\addplot[
    domain=0:14, 
    samples=100, 
    color=black
]
{2*exp(x-7)/(1 + exp(2*(x-7)))+ 0.2}; 

\addplot[
    domain=-10:0, 
    samples=100, 
    color=black
]
{-2*sqrt(2)*exp(2*(1.4*x+7))/(1 + exp(4*(1.4*x+7)))+ 0.2}; 

\end{axis}
\end{tikzpicture}
\caption{Period of a "generic" two-pulse periodic wave} 
\label{fig:graphe_onde_periodique} 
\end{figure}
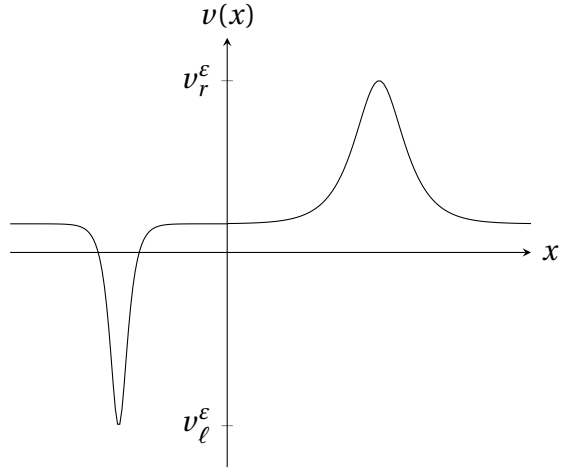

To formalize this discussion, we will now detail the main hypothesis that we have on the potential $\w$.

\begin{hypothèse}\label{hypothèse_potentiel}
    We consider a potential $\w$ that is smoothly defined on $I \times \Lambda$ where $I$ is an open interval in $\R$ and $\Lambda$ is an open subset of $\R^{N+1}$. 
    We also assume the existence of smooth functions $v_\ell^s,~v_s,~v_r^s \colon \Lambda \rightarrow I$, such that for all $(\lm,c) \in \Lambda$,
    \begin{equation*}
        v_\ell^s(\lm,c) < v_s(\lm,c) < v_r^s(\lm,c),
    \end{equation*}
    \begin{equation*}
        \mu_s(\lm,c) := \w\left( v_s(\lm,c);\lm,c\right) = \w\left( v_\ell^s(\lm,c);\lm,c\right) = \w\left( v_r^s(\lm,c);\lm,c\right),
    \end{equation*}
    \begin{equation*}
       \p_v \w\left( v_\ell^s(\lm,c);\lm,c\right) < 0,\quad \p_v \w\left( v_r^s(\lm,c);\lm,c\right) > 0, \quad
        \p_v\w\left( v_s(\lm,c);\lm,c\right) = 0,\quad \p_v^2\w\left( v_s(\lm,c);\lm,c\right) < 0,
    \end{equation*}
    \begin{equation*}
        \forall v \in \left(v_\ell^s(\lm,c),v_s(\lm,c)\right) \cup \left(v_s(\lm,c),v_r^s(\lm,c)\right), \quad \w\left( v;\lm,c\right) < \mu_s.
    \end{equation*}
\end{hypothèse} 

We denote by $\Omega \subset \R^{N+2}$ the open subset defined by
\begin{equation*}
    \Omega = \left\{\mug = (\mu,\lm,c);~(\lm,c)\in \Lambda, \mu > \mu_s(\lm,c)\right\},
\end{equation*}
and consider $v_\ell$, $v_r$ smoothly defined on $\Omega$ and satisfying for all $\mug = (\mu,\lm,c)\in \Omega$,
\begin{equation*}
    v_\ell(\mu,\lm,c) < v_\ell^s(\lm,c) < v_s(\lm,c) < v_r^s(\lm,c) < v_r(\mu,\lm,c),
\end{equation*}
\begin{equation*}
    \mu = \w\left(v_\ell(\mu,\lm,c);\lm,c\right) = \w\left(v_r(\mu,\lm,c);\lm,c\right),
\end{equation*}
\begin{equation*}
    \forall v \in \ff{v_\ell(\mu,\lm,c)}{v_\ell^s(\lm,c)},~\p_v\w(v;\lm,c) < 0 \quad \text{and} \quad \forall v \in \ff{v_r^s(\lm,c)}{v_r(\mu,\lm,c)},~\p_v\w(v;\lm,c) > 0.
\end{equation*}
From now on, we fix $(\lm_s,c_s) \in \Lambda$ and denote by $U_\ell$ the profile of the soliton with endstate $U_s = \left(\begin{smallmatrix} v_s(\lm_s,c_s) \\ g(v_s(\lm_s,c_s);\lm_s,c_s)\end{smallmatrix}\right)$ and which passes through the point $\left(\begin{smallmatrix} v_\ell^s(\lm_s,c_s) \\ g(v_\ell^s(\lm_s,c_s);\lm_s,c_s)\end{smallmatrix}\right)$. He will be called the \emph{left-hand} soliton, it is the dark one. We do the same for the other soliton, $U_r$, which will be called the \emph{right-hand} soliton. It has $U_s$ as an endstate and passes through the point $\left(\begin{smallmatrix} v_r^s(\lm_s,c_s) \\ g(v_r^s(\lm_s,c_s);\lm_s,c_s)\end{smallmatrix}\right)$. It is the bright soliton. We are interested, for $\mug = (\mu,\lm,c) \in \Omega$, in the stability of the two-pulse periodic wave with profile $\un{U}$ and period $X$, parametrized by $\mug$, and more precisely when $\mug$ is close to $\mug_{s\ast} = (\mu_s(\lm_s,c_s),\lm_s,c_s)$ in $\Omega$ so when $X$ is large. 

\subsection{Main result and strategy of proof}

To study the instability of our waves, we will use two different approaches. To explain it, we first need to recall some stability/instability criteria for periodic waves and solitary waves. The first one is for periodic waves and is taken from \cite{benzoni-gavageCoperiodicStabilityPeriodic2016}. To this end, for a periodic wave parametrized by $(c,\lm,\mu) \in \R\times \R^N \times \R$ with profile $\un{U}$ and period $X$, as in \cite{benzoni-gavageStabilityPeriodicWaves2014,benzoni-gavageCoperiodicStabilityPeriodic2016}, we define the abbreviated action integral 
\begin{equation*}
    \Theta(c,\lm,\mu) := \int_0^X \Hm[\un{U}] + cQ(\un{U}) + \lm \cdot \un{U} + \mu \dd x.
\end{equation*}
It has been shown in earlier work that the $(N+2)\times(N+2)$ Hessian matrix $\n_{(\mu,\lm,c)}^2 \Theta$ of the action is important to understand the stability of periodic waves and their modulations, see for example \cite{bronskiIndexTheoremStability2011,johnsonNonlinearStabilityPeriodic2009,benzoni-gavageSlowModulationsPeriodic2014,benzoni-gavageStabilityPeriodicWaves2014,benzoni-gavageCoperiodicStabilityPeriodic2016}. Here we recall the criteria shown in \cite{benzoni-gavageCoperiodicStabilityPeriodic2016}, which is the one that we will use. In the following, $n(\n_{(\mu,\lm,c)}^2 \Theta)$ corresponds to the \emph{negative signature} of $\n_{(\mu,\lm,c)}^2 \Theta$, so it counts the number of negative directions for the associated quadratic form in $\R^{N+2}$.

\begin{theorem}\label{theorem_stability_instability_periodic_wave}
Under Assumption~\ref{hypothèse_Hm}, for a wave locally parametrized by $(c,\lm,\mu) \in \R\times \R^N \times \R$ such that $\p_\mu^2 \Theta$ is nonzero and $\n_{(\mu,\lm,c)}^2 \Theta$ is non singular,
\begin{itemize}[label=\textbullet]
    \item if $n(\n_{(\mu,\lm,c)}^2 \Theta) = N$, then the wave is conditionally orbitally stable with respect to co-periodic perturbations; 
    \item if $n(\n_{(\mu,\lm,c)}^2 \Theta) - N$ is odd, then the wave is spectrally unstable to co-periodic perturbations. More precisely the linearized operator admits a real positive eigenvalue with Floquet exponents $\xi =0$.
\end{itemize}
\end{theorem}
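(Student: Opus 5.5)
This is the co-periodic stability/instability criterion of \cite{benzoni-gavageCoperiodicStabilityPeriodic2016}. The plan is to reduce both items to a count of negative directions of the Hessian $\mathcal{A}:=\hess\,\mathcal{F}$ of the constrained functional $\mathcal{F}=\Hm + cQ + \lm\cdot U$ at $\un{U}$, acting on $X$-periodic functions. The linearized operator about the wave, in the moving frame, is $\mathcal{L}=\p_x B\mathcal{A}$. Under Assumption~\ref{hypothèse_Hm}, $\mathcal{A}$ is a self-adjoint Sturm--Liouville-type operator: $-\p_x\ka\p_x+\cdots$ in the $v$ component and, if $N=2$, multiplication by $\tau>0$ in the $u$ component. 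It therefore has finitely many negative eigenvalues, and its kernel contains $\un{U}_x$.

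\textbf{Step 1: spectral information on $\mathcal{A}$.} Eliminating $u$ by completing squares in $\I$ reduces the problem to a scalar Hill operator in $v$ whose kernel contains $\un{v}_x$. Since $\un{v}_x$ has exactly two zeros per period, Sturm oscillation theory gives that the scalar operator has either one or two negative eigenvalues. Which case occurs is decided by the sign of $\p_\mu X = \p_\mu^2\Theta$, assumed nonzero. In the same way one gets that the kernel is exactly spanned by $\un{U}_x$.

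\textbf{Step 2: constrained count via $\nabla^2\Theta$.} The natural phase space for co-periodic perturbations fixes the $N+1$ conserved quantities $\int U$ and $\int Q(U)$ at fixed period $X$. Differentiating \eqref{eq_1_onde_progressive} and \eqref{eq_2_onde_progressive} in $(\mu,\lm,c)$ produces functions $\p_{\mug}\un{U}$ whose images under $\mathcal{A}$ are combinations of constants and of $\un{U}$ itself, corrected by the period variation through $\p_\mu$. The standard Vakhitov--Kolokolov/Pego--Weinstein lemma then expresses the number of negative directions of $\mathcal{A}$ restricted to the constraint space as
\begin{equation*}
n(\mathcal{A})-n(\text{constraint matrix}).
\end{equation*}
The key computation identifies the constraint matrix with $\nabla^2_{(\mu,\lm,c)}\Theta$, up to the Schur complement coming from the period direction. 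This uses the identities $\p_\mu\Theta = X$, $\p_{\lm}\Theta=\int\un{U}$ and $\p_c\Theta=\int Q(\un{U})$. This bookkeeping is the main obstacle: one has to track the period-dependent terms carefully and show that $n(\mathcal{A})$ combines with the signature of $\nabla^2\Theta$ to give a count $n(\nabla^2\Theta)-N$ of constrained negative directions.

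\textbf{Step 3: conclusions.} If $n(\nabla^2\Theta)=N$, then $\mathcal{A}$ is positive on the constraint space modulo translations. Conditional orbital stability follows by the Grillakis--Shatah--Strauss energy-Casimir argument, using $\mathcal{F}$ plus multiples of the constraints as a Lyapunov functional; the result is conditional because well-posedness is not known in the quasilinear setting. If $n(\nabla^2\Theta)-N$ is odd, I will use a Krein/Hamiltonian–Krein index count for $\p_x B\mathcal{A}$ on the mean-zero periodic space. This count gives
\begin{equation*}
k_r+2k_c+2k_i^- = n(\mathcal{A}|_{\text{constraints}}),
\end{equation*}
where $k_r$ is the number of positive real eigenvalues, $k_c$ the number of complex quadruplets, and $k_i^-$ the number of purely imaginary eigenvalues with negative Krein signature. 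An odd right-hand side forces $k_r\ge1$, which yields a real positive eigenvalue with Floquet exponent $\xi=0$. Alternatively, one can run a parity argument on the periodic Evans function $D(\lambda,0)$, comparing its sign at $\lambda\to+\infty$ with the sign of its leading behaviour at $\lambda=0$. That leading behaviour is proportional to $\det\nabla^2\Theta$, whose sign is $(-1)^{n(\nabla^2\Theta)}$.
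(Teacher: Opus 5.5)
Your outline is correct. The paper does not reprove this statement: it quotes it from \cite{benzoni-gavageCoperiodicStabilityPeriodic2016}, attributing the parity count of positive real $\xi=0$ eigenvalues to Evans function computations. Your stability half is the standard variational route and is in line with that source: the Sturm count $n(\mathcal{A})\in\{1,2\}$, the Vakhitov--Kolokolov count on the constraint space, then a Grillakis--Shatah--Strauss Lyapunov argument.

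In Step 2 you should pin down the direction of the Step 1 dichotomy: $n(\mathcal{A})=1+n(\p_\mu^2\Theta)$, i.e.\ one negative eigenvalue exactly when $\p_\mu X>0$. Let $\mathrm{S}$ denote the Schur complement of $\p_\mu^2\Theta$ in $\n^2\Theta$. The constraint matrix is then $-\mathrm{S}$, so its negative index is $N+1-n(\mathrm{S})$, and $n(\n^2\Theta)=n(\p_\mu^2\Theta)+n(\mathrm{S})$. Only with that direction does $n(\mathcal{A})-n(-\mathrm{S})$ reduce to $n(\n^2\Theta)-N$ in both cases. This is also where $\p_\mu^2\Theta\neq0$ enters, together with the simplicity of $\ker\mathcal{A}$.

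The genuine difference from the cited source is in the instability half. The source's route is a parity argument: the Evans function at Floquet multiplier $1$ is real for real $\lambda$. Its leading coefficient at $\lambda=0$ (order $\lambda^{N+2}$) is governed by $\det\n^2\Theta$. Comparing that sign with the sign as $\lambda\to+\infty$ gives a positive root.

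You instead use the Hamiltonian--Krein count for $\p_xB\mathcal{A}$, which the paper only mentions in its remark on Krein indices. This is economical, since the single count of Step 2 gives both bullets. It also yields more than parity: when the count vanishes it rules out complex eigenvalues and negative-signature imaginary eigenvalues as well.

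The price is that you must check the index theorem's hypotheses in this setting:
\begin{itemize}
\item $J=\p_xB$ is unbounded and has an $N$-dimensional kernel.
\item For $N=2$, $\mathcal{A}$ is not of compact resolvent. Its essential spectrum, the range of $\tau(\un{v})$, must stay away from $0$.
\item The generalized kernel of $J\mathcal{A}$ on mean-zero functions must be exactly $\un{U}_x$ plus one Jordan vector, with nondegenerate associated matrix. This is again $\det\mathrm{S}\neq0$.
\end{itemize}
The Evans route avoids this abstract machinery and stays at the ODE level, at the cost of the $\lambda^{N+2}$ expansion and careful sign bookkeeping. In your sketch of it, note that $\det\n^2\Theta$ alone has sign $(-1)^{n(\n^2\Theta)}$. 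The extra factor $(-1)^N$ needed for the parity of $n(\n^2\Theta)-N$ must come from the normalizing constant and the behaviour at $+\infty$.
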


In the first statement "conditionally orbitally stable with respect to co-periodic perturbation" means that a solution to~\eqref{eq_hamiltonienne} stays, as long as it exists, as close as we want, in the energy space, to some translate of the underlying wave. If $N=1$ the energy space is $H^1\left(\R/X\Z  \right)$ and $H^1\left(\R/X\Z  \right)\times L^2\left(\R/X\Z \right)$ if $N=2$. In general, this is not enough to conclude global existence in the energy space because local existence for this type of quasilinear PDE is not known in such low-regularity spaces. Up to our knowledge the best result for $N=1$ is in $H^s\left(\R/X\Z  \right)$ for $s > \frac{7}{2}$, see \cite{mietkaWellposednessQuasilinearKortewegde2017,iandoliCauchyProblemQuasilinear2022} and in $H^{s+1}\left(\R/X\Z \right) \times H^s\left(\R/X\Z \right)$ for $s>\frac{3}{2}$ if $N=2$, see \cite{benzoni-gavageWellposednessOnedimensionalKorteweg2006,benzoni-gavageWellposednessEulerKortewegModel2007}.

Spectral instability means that the linearized equation admits exponentially growing solutions. More precisely, $n(\n_{(\mu,\lm,c)}^2 \Theta) - N$ counts mod 2 the number of positive eigenvalues for the Floquet exponents $\xi = 0$ of the linearized operator. This is done using Evans functions calculations; see \cite{benzoni-gavageCoperiodicStabilityPeriodic2016}.

This type of stability/instability criteria for periodic waves is inspired by the theory of stability for solitary waves in Hamiltonian systems. From this we will only recall the results that we will use here; for reference on the topic, see \cite{grillakisStabilityTheorySolitary1987,debievreOrbitalStabilityAnalysis2015,angulopavaNonlinearDispersiveEquations2009,kapitulaSpectralDynamicalStability2013}. A family of solitary waves can be parametrized by their speed $c$ and endstate $U_s$. For a solitary wave, $U$, we introduce is Boussinesq momentum 
\begin{equation*}
    \mo(c,U_s) = \int_{-\infty}^{\infty} \left(\Hm[U] + cQ( U) + \lm_s \cdot U + \mu_s \right) \dd x,
\end{equation*}
with $\lm_s := -\n_{U}\left(\Hm+cQ\right)(U_s,0) $ and $\mu_s := \left(\Hm + cQ \right)(U_s,0) - \lm_s \cdot U_s $. We have the following stability/instability criteria.

\begin{theorem}\label{theorem_stability_instability_soliton}
Under Assumption~\ref{hypothèse_Hm}, for a solitary wave locally parametrized by $(c,U_s) \in \R \times \R^N$, if we denote $\dd_c$ as the derivative with respect to the speed $c$ of the solitary wave with the final state $U_s$ fixed then
\begin{itemize}[label=\textbullet]
    \item if $\dd_c^2 \mo < 0$, the solitary wave is spectrally unstable, more precisely, the linearized operator have a real positive eigenvalue;
    \item if $\dd_c^2 \mo > 0$, the solitary wave is conditionally orbitally stable.
\end{itemize}
\end{theorem}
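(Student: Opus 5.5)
This is a classical criterion, and I would recover it by reducing to the Grillakis--Shatah--Strauss framework for the Lyapunov part and to a Pego--Weinstein type Evans function argument for the instability part.

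\textbf{Setting and spectral facts for the Hessian.} In the moving frame $\xi=x-ct$ the wave $U$ is a critical point of $\mathcal{F}:=\Hm+cQ+\lm_s\cdot U+\mu_s$, by \eqref{eq_1_onde_progressive}. The linearized equation reads
\begin{equation*}
\p_t V=\p_x B\,\mathcal{L}V,\qquad \mathcal{L}:=\text{Hess}\,(\Hm+cQ)[U].
\end{equation*}
For $N=2$, Assumption~\ref{hypothèse_Hm} ($\I$ quadratic in $u$ with $\tau>0$) lets me complete the square in $u$. Up to a positive definite block, $\mathcal{L}$ then reduces to the scalar Sturm--Liouville operator
\begin{equation*}
\mathcal{L}_v=-\p_x\ka(v)\p_x+\p_v^2\w(v;\lm_s,c)+\dots
\end{equation*}
The case $N=1$ is already of this form.

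I would establish three facts about $\mathcal{L}$:
\begin{itemize}
\item Since $\p_v^2\w(v_s)<0$, i.e.\ the endstate is a saddle, the essential spectrum of $\mathcal{L}$ lies in $[\delta,\infty)$ for some $\delta>0$.
\item Its kernel is spanned by $U'$.
\item By Sturm oscillation, $v'$ has exactly one zero, so $\mathcal{L}$ has exactly one negative eigenvalue.
\end{itemize}

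Differentiating the profile equation in $c$ at fixed $U_s$ gives
\begin{equation*}
\mathcal{L}\,\p_cU=-B^{-1}(U-U_s).
\end{equation*}
From the definition of $\mo$, the first variation kills the $U$-derivative terms, so $\dd_c\mo=\int \big(Q(U)-Q(U_s)\big)$. Hence
\begin{equation*}
\dd_c^2\mo=-\big\langle \mathcal{L}^{-1}B^{-1}(U-U_s),\,B^{-1}(U-U_s)\big\rangle .
\end{equation*}
This pairing is exactly the GSS constraint quantity for the impulse $\int (Q(U)-Q(U_s))$.

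\textbf{Stability.} If $\dd_c^2\mo>0$, the standard GSS lemma applies: a self-adjoint operator with one negative direction, whose kernel is spanned by translation, and whose constraint quantity has this sign, is coercive on the orthogonal complement of $\{U',\,B^{-1}(U-U_s)\}$. I would then take the conserved functional $\mathcal{F}$ restricted to the level set of the conserved impulse as a Lyapunov functional, and use the usual modulation in translation. This gives closeness in the energy space to a translate as long as the solution exists, which is conditional orbital stability.

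\textbf{Instability.} If $\dd_c^2\mo<0$, I would build the Evans function $D(\lambda)$ for $\lambda V=\p_xB\mathcal{L}V$ on $\Rel\lambda>0$, normalized so that $D(\lambda)>0$ for large real $\lambda$. Following Pego--Weinstein, $D$ extends to a neighbourhood of $0$ (in exponentially weighted spaces if necessary). A Taylor expansion using $U'$ and $\p_cU$ as the Jordan chain at $\lambda=0$ should give
\begin{equation*}
D(\lambda)=C\,\dd_c^2\mo\,\lambda^2+O(\lambda^3),\qquad C>0.
\end{equation*}
Then $D$ is negative for small $\lambda>0$ and positive for large $\lambda$, so the intermediate value theorem yields a positive real eigenvalue.

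\textbf{Main obstacle.} The hardest step is this low-frequency expansion. The operator $\p_x B$ is not onto, and the essential spectrum of $\p_xB\mathcal{L}$ touches $0$ on the imaginary axis. So I must justify the analytic extension of $D$ to $\lambda=0$, and fix the sign of $C$ by tracking the orientation of the stable and unstable bundles. I expect both can be done by first passing to the integrated variable $V=\p_xW$ and then checking everything on the scalar reduction $\mathcal{L}_v$.
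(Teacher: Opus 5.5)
Your route is essentially the paper's: it gives no self-contained proof, but defers the instability part to the Evans-function computations of Pego--Weinstein ($N=1$) and Benzoni-Gavage ($N=2$), and the stability part to Grillakis--Shatah--Strauss, which is the same split you propose. There are two slips to fix: when differentiating $\mo$ you dropped $\dd_c\lm\cdot U+\dd_c\mu_s$, so in fact $\dd_c\mo=\int Q(U-U_s)\,\dd x$ (as the paper computes for $\dd_c\mo_\ell$), and this, not $\int (Q(U)-Q(U_s))$, is the conserved impulse you need (it is well defined for $L^2$-localized perturbations, its tangent condition is orthogonality to $B^{-1}(U-U_s)$, and it is what actually yields your formula for $\dd_c^2\mo$); also, the zeroth-order term of $\mathcal{L}_v$ is $-\p_v^2\w$, not $+\p_v^2\w$, and with that sign your essential-spectrum claim holds.
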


As for periodic waves, the first part comes from Evans functions calculations, see \cite{pegoEigenvaluesInstabilitiesSolitary1992} for the case $N=1$ and \cite{benzoni-gavageSpectralTransverseInstability2010} for the case $N=2$. The second part comes from the general theory of Grillakis, Shatah and Strauss \cite{grillakisStabilityTheorySolitary1987,debievreOrbitalStabilityAnalysis2015}.

We can now express our main result on the instability of two-pulse periodic waves.

\begin{theorem}\label{main_results}
Under Assumption~\ref{hypothèse_Hm} and~\ref{hypothèse_potentiel}, for a two-pulse periodic wave of sufficiently large period, if we denote $\mo_\ell$ (respectively $\mo_r$) 
the Boussinesq momentum of the left-hand soliton (respectively right-hand soliton), we have the following result.
\begin{itemize}[label=\textbullet]
    \item If $\dd_c^2 \mo_\ell + \dd_c^2 \mo_r > 0$ then the two-pulse periodic wave is spectrally unstable with respect to co-periodic perturbation. More precisely the linearized operator admits a positive real eigenvalue with Floquet exponent $\xi =0$.
    \item If one of the solitons is spectrally unstable, then the two-pulse periodic wave is spectrally unstable. More precisely the linearized operator admits an eigenvalue in $\C_+ = \{\lambda \in \C \mid \Rel(\lambda) > 0\}$ for each Floquet exponent.
\end{itemize}
\end{theorem}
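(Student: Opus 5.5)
The proof treats the two bullets separately, as the introduction announces: the first through Theorem~\ref{theorem_stability_instability_periodic_wave}, the second through a perturbation argument on the periodic Evans function.

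\emph{First bullet.} The goal is to show that $n(\n^2_{(\mu,\lm,c)}\Theta)-N$ is odd for $\mug\in\Omega$ close to $\mug_{s\ast}$, using an asymptotic expansion of the Hessian of the action.
\begin{itemize}
\item Write $\Theta(\mug)=\oint \sqrt{2\ka(v)(\mu-\w(v;\lm,c))}\,\dd v$, up to the usual algebraic reduction when $N=2$. The closed orbit encircles both homoclinic loops.
\item Split the integral into a neighbourhood of the saddle $v_s$ and the two excursions toward $v_\ell$ and $v_r$. Near the saddle the integrand produces $(\mu-\mu_s)\log(\mu-\mu_s)$ terms. Since the orbit passes the saddle twice per period, the singular part is twice the single-pulse one: $\p_\mu^2\Theta\sim -\frac{2}{\sqrt{-\p_v^2\w(v_s)}\,(\mu-\mu_s)}\cdot(\text{positive factor})$, so it is large and of fixed sign. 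The period behaves like $X\sim -\frac{2}{\alpha}\log(\mu-\mu_s)$.
\item Take the Schur complement with respect to the $\mu$ direction, after first changing variables to $(\mu-\mu_s(\lm,c),\lm,c)$. The $(N+1)\times(N+1)$ block then converges to the Hessian in $(\lm,c)$ of the sum of the two soliton actions $\mo_\ell+\mo_r$, written as functions of $(\lm,c)$, plus corrections of order $1/\log(\mu-\mu_s)$ along a rank-one direction.
\item Following the computation of \cite{benzoni-gavageStabilityPeriodicWaves2020}, transfer from $(\lm,c)$ to $(c,U_s)$, using that $U_s\mapsto\lm_s$ is a local diffeomorphism. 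The limiting block then has signature determined by the hyperbolic-type structure of $\n^2_{U_s}$ together with the sign of $\dd_c^2\mo_\ell+\dd_c^2\mo_r$.
\item Counting, one should obtain $n(\n^2\Theta)=N+1$ when $\dd_c^2\mo_\ell+\dd_c^2\mo_r>0$, the odd case, so Theorem~\ref{theorem_stability_instability_periodic_wave} gives a positive eigenvalue at $\xi=0$.
\end{itemize}
The delicate point is bookkeeping the logarithmic terms: in the Schur complement the divergent pieces must cancel exactly and leave the sum of the two momenta. This is where the double passage near the saddle matters, as opposed to the single-pulse case.

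\emph{Second bullet.} This requires a spectral convergence argument in the style of \cite{yangConvergencePeriodGoes2019}.
\begin{itemize}
\item Let $D_X(\lambda,\xi)=\det(\Phi_X(\lambda)-e^{i\xi X})$, where $\Phi_X(\lambda)$ is the monodromy matrix of the linearized first-order system over one period.
\item For $\lambda$ in a compact set $K\subset\C_+$ avoiding the essential spectrum of the constant state $U_s$, the linearization at $U_s$ has a spectral gap, with stable and unstable dimensions independent of $\lambda$.
\item Split the period into two half-cells, each containing one pulse and matched near the saddle. Use exponential dichotomies on each half-cell, via roughness, since on each half-cell the periodic profile is exponentially close to the corresponding soliton.
\item Conclude that a renormalization $\tilde D_X(\lambda,\xi)=D_X(\lambda,\xi)\,/\,(\text{product of dominant unstable growth factors } e^{X\nu(\lambda)})$ converges, uniformly on $K$ and uniformly in $\xi$, to $c\,E_\ell(\lambda)E_r(\lambda)$. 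Here $E_\ell,E_r$ are the soliton Evans functions and $c\neq0$. The rate is exponential in $X$.
\end{itemize}
The main obstacle is exactly this step. One has to control the intermediate exponentially small and large modes so that $e^{i\xi X}$ only enters through subdominant terms, and the normalizations must be chosen so that both pulses contribute a factor. My first attempt would be to conjugate onto the stable/unstable splitting at $U_s$ and write the monodromy matrix in block form, then compute the determinant by expanding in blocks with Schur complements.

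Given the convergence, if, say, the left soliton is unstable, Theorem~\ref{theorem_stability_instability_soliton} gives a positive real zero $\lambda_0$ of $E_\ell$. Generically $\lambda_0$ is an isolated zero; if needed, perturb to an isolated zero. Pick a small circle around $\lambda_0$ inside $\C_+$ on which $E_\ell E_r$ does not vanish. Rouché's theorem applied to $\tilde D_X(\cdot,\xi)$ then yields, for $X$ large and every $\xi$, a zero inside the circle, that is, an eigenvalue in $\C_+$ for each Floquet exponent.
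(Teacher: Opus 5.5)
\paragraph{First bullet: a genuine gap.} You assert that, after eliminating the $\mu$-direction in the coordinates $(\tilde\mu,\lm,c)=(\mu-\mu_s(\lm,c),\lm,c)$, the logarithmic terms cancel and the remaining block converges to $\n^2_{(\lm,c)}(\mo_\ell+\mo_r)$. That is true for the Hessian of $\tilde\Theta(\tilde\mu,\lm,c)=\Theta(\tilde\mu+\mu_s(\lm,c),\lm,c)$. But Theorem~\ref{theorem_stability_instability_periodic_wave} needs the signature of $\n^2\Theta$, and your change of variables is nonlinear, so the two Hessians are not congruent. With $J$ its Jacobian and $X=\p_\mu\Theta$,
\[
J^{\T}\,\n^2\Theta\,J=\n^2\tilde\Theta-X\,\n^2_{(\lm,c)}\mu_s,\qquad \n^2_{(\lm,c)}\mu_s=\bar T_s\otimes\bar T_s+\tfrac{a_s^2}{2}\,\bar W_s\otimes\bar W_s ,
\]
where bars denote $(\lm,c)$-components and the last term acts only on the $(\lm,c)$-block. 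Since $X=-2\y_s\ln\ve+\go(1)\to+\infty$, the Schur complement of $J^{\T}\n^2\Theta J$ with respect to $\tilde\mu$ is
\[
\n^2_{(\lm,c)}(\mo_\ell+\mo_r)-X\,\n^2_{(\lm,c)}\mu_s+o(1).
\]
So the log terms do not cancel. They form a negative semidefinite block of rank $N$, which is exactly the paper's $C_2\ln\ve$ contribution on $\sm T_s,\sm W_s$. They are what produces $N$ of the $N+1$ negative directions.

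Without them your count has no basis. The signature of the finite matrix $\n^2_{(\lm,c)}(\mo_\ell+\mo_r)$ is not determined by $\dd_c^2\mo_\ell+\dd_c^2\mo_r$. Moreover, your passage to $(c,U_s)$ is another nonlinear reparametrization, which again does not preserve Hessian signatures.

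\paragraph{What is missing.} You need the orthogonality structure of \cite[Lemma~1]{benzoni-gavageStabilityPeriodicWaves2020}.
\begin{itemize}
\item $S_s=\sm V_s$ is orthogonal to $V_s,W_s,T_s$, so it annihilates every singular term.
\item The kernel of $\n^2_{(\lm,c)}\mu_s$ is spanned by $\bar S_s$.
\item When $c$ varies at fixed $U_s$, $(\lm,c)$ moves affinely in the direction $-\bar S_s$. Hence $\bar S_s\cdot\n^2_{(\lm,c)}(\mo_\ell+\mo_r)\,\bar S_s=\dd_c^2\mo_\ell+\dd_c^2\mo_r$.
\end{itemize}
For small $\ve$ this gives one negative direction from $\p_\mu^2\Theta\sim-\y_s\ve^{-2}$, $N$ negative directions of size $\abs{\ln\ve}$ transverse to $\bar S_s$, and the sign of $\dd_c^2\mo_\ell+\dd_c^2\mo_r$ along $\bar S_s$. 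Hence $n(\n^2\Theta)=N+1$ when that sum is positive.

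The paper does this bookkeeping directly. It expands $\n^2\Theta=D_0\ve^{-2}+D_1\ve^{-1}+C_2\ln\ve+D_2+\go(\ve\ln\ve)$, writes it in the basis $(E,\sm V_s,\sm T_s,\sm W_s)$, and applies Sylvester's rule to the principal minors. If you keep the $-X\n^2\mu_s$ term, your Schur-complement route becomes a valid alternative. It avoids computing $C_2$ explicitly, at the cost of proving $\tilde\Theta\to\mo_\ell+\mo_r$ in $C^2_{(\lm,c)}$ together with $\p_{\tilde\mu}\n_{(\lm,c)}\tilde\Theta=\go(\ln\ve)$.

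\paragraph{Second bullet.} This is essentially the paper's argument: the Yang--Zumbrun renormalized Evans function converges, uniformly in $\gamma$, to $\tilde D^0$, which is $D^0_\ell D^0_r$ up to a nonvanishing factor, and Rouché's theorem concludes. Two small corrections:
\begin{itemize}
\item No perturbation is needed, since zeros of these analytic functions are isolated.
\item Start from an arbitrary eigenvalue $z_\ast\in\C_+$ of the unstable soliton. The criterion $\dd_c^2\mo<0$ is only one way a soliton can be unstable.
\end{itemize}
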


We first begin with remarks on this result. This result tells us that in all cases, except if the two solitons are spectrally stable and $\dd_c^2 \mo_\ell = \dd_c^2 \mo_r = 0$, two-pulse periodic waves with sufficiently large period are spectrally unstable. The conclusion is only at a linearized level, it is all about the spectrum of the linearized operator around the wave. One may wonder whether some information about what is happening at the non-linear level may be deduced. With our Assumption~\ref{hypothèse_Hm}, if $\ka$ is constant, then we can apply the result of \cite{jinNonlinearModulationalInstability2019}. This tells us that the wave is nonlinearly orbitally unstable for co-periodic perturbation and nonlinearly unstable for localized perturbations. For the general quasilinear case, $\ka$ not constant, up to our knowledge, there is no such result.

We can also ask if there exists an integrable equation that fits in our Assumption~\ref{hypothèse_Hm} and admits two-pulse periodic waves. This is the case for the focusing modified Korteweg-de Vries equation, which is 
\begin{equation}\label{mKdV}\tag{mKdv}
    \p_t v + 6v^2v_x + v_{xxx} = 0.
\end{equation} 
For this equation, our periodic waves are the cnoidal waves, and their stability to co-periodic perturbation has been studied in \cite{kapitulaSpectralOrbitalStability2015} and in \cite{cuiInstabilityBandsPeriodic2025}. For the solitary waves, their stability has been studied in \cite{bonaStabilityInstabilitySolitary1987}. The result is the following. If we fix the endstate of the solitons to be 0, then solitary waves are all orbitally stable, and two-pulse periodic waves of sufficiently large period are spectrally unstable under co-periodic perturbation. So we can see our results as an extension of this one for a quasilinear version of mKdV-type equations.\newline

To prove Theorem~\ref{main_results} we combine two approaches. The first one, inspired from \cite{benzoni-gavageStabilityPeriodicWaves2020}, is to look at the asymptotic expansion of $\n_{(\mu,\lm,c)}^2 \Theta$ when the period goes to infinity and apply Theorem~\ref{theorem_stability_instability_periodic_wave}. This results in Theorem~\ref{instabilité_double_bosse} which shows the first part of Theorem~\ref{main_results}. However, this will not be enough to conclude at the instability of the wave in all cases. This is due to the fact that the criteria of Theorem~\ref{theorem_stability_instability_periodic_wave} only counts mod 2 the number of positive eigenvalues for the Floquet exponents $\xi = 0$ of the linearized operator. The second approach is to study perturbatively the spectrum for all Floquet exponents. More precisely, we link the Evans function of a two-pulse periodic wave to Evans functions of the two solitons that we have at the limit, this is Proposition~\ref{cv_evans_away_essential_spectra}. In the case of a periodic wave that converges to one soliton, this has already been done in \cite{yangConvergencePeriodGoes2019}. In Section~\ref{section_evans_function} we adapt the strategy of \cite{yangConvergencePeriodGoes2019}, to our case. With Proposition~\ref{cv_evans_away_essential_spectra} in hand, we show Corollary~\ref{spectrum_periodic_evans_function} that gives us the second part of Theorem~\ref{main_results}.

\begin{remarque}
    One may ask if alternatively a Krein index method may be used to show our result, as in \cite{bronskiIndexTheoremStability2011}. In most cases, Krein indices are used because we can count mod 2 the number of positive eigenvalues, as done by the criteria of Theorem~\ref{theorem_stability_instability_periodic_wave}. So we except that using such a method would only recover the first part of our theorem. For a detailed discussion on the link of Theorem~\ref{theorem_stability_instability_periodic_wave} with Krein index theory, see remark 3 of \cite{benzoni-gavageCoperiodicStabilityPeriodic2016}. For references on Krein index theory, see \cite{kapitulaCountingEigenvaluesKrein2005,bronskiIndexTheoremStability2011,kapitulaSpectralDynamicalStability2013} and references therein. 
\end{remarque}

\section{Asymptotic expansion of the action}\label{double_bosse}

\subsection{Preparation for the asymptotic}

To conclude whether stability or instability occurs for our periodic waves, motivated by Theorem~\ref{theorem_stability_instability_periodic_wave}, we begin by considering the Hessian of the action. The objective will be, as in \cite{benzoni-gavageStabilityPeriodicWaves2020}, to examine its asymptotic behavior in order to draw conclusions for waves of sufficiently large period.\newline

For $\mug=(\mu,\lm,c) \in \Omega$, we denote by $\un{U}$ the profile of the corresponding periodic wave and by $X = X(\mu,\lm,c)$ its period. We will calculate, when $\mug \rightarrow \mug_{s\ast}$, the asymptotic expansion of the following quantities.
\begin{itemize}[label = \textbullet]
    \item The action
    \begin{equation*}
        \Theta(\mu,\lm,c) = \int_{-\frac{X}{2}}^{\frac{X}{2}} \Hm[\un{U}] + c Q(\un{U}) + \lm\cdot\un{U} + \mu \dd x,
    \end{equation*}
    which, by Assumption~\ref{hypothèse_Hm} and~\eqref{eq_2_onde_progressive}, can be written as
    \begin{equation*}
    \Theta = \int_{-\frac{X}{2}}^{\frac{X}{2}} \un{U}_x \cdot \n_{U_x} \left(\Hm + cQ + \lm\cdot\right)[\un{U}] \dd x 
    = \int_{-\frac{X}{2}}^{\frac{X}{2}} \un{v}_x \p_{v_x}\Em(\un{v},\un{v}_x) \dd x.
    \end{equation*}
    Integrating along the orbit described by $(\un{v},\un{v}_x)$ in the phase portrait and using~\eqref{eq_onde_périodique_2} for $N=2$ and~\eqref{eq_onde_périodique_1} for $N=1$, we obtain
    \begin{equation*}
        \Theta(\mu,\lm,c) = 2 \int_{v_\ell}^{v_r} \sqrt{2\ka(v)\left(\mu - \w(v;\lm,c)\right)} \dd v.
    \end{equation*}
    \item The derivatives of the action which, by Proposition 1 of \cite{benzoni-gavageStabilityPeriodicWaves2014}, are given by
    \begin{align}
        &\p_\mu \Theta = X = \int_{v_\ell}^{v_r} \sqrt{\frac{2\ka(v)}{\mu - \w(v;\lm,c)}} \dd v, \label{period} \\
        &\p_{\lambda_1} \Theta = \int_{-\frac{X}{2}}^{\frac{X}{2}} \un{v} \dd x = \int_{v_\ell}^{v_r} v \sqrt{\frac{2\ka(v)}{\mu - \w(v;\lm,c)}} \dd v, \nonumber \\
        &\p_{\lambda_2} \Theta = \int_{-\frac{X}{2}}^{\frac{X}{2}} g(\un{v};\lm_2,c) \dd x = \int_{v_\ell}^{v_r} g(v;\lambda_2,c) \sqrt{\frac{2\ka(v)}{\mu - \w(v;\lm,c)}} \dd v,\nonumber \\
        &\p_{c} \Theta = \int_{-\frac{X}{2}}^{\frac{X}{2}} Q(\un{U}) \dd x = \int_{v_\ell}^{v_r} q(v;\lambda_2,c) \sqrt{\frac{2\ka(v)}{\mu - \w(v;\lm,c)}} \dd v, \nonumber
    \end{align}
    where $q(v;\lambda_2,c) = Q\left(\begin{smallmatrix} v \\ g(v;\lambda_2,c) \end{smallmatrix} \right) $ if $N=2$ and $q(v;\lambda_2,c) = Q(v)$ if $N=1$.
\end{itemize}
This leads us to introduce a new Lagrangian to study these integrals,
\begin{equation*}
    \la(v;\mu,\lm,c) := \mu - \w(v;\lm,c).
\end{equation*}
From now $\n := \n_{(\mu,\lm,c)}$, so we have
\begin{equation*}
    \n\la(v;\mu,\lm,c) = \begin{pmatrix}1 \\ v \\ g(v;\lambda_2,c) \\ q(v;\lambda_2,c) \end{pmatrix},~\text{if } N=2 \quad \text{and} \quad \n\la(v;\mu,\lm,c) = \begin{pmatrix} 1 \\ v\\Q(v) \end{pmatrix},~\text{if } N=1.
\end{equation*}
From the foregoing, we have
\begin{equation*}
    \n \Theta(\mu,\lm,c) = \int_{v_\ell}^{v_r} \n\la(v;\mu,\lm,c)\sqrt{\frac{2\ka(v)}{\la(v;\mu,\lm,c)}} \dd v.
\end{equation*}

We recall some notation from \cite{benzoni-gavageStabilityPeriodicWaves2020} that we will be using throughout this whole section.

\begin{notation}\label{note_lag}
    For a function $\phi = \phi(v;\mu,\lm,c)$ we note
    \begin{equation*}
        \phi_s = \phi\left(v_s(\lm,c);\mu_s(\lm,c),\lm,c\right).
    \end{equation*}
    For derivatives of $\la$, we introduce
    \begin{equation*}
        \n\la_s = V_s, \quad \n\la_{v,s} = W_s, \quad \n\la_{vv,s} = Z_s.
    \end{equation*}
    In detail, if $N=2$, we have
    \begin{equation*}
        V_s = \begin{pmatrix} 1 \\ v_s \\ g_s \\ q_s \end{pmatrix}, \quad W_s = \begin{pmatrix} 0 \\ 1 \\ g_{v,s} \\ q_{v,s} \end{pmatrix}, \quad 
        Z_s = \begin{pmatrix} 0 \\ 0 \\ g_{v v,s} \\ q_{v v,s} \end{pmatrix},
    \end{equation*}
    and if $N=1$,
    \begin{equation*}
        V_s = \begin{pmatrix} 1 \\ v_s \\ q_s \end{pmatrix}, \quad W_s = \begin{pmatrix} 0 \\ 1 \\ q_{v,s} \end{pmatrix}, \quad 
        Z_s = \begin{pmatrix} 0 \\ 0 \\ q_{v v,s} \end{pmatrix}.
    \end{equation*}
    For the Hessian, if $N=1$, $\n^2 \la = 0$ and if $N=2$ we have
    \begin{equation*}
        \n^2\la_s = - T_s \otimes T_s := -T_sT_s^{\T},
    \end{equation*}
    where $T_s$ is the column vector defined as $$T_s = \frac{1}{\sqrt{\tau(v_s)}} \begin{pmatrix} 0 \\ 0 \\ 1 \\ \frac{v_s}{b} \end{pmatrix}.$$
    For consistency, we will note $T_s = 0$ in the case $N=1$.\\
    Since $v_s$ is a double root of $\la\left(.;\mu_s,\lm,c\right)$ we introduce
    \begin{equation*}
        \ti{R}(v,w,z;\lm,c) = \int_0^1 \int_0^1 t \p_v^2\w\left(w + t(z-w) +t s (v - z);\lm,c\right) \dd s \dd t,
    \end{equation*}
    so, we have
    \begin{equation*}
        \la(v;\mu,\lm,c) = \mu - \w(v,\lm,c) = \mu-\mu_s - (v-v_s)^2 \ti{R}(v , v_s ,v_s;\lm,c). 
    \end{equation*}
    At last, we denote $R_s = \ti{R}(v_s,v_s,v_s;\lm,c) = \frac{1}{2}\p_v^2\w(v_s;\lm,c)$ and
    \begin{equation*}
        \y = \y(v,w,z;\lm,c) = \sqrt{\frac{2\ka(v)}{\abs{\ti{R}(v,w,z;\lm,c)}}}.
    \end{equation*}
\end{notation}

Now we recall some notation of \cite{benzoni-gavageStabilityPeriodicWaves2020} useful to express asymptotic expansions.

\begin{notation}\label{note_dl}
Asymptotic expansions will be expressed in terms of
\begin{equation*}
    a_s = \sqrt{\frac{2}{-\p_v^2\w(v_s;\lm,c)}}, \quad b_s= \frac{1}{3} \frac{\p_v^3\w(v_s;\lm,c)}{\left(\p_v^2\w(v_s;\lm,c) \right)^2},
\end{equation*}
\begin{equation*}
    p_r^s = \frac{1}{\p_v\w(v_r^s(\lm,c);\lm,c)}, \quad p_\ell^s =  \frac{1}{\p_v\w(v_\ell^s(\lm,c);\lm,c)}.
\end{equation*}
\end{notation}

We now detail the strategy that we shall use in this section. We note that all the quantities that we are interested in, are of the form
\begin{equation*}
    I(\phi) := \int_{v_\ell}^{v_r} \frac{\phi(v)}{\sqrt{\mu - \w(v)}} \dd v,
\end{equation*}
where $\phi$ is a smooth function. In Appendix~\ref{dl_intégrale}, we therefore calculate the asymptotic behavior of such an integral for every $\phi$ as a function of $\ve = \sqrt{\mu-\mu_s}$. In the following subsections, we use the result obtained, namely Theorem~\ref{dl_int_phi}, to compute asymptotic expansions of the quantities that we are interested in. 

\subsection{Asymptotic behavior of the action and the period}

We begin by studying the asymptotic behavior of the roots of the function $$\la(v;\mu,\lm,c) = \mu - \w(v;\lm,c).$$ We recall that we denote by $v_\ell$ the root of $\la(.;\mu,\lm,c)$ lower than $v_s$ and $v_r$ the upper root.

\begin{proposition}\label{dl_racines}
    With Notations~\ref{note_lag},~\ref{note_dl} and Assumption~\ref{hypothèse_potentiel}, for $\ve = \sqrt{\mu - \mu_s} \rightarrow 0$, we have the following asymptotic expansion
    \begin{equation*}
        \left\{
        \begin{array}{ll}
            v_r =& v_r^s + p_r^s \ve^2 + \go(\ve^4),\\
            v_{\ell} =& v_\ell^s + p_\ell^s \ve^2 + \go(\ve^4).
        \end{array}
        \right.
    \end{equation*}
\end{proposition}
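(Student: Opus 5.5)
The plan is to apply the implicit function theorem to each simple root, with $\ve^2=\mu-\mu_s$ as the parameter. Fix $(\lm,c)\in\Lambda$ and view $v_r$ as the solution of
\[
F(v,\delta):=\w(v;\lm,c)-\mu_s(\lm,c)-\delta=0,\qquad \delta=\ve^2 .
\]
At $\delta=0$ we have $F(v_r^s,0)=0$ by Assumption~\ref{hypothèse_potentiel}. We also have $\p_vF(v_r^s,0)=\p_v\w(v_r^s;\lm,c)>0$, so $v_r^s$ is a simple root. Since $\w$ is smooth, the implicit function theorem gives a smooth function $\delta\mapsto \tilde v_r(\delta)$ near $0$ with $\tilde v_r(0)=v_r^s$ and $F(\tilde v_r(\delta),\delta)=0$. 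It is locally unique, and smooth in $(\lm,c)$ as well.

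Next I will identify $\tilde v_r(\delta)$ with $v_r(\mu,\lm,c)$ for small $\delta>0$. By hypothesis $\p_v\w>0$ on $[v_r^s,v_r]$, so $\w$ is increasing there. Together with $\w(v_r^s)=\mu_s<\mu=\w(v_r)$, this gives $v_r>v_r^s$ and $v_r\to v_r^s$ as $\mu\to\mu_s$, because $\w$ is strictly increasing near $v_r^s$ with a continuous inverse. Local uniqueness then gives $v_r=\tilde v_r(\ve^2)$. Differentiating $F(\tilde v_r(\delta),\delta)=0$ at $\delta=0$ yields $\tilde v_r'(0)=1/\p_v\w(v_r^s;\lm,c)=p_r^s$. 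A Taylor expansion of the smooth function $\tilde v_r$ to second order in $\delta$ then gives
\[
v_r=v_r^s+p_r^s\ve^2+\go(\ve^4).
\]
The argument for $v_\ell$ is identical. There $\p_v\w(v_\ell^s)<0$ and $\w$ is decreasing on $[v_\ell,v_\ell^s]$, so $v_\ell<v_\ell^s$, and the derivative is $p_\ell^s$, which is negative, consistent with $v_\ell<v_\ell^s$.

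The only point needing care is this identification of the root $v_r$ defined in the setup with the implicit-function branch, that is, checking that $v_r\to v_r^s$. The monotonicity hypotheses on $[v_r^s,v_r]$ and $[v_\ell,v_\ell^s]$ settle it. If uniformity in $(\lm,c)$ near $(\lm_s,c_s)$ is wanted, I will apply the implicit function theorem jointly in $(v,\delta,\lm,c)$ around $(v_r^s(\lm_s,c_s),0,\lm_s,c_s)$, so that the $\go(\ve^4)$ remainder is locally uniform.
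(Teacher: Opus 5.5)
Your proposal is correct and follows the paper's approach: the paper's proof is the one-line remark that the implicit function theorem applies because $\p_v\w(v_r^s;\lm,c)\neq 0$ and $\p_v\w(v_\ell^s;\lm,c)\neq 0$, which yields $\tilde v_r'(0)=p_r^s$ and $\tilde v_\ell'(0)=p_\ell^s$ exactly as you compute. Your extra identification step is a reasonable addition that the paper leaves implicit; to make it airtight, fix some $\mu_0>\mu_s$ and use monotonicity to show that $v_r(\mu)\le v_r(\mu_0)$ for $\mu<\mu_0$ (and symmetrically for $v_\ell$).
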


\begin{proof}
We simply apply the implicit function theorem to $\w(\cdot;\lm,c)$ using that $\p_v \w(v_r^s;\lm,c) \neq 0$ and $\p_v \w(v_\ell^s;\lm,c) \neq 0$.
\end{proof}

Now we calculate the asymptotic behavior of the period and the action. But first, if we denote by $U_\ell$ the left-hand soliton and $U_r$ the right-hand soliton, their Boussinesq momentums are given by
\begin{equation*}
    \mo_\ell = \int_{-\infty}^{\infty} \left(\Hm[ U_\ell] + cQ( U_\ell) + \lm\cdot U_\ell + \mu_s \right) \dd x,
\end{equation*}
\begin{equation*}
    \mo_r = \int_{-\infty}^{\infty} \left(\Hm[ U_r] + cQ( U_r) + \lm\cdot U_r + \mu_s \right) \dd x.
\end{equation*}
Using that $\leg\left(\Hm + cQ + \lm\cdot \right)[ U_\ell] =\leg\left(\Hm + cQ + \lm\cdot \right)[ U_r] = \mu_s $ and integrating along the orbits described by $v_r$ and $v_\ell$ in the phase portrait, one gets
\begin{equation*}
    \mo_\ell = 2 \int_{v_\ell^s}^{v_s} \sqrt{2\ka(v)\left(\mu_s -\w(v;\lm,c)\right)} \dd v,
\end{equation*}
\begin{equation*}
    \mo_r = 2 \int_{v_s}^{v_r^s} \sqrt{2\ka(v)\left(\mu_s -\w(v;\lm,c)\right)} \dd v.
\end{equation*}

\begin{proposition}\label{dl_action_période}
With Notation~\ref{note_lag},~\ref{note_dl} and Assumption~\ref{hypothèse_potentiel} we have, for $\ve = \sqrt{\mu-\mu_s} \rightarrow 0$, the following asymptotic expansion
\begin{equation*}
    \Theta \tend{\ve}{0} \mo_r + \mo_\ell,
\end{equation*}
and 
\begin{equation*}
    X = - 2a_s\sqrt{2\ka(v_s)}\ln\ve + \go(1).
\end{equation*}
\end{proposition}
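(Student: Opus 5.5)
The plan is to handle the two quantities separately, starting from the integral representations
\begin{equation*}
\Theta = 2\int_{v_\ell}^{v_r}\sqrt{2\ka(v)\la(v;\mu,\lm,c)}\dd v,\qquad X=\int_{v_\ell}^{v_r}\sqrt{\frac{2\ka(v)}{\la(v;\mu,\lm,c)}}\dd v .
\end{equation*}
In both, $(\lm,c)$ is held fixed and $\ve=\sqrt{\mu-\mu_s}\to 0$.

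For the action no fine expansion is needed. The integrand $\sqrt{2\ka(v)(\mu-\w(v))}$ depends continuously on $\mu$ and is uniformly bounded for $\mu$ near $\mu_s$ and $v$ in a compact subinterval of $I$. By Proposition~\ref{dl_racines}, the endpoints satisfy $v_\ell\to v_\ell^s$ and $v_r\to v_r^s$. Dominated convergence on $[v_\ell,v_r]$, with the integrand extended by zero outside, then gives convergence to
\begin{equation*}
2\int_{v_\ell^s}^{v_r^s}\sqrt{2\ka(v)(\mu_s-\w(v))}\dd v .
\end{equation*}
Splitting this integral at $v_s$ and using the formulas for $\mo_\ell$ and $\mo_r$ just derived yields $\Theta\to\mo_\ell+\mo_r$.

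For the period, I would split $X$ into a near-saddle part and two outer parts. Fix a small $\eta>0$ and write $X$ as the sum of the integrals over $[v_\ell,v_s-\eta]$, $[v_s-\eta,v_s+\eta]$ and $[v_s+\eta,v_r]$.

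\textbf{Outer pieces.} Near $v_r$ the integrand behaves like $(v_r-v)^{-1/2}$ times a smooth positive factor, because $\p_v\w(v_r)>0$ uniformly for $\mu$ near $\mu_s$. After the change of variable $v=v_r-t^2$, the integrand is uniformly bounded, so the outer pieces are $\go(1)$; the same holds near $v_\ell$. Away from the endpoints and from $v_s$, $\la$ is bounded below by a positive constant, so those parts are $\go(1)$ too.

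\textbf{Middle piece.} Using Notation~\ref{note_lag}, write
\begin{equation*}
\la=\ve^2+(v-v_s)^2\abs{\ti R(v,v_s,v_s)},
\end{equation*}
where $\abs{\ti R}$ is close to $-R_s=1/a_s^2>0$ near $v_s$. Substituting $v=v_s+\ve y$, the middle integral becomes
\begin{equation*}
\int_{-\eta/\ve}^{\eta/\ve}\frac{\sqrt{2\ka(v_s+\ve y)}}{\sqrt{1+y^2\abs{\ti R}}}\dd y .
\end{equation*}
Replacing $\ka$ by $\ka(v_s)$ and $\abs{\ti R}$ by $1/a_s^2$ costs $\go(1)$: the differences are $\go(\ve\abs{y})$ relative to the main term, which integrates to $\go(\ve\cdot\eta/\ve)=\go(1)$ against the integrand of size $(1+\abs{y})^{-1}$. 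The explicit integral
\begin{equation*}
\int_{-\eta/\ve}^{\eta/\ve}\frac{\dd y}{\sqrt{1+y^2/a_s^2}}=2a_s\,\mathrm{arsinh}\!\left(\frac{\eta}{a_s\ve}\right)=-2a_s\ln\ve+\go(1)
\end{equation*}
then gives $X=-2a_s\sqrt{2\ka(v_s)}\ln\ve+\go(1)$.

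Alternatively, the whole computation is an instance of Theorem~\ref{dl_int_phi} from the appendix with $\phi=\sqrt{2\ka}$, whose leading term is exactly $-2a_s\phi(v_s)\ln\ve$, and I would simply cite it.

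The main obstacle is the uniform control in the middle piece: the error terms must not accumulate a $\ln\ve$ contribution with the wrong coefficient. This works only because the corrections are of relative order $\ve\abs{y}$, which is precisely what makes their total contribution $\go(1)$ rather than logarithmic. I would verify this first with a Taylor bound on $\sqrt{\ka}$ and $\ti R$ on $[v_s-\eta,v_s+\eta]$.
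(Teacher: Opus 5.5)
Your proposal is correct and essentially matches the paper. For $\Theta$ the paper also uses dominated convergence, splitting at $v_s$ and rescaling each half to $[0,1]$ where you extend by zero instead; for $X$ it simply invokes Theorem~\ref{dl_int_phi} with $\phi=\sqrt{2\ka}$, which is your stated alternative. Your direct period estimate is the leading-order part of that theorem's appendix proof: the same split into rootless, simple-root and double-root pieces, the same rescaling $w=\ve\sigma$ and explicit $\int(1+x^2)^{-1/2}\dd x$, and your relative-error bound of order $\ve\abs{y}$ correctly gives the $\go(1)$ remainder needed here.
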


\begin{proof}
    We begin with the period, we already know that
    \begin{equation*}
        X = \int_{v_\ell}^{v_r} \sqrt{\frac{2\ka(v)}{\mu-\w(v;\lm,c)}} \dd v = I(\sqrt{2\ka}),
    \end{equation*}
    so with Theorem~\ref{dl_int_phi} we have
    \begin{equation*}
        X = -2\sqrt\frac{4\ka(v_s)}{-\p_v^2\w(v_s;\lm,c)}\ln\ve + \go(1).
    \end{equation*}
    The result follows from $a_s = \sqrt{-2/\p_v^2\w(v_s;\lm,c)}$.\newline
    For the action, it is sufficient to write
    \begin{align*}
        \Theta  &= 2 \int_{v_\ell}^{v_r} \sqrt{2\ka(v)\left(\mu -\w(v;\lm,c)\right)}\dd v\\
                &= 2 \int_{v_s}^{v_r} \sqrt{2\ka(v)\left(\mu -\w(v;\lm,c)\right)}\dd v + 2 \int_{v_s}^{v_r} \sqrt{2\ka(v)\left(\mu -\w(v;\lm,c)\right)}\dd v\\
                &=:\Theta_\ell + \Theta_r.
    \end{align*}
    For $\Theta_\ell$, respectively $\Theta_r$, we carry out the change of variable $\V_\ell = v_s + \sig(v_\ell - v_s)$, respectively $\V_r = v_s + \sig(v_r - v_s)$.
    We get
    \begin{align*}
        \Theta_\ell &= 2(v_s-v_\ell)\int_0^1 \sqrt{\ka(\V_\ell)\left(\mu - \w(\V_\ell;\lm,c)\right)} \dd \sig, \\
        \Theta_r &= 2(v_r-v_s)\int_0^1 \sqrt{\ka(\V_r)\left(\mu - \w(\V_r;\lm,c)\right)} \dd \sig.
    \end{align*}
    By the dominated convergence theorem, we have
    \begin{align*}
        \Theta_\ell &\tend{\ve}{0} 2(v_s-v_\ell^s)\int_0^1 \sqrt{\ka(\V_\ell^s)\left(\mu - \w(\V_\ell^s;\lm,c)\right)} \dd \sig, \\
        \Theta_r &\tend{\ve}{0} 2(v_r^s-v_s)\int_0^1 \sqrt{\ka(\V_r^s)\left(\mu - \w(\V_r^s;\lm,c)\right)} \dd \sig,
    \end{align*}
    where $\V_\ell^s = v_s + \sig(v_\ell^s - v_s)$ and $\V_r^s = v_r^s + \sig(v_r^s - v_s)$.\\
    By undoing the change of variable for the limit, $v = \V_\ell^{s}$ and $v = \V_r^{s}$, we obtain that
    \begin{equation*}
        \Theta_\ell \tend{\ve}{0} \mo_\ell \quad \text{and} \quad \Theta_r \tend{\ve}{0} \mo_r.
    \end{equation*}
\end{proof}

\subsection{Asymptotic behavior of the Hessian of the actions}

As in \cite{benzoni-gavageStabilityPeriodicWaves2020}, to compute the asymptotic expansion of the Hessian, we begin by studying the asymptotic behavior of the gradient of the action, which is repeated here
\begin{equation*}
    \n\Theta = \int_{v_\ell}^{v_r} \n \la(v) \sqrt{\frac{2\ka(v)}{\mu - \w(v;\lm,c)}} \dd v.
\end{equation*}
So, using the notation of Appendix~\ref{dl_intégrale}, $\n\Theta = I(\n\la\sqrt{2\ka})$, we may use Theorem~\ref{dl_int_phi} to derive the asymptotic expansion of $\n\Theta$. The result is the following.

\begin{proposition}\label{dl_grad_theta_2_bosse}
    With Notations~\ref{note_lag},~\ref{note_dl} and Assumption~\ref{hypothèse_potentiel}, for $\ve = \sqrt{\mu-\mu_s} \rightarrow 0$, we have the following asymptotic expansion
    \begin{equation*}
    \n \Theta = A_0\ln(\ve) + B_0 + B_1\ve + A_2\ve^2 \ln(\ve) + B_2 \ve^2 + \go\left(\ve^3\ln(\ve)\right),
    \end{equation*}
    where each coefficient is given by
    \begin{enumerate}[label=\roman*)]
        \item $A_0 =-2 \y_{s} V_s$,
        \item $B_0 = \int_{0}^{1} \frac{f(\V_\ell^s,v_s,v_s)-\y_sV_s}{\sigma} \dd \sig + \int_{0}^{1} \frac{f(\V_r^s,v_s,v_s)-\y_sV_s}{\sigma} \dd \sig
               + \ln\left(4(v_r^s -v_s)(v_s-v_\ell^s)(-R_s)\right)\y_sV_s,$
        \item $B_1 = -2 a_s^2\y_{s}V_s,$
        \item $A_2 = \left(\frac{3\ka_{v,s}R_{v,s}}{4\sqrt{2\ka_s}(-R_s)^{5/2}}+\frac{3\sqrt{2\ka_s}R_{vv,s}}{4(-R_s)^{5/2}} + \frac{15\sqrt{2\ka_s}R_{v,s}^2}{8(-R_s)^{7/2}}\right)V_s 
                + \frac{3}{4}\frac{\sqrt{2\ka_s}R_{vv,s}}{4(-R_s)^{5/2}}W_s,$
        \item $B_2 = B_2(\n\la\sqrt{2\ka}),$
    \end{enumerate}
    with $f(v,w,z) = \n\la(v)\sqrt{\frac{2\ka(v)}{\abs{R(v,w,z)}}}$, $\V_\ell^s = v_s + \sig(v_\ell^s-v_s)$ and $\V_r^s = v_s+\sig(v_r^s-v_s)$.
\end{proposition}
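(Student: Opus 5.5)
The plan is a direct application of Theorem~\ref{dl_int_phi} from Appendix~\ref{dl_intégrale} with $\phi = \n\la\sqrt{2\ka}$. Since $\n\Theta = I(\n\la\sqrt{2\ka})$, each component of $\n\Theta$ is an integral of the form $I(\phi)$ with $\phi$ smooth. Theorem~\ref{dl_int_phi} presumably gives a general expansion
\begin{equation*}
I(\phi) = A_0(\phi)\ln\ve + B_0(\phi) + B_1(\phi)\ve + A_2(\phi)\ve^2\ln\ve + B_2(\phi)\ve^2 + \go(\ve^3\ln\ve),
\end{equation*}
with coefficients expressed through $\phi_s$, $\phi_{v,s}$, $\phi_{vv,s}$, $R_s$, $R_{v,s}$, $R_{vv,s}$, $v_r^s-v_s$, $v_s-v_\ell^s$, $p_r^s$ and $p_\ell^s$. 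So the work is to substitute $\phi$ and simplify each coefficient using Notations~\ref{note_lag} and~\ref{note_dl}.

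\textbf{Steps, in order.}

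1. Compute the Taylor data of $\phi$ at $v_s$:
\begin{align*}
\phi_s &= \sqrt{2\ka_s}\,V_s,\\
\phi_{v,s} &= \sqrt{2\ka_s}\,W_s + \frac{\ka_{v,s}}{\sqrt{2\ka_s}}\,V_s,\\
\phi_{vv,s} &= \sqrt{2\ka_s}\,Z_s + (\text{terms in } W_s, V_s).
\end{align*}
2. \emph{Leading term.} The coefficient $A_0$ in Theorem~\ref{dl_int_phi} should be $-2\phi_s/\sqrt{-R_s}$, which gives $A_0 = -2\y_sV_s$ because $\y_s = \sqrt{2\ka_s/\abs{R_s}}$. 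This is consistent with the period computed in Proposition~\ref{dl_action_période}.
3. \emph{Constant term.} $B_0$ comes from splitting $\ff{v_\ell}{v_r}$ at $v_s$, rescaling each half by $\sig$, and subtracting the singular part $\y_sV_s/\sig$. This yields the two integrals of $(f(\V^s_{\ell/r},v_s,v_s)-\y_sV_s)/\sig$. The logarithmic constant $\ln\bigl(4(v_r^s-v_s)(v_s-v_\ell^s)(-R_s)\bigr)$ comes from the explicit primitive $\operatorname{arcsinh}$ of $1/\sqrt{\ve^2+\abs{R_s}w^2}$ on each side. I will just read this off from Theorem~\ref{dl_int_phi} with $\phi=\n\la\sqrt{2\ka}$.
4. \emph{Odd term $B_1$.} This comes from the correction of the endpoints, $v_{r,\ell}=v^s_{r,\ell}+p^s_{r,\ell}\ve^2+\dots$ (Proposition~\ref{dl_racines}), combined with the expansion of $\ve/\sqrt{\ve^2+\abs{R_s}(v-v_s)^2}$. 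In Theorem~\ref{dl_int_phi} it should be proportional to $\phi_s$; rewriting the constant with $a_s$ gives $-2a_s^2\y_sV_s$.
5. \emph{$A_2$.} Insert $\phi_s$, $\phi_{v,s}$, $\phi_{vv,s}$ into the general formula for $A_2(\phi)$, which involves $\phi_{vv,s}$, $\phi_{v,s}R_{v,s}$ and $\phi_s(R_{vv,s},R_{v,s}^2)$ with powers $(-R_s)^{-5/2}$ and $(-R_s)^{-7/2}$. Then group the terms along $V_s$, $W_s$ and $Z_s$. The $B_2$ coefficient is simply left in the abstract form $B_2(\n\la\sqrt{2\ka})$.

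\textbf{Main obstacle.} I expect the main difficulty to be the bookkeeping in the $A_2$ coefficient. The derivatives of $\ka$ and of $\n\la$ mix, and the $Z_s$ contribution from $\phi_{vv,s}$ has to be tracked or shown to be absorbed. I will first check the scalar case $\phi=\sqrt{2\ka}$ (the period component) against the general formula, and only then do the vector case component-wise. All remainders are uniform in $(\lm,c)$ near $(\lm_s,c_s)$ by smoothness, so the $\go(\ve^3\ln\ve)$ bound transfers directly from Theorem~\ref{dl_int_phi}.
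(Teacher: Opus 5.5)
Your plan is the paper's proof. There, the proposition is obtained by inserting $\phi=\n\la\sqrt{2\ka}$ into Theorem~\ref{dl_int_phi} and rewriting with $a_s=(-R_s)^{-1/2}$ and $\y_s=a_s\sqrt{2\ka_s}$. Only $B_0$ is worked out in detail, by splitting at $v_s$, rescaling by $\sig$ and collecting logarithms, as in your step~3. If you quote that theorem as printed, two things follow. Its $A_2(\phi)$ involves only $\phi_s$ and $\phi_{v,s}$, so no $Z_s$ term appears. Its $B_1(\phi)=-2\phi_s/(-R_s)$ gives $-2a_s\y_sV_s$, which is what the paper's proof and $D_1$ use, not the $-2a_s^2\y_sV_s$ you claim to obtain by rewriting.

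Be aware, though, that your suspicion in step~5 is justified, and the black box you share with the paper is not correct. Substitute $y=w\sqrt{-R(v_s+w)}$. The double-root piece becomes $\int\psi(y)(\ve^2+y^2)^{-1/2}\dd y$ over a fixed interval, with $\psi$ smooth. All other pieces are smooth in $\ve^2$, including the endpoint shifts $p^s_{\ell,r}\ve^2$, so your heuristic for $B_1$ cannot produce an odd power. Hence the expansion contains only $\ve^{2k}$ and $\ve^{2k}\ln\ve$, and the true $B_1$ is $0$. The true $\ve^2\ln\ve$ coefficient is $\frac12\psi''(0)=\frac{\phi_{vv,s}}{2(-R_s)^{3/2}}+\frac{3\phi_{v,s}R_{v,s}}{2(-R_s)^{5/2}}+\frac{3\phi_sR_{vv,s}}{4(-R_s)^{5/2}}+\frac{15\phi_sR_{v,s}^2}{8(-R_s)^{7/2}}$. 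For $\phi=\n\la\sqrt{2\ka}$ this contains $\frac{\sqrt{2\ka_s}}{2(-R_s)^{3/2}}Z_s$, which cannot be absorbed.

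A concrete check: take $\w=-\frac12v^2+\frac14v^4$ and $\ka\equiv1$. The period is $4(1+4\ve^2)^{-1/4}K(k)$, with $k'^2$ smooth in $\ve^2$ and $k'^2=\ve^2+\go(\ve^4)$, so it has no $\ve$ term. Theorem~\ref{dl_int_phi} instead predicts a term $-4\sqrt2\,\ve$. The slip is in the proof of Proposition~\ref{dl_int_positive}, where $I^{0,1}_{\eta,+}$ is expanded with $\phi$ in place of $\phi_v$.

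This is harmless downstream. $B_1$ and $A_2$ enter $\n^2\Theta$ only through $\cdot\otimes V_s$, and $V_s\cdot\sm V_s=V_s\cdot\sm W_s=V_s\cdot\sm T_s=0$. So they only touch the first column of $\Ps^{\T}\n^2\Theta\Ps$, and they drop out of $S_s\cdot D_2S_s$. In short, quoting the appendix reproduces the printed statement up to typos, but not the correct expansion.
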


\begin{proof}
As mentioned above, we simply use Theorem~\ref{dl_int_phi} with $\phi = \n\la\sqrt{2\ka}$. We only need to simplify each coefficient, for this we repeat that 
$$a_s = \sqrt{\frac{-2}{\p_{v}^2 \w(v_s;c,\lm)}} = \frac{1}{\sqrt{-R_s}},$$
which gives
$$a_s\sqrt{2\ka_s} = \sqrt{\frac{-2\ka_s}{R_s}} = \y_{s}.$$
So for the first coefficient we have
\begin{equation*}
    A_0 = -2\n \la_s \sqrt{\frac{2\ka_s}{-R_s}} = -2\y_s V_s,
\end{equation*}
and also
\begin{equation*}
    B_1 = - 2\n \la_s \frac{\sqrt{2\ka_s}}{-R_s} = - 2 a_s^2\sqrt{2\ka_s}V_s = - 2 a_s\y_sV_s.
\end{equation*}
Using that $\n\la_{v,s} = W_s$ we deduce the coefficient $A_2$. Concerning $B_2$, it can be made explicit using Theorem~\ref{dl_int_phi}, but since we will not use its exact form, we do not detail its calculation here. We still need to calculate $B_0$ which is given by
\begin{equation*}
\begin{aligned}[t]B_0 =&\int_{v_\ell^s}^{v_s-\eta} \n\la(v) \frac{\sqrt{2\ka(v)}}{\sqrt{\mu_s - \w(v)}} \dd v
        + \int_{v_s+\eta}^{v_r^s} \n\la(v) \frac{\sqrt{2\ka(v)}}{\sqrt{\mu_s - \w(v)}} \dd v \\
        &+ \int_{v_s}^{v_s+\eta} \frac{1}{v-v_s}\left(\n\la(v)\frac{\sqrt{2\ka(v)}}{\sqrt{-R(v)}}-\frac{\sqrt{2\ka_s}}{\sqrt{-R_s}}V_s\right) \dd v
        + \int_{v_s-\eta}^{v_s} \frac{-1}{v-v_s}\left(\n\la(v)\frac{\sqrt{2\ka(v)}}{\sqrt{-R(v)}}-\frac{\sqrt{2\ka_s}}{\sqrt{-R_s}}V_s\right)\dd v\\
        &+ 2\frac{\sqrt{2\ka_s}}{\sqrt{-R_s}}\ln(2\eta\sqrt{-R_s})V_s.
    \end{aligned}
\end{equation*}
We use that $\mu_s - \w(v;c,\lm) = -(v-v_s)^2R(v,v_s,v_s;\lm,c)$ and that $f(v,w,z) = \n\la(v)\sqrt{\frac{2\ka(v)}{\abs{R(v,w,z)}}}$,
\begin{equation*}
\begin{aligned}[t]B_0 =&\int_{v_\ell^s}^{v_s-\eta} \left(\frac{f(v,v_s,v_s)}{-(v-v_s)}-\frac{\y_s}{-(v-v_s)}V_s \right)\dd v
        + \int_{v_s+\eta}^{v_r^s} \left(\frac{f(v,v_s,v_s)}{(v-v_s)}-\frac{\y_s}{(v-v_s)}V_s \right)\dd v \\
        &+ \int_{v_s-\eta}^{v_s} \left(\frac{f(v,v_s,v_s)}{-(v-v_s)}-\frac{\y_s}{-(v-v_s)}V_s\right)\dd v
        + \int_{v_s}^{v_s+\eta} \left(\frac{f(v,v_s,v_s)}{(v-v_s)}-\frac{\y_s}{(v-v_s)}V_s \right) \dd v\\
        & + \int_{v_\ell^s}^{v_s-\eta} \frac{\y_s}{-(v-v_s)}V_s \dd v + \int_{v_s+\eta}^{v_r^s} \frac{\y_s}{(v-v_s)}V_s \dd v
        + 2\y_s\ln(2\eta\sqrt{-R_s})V_s.
    \end{aligned}
\end{equation*}
For the integral on the left, we perform the change of variable $\V_\ell^s = v_s + \sig(v_\ell^s-v_s)$ and for the integral on the right $\V_r^s = v_s+\sig(v_r^s-v_s)$. This gives
\begin{equation*}
\begin{aligned}[t]B_0 =&\int_{0}^{1} \frac{f(\V_\ell^s,v_s,v_s)-\y_sV_s}{\sig}\dd \sig
        + \int_{0}^{1} \frac{f(\V_r^s,v_s,v_s)-\y_sV_s}{\sig}\dd \sig \\
        &+\ln\left(\frac{v_r^s-v_s}{\eta}\right)\y_sV_s - \ln\left(\frac{\eta}{v_s-v_\ell^s}\right)\y_sV_s + 2\ln(2\eta\sqrt{-R_s})\y_sV_s,
    \end{aligned}
\end{equation*}
so we have that
\begin{equation*}
   B_0 = \int_{0}^{1} \frac{f(\V_\ell^s,v_s,v_s)-\y_sV_s}{\sigma} \dd \sig + \int_{0}^{1} \frac{f(\V_r^s,v_s,v_s)-\y_sV_s}{\sigma} \dd \sig
               + \ln\left(4(v_r^s -v_s)(v_s-v_\ell^s)(-R_s)\right)\y_sV_s.
\end{equation*}
\end{proof}

From this asymptotic expansion, we deduce the asymptotic behavior of the Hessian of the action. This is the content of the following theorem.

\begin{theorem}\label{dl_hess_theta_2_bosse}
With Notation~\ref{note_lag},~\ref{note_dl} and Assumption~\ref{hypothèse_potentiel}, for $\ve = \sqrt{\mu-\mu_s} \rightarrow 0$, we have the following asymptotic expansion
\begin{equation*}
    \n^2 \Theta = D_0 \ve^{-2} + D_1 \ve^{-1} + C_2 \ln(\ve) + D_2 + \go(\ve\ln(\ve)),
\end{equation*}
where
\begin{enumerate}[label=\roman*)]
    \item $D_0 = -\y_s V_s \otimes V_s,$
    \item $D_1 = -a_s \y_s V_s \otimes V_s,$
    \item $C_2 =\begin{aligned}[t]& \frac{a_s^2}{2} \y_s V_s \otimes Z_s + (2b_s\y_s + a_s^2\y_{v,s})V_s \otimes W_s + 2\y_s T_s\otimes T_s 
                + a_s^2 \y_s W_s \otimes W_s\\ 
                &+ \left(\frac{3\ka_{v,s}R_{v,s}}{4\sqrt{2\ka_s}(-R_s)^{5/2}}+\frac{3\sqrt{2\ka_s}R_{vv,s}}{4(-R_s)^{5/2}} 
                + \frac{15\sqrt{2\ka_s}R_{v,s}^2}{8(-R_s)^{7/2}}\right)V_s \otimes V_s
                + \frac{3}{4}\frac{\sqrt{2\ka_s}R_{vv,s}}{(-R_s)^{5/2}}W_s\otimes V_s,\end{aligned}$
\end{enumerate}
and
\begin{equation*}
S_s\cdot D_2S_s = \dd_c^2 \mo_\ell + \dd_c^2 \mo_r,
\end{equation*}
with $S_s = (-q_s, (B^{-1}U_s)^{\T}, -1)^{\T}$.
\end{theorem}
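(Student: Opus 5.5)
The plan is to differentiate the expansion of Proposition~\ref{dl_grad_theta_2_bosse} with respect to $\mug=(\mu,\lm,c)$, following \cite{benzoni-gavageStabilityPeriodicWaves2020}. The coefficients $A_0,B_0,B_1,A_2,B_2$ are smooth functions of $(\lm,c)$ only, because they are evaluated at $\mu_s(\lm,c)$. The dependence on $\mu$ enters only through $\ve=\sqrt{\mu-\mu_s(\lm,c)}$.

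\textbf{Step 1: the chain rule.} We have
\begin{equation*}
\n\ve = \frac{1}{2\ve}\,\n(\mu-\mu_s) = \frac{1}{2\ve}\,(1,-\n_{(\lm,c)}\mu_s)^{\T}.
\end{equation*}
By the envelope identity $\n_{(\lm,c)}\mu_s = -\n_{(\lm,c)}\la_s$, evaluated at the double root $v_s$ where $\p_v\la$ vanishes, this equals $\frac{1}{2\ve}V_s$. Since $A_0=-2\y_sV_s$, the chain rule then gives:
\begin{itemize}
\item $\n(A_0\ln\ve)=-\y_s\ve^{-2}V_s\otimes V_s+\ln\ve\,\n A_0$, which accounts for $D_0$;
\item $\n(B_1\ve)=\frac{1}{2\ve}B_1\otimes V_s=-a_s\y_s\ve^{-1}V_s\otimes V_s$, which accounts for $D_1$;
\item $\n(A_2\ve^2\ln\ve)=A_2\otimes V_s\ln\ve+\go(1)$, and $\n(B_2\ve^2)=\go(1)$.
\end{itemize}
The remainder $\go(\ve^3\ln\ve)$ has to be differentiable with derivative $\go(\ve\ln\ve)$. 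I would justify this by noting that Theorem~\ref{dl_int_phi} actually provides expansions smooth in parameters: each coefficient is an explicit integral, and the remainder can be represented as a smooth function of $(\ve,\ve^2\ln\ve,\lm,c)$. Alternatively, one can repeat the derivation of Theorem~\ref{dl_int_phi} directly for $I(\n\la\,\n\la^{\T}\cdots)$ by differentiating under the integral sign after the change of variables.

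\textbf{Step 2: the $\ln\ve$ coefficient.} $C_2=\n A_0+A_2\otimes V_s$. Computing $\n A_0=-2\n(\y_sV_s)$ requires derivatives of $v_s(\lm,c)$, obtained from $\p_v\w(v_s)=0$ by the implicit function theorem. This gives $\n v_s=-a_s^2W_s$, up to the sign convention $-\p_v^2\w=2/a_s^2$, since $\p_v\n\la=W_s$. Differentiating $V_s=\n\la(v_s)$ yields $\n\la_{v,s}\otimes\n v_s+\n^2\la_s=W_s\otimes W_s(\cdot)-T_s\otimes T_s$. This produces the $a_s^2\y_sW_s\otimes W_s$ and $2\y_sT_s\otimes T_s$ terms. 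Differentiating $\y_s=\y(v_s,v_s,v_s)$ produces the $\y_{v,s}$ and $b_s$ terms, through $\p_v^3\w$ in $\n R_s$, and the $Z_s$ term. Collecting these gives the stated $C_2$. This is routine bookkeeping.

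\textbf{Step 3: identifying $D_2$.} This is the main obstacle. $D_2$ collects $\n B_0$ together with the $\go(1)$ contributions from Step 1. Rather than computing it fully, I would only evaluate $S_s\cdot D_2S_s$. The key observation is that $S_s\cdot V_s=-q_s+B^{-1}U_s\cdot(v_s,g_s)^{\T}\cdot(\ldots)-\ldots$, which vanishes since $U_s\cdot B^{-1}U_s=2q_s$ and $\lm\cdot U_s$ relations hold. The idea is that $S_s$ encodes the direction $\dd_c$ at fixed endstate, that is, $\dd_c(\mu_s,\lm_s,c)$. Hence all the singular terms $D_0,D_1$ and the $V_s\otimes\cdot$ pieces of $C_2$ are annihilated, and $S_s\cdot C_2S_s$ reduces to $a_s^2\y_s(W_s\cdot S_s)^2+2\y_s(T_s\cdot S_s)^2$. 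That reduction must then be checked to vanish, or else be absorbed. I would check that $S_s$ is tangent to the soliton family with fixed $U_s$, which forces $v_s$ to be fixed, so $W_s\cdot S_s$ is proportional to $\dd_cv_s=0$, and similarly $T_s\cdot S_s=0$.

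It remains to compute $S_s\cdot D_2S_s$. I would use the chain rule in the form $\dd_c^2\Theta$ along the curve $c\mapsto(\mu_s+\ve^2,\lm_s(c),c)$ with $\ve$ fixed, and compare it with $\dd_c^2(\mo_\ell+\mo_r)$. We have $\Theta(\ve)=\mo_\ell+\mo_r+\go(\ve^2\ln\ve)$ uniformly, with parameter-smooth expansion, by Proposition~\ref{dl_action_période} refined. Moreover, the relevant second derivative of $\Theta$ along this curve equals $S_s\cdot\n^2\Theta S_s$ plus $\n\Theta\cdot\dd_c^2\mug$. The second term is handled by $\dd_c^2\mug$ being proportional to $V_s$-orthogonal pieces together with the expansion of $\n\Theta$. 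The bulk of the work is justifying the commutation of $\dd_c^2$ with $\ve\to0$ using the structure of Theorem~\ref{dl_int_phi}.
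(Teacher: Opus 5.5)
Your Steps~1--2 follow the paper's argument. You differentiate the expansion of $\n\Theta$ with the total gradient, using $\n\ve=\frac{1}{2\ve}V_s$, which gives $D_0=\frac12A_0\otimes V_s$, $D_1=\frac12B_1\otimes V_s$ and $C_2=\ntot A_0+A_2\otimes V_s$. To justify termwise differentiation, the paper notes that $\n^2\Theta$, written with $(\mu-\w)^{-3/2}$-integrals, itself expands in the same scale. There is one slip. Differentiating $\p_v\w(v_s)=0$ gives $\p_v^2\w(v_s)\,\n v_s=W_s$, i.e.\ $\n v_s=-\frac{a_s^2}{2}W_s$, not $-a_s^2W_s$. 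This factor is what yields $\ntot V_s=-T_s\otimes T_s-\frac{a_s^2}{2}W_s\otimes W_s$, hence the coefficient $a_s^2\y_s$ (not $2a_s^2\y_s$) of $W_s\otimes W_s$.

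Step~3 takes a genuinely different route. The paper computes directly: via Lemma~\ref{lemme_orthogonalité} it reduces $S_s\cdot D_2S_s$ to $S_s\cdot\ntot B_0\,S_s$. The explicit formula for $B_0$ turns this into $\int_0^1S_s\cdot(\n f+f_v\otimes\n v^s_{\ell,r})S_s\,\frac{\dd\sig}{\sig}$ on each side. Each piece is then matched with $\dd_c^2\mo_{\ell,r}$ by writing $\dd_c\mo_\ell=-\int_0^1S_s\cdot f^{s,\ell}\frac{\dd\sig}{\sig}$ and differentiating again with $\dd_c=-S_s\cdot\n$.

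You instead read $S_s\cdot\n^2\Theta\,S_s$ as a second $c$-derivative along the fixed-endstate family at fixed $\ve$, then let $\ve\to0$. This works once two points are sharpened.
\begin{itemize}
\item Since $\dd_c\lm=-B^{-1}U_s$ and $\dd_c\mu_s=Q(U_s)$ are constant, $c\mapsto(\mu_s+\ve^2,\lm,c)$ is a straight line of direction $-S_s$. So $\dd_c^2\mug=0$ and $\frac{\dd^2}{\dd c^2}\Theta=S_s\cdot\n^2\Theta\,S_s$ exactly. Your claim that $\n\Theta\cdot\dd_c^2\mug$ is harmless because $\dd_c^2\mug$ is $V_s$-orthogonal would not suffice: $B_0\cdot\dd_c^2\mug$ would survive in the limit.
\item No refined expansion of $\Theta$ is needed. $S_s$ annihilates $D_0$, $D_1$ and $C_2$; your check via $\dd_cv_s=0$, and likewise for $g(v_s)$, is correct. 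Hence $h_\ve(c):=\Theta$ along the line satisfies $h_\ve''\to S_s\cdot D_2S_s$ locally uniformly. Meanwhile $h_\ve\to\mo_\ell+\mo_r$ pointwise by Proposition~\ref{dl_action_période}. Passing to the limit in second difference quotients gives the identity.
\end{itemize}
Your route avoids differentiating the singular $\frac{\dd\sig}{\sig}$-integrals in $B_0$, and it explains why the identity holds: the soliton direction kills every singular coefficient. The price is that the Hessian remainder must be locally uniform in $(\lm,c)$. The paper's route is a computation at $\ve=0$ and gives separate formulas for $\dd_c^2\mo_\ell$ and $\dd_c^2\mo_r$.

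There is a caveat that you inherit rather than introduce, since you take $A_2$ and $B_1$ from Proposition~\ref{dl_grad_theta_2_bosse}. The stated $C_2$ cannot be right. The $\ln\ve$-coefficient of a Hessian must be symmetric, yet $C_2$ contains $V_s\otimes Z_s$ with no $Z_s\otimes V_s$ partner.
\begin{itemize}
\item The culprit is $A_2(\phi)$ in Theorem~\ref{dl_int_phi}. It misses $\frac{\phi_{vv,s}}{2(-R_s)^{3/2}}$, and the coefficient of $\frac{\phi_{v,s}R_{v,s}}{(-R_s)^{5/2}}$ should be $\frac32$, not $\frac34$. With these, $A_2\otimes V_s$ supplies exactly the transposes $\frac{a_s^2}{2}\y_sZ_s\otimes V_s$ and $(2b_s\y_s+a_s^2\y_{v,s})W_s\otimes V_s$.
\item The $\ve$-terms from $w>0$ and $w<0$ cancel. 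In the proof of Proposition~\ref{dl_int_positive}, rewriting $I^{0,1}_{\eta,+}$ in the $\sig$-variable replaces $\phi_v$ by $\phi$. So in fact $B_1=0$ and $D_1=0$.
\end{itemize}
A check: for $\w=\frac14v^4-\frac12v^2$ and $\ka\equiv1$, complete elliptic integrals give $X=-4\ln\ve+4\ln4+\go(\ve^2\ln\ve)$. For $\phi(v)=v^2$ they give $I(\phi)=4\sqrt2+2\sqrt2\,\ve^2\ln\ve+\go(\ve^2)$; this confirms both points. These corrections only affect $D_1$ and the $(\cdot)\otimes V_s$ part of $C_2$. So neither your Step~3 nor the signature count of Theorem~\ref{instabilité_double_bosse} changes.
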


We recall that we denote $\mo_r$, respectively $\mo_\ell$, the Boussinesq momentum for the right-hand soliton, respectively left-hand soliton and 
$\dd_c$ the derivative with respect to the speed $c$ of the solitary wave with the final state $U_s$ fixed.

\begin{proof}
    Using the expression for $\n \Theta$, we have 
    \begin{align*}
        \n^2 \Theta =& \int_{0}^{1} \n^2 \la(\V) \sqrt{\frac{2\ka(\V)}{\mu - \w(\V;\lm,c)}} \dd \sig + \int_{0}^{1} \n \la_v(\V) \otimes \left((1-\sig)\n v_\ell + \sig \n v_r \right) \sqrt{\frac{2\ka(\V)}{\mu - \w(\V;\lm,c)}} \dd \sig \\
                     &+ \int_{0}^{1} \n \la(\V) \otimes \begin{pmatrix} -1 \\ \n_{\lm} \w(\V;\lm,c) \\ \p_c \w(\V;\lm,c) \end{pmatrix} \frac{\sqrt{2\ka(\V)}}{2\left(\mu - \w(\V;\lm,c)\right)^{3/2}} \dd \sig \\ &-
                       \int_{0}^{1} \n \la(\V) \otimes \left((1-\sig)\n v_\ell + \sig \n v_r \right) \frac{\sqrt{2\ka(\V)}}{2\left(\mu - \w(\V;\lm,c)\right)^{3/2}} \dd \sig,
    \end{align*}
    where $\V = (1-\sig) v_\ell + \sig v_r $.

    We already know, from Appendix~\ref{dl_intégrale}, that for $\phi$ a smooth function $$I(\phi) = \int_{v_\ell}^{v_r} \frac{\phi(v)}{\sqrt{\mu - \w(v)}} \dd v$$ admits an asymptotic expansion in $\ve^k,~k\geq 0$ and $\ve^{k}\ln(\ve),~k\geq0$. By using multiple Taylor expansions as in the proof of Theorem~\ref{dl_int_phi}, we conclude that $$\int_{v_\ell}^{v_r} \frac{\phi(v)}{\left(\mu - \w(v)\right)^{3/2}} \dd v$$ admits an asymptotic expansion in $\ve^k,~k\geq -2$ and $\ve^{k}\ln(\ve),~k\geq0$. So combining this with Proposition~\ref{dl_racines}, we obtain that $\n^2 \Theta$ admits an asymptotic expansion in $\ve^k,~k\geq -2$ and $\ve^{k}\ln(\ve),~k\geq0$. To compute each coefficient of the expansion we will use the expansion of $\n \Theta$, Proposition~\ref{dl_grad_theta_2_bosse}.

    First, we need to calculate $\n \ve$, we know that
    \begin{equation*}
        \ve = \sqrt{\mu -\mu_s} = \sqrt{\mu - \w(v_s;\lm,c)} = \sqrt{\la(v_s;\mu,\lm,c)},
    \end{equation*}
    so using that $\p_v\la(v_s;\mu,\lm,c) = - \p_v\w(v_s;\lm,c) = 0$, we have
    \begin{equation*}
        \n \ve = \frac{1}{2\sqrt{\la(v_s;\mu,\lm,c)}}\n \la_s = \frac{1}{2\ve} V_s.
    \end{equation*}

    Furthermore, with Proposition~\ref{dl_grad_theta_2_bosse}, we also know that
    \begin{equation*}
        \n \Theta = A_0\ln(\ve) + B_0 + B_1\ve + A_2\ve^2 \ln(\ve) + B_2 \ve^2 + \go\left(\ve^3\ln(\ve)\right).
    \end{equation*}
    So, as in \cite{benzoni-gavageStabilityPeriodicWaves2020}, if we denote by $\n_{\text{tot}}$ the 'total gradient' with respect to $(\mu,\lm,c)$, which involves the regular gradient $\n$ but also $\n v_s,~\n v_r^s$ and $\n v_\ell^s$ by the chain rule, we get
    \begin{equation*}
        \n^2\Theta = \begin{aligned}[t]&\left(A_0\ve^{-1} + B_1 + 2A_2\ve\ln(\ve) + (A_2+2B_2)\ve + \go(\ve^2\ln(\ve))\right)\otimes(\frac{1}{2}\ve^{-1}V_s)\\
                    &+ \n_{\text{tot}}A_0\ln(\ve) + \n_{\text{tot}}B_0 + \go\left(\ve\ln(\ve)\right).\end{aligned}
    \end{equation*} 
    Therefore,
    \begin{equation*}
        \n^2 \Theta = D_0 \ve^{-2} + D_1 \ve^{-1} + C_2 \ln(\ve) + D_2 + \go(\ve\ln(\ve)),
    \end{equation*}
    where coefficients are given by
    \begin{enumerate}[label=\roman*)]
        \item $D_0 = \frac{1}{2} A_0 \otimes V_s,$
        \item $D_1 = \frac{1}{2}B_1\otimes V_s,$
        \item $C_2 = \n_{\text{tot}}A_0 + A_2 \otimes V_s,$
        \item $D_2 = \n_{\text{tot}}B_0 + \frac{1}{2}(A_2+2B_2)\otimes V_s.$
    \end{enumerate}
    We only have to identify each coefficient in the same way as in Theorem 5 of \cite{benzoni-gavageStabilityPeriodicWaves2020}. We recall that Proposition~\ref{dl_grad_theta_2_bosse} gives that $A_0 = -2\y_sV_s$ and $B_1 = -2a_s\y_sV_s$, so for the first coefficient we have
    \begin{equation*}
        D_0 = -\y_s V_s \otimes V_s \quad \text{and} \quad D_1 = - a_s \y_s V_s \otimes V_s.
    \end{equation*}
    To determine $C_2$ we need to calculate $ \n_{\text{tot}}A_0$. Since $A_0 = -2\y_sV_s$,
    \begin{equation*}
    \n_{\text{tot}} A_0 = - 2 V_s \otimes \ntot \y_s - 2\y_s\ntot V_s.
    \end{equation*} 
    But $ V_s = \n\la_s$, so by Notation~\ref{note_lag}
    \begin{equation*}
        \n_{\text{tot}}V_s = - T_s\otimes T_s  + W_s \otimes \n v_s.
    \end{equation*}
    To compute $\n v_s$ we use that $v_s$ is a critical point of the potential, $\p_v\w(v_s(c,\lm);\lm,c) = 0$. Taking the gradient of this expression, we get
    \begin{equation*}
        \p_v^2\w(v_s;\lm,c) \n v_s + \n\p_v\w(v_s;\lm,c) = 0 \quad \text{so} \quad \n v_s = - \frac{a_s^2}{2} W_s.
    \end{equation*}
    This gives 
    \begin{equation*}
        \n_{\text{tot}}V_s = - T_s\otimes T_s -\frac{a_s^2}{2}W_s \otimes W_s.
    \end{equation*}
    We now compute $\ntot \y_s$. Due to the symmetry of $\y$ in $w$ and $z$, 
    \begin{equation*}
        \ntot \y_s = \n \y_s + \left(\y_{v,s} + 2\y_{w,s} \right)\n v_s.
    \end{equation*}
    For the first term, since in a neighborhood of $(v_s,v_s,v_s)$
    \begin{equation*}
        \y(v,w,z;\lm,c) = \sqrt{\frac{2\ka(v)}{-R(v,w,z;\lm,c)}},
    \end{equation*}
    we get that
    \begin{equation*}
        \frac{\n \y_s}{\y_s} = -\frac{\n R_s}{2 R_s}.
    \end{equation*}
    But we know that $R_s = -\frac{1}{a_s^2}$ and that 
    \begin{equation*}
        \n R_s = \int_0^1 \int_0^1 t\n(\p_v^2\w)(v_s;\lm,c) \dd u \dd t = -\frac{1}{2} Z_s,
    \end{equation*}
    so 
    \begin{equation*}
        \n \y_s =  -\frac{a_s^2}{4} \y_s Z_s.
    \end{equation*}
    In the same spirit, $ \frac{ \y_{z,s}}{\y_s} = -\frac{ R_{z,s}}{2 R_s}$ and
    \begin{equation*}
        R_{z,s} = \int_0^1 \int_0^1 t^2(1-u) \p_v^3\w(v_s;\lm,c) \dd u \dd t = \frac{12b_s}{a_s^4} \int_0^1t^2\dd t \int_0^1 (1-u) \dd u = \frac{2b_s}{a_s^4}, 
    \end{equation*}
    where we recall that $b_s = \frac{1}{3} \frac{\p_v^3\w(v_s;\lm,c)}{(\p_v^2\w(v_s;\lm,c))^2}$. \emph{In fine}, we get
    \begin{equation*}
        \n_{\text{tot}}\y_s = - \frac{a_s^2}{4} \y_s Z_s - (b_s\y_s + \frac{a_s^2}{2}\y_{v,s})W_s.
    \end{equation*}
    Adding up the different terms, we obtain that
    \begin{equation*}
        \n_{\text{tot}}A_0  = \frac{a_s^2}{2} \y_s V_s \otimes Z_s + (2b_s\y_s + a_s^2\y_{v,s})V_s \otimes W_s + 2\y_s T_s\otimes T_s 
                + a_s^2 \y_s W_s \otimes W_s.
    \end{equation*}
    Since we know $A_2$, we get for $C_2$,
    \begin{equation*}
        C_2 =\begin{aligned}[t]& \frac{a_s^2}{2} \y_s V_s \otimes Z_s + (2b_s\y_s + a_s^2\y_{v,s})V_s \otimes W_s + 2\y_s T_s\otimes T_s 
                + a_s^2 \y_s W_s \otimes W_s\\ 
                &+ \left(\frac{3\ka_{v,s}R_{v,s}}{4\sqrt{2\ka_s}(-R_s)^{5/2}}+\frac{3\sqrt{2\ka_s}R_{vv,s}}{4(-R_s)^{5/2}} 
                + \frac{15\sqrt{2\ka_s}R_{v,s}^2}{8(-R_s)^{7/2}}\right)V_s \otimes V_s
                + \frac{3}{4}\frac{\sqrt{2\ka_s}R_{vv,s}}{(-R_s)^{5/2}}W_s\otimes V_s.\end{aligned}
    \end{equation*}
    We are left to compute $S_s\cdot D_2S_s$. To do so we use that $S_s$ is orthogonal to $V_s, W_s$ and $T_s$. This follows from Lemma 1 of \cite{benzoni-gavageStabilityPeriodicWaves2020} that we recall later on (Lemma~\ref{lemme_orthogonalité}). This gives
    \begin{align*}
        S_s\cdot D_2S_s 
                    =& S_s.\n_{\text{tot}}B_0 S_s \\
                    =& \int_{0}^{1} S_s\cdot\frac{\n_{\text{tot}}f(\V_\ell^s,v_s,v_s)-\n_{\text{tot}}(\y_sV_s)}{\sigma}S_s \dd \sig
                     + \int_{0}^{1} S_s\cdot\frac{\n_{\text{tot}}f(\V_r^s,v_s,v_s)-\n_{\text{tot}}(\y_sV_s)}{\sigma}S_s \dd \sig\\
                     &+ S_s\cdot\left(V_s \otimes \n_{\text{tot}}\left[\ln\left(4(v_r^s -v_s)(v_s-v_\ell^s)(-R_s)\right)\y_s\right]\right)S_s\\
                     &+\ln\left(4(v_r^s -v_s)(v_s-v_\ell^s)(-R_s)\right)\y_s~S_s\cdot \n_{\text{tot}}V_s S_s.
    \end{align*}
    But we already know that
    \begin{align*}
    &\n_{\text{tot}}V_s = - T_s\otimes T_s -\frac{a_s^2}{2}W_s \otimes W_s, \\
    &\n_{\text{tot}}\y_s = - \frac{a_s^2}{4} \y_s Z_s - (b_s\y_s + \frac{a_s^2}{2}\y_{v,s})W_s,
    \end{align*}
    and since $S_s$ is orthogonal to $V_s$, $T_s$ and $W_s$, we get that
    \begin{align*}
        S_s\cdot D_2S_s =& \int_{0}^{1} S_s\cdot\frac{\n_{\text{tot}}f(\V_\ell^s,v_s,v_s)}{\sigma}S_s \dd \sig
                     + \int_{0}^{1} S_s\cdot \frac{\n_{\text{tot}}f(\V_r^s,v_s,v_s)}{\sigma}S_s \dd \sig\\
                        =& \int_{0}^{1} S_s\cdot\left(\n f^{s,\ell}+f_v^{s,\ell}\otimes\n v_\ell^s\right)S_s \frac{\dd \sig}{\sig}
                     + \int_{0}^{1} S_s\cdot \left(\n f^{s,r}+f_v^{s,r}\otimes\n v_r^s\right)S_s \frac{\dd \sig}{\sig}.
    \end{align*}
    Here, we used that $\n v_s = -\frac{a_s^2}{2}W_s$, and we denoted, for any functions $g$, $g^{s,\ell} = g(\V_\ell^s,v_s,v_s)$, and $g^{s,r} = g(\V_r^s,v_s,v_s)$.\newline

    All that remains is to identify each integral. We show that
    \begin{equation*}
        \dd_c^2 \mo_\ell = \int_{0}^{1} S_s\cdot\left(\n f^{s,\ell}+f_v^{s,\ell}\otimes\n v_\ell^s\right)S_s \frac{\dd \sig}{\sig} \quad \text{and} \quad
        \dd_c^2 \mo_r = \int_{0}^{1} S_s\cdot \left(\n f^{s,r}+f_v^{s,r}\otimes\n v_r^s\right)S_s \frac{\dd \sig}{\sig}.
    \end{equation*}
    The calculations are similar for both integrals and are identical to the calculations in the proof of Theorem~5 of \cite{benzoni-gavageStabilityPeriodicWaves2020}. We will only give details for the Boussinesq momentum of the left-hand soliton.\newline

    We introduce $U_\ell$ the left-hand soliton with end state $U_s$ and velocity $c$. The Boussinesq momentum is given by
    \begin{equation*}
        \mo_\ell = \int_{-\infty}^{\infty} \left(\Hm[U_\ell] + cQ(U_\ell) + \lm \cdot U_\ell + \mu_s \right) \dd x,      
    \end{equation*}
    with $\lm = -\n_{U}\left(\Hm+cQ\right)(U_s,0)$ and $\mu_s = -\lm\cdot U_s - (\Hm + cQ)(U_s,0)$.\\
    Since $\dd_c$ is the derivative with respect to the speed $c$ with the end state $U_s$ fixed, we have
    \begin{align*}
        &\dd_c \lm = -\n_{U}Q(U_s) = - B^{-1}U_s,\\
        &\dd_c \mu_s = U_s\cdot B^{-1}U_s - Q(U_s) = Q(U_s).
    \end{align*}
    Furthermore, we know that $\delta\left(\Hm + cQ \right)[U_\ell] + \lm = 0$, so we obtain that
    \begin{align*}
        \dd_c \mo_\ell =& \int_{-\infty}^{\infty} \left(Q(U_\ell)+\delta\left(\Hm + cQ \right)[U_\ell]\cdot \dd_c U_\ell + \lm\cdot\dd_c U_\ell + \dd_c\lm\cdot U_\ell + \dd_c\mu_s \right)\dd x\\
                       =& \int_{-\infty}^{\infty} Q(U_\ell) + Q(U_s) - U_\ell\cdot B^{-1}U_s \dd x\\
                       =& 2 \int_{v_\ell^s}^{v_r^s} \left(Q(\mathcal{U}) + Q(U_s) - \mathcal{U}\cdot B^{-1} U_s \right)\sqrt{\frac{\ka(v)}{2(\mu_s-\w(v;\lm,c))}} \dd v,
    \end{align*}
    where $\mathcal{U}(v) = v$ if $N=1$ and $\mathcal{U}(v) = (v, g(v;\lm,c))^{\T}$ if  $N=2$.\newline 
    However, we know that, for $v\in \oo{v_\ell^s}{v_s}$,
    \begin{equation*}
        \frac{\y(v,v_s,v_s;\lm,c)}{-2(v-v_s)} = \sqrt{\frac{\ka(v)}{2(\mu_s - \w(v;\lm,c))}}.
    \end{equation*}
    So, through the change of variable $\V_\ell^s = v_s + \sig(v_\ell^s - v_s)$ we derive
    \begin{equation*}
        \dd_c \mo_\ell = \int_{0}^{1} \left(Q(\mathcal{U}^{s,\ell}) + Q(U_s) - \mathcal{U}^{s,\ell}\cdot B^{-1} U_s \right)\y^{s,\ell} \frac{\dd \sig}{\sig}.
    \end{equation*}
    In addition, we know that $f^{s,\ell} =\y^{s,\ell}\n\la^{s,\ell} = \y^{s,\ell}\left(1, (\mathcal{U}^{s,\ell})^{\T}, Q(\mathcal{U}^{s,\ell}) \right)^{\T}$ and $S_s = \left(-Q(U_s), (B^{-1}U_s)^{\T}, -1 \right)^{\T}.$
    This gives
    \begin{equation*}
        S_s\cdot f^{s,\ell} = -\left(Q(U_s) + Q(\mathcal{U}^{s,\ell})-\mathcal{U}^{s,\ell}\cdot B^{-1}U_s\right)\y^{s,\ell},
    \end{equation*}
    so
    \begin{equation*}
        \dd_c \mo_\ell = \int_{0}^{1} - S_s\cdot f^{s,\ell} \frac{\dd \sig}{\sig}.
    \end{equation*}
    Using that $\dd_c\lm = -B^{-1}U_s$ we get that $\dd_c a(c,\lm)= \p_ca + \n_{\lm} a\cdot\dd_c\lm = -\n a \cdot S_s$, hence
    \begin{equation*}
        \dd_c^2 \mo_\ell = \int_{0}^{1} S_s\cdot\left(\n f^{s,\ell}+\sigma f_v^{s,\ell}\otimes\n v_\ell^s\right)S_s \frac{\dd \sig}{\sig}.
    \end{equation*} 
    This concludes the proof.
\end{proof}

\subsection{Conclusion on the action of the periodic waves}

We use the asymptotic expansion of $\n^2\Theta$ to deduce $n(\n^2\Theta)$, for $\ve$ small enough, and conclude, with Theorem~\ref{theorem_stability_instability_periodic_wave}, at the instability of two-pulse periodic waves for sufficiently large period when $\dd_c^2\mo_\ell + \dd_c^2\mo_r > 0$. To compute the signature, we express the matrix in a good basis, as in \cite{benzoni-gavageStabilityPeriodicWaves2020}. For clarity, we recall here the Lemma 1 of \cite{benzoni-gavageStabilityPeriodicWaves2020}, which introduces the basis that we use.

\begin{lemma}\label{lemme_orthogonalité}
    We introduce the following symmetric matrix
    \begin{equation*}
        \sm = \begin{pmatrix}
            0 & 0 & -1 \\
            0 & B^{-1} & 0 \\
            -1 & 0 & 0
        \end{pmatrix}.
    \end{equation*}
    We have $\sm V_s = S_s$ and the following orthogonal relations
    \begin{equation*}
        \left\{ \begin{array}{lll}
            V_s\cdot \sm V_s = 0,~V_s\cdot \sm W_s = 0,~V_s\cdot \sm T_s = 0,~V_s\cdot \sm Z_s = - W_s\cdot\sm W_s,\\
            T_s\cdot \sm T_s = 0,~ T_s\cdot \sm Z_s = 0, \\
            E \cdot V_s = 1,~E\cdot W_s = 0,~E\cdot Z_s = 0,~E_s\cdot T_s = 0,
        \end{array} \right.
    \end{equation*}
    where $E = (1,0,0,0)^{\T}$ if $N=2$ and $E = (1,0,0)^{\T}$ if $N=1$.\newline
    Furthermore, if $N=2$, $\left(\sm V_s, \sm W_s, \sm T_s, E\right)$ is a basis of $\R^4$ and if $N=1$, 
    $\left(\sm V_s, \sm W_s, E\right)$ is a basis of $\R^3$.
\end{lemma}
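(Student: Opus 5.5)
This lemma is pure linear algebra on the explicit vectors of Notation~\ref{note_lag}, so I will check every relation by direct computation, treating $N=1$ and $N=2$ separately. The one non-obvious input is a structural identity for $q$. Since $Q(U) = \frac12 U\cdot B^{-1}U$, we have $q(v) = Q(\mathcal U(v))$ with $\mathcal U(v)=v$ if $N=1$ and $\mathcal U(v)=(v,g(v))^{\T}$ if $N=2$. Differentiating gives
\begin{equation*}
q_v = \mathcal U_v\cdot B^{-1}\mathcal U, \qquad q_{vv} = \mathcal U_{vv}\cdot B^{-1}\mathcal U + \mathcal U_v\cdot B^{-1}\mathcal U_v.
\end{equation*}
Everything else follows from the block structure of $\sm$: it pairs the first and last coordinates with a minus sign and acts by $B^{-1}$ on the middle block.

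First I compute $\sm V_s$. With $V_s=(1,\mathcal U_s,q_s)$ this is $(-q_s, B^{-1}U_s, -1)$, which is $S_s$. Next I use the general formula for $X=(x_0,X',x_3)$ and $Y=(y_0,Y',y_3)$:
\begin{equation*}
X\cdot\sm Y = -x_0y_3 - x_3y_0 + X'\cdot B^{-1}Y'.
\end{equation*}
This gives the following relations.
\begin{itemize}
\item $V_s\cdot\sm V_s = -2q_s + U_s\cdot B^{-1}U_s = 0$.
\item $V_s\cdot\sm W_s = -q_{v,s} + U_s\cdot B^{-1}\mathcal U_{v,s} = 0$, using the formula for $q_v$.
\item $V_s\cdot\sm Z_s = -q_{vv,s} + U_s\cdot B^{-1}\mathcal U_{vv,s}$. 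By the formula for $q_{vv}$ this equals $-\mathcal U_{v,s}\cdot B^{-1}\mathcal U_{v,s}$, and since the first and last entries of $W_s$ vanish, that is exactly $-W_s\cdot\sm W_s$.
\end{itemize}

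For $T_s$ (only when $N=2$, the case $N=1$ being trivial since $T_s=0$), I will use $B^{-1}=b^{-1}\left(\begin{smallmatrix}0&1\\1&0\end{smallmatrix}\right)$ and $T_s=\tau_s^{-1/2}(0,0,1,v_s/b)$.
\begin{itemize}
\item $T_s\cdot\sm T_s = \tau_s^{-1}\cdot 2\cdot 0\cdot 1/b = 0$, because the middle block of $T_s$ is $(0,1)$.
\item $V_s\cdot\sm T_s = \tau_s^{-1/2}\left(-v_s/b + (v_s,g_s)\cdot B^{-1}(0,1)\right) = \tau_s^{-1/2}(-v_s/b+v_s/b) = 0$.
\item $T_s\cdot\sm Z_s = \tau_s^{-1/2}\left(-\tfrac{v_s}{b}\cdot 0 + (0,1)\cdot B^{-1}(0,g_{vv,s})\right) = 0$, since the first component of $B^{-1}(0,g_{vv})$ is what pairs with the $1$ in the second slot, and that component is $g_{vv}/b$ paired with $0$. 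I will recheck this carefully: $B^{-1}(0,g_{vv})=(g_{vv}/b,0)$, and its dot product with $(0,1)$ is $0$.
\end{itemize}
The relations involving $E$ are immediate from the first coordinates, which are $1,0,0,0$ for $V_s,W_s,Z_s,T_s$ respectively.

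For the basis claim, $\sm$ is invertible because $\det\sm=\pm\det B^{-1}\neq0$. I will take a linear combination $\alpha\sm V_s+\beta\sm W_s+\gamma\sm T_s+\delta E=0$ and pair it successively with $V_s$, $W_s$, $T_s$.
\begin{itemize}
\item Pairing with $V_s$ and using the orthogonality relations together with $E\cdot V_s=1$ gives $\delta=0$.
\item Then, by invertibility of $\sm$, $\alpha V_s+\beta W_s+\gamma T_s=0$.
\item $V_s$ is independent of the other two because its first entry is $1$ and theirs are $0$, so $\alpha=0$.
\item $W_s=(0,1,g_v,q_v)$ and $T_s\propto(0,0,1,v_s/b)$ are independent because $W_s$ has second entry $1$ and $T_s$ has second entry $0$, so $\beta=\gamma=0$.
\end{itemize}
The case $N=1$ is identical with $T_s$ dropped.

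The only step requiring care is the sign and index bookkeeping for the $T_s$ pairings with the off-diagonal $B^{-1}$; everything else is mechanical.
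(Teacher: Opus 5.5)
Your verification is correct. The paper itself gives no proof of this lemma: it recalls it from Lemma~1 of \cite{benzoni-gavageStabilityPeriodicWaves2020}. So your coordinate check supplies a self-contained argument rather than reproducing one in the text, and all of its entries check out: the relation $\sm V_s=S_s$, the $V_s$, $W_s$, $Z_s$ pairings via $q=Q(\mathcal U)$, the three $T_s$ pairings with $B^{-1}=b^{-1}\left(\begin{smallmatrix}0&1\\1&0\end{smallmatrix}\right)$, and the basis argument through invertibility of $\sm$.

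If you want a more structural way to organize the same computation, use the identity $\n\la\cdot\sm\n\la=-2q+\mathcal U\cdot B^{-1}\mathcal U\equiv0$, which holds for all $(v,\mu,\lm,c)$. Its value and first two $v$-derivatives at $v_s$ are exactly $V_s\cdot\sm V_s=0$, $V_s\cdot\sm W_s=0$ and $V_s\cdot\sm Z_s=-W_s\cdot\sm W_s$; this is what your formulas for $q_v$ and $q_{vv}$ encode. Its gradient in $(\mu,\lm,c)$ gives $2\n^2\la\,\sm\n\la=-2\,(T\cdot\sm\n\la)\,T=0$ with $T=\tau(v)^{-1/2}(0,0,1,v/b)^{\T}$, hence $V_s\cdot\sm T_s=0$ without coordinates. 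Only $T_s\cdot\sm T_s=0$ and $T_s\cdot\sm Z_s=0$ genuinely need the explicit off-diagonal form of $B^{-1}$, as in your computation.

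Two cosmetic fixes. First, in the $T_s\cdot\sm Z_s$ item, the sentence saying that the first component of $B^{-1}(0,g_{vv,s})$ pairs with the $1$ in the second slot is backwards. It is the second component that pairs with it, and that component vanishes; your recheck states this correctly. Second, in the basis argument, note explicitly that $T_s\neq0$ (its third entry is $\tau_s^{-1/2}$) before concluding $\gamma=0$.
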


We move on to the main theorem of this section.

\begin{theorem}\label{instabilité_double_bosse}
    With Assumption~\ref{hypothèse_Hm} and~\ref{hypothèse_potentiel}, for a two-pulse periodic wave of sufficiently large period, we have the following properties.
    \begin{itemize}[label = \textbullet]
        \item Its period $X$ is monotonically decreasing with  $\mu$, that is, $\p_\mu X = \p_\mu^2\Theta < 0$.
        \item If $\dd_c^2\mo_\ell + \dd_c^2\mo_r \neq 0$ then the matrix $\n^2 \Theta$ is invertible.
        \item If in addition $\dd_c^2\mo_\ell + \dd_c^2\mo_r > 0$ then $n(\n^2\Theta) = N+1$ and the corresponding wave is spectrally unstable.
        \item If in addition $\dd_c^2\mo_\ell + \dd_c^2\mo_r < 0$ then the matrix $\n^2\Theta$ is negative-definite.
    \end{itemize}
\end{theorem}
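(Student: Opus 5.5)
We express $\n^2\Theta$ in the basis of Lemma~\ref{lemme_orthogonalité} and read off its signature from the expansion of Theorem~\ref{dl_hess_theta_2_bosse}. For $N=2$ take $P=(E, \sm W_s, \sm T_s, \sm V_s)$; for $N=1$ drop $\sm T_s$. Consider the congruent matrix $M=P^{\T}\n^2\Theta P$.

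\emph{First item.} We have $\p_\mu^2\Theta = E\cdot\n^2\Theta E$. Since $E\cdot V_s=1$, the leading term is $E\cdot D_0E\,\ve^{-2} = -\y_s\ve^{-2}<0$, so $\p_\mu X=\p_\mu^2\Theta<0$ for $\ve$ small.

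\emph{Structure of $M$.} All the rank-one terms in $D_0,D_1$ and in the $V_s\otimes V_s$, $V_s\otimes W_s$, $V_s\otimes Z_s$, $W_s\otimes V_s$ parts of $C_2$ contain a factor $V_s$ on at least one side. Since $V_s\cdot\sm V_s = V_s\cdot\sm W_s = V_s\cdot\sm T_s=0$, these terms are killed whenever they are tested against $\sm W_s,\sm T_s,\sm V_s$, and they only survive in entries involving $E$. Consequently:
\begin{itemize}
\item The $(E,E)$ entry is $-\y_s\ve^{-2}+\go(\ve^{-1})$.
\item The entries coupling $E$ with the other basis vectors are $\go(\ln\ve)$. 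They come only from the $C_2\ln\ve$ and $D_2$ terms on the side not carrying $V_s$.
\item On the block spanned by $\sm W_s,\sm T_s$, the leading term is $\ln(\ve)$ times the restriction of $a_s^2\y_s W_s\otimes W_s+2\y_s T_s\otimes T_s$. This gives $\ln\ve\,\mathrm{diag}\bigl(a_s^2\y_s(W_s\cdot\sm W_s)^2,\ 2\y_s(T_s\cdot\sm W_s)^2+\dots\bigr)$. More precisely, the block is $\ln\ve\,(G^{\T}\,\mathrm{diag}(a_s^2\y_s,2\y_s)\,G)$ with $G$ the Gram-type matrix of $(W_s,T_s)$ against $(\sm W_s,\sm T_s)$. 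Since $T_s\cdot\sm T_s=0$, $G$ is triangular.
\item The entries involving $\sm V_s=S_s$ reduce to $D_2$ plus $o(1)$: $S_s\cdot\n^2\Theta S_s = \dd_c^2\mo_\ell+\dd_c^2\mo_r+\go(\ve\ln\ve)$. The cross terms between $S_s$ and $\sm W_s,\sm T_s$ are $\go(1)$, because $C_2$ paired with $S_s$ vanishes.
\end{itemize}

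\emph{Main obstacle: nondegeneracy of $G$.} The step I expect to be hardest is showing that $G$ is invertible. This needs $W_s\cdot\sm W_s\neq0$ and $T_s\cdot\sm W_s\neq0$ (for $N=2$), to be checked directly from the explicit vectors. For instance, $T_s\cdot\sm W_s$ involves $b^{-1}$ times a combination like $q_{v,s}-v_s g_{v,s}/b$. If this fails, I would instead use that the full block is positive-definite times $\ln\ve$ by exploiting that $(\sm V_s,\sm W_s,\sm T_s,E)$ is a basis. Granting this, the $(\sm W_s,\sm T_s)$ block is $\ln\ve$ times a positive-definite matrix, hence of order $\ln\ve\to-\infty$ and negative-definite, contributing $N-1$ negative directions.

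\emph{Signature count.} Scale the basis vectors: $E\mapsto\ve E$, the $W,T$ directions by $|\ln\ve|^{-1/2}$, and leave $S_s$ unscaled. The resulting matrix converges to
\[
\mathrm{diag}\bigl(-\y_s,\ -\mathrm{(pos.\ def.)},\ \dd_c^2\mo_\ell+\dd_c^2\mo_r\bigr).
\]
The off-diagonal entries vanish in the limit, being $\go(\ve\ln\ve)$, $\go(|\ln\ve|^{-1/2})$ and so on. Hence, when $\dd_c^2\mo_\ell+\dd_c^2\mo_r\neq0$, the matrix is invertible for small $\ve$ and has $N$ negative directions coming from $E,\sm W_s,\sm T_s$. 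The direction $S_s$ is negative or positive according to the sign of $\dd_c^2\mo_\ell+\dd_c^2\mo_r$:
\begin{itemize}
\item If the sum is negative, then $n=N+2$ and $\n^2\Theta$ is negative-definite.
\item If the sum is positive, then $n=N+1$, so $n-N=1$ is odd, $\p_\mu^2\Theta\neq0$, and Theorem~\ref{theorem_stability_instability_periodic_wave} yields co-periodic spectral instability.
\end{itemize}
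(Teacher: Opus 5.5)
Your argument is essentially the paper's. It uses the same basis $(E,\sm V_s,\sm W_s,\sm T_s)$, the same block structure of the congruent matrix, and Sylvester's law of inertia. The only difference is the final step. The paper reads the signature off the leading orders of the leading principal minors. You instead rescale ($E\mapsto\ve E$, the $\sm W_s,\sm T_s$ directions by $|\ln\ve|^{-1/2}$) and use that inertia is locally constant near a nonsingular limit, which is a valid and somewhat cleaner variant. Two points need repair.

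\textbf{The non-degeneracy of $G$.} You leave this step as ``granted'', and your guess of what it involves is wrong; it is a one-line computation.
\begin{itemize}
\item For $N=2$, $\sm W_s=(-q_{v,s},\,g_{v,s}/b,\,1/b,\,0)^{\T}$ and the first two entries of $T_s$ vanish, so $T_s\cdot\sm W_s=1/(b\sqrt{\tau(v_s)})\neq0$. No combination like $q_{v,s}-v_sg_{v,s}/b$ appears.
\item For $N=1$, $\sm W_s=(-q_{v,s},1/b,0)^{\T}$ gives $W_s\cdot\sm W_s=1/b\neq0$.
\end{itemize}
The paper records these values as $b/\sqrt{\tau(v_s)}$ and $b$; only their nonvanishing matters.

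For $N=2$ you do not need $W_s\cdot\sm W_s\neq0$; it equals $2g_{v,s}/b$ and is irrelevant. Also, $G=\left(\begin{smallmatrix}W_s\cdot\sm W_s & T_s\cdot\sm W_s\\ T_s\cdot\sm W_s & 0\end{smallmatrix}\right)$ is not triangular. Its zero sits on the diagonal, so a triangular $G$ would be singular. In fact $\det G=-(T_s\cdot\sm W_s)^2$.

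Your fallback can also be made rigorous. $V_s,W_s$ (and $T_s$ if $N=2$) span an $(N+1)$-dimensional space $U$, and $V_s$ is $\sm$-orthogonal to all of $U$. Since $\sm$ is nondegenerate on $\R^{N+2}$, $V_s$ therefore spans the $\sm$-orthogonal complement of $U$. Hence the form $x\cdot\sm y$ is nondegenerate on $\mathrm{span}(W_s,T_s)$, which is exactly the invertibility of $G$.

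\textbf{The count is off by one.} The block on $\sm W_s,\sm T_s$ has dimension $N$ (it is just $\sm W_s$ when $N=1$). It therefore contributes $N$ negative directions, not $N-1$, and $E,\sm W_s,\sm T_s$ together contribute $N+1$, not $N$. Taken literally, your intermediate counts would give $n=N$ (positive sum) or $N+1$ (negative sum). With $n=N$, Theorem~\ref{theorem_stability_instability_periodic_wave} would give stability rather than instability. The final values you state, $n=N+1$ and $n=N+2$, are the correct ones, but only after this correction.
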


\begin{proof}
The proof of the theorem is similar to that of Corollary~2 in \cite{benzoni-gavageStabilityPeriodicWaves2020}. The idea is to write the matrix $\n^2 \Theta$ in the basis given in Lemma~\ref{lemme_orthogonalité} and then compute the principal minors. To conclude, we only have to use Sylvester's rule to know the negative signature of $\n^2 \Theta$. To use the basis of Lemma~\ref{lemme_orthogonalité}, we introduce the following invertible matrices,
\begin{equation*}
    \Ps = \left(E,\sm V_s,\sm T_s,\sm W_s \right)~\text{if}~N=2, \quad \text{and} \quad \Ps = \left(E,\sm V_s,\sm W_s\right)~\text{if}~N=1.
\end{equation*}
Let us start with the case $N=2$. From Theorem~\ref{dl_hess_theta_2_bosse} and orthogonality relations given by Lemma~\ref{lemme_orthogonalité}, we have
\begin{equation*}
\Ps^{\T}\n^2\Theta\Ps = \begin{pNiceArray}{c|c|c|c}
                            \begin{array}{@{}c@{}}
                            -\y_s\ve^{-2}-a_s\y_s\ve^{-1}\\
                            + \go(\ln\ve)
                            \end{array}   & \go(\ln\ve) & \go(\ln\ve) & \go(\ln\ve)\\
                            \cmidrule(l r){1-4}
                            \go(\ln\ve) &  \begin{array}{@{}c@{}} \dd_c^2\mo_\ell + \dd_c^2\mo_r\\ + \go(\ve\ln\ve) \end{array}&   \go(1)  & \go(1)\\
                            \cmidrule(l r){1-4}
                            \go(\ln\ve) & \go(1) & \begin{array}{@{}c@{}} a_s^2\y_s(T_s\cdot\sm W_s)^2\\ \times\ln\ve + \go(1) \end{array} &\begin{array}{@{}c@{}} a_s^2\y_s(T_s\cdot\sm W_s)(W_s\cdot\sm W_s)\\\times\ln\ve + \go(1)\end{array}\\
                            \cmidrule(l r){1-4}
                            \go(\ln\ve) & \go(1) & \begin{array}{@{}c@{}} a_s^2\y_s(T_s\cdot\sm W_s)(W_s\cdot\sm W_s)\\\times\ln\ve + \go(1)\end{array} & \begin{array}{@{}c@{}c@{}}2\y_s(T_s\cdot\sm W_s)^2\ln\ve\\ + a_s^2\y_s(W_s\cdot\sm W_s)^2\ln\ve\\ +\go(1)\end{array}
                        \end{pNiceArray}.
\end{equation*}
The principal minors of this matrix are
\begin{align*}
    &\Delta_{1\times1} = -\y_s\ve^{-2}-a_s\y_s\ve^{-1} + \go(\ln\ve),\\
    &\Delta_{2\times2} = -\y_s\left(\dd_c^2\mo_\ell + \dd_c^2\mo_r\right)\ve^{-2} + \go(\ve^{-1}\ln\ve), \\
    &\Delta_{3\times3} = -a_s^2\y_s^2\left(\dd_c^2\mo_\ell + \dd_c^2\mo_r\right)(T_s\cdot\sm W_s)^2\frac{\ln\ve}{\ve^2} + \go(\ve^{-2}),\\
    &\Delta_{4\times4} = \det(\Ps)^2\det(\n^2\Theta) = -2a_s^2\y_s^3\left(\dd_c^2\mo_\ell + \dd_c^2\mo_r\right)(T_s\cdot\sm W_s)^4 \frac{(\ln\ve)^2}{\ve^2} + \go(\ve^{-2}\ln\ve).
\end{align*}
The sign of $\p_\mu X$, is obtained from $\y_s > 0$, and
\begin{equation*}
\p_\mu X = \p_\mu^2 \Theta = E^{\T}\n^2\Theta E = -\y_s\ve^{-2}-a_s\y_s\ve^{-1} + \go(\ln\ve),
\end{equation*}
which imply that $\p_\mu X<0$ for $\ve$ small enough.

For other properties, since $T_s\cdot\sm W_s = b/\sqrt{\tau(v_s)} \neq 0$ and $\Ps$ is invertible, we have that, if $\dd_c^2\mo_\ell + \dd_c^2\mo_r \neq 0$ then $\n^2\Theta$ is invertible. In this case, by Sylvester's rule, for $\ve$ small enough, $n(\n^2\Theta)$ equals the number of sign changes in
\begin{equation*}
    \left(+,~-,~-\sign(\dd_c^2\mo_\ell + \dd_c^2\mo_r),~\sign(\dd_c^2\mo_\ell + \dd_c^2\mo_r),~-\sign(\dd_c^2\mo_\ell + \dd_c^2\mo_r)\right).
\end{equation*}
so if $\dd_c^2\mo_\ell + \dd_c^2\mo_r > 0$ we have $n(\n^2\Theta) = 3 = N +1$ and we can conclude to the spectral instability of the wave using Theorem~\ref{theorem_stability_instability_periodic_wave}. In the other case, we get that $n(\n^2\Theta) = 4 = N +2$, so $\n^2\Theta$ is negative-definite and we can not conclude to the stability or instability of the wave.\newline

Let us know look at the case $N=1$, here the matrix is given by
\begin{equation*}
    \Ps^{\T}\n^2\Theta\Ps = \begin{pNiceArray}{c|c|c}
                            \begin{array}{@{}c@{}}
                            -\y_s\ve^{-2}-a_s\y_s\ve^{-1}\\
                            + \go(\ln\ve)
                            \end{array}   & \go(\ln\ve)  & \go(\ln\ve)\\
                            \cmidrule(l r){1-3}
                            \go(\ln\ve) &  \begin{array}{@{}c@{}} \dd_c^2\mo_\ell + \dd_c^2\mo_r\\ + \go(\ve\ln\ve) \end{array} & \go(1)\\
                            \cmidrule(l r){1-3}
                            \go(\ln\ve) & \go(1) & \begin{array}{@{}c@{}c@{}}a_s^2\y_s(W_s\cdot\sm W_s)^2\ln\ve\\ +\go(1)\end{array}
    \end{pNiceArray}.
\end{equation*}
Principal minors are given by
\begin{align*}
    &\Delta_{1\times1} = -\y_s\ve^{-2}-a_s\y_s\ve^{-1} + \go(\ln\ve),\\
    &\Delta_{2\times2} = -\y_s\left(\dd_c^2\mo_\ell + \dd_c^2\mo_r\right)\ve^{-2} + \go(\ve^{-1}\ln\ve), \\
    &\Delta_{4\times4} = \det(\Ps)^2\det(\n^2\Theta) = -2a_s^2\y_s^2\left(\dd_c^2\mo_\ell + \dd_c^2\mo_r\right)(W_s\cdot\sm W_s)^4 \frac{\ln\ve}{\ve^2} + \go(\ve^{-2}).
\end{align*}
We still have
\begin{equation*}
\p_\mu X = \p_\mu^2 \Theta = E^{\T}\n^2\Theta E = -\y_s\ve^{-2}-a_s\y_s\ve^{-1} + \go(\ln\ve),
\end{equation*}
so $\p_\mu X<0$ for $\ve$ small enough.

For the other properties, since $W_s\cdot\sm W_s = b \neq 0$ and $\Ps$ is invertible, we have that, if $\dd_c^2\mo_\ell + \dd_c^2\mo_r \neq 0$ then $\n^2\Theta$ is invertible. In this case, Sylvester's rule gives that, for $\ve$ small enough, $n(\n^2\Theta)$ equals the number of sign changes in
\begin{equation*}
    \left(+,~-,~-\sign(\dd_c^2\mo_\ell + \dd_c^2\mo_r),~\sign(\dd_c^2\mo_\ell + \dd_c^2\mo_r)\right).
\end{equation*}
So if $\dd_c^2\mo_\ell + \dd_c^2\mo_r > 0$ we have $n(\n^2\Theta) = 2 = N +1$ and we can conclude to the spectral instability of the wave using Theorem~\ref{theorem_stability_instability_periodic_wave}. In the other case we get that $n(\n^2\Theta) = 3 = N +2$, so $\n^2\Theta$ is negative-definite and we can not conclude to the stability or instability of the wave.
\end{proof}

\begin{remarque}
    In the proof, we can compare the matrix $\Ps^{\T}\n^2\Theta\Ps$ obtained in our limit with the one obtained in the large period limit in Corollary 2 of \cite{benzoni-gavageStabilityPeriodicWaves2020}. Without the first coefficient, which corresponds to $\p_\mu X$, we see that the matrix obtained here is simply the sum of the matrices that would be obtained by the long wavelength limit for the left-hand soliton and the right-hand soliton.
\end{remarque}

\begin{remarque}\label{rmq_stabilité_modulationnelle}
    When $\dd_c^2\mo_\ell + \dd_c^2\mo_r <0$, $\n^2 \Theta$ is negative definite. From Proposition~2 of \cite{benzoni-gavageStabilityPeriodicWaves2014}, this is known to imply that the associated modulation system is hyperbolic. In turn, this rules out modulational instability as encoded by \cite[Theorem~1]{benzoni-gavageStabilityPeriodicWaves2014}.
\end{remarque}

\section{Asymptotic expansion of the Evans function}\label{section_evans_function}

To conclude when $\dd_c^2\mo_\ell + \dd_c^2\mo_r <0$, we need to study the Evans functions for all Floquet exponents of a two-pulse periodic wave and link it to the Evans functions of each homoclinic orbit. We extend the approach of \cite{yangConvergencePeriodGoes2019}. The difference is that, in \cite{yangConvergencePeriodGoes2019}, the periodic waves converges to one soliton. Here each part of the periodic waves converges to a different soliton. So first, we need to be more precise on the behavior at infinity of the soliton and on the "convergence" of two-pulse periodic waves toward the homoclinic orbits.

\subsection{Behavior of the periodic waves for a large period}

Until now, traveling waves were defined up to a translation. Here, we make choices and fix the value at zero of the traveling waves that we consider. Recalling Figure~\ref{fig:portrait_de_phase} and Assumption~\ref{hypothèse_potentiel}, we can fix the following condition for each soliton. First, for the right-hand soliton, $U_r^0$, we choose that
\begin{equation*}
    U_r^0(0) = \left( \begin{matrix} v_r^s \\ g(v_r^s) \end{matrix} \right),~\text{if}~N=2, \quad U_r^0(0) = v_r^s,~\text{if}~N=1.
\end{equation*} 
In the same spirit, for the left-hand soliton we choose 
\begin{equation*}
    U_\ell^0(0) = \left( \begin{matrix} v_\ell^s \\ g(v_\ell^s) \end{matrix} \right),~\text{if}~N=2, \quad U_\ell^0(0) = v_\ell^s,~\text{if}~N=1.
\end{equation*}
By assumption the two solitons have the same endstate
\begin{equation*}
    U_s = \left( \begin{matrix} v_s \\ g(v_s) \end{matrix} \right),~\text{if}~N=2, \quad U_s = v_s,~\text{if}~N=1.
\end{equation*}
As for periodic waves, we choose to place a bright pulse at the right of the origin and a dark one at is left. Moreover for $\ve>0$, we fix $\un{U}^\ve$ the profile of the two-pulse periodic wave parametrized by $\mug = \left(\ve^2 + \mu_s, \lm, c \right) \in \Omega $ with period $X^\ve$, such that,
\begin{equation*}
     \un{U}^\ve(0) = \left( \begin{matrix} v_s \\ g(v_s) \end{matrix} \right),~\text{if}~N=2, \quad  \un{U}^\ve(0) = v_s,~\text{if}~N=1,~\text{and}~v_x^\ve(0) > 0.
\end{equation*}
We need to define the position, on a period, where the periodic wave reaches its maximum and its minimum, as in Figure~\ref{fig:graphe_onde_periodique_Evans}. 

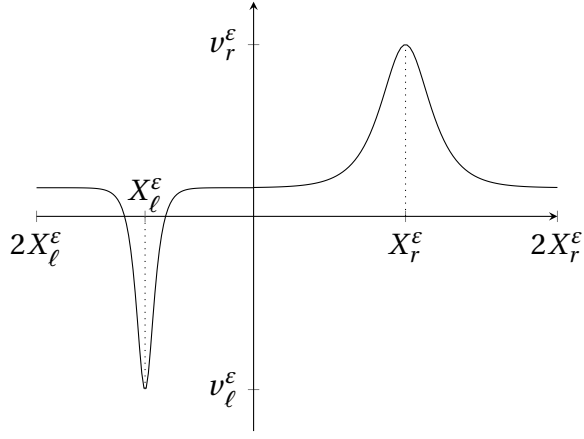
\begin{figure}[H]
\centering
\begin{tikzpicture}
\begin{axis}[
    axis x line=bottom,
    axis y line = left,
    axis lines=middle,
    xtick = {-10,7,14},
    xticklabels ={$2 X^\ve_\ell$,$X^\ve_r$,$2 X^\ve_r$},
    ytick= {-1.21,1.2},
    yticklabels = {$v^\ve_\ell$,$v_r^\ve$},
    extra x ticks={-5},
    extra x tick labels={$X^\ve_\ell$},
    extra x tick style={          
        tick label style={anchor=south} 
    },
    ymin = -1.5,
    ymax = 1.5,
    scale = 1
]
\addplot[
    domain=0:14, 
    samples=100, 
    color=black
]
{2*exp(x-7)/(1 + exp(2*(x-7)))+ 0.2}; 

\addplot[
    domain=-10:0, 
    samples=100, 
    color=black
]
{-2*sqrt(2)*exp(2*(1.4*x+7))/(1 + exp(4*(1.4*x+7)))+ 0.2};

\addplot[dotted, samples=50, smooth,domain=0:6 ] coordinates {(7,0)(7, 1.2)};
\addplot[dotted, samples=50, smooth,domain=0:6 ] coordinates {(-5,0)(-5 , -1.21)};

\end{axis}
\end{tikzpicture}
\caption{"Generic" periodic wave that we are looking at} 
\label{fig:graphe_onde_periodique_Evans} 
\end{figure}

Due to Assumption~\ref{hypothèse_potentiel}, as we can see in the phase portrait, Figure~\ref{fig:portrait_de_phase}, we know that $v_x^\ve$ vanishes only two times on each period. With the value that we have chosen for $x=0$, we denote by $X_r^\ve$ the first positive instant where $v_x^\ve$ vanishes and we denote by $X^\ve_\ell$ the larger $x < 0$ where $v_x^\ve$ vanishes. Using~\eqref{eq_onde_périodique_1} or~\eqref{eq_onde_périodique_2}, and integrating in the phase portrait, we derive
\begin{equation}\label{definition_X_r}
    X^\ve_r = \int_{v_s}^{v_r^\ve} \sqrt{\frac{\ka(v)}{2(\mu^\ve - \w(v;\lm,c))}} \dd v,
\end{equation}
and
\begin{equation}\label{definition_X_l}
    X^\ve_\ell = \int_{v_\ell^\ve}^{v_s} \sqrt{\frac{\ka(v)}{2(\mu^\ve - \w(v;\lm,c))}} \dd v.
\end{equation}
It also follows from both the symmetry of the phase portrait and~\eqref{period}, that
\begin{equation*}
    -X^\ve_\ell + X^\ve_r = \frac{X^\ve}{2}.
\end{equation*}
So we know that on $\ff{2X_\ell^\ve}{2X^\ve_r}$ we have a full period of the periodic wave $\un{U}^\ve$. As in Proposition~\ref{dl_action_période}, from Theorem~\ref{dl_int_phi}, we obtain asymptotic expansions of these quantities.

\begin{lemma}\label{propriétés_périodes} With Notation~\ref{note_dl}, the following asymptotic expansions hold when $\ve \to 0$.
\begin{align*}
    & X^\ve_\ell = \frac{1}{2}a_s\sqrt{2\ka(v_s)}\ln(\ve) + \go(1) \\
    & X^\ve_r = - \frac{1}{2}a_s\sqrt{2\ka(v_s)}\ln(\ve) + \go(1),
\end{align*}
and there exists $X^0 \in \R$ such that
\begin{equation}\label{cv_demi_période}
    X^\ve_r + X^\ve_\ell = X^0 + \go(\ve)
\end{equation}
\end{lemma}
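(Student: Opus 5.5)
The plan is to work directly with the one-sided integrals \eqref{definition_X_r} and \eqref{definition_X_l} and to rerun, on each half-interval separately, the analysis behind Theorem~\ref{dl_int_phi}. That theorem handles the full integral $I(\phi)$ over $\ff{v_\ell}{v_r}$; here we only need its restriction to $\ff{v_s}{v_r^\ve}$ or to $\ff{v_\ell^\ve}{v_s}$, with $\phi=\sqrt{\ka/2}=\frac12\sqrt{2\ka}$.

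Concerning signs: by the choice of origin, $X_\ell^\ve<0$ is the signed position of the minimum. So I will read \eqref{definition_X_l} as $X_\ell^\ve=-\int_{v_\ell^\ve}^{v_s}\sqrt{\ka/(2(\mu^\ve-\w))}\,\dd v$, which is what makes both the stated expansion and $-X^\ve_\ell+X^\ve_r=X^\ve/2$ consistent.

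\textbf{Step 1 (splitting).} Fix a small $\eta>0$ and split, for instance,
\[
\int_{v_s}^{v_r^\ve}=\int_{v_s}^{v_s+\eta}+\int_{v_s+\eta}^{v_r^\ve}.
\]
\begin{itemize}
\item On the outer piece the integrand is smooth in $\ve^2$ away from the endpoint $v_r^\ve$. The endpoint singularity is a simple square-root one, since $\p_v\w(v_r^s)\neq0$. So, after the change of variable $v=v_s+\eta+\sig(v_r^\ve-v_s-\eta)$ and Proposition~\ref{dl_racines}, this piece equals its $\ve=0$ value plus $\go(\ve^2)$.
\item On the inner piece I write $\mu^\ve-\w=\ve^2+(v-v_s)^2(-\ti R(v,v_s,v_s))$ and substitute $v-v_s=\ve a_s\,\sinh(\theta)$, or equivalently $w=(v-v_s)/\ve$.
\end{itemize}
Taylor expanding $\sqrt{\ka(v)}$ and $\ti R$ around $v_s$ then gives three contributions:
\begin{itemize}
\item a leading term $\frac12\y_s\cdot\frac12\int_0^{\eta/\ve}\frac{\dd w}{\sqrt{1+w^2/a_s^2}}\cdot a_s^{-1}(\dots)$, which produces $-\frac12 a_s\sqrt{2\ka(v_s)}\ln\ve+\go(1)$, i.e.\ exactly half of the logarithmic coefficient of $X$ found in Proposition~\ref{dl_action_période};
\item a first-order correction proportional to $\ve\int_0^{\eta/\ve}\frac{w\cdot w^2}{(1+w^2)^{3/2}}\dd w$, which, after multiplication by $\ve$, is bounded by $C\eta$ and contributes a constant plus $\go(\ve)$, with no logarithm;
\item higher-order remainders, which are $\go(\ve^2\ln\ve)$.
\end{itemize}
The same computation on $\ff{v_\ell^\ve}{v_s}$ gives the expansion of $-X_\ell^\ve$. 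This proves the first two statements of the lemma.

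\textbf{Step 2 (cancellation of the logarithms).} For \eqref{cv_demi_période}, form the difference of the two one-sided integrals. The logarithmic terms have identical coefficients $\frac12 a_s\sqrt{2\ka(v_s)}$, because they only depend on the quadratic behaviour of $\w$ at the saddle $v_s$. They therefore cancel, and we are left with
\[
X_r^\ve+X_\ell^\ve=X^0+(\text{terms of order }\ve\text{ and smaller}).
\]
Here $X^0$ is the difference of the two renormalised constants. In the spirit of $B_0$ in Proposition~\ref{dl_grad_theta_2_bosse}, it can be written as
\[
X^0=\frac12\int_0^1\frac{\y(\V_r^s,v_s,v_s)-\y_s}{\sig}\dd\sig-\frac12\int_0^1\frac{\y(\V_\ell^s,v_s,v_s)-\y_s}{\sig}\dd\sig+\frac12\y_s\ln\frac{v_r^s-v_s}{v_s-v_\ell^s}.
\]
This expression is independent of $\eta$, by the usual argument.

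\textbf{Main obstacle.} The step I expect to be delicate is showing that the error in each one-sided expansion is genuinely $\go(\ve)$ and contains no term of order $\ve\ln\ve$. The symmetric argument used for the full integral $I(\phi)$ does not apply here, because odd terms no longer cancel between the two sides. I would handle this by isolating the linear Taylor terms (in $\ka_{v,s}$ and $R_{v,s}$), which are odd in $w$. One then checks explicitly that, at order $\ve$, their contribution is
\[
\ve\int_0^{\eta/\ve}\Big(\frac{w^3}{(1+w^2)^{3/2}}-\text{their large-}w\text{ limit}\Big)\dd w+(\text{$\eta$-dependent constant}),
\]
where the subtracted integrand is integrable at infinity. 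This yields an $\go(\ve)$ error with no logarithm. The quadratic terms produce $\ve^2\ln\ve=\go(\ve)$.
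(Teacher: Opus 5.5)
Your route is the paper's: the lemma comes from the one-sided expansions of the appendix, applied with $\phi=\tfrac12\sqrt{2\ka}$. These are Propositions~\ref{dl_mid}, \ref{dl_droite} and \ref{dl_gauche} for the outer pieces, and Propositions~\ref{dl_int_positive} and~\ref{dl_int_negative} for the pieces adjacent to $v_s$. Your reading of \eqref{definition_X_l} with a minus sign is the right one. Your formula for $X^0$ is exactly the difference of the two one-sided constants, the one-sided analogue of $B_0$. Your displayed leading term has stray factors: it is simply $\tfrac12\y_s\,\mathrm{arcsinh}\big(\eta/(a_s\ve)\big)$, which does give the stated logarithm.

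One intermediate claim is false, however. In the inner piece the Taylor variable is $h=\ve w$, which sweeps all of $\ff{0}{\eta}$, not a shrinking neighbourhood of $0$. So the higher-order terms are \emph{not} $\go(\ve^2\ln\ve)$, as you assert both in Step~1 and in your last sentence; each order contributes an $\go(1)$ constant. For instance, with $\phi(v_s+h)=1+h^2$ and $R\equiv-1$, the quadratic term is $\ve^2\int_0^{\eta/\ve}\frac{w^2}{\sqrt{1+w^2}}\dd w=\frac{\eta^2}{2}+\frac12\ve^2\ln\ve+\go(\ve^2)$. Your three-term bookkeeping therefore cannot produce the $\eta$-independent $X^0$ you write down. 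That formula needs the full $\frac12\int_0^\eta\frac1h\big(\y(v_s+h,v_s,v_s)-\y_s\big)\dd h$, to which every Taylor order contributes.

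What is true, and all the lemma needs, is that this remainder equals an $\ve$-independent constant plus $\go(\ve^2\abs{\ln\ve})$. In the $h$ variable its integrand converges to an integrable limit and differs from it by at most $C\ve^2h/(\ve^2+h^2)$. This is how the paper proceeds in Proposition~\ref{dl_int_positive}. It first identifies the limit $I^{0,\mathrm{lim}}_{\eta,+}(\phi)$ by dominated convergence, and only then Taylor-expands the difference. That difference's integrand decays fast enough in $\sig$ that each further order gains a factor $\ve$, up to logarithms.

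Your ``main obstacle'' is thus already settled in the paper. Propositions~\ref{dl_int_positive} and~\ref{dl_int_negative} are one-sided and give $A_0^{\eta,\pm}\ln\ve+B_0^{\eta,\pm}+B_1^{\eta,\pm}\ve+\go(\ve^2\ln\ve)$ with $A_0^{\eta,+}=A_0^{\eta,-}$. So the absence of an $\ve\ln\ve$ term is a one-sided fact, not a symmetric cancellation in Theorem~\ref{dl_int_phi}. Subtracting the two sides directly gives the $\go(\ve)$ bound for $X_r^\ve+X_\ell^\ve$.

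A shorter repair is the substitution $y=h\sqrt{-R(v_s+h)}$. It turns the inner integral into $\int_0^Y\frac{g(y)}{\sqrt{\ve^2+y^2}}\dd y$ with $g$ smooth and $g(0)=a_s\phi_s$. Writing $g(y)=g(0)+y\,g_1(y)$ gives $g(0)\,\mathrm{arcsinh}(Y/\ve)+\int_0^Y g_1\,\dd y$, minus a term bounded by $\norm{g_1}_\infty\int_0^Y\big(1-\frac{y}{\sqrt{\ve^2+y^2}}\big)\dd y\le\norm{g_1}_\infty\ve$. This yields the logarithm-plus-constant-plus-$\go(\ve)$ structure with no Taylor bookkeeping at all.
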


The next goal is to use these quantities to translate the periodic wave and show that each pulse is well-approximated by a truncated soliton. But first, we show that each soliton converges exponentially fast, at $+\infty$ and $-\infty$, to the same constant. The rate of convergence will be useful afterward to define the Evans functions for the solitons and quantify the effect of truncation.

\begin{lemma}\label{limits_soliton}
    There exist some positive constant $C>0$ and $\nu>0$ such that for $i \in \left\{r,\ell\right\}$ we get
    \begin{equation*}
        \forall x \in \R,~\norm{U_i^0(x) - U_s} + \norm{\p_x U_i^0(x)}\leq C \e^{-\nu \abs{x}},
    \end{equation*}
    where $U_s = \left( \begin{matrix} v_s \\ g(v_s) \end{matrix} \right)$, if $N=2$ and $U_s = v_s$, if $N=1$.
\end{lemma}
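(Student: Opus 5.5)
We reduce everything to the scalar profile $v_i^0$, where $U_i^0=(v_i^0,g(v_i^0))$ if $N=2$ and $U_i^0=v_i^0$ if $N=1$. Since $g(\cdot;\lambda_2,c)$ is smooth near $v_s$, the mean value theorem gives $\norm{U_i^0-U_s}\leq C\abs{v_i^0-v_s}$ and $\norm{\p_xU_i^0}\leq C\abs{\p_xv_i^0}$ on the bounded range of the soliton. It therefore suffices to show that $\abs{v_i^0(x)-v_s}+\abs{\p_xv_i^0(x)}\leq C\e^{-\nu\abs{x}}$ for each $i\in\{r,\ell\}$. We then take the smaller of the two rates and the larger of the two constants.

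For the right-hand soliton, the profile solves $\frac12\ka(v)v_x^2=\mu_s-\w(v)=-(v-v_s)^2\ti R(v,v_s,v_s)$ (Notation~\ref{note_lag}). On the closed range $[v_s,v_r^s]$, $-\ti R$ is positive, since it equals $-R_s>0$ at $v_s$ and $\w<\mu_s$ on $(v_s,v_r^s]$ except at $v_r^s$ where $v-v_s\neq0$. By continuity and compactness, $-\ti R/\ka$ is therefore bounded between two positive constants $m^2\leq M^2$. This gives $\abs{v_x}=(v-v_s)\sqrt{2(-\ti R)/\ka}\in[\sqrt2m(v-v_s),\sqrt2M(v-v_s)]$. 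With $U_r^0(0)=v_r^s$, the profile is symmetric about $0$ and decreasing for $x>0$, with $v_x=-(v-v_s)h(v)$ where $h\geq\sqrt2 m$. Hence $\frac{\dd}{\dd x}\ln(v-v_s)\leq-\sqrt2m$, and Gronwall gives $0<v(x)-v_s\leq(v_r^s-v_s)\e^{-\sqrt2 m x}$ for $x\geq0$. The case $x\leq0$ follows by the symmetry $x\mapsto-x$, which comes from the reversibility of the ODE and the choice $v_x(0)=0$. The derivative bound then follows from $\abs{v_x}\leq\sqrt2M\abs{v-v_s}$. The left-hand (dark) soliton is handled identically on $[v_\ell^s,v_s]$, with the signs reversed.

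The only delicate point is the positivity of $-\ti R$ uniformly up to $v_s$. This is where the nondegeneracy $\p_v^2\w(v_s)<0$ of the saddle enters: it prevents the decay from being merely algebraic. We must also check that the soliton never reaches $v_s$ in finite $x$, which follows from the same linear bound $\abs{v_x}\leq C\abs{v-v_s}$ via uniqueness for the ODE. One can take $\nu=\sqrt2\min_i m_i$. The sharp rate is $\sqrt{-\p_v^2\w(v_s)/\ka(v_s)}=\sqrt{2}/(a_s\sqrt{\ka_s})$, but this is not needed.
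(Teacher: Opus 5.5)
Using the first integral to get a scalar differential inequality for $\ln(v-v_s)$ is a viable strategy. However, one step is false as written, and the trouble is at the opposite end of the interval from the one you flagged. Near $v_s$ everything is fine, since $\ti R=R_s<0$ there. The function $-\ti R(\cdot,v_s,v_s)$ is \emph{not}, however, bounded below by a positive constant on the closed interval $[v_s,v_r^s]$. From $\mu_s-\w(v)=-(v-v_s)^2\ti R(v,v_s,v_s)$, $\w(v_r^s)=\mu_s$ and $v_r^s\neq v_s$ you get $\ti R(v_r^s,v_s,v_s)=0$: the turning point is exactly where $v_x$ vanishes. So your $m$ is $0$.

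This already breaks the inequality $\frac{\dd}{\dd x}\ln(v-v_s)\leq-\sqrt2 m$ with $m>0$ at $x=0$, where the left-hand side vanishes. In fact the claimed bound $v(x)-v_s\leq(v_r^s-v_s)\e^{-\sqrt2 m x}$ is false for every $m>0$. Since $v_{xx}(0)=-\p_v\w(v_r^s)/\ka(v_r^s)<0$, the quantity $\ln(v(x)-v_s)-\ln(v_r^s-v_s)$ behaves like $-cx^2$ near $x=0$, which exceeds $-\sqrt2 m x$ for small $x>0$.

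The repair is short:
\begin{itemize}
\item Fix $x_0>0$. Since $v$ is strictly decreasing on $(0,\infty)$, for $x\geq x_0$ the value $v(x)$ lies in the compact interval $[v_s,v(x_0)]\subset[v_s,v_r^s)$, on which $-\ti R/\ka\geq m^2>0$.
\item Integrate the inequality from $x_0$ rather than from $0$.
\item Absorb the segment $[0,x_0]$ into the constant, for instance $C=(v_r^s-v_s)\e^{\sqrt2 m x_0}$.
\end{itemize}
The same correction is needed at $v_\ell^s$ for the dark soliton. The upper bound by $M$, and hence the derivative estimate, are fine, as are the reduction to $v$, the symmetry and the monotonicity.

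Once patched, your proof is genuinely different from the paper's. The paper writes the profile equation as the planar system $\mathcal U'=F(\mathcal U)$. It uses that $\dd F(\mathcal U_s)$ is hyperbolic with eigenvalues $\pm\sigma$, where $\sigma=\sqrt2/(a_s\sqrt{\ka(v_s)})$. It then runs a Duhamel/contraction argument in the exponentially weighted space $E_{M,\delta}$, which is in effect the stable manifold theorem, to get decay for $x\geq\delta(M)$ with any $\nu<\sigma$. Finally it uses boundedness on $[0,\delta(M)]$, which is exactly the compact-segment step missing from your version.

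Your route exploits the conserved energy of this one-degree-of-freedom reduction. It is more elementary and works directly on the given trajectory, so there is no need to identify a fixed point of a contraction with the soliton. It still yields any rate $\nu<\sigma$ by taking $x_0$ large, since $\sqrt{2(-\ti R)/\ka}\to\sigma$ as $v\to v_s$. The paper's route does not rely on the first integral, and it sets up the Duhamel framework that is reused with two-point boundary data in Lemma~\ref{cv_onde_soliton}.
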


This is a standard result that we prove as a warm-up for later proofs.

\begin{proof}
    Firstly, the proof is identical for the two solitons and for the limits in $+\infty$ and $-\infty$ so we will write down only the proof for the right-hand soliton and the $+\infty$ limit. In the case $N=2$, we know that the profile $U = \left(\begin{smallmatrix} u \\ v \end{smallmatrix} \right)$ of a traveling wave satisfies~\eqref{eq_onde_périodique_2}. Thus we only need to focus on the equation satisfied by $v$, which is the same as in the case $N=1$. Therefore we write~\eqref{eq_onde_périodique_1} as a first-order ODE with $\mathcal{U} = \left(\begin{smallmatrix} v \\ v_x \end{smallmatrix} \right)$,
    \begin{equation}\label{EDO_soliton_onde_périodique}
        \frac{\dd \mathcal{U}}{\dd x} = F\left(\mathcal{U}\right),
    \end{equation}
    where $F(\mathcal{U}) = \left(\begin{smallmatrix} v_x \\ \frac{-1}{\ka(v)}\left(\frac{1}{2}\p_v\ka(v)v_x^2 + \p_v\w(v;\lambda,c)\right) \end{smallmatrix} \right)$.\newline
    We linearize it at $\mathcal{U}_s := \left(\begin{smallmatrix} v_s \\ 0 \end{smallmatrix} \right)$, knowing that $\dd F(\mathcal{U}_s)$ is diagonalizable and its spectrum is $\left\{ \pm \sigma \right\}$ where $\sigma = \frac{\sqrt{2}}{a_s\sqrt{\ka(v_s)}} > 0 $. We denote by $\mathcal{P}_-$ and $\mathcal{P}_+$ the eigenprojections on the unstable and stable subspaces. For $\delta >0$, since~\eqref{EDO_soliton_onde_périodique} with an initial data admits a unique global solution, Duhamel's formula tells us that, up to a translation, for $x\in \fo{\delta}{+\infty}$,
    \begin{equation*}
        \mathcal{U}(x) = \mathcal{U}_s + \int_\delta^x \e^{-\sig(x-y)} \mathcal{P}_- G(\mathcal{U}(y)) \dd y + \int_{+\infty}^x \e^{\sig(x-y)} \mathcal{P}_+ G(\mathcal{U}(y)) \dd y,
    \end{equation*}
    where $ G(\mathcal{U}(y)) = F(\mathcal{U}(y)) - \dd F(\mathcal{U}_s)\left(\mathcal{U}(y) - \mathcal{U}_s \right)$ because $F(\mathcal{U}_s) = 0 $.\newline
    Then one shows that for $0 < \nu < \sigma$, for every $M>0$, there exists $\delta(M) \geq 0$ such that for $\delta \geq \delta(M) \geq 0$ the map
    \begin{equation*}
        \fonction{\mathcal{T}}{E_{M,\delta}}{E_{M,\delta}}{\mathcal{V}}{\mathcal{T}(\mathcal{V})}
    \end{equation*}
    where 
    \begin{equation*}
        \mathcal{T}(\mathcal{V})(x) = \mathcal{U}_s + \int_\delta^x \e^{-\sig(x-y)} \mathcal{P}_- G(\mathcal{V}(y)) \dd y + \int_{+\infty}^x \e^{\sig(x-y)} \mathcal{P}_+ G(\mathcal{V}(y)) \dd y,
    \end{equation*}
    and
    \begin{equation*}
        E_{M,\delta} = \left\{ \mathcal{V} \in \co(\fo{\delta}{+\infty});~\sup_{x \in \fo{\delta}{+\infty}} \e^{\nu x}\norm{\mathcal{V}(x) - \mathcal{U}_s} \leq M \right\},
    \end{equation*}
    is well defined and admits a unique fixed point. So we fix $M>0$, then $\delta(M) >0$ and we get that $\mathcal{U}$ verifies for $x \in \fo{\delta(M)}{+\infty}$,
    \begin{equation*}
        \norm{\mathcal{U}(x) - \mathcal{U}_s} \leq M \e^{- \nu x}.
    \end{equation*} 
    Since $\mathcal{U}$ is bounded on $\ff{0}{\delta(M)}$, there exists a constant $C>0$ such that for every $x \in \fo{0}{+\infty}$, 
    \begin{equation*}
        \norm{\mathcal{U}(x) - \mathcal{U}_s} \leq C \e^{- \nu x}.
    \end{equation*} 
    Noting that $\mathcal{U} = \left(\begin{smallmatrix} v \\ v_x \end{smallmatrix} \right)$ and $\mathcal{U}_s = \left(\begin{smallmatrix} v_s \\ 0 \end{smallmatrix} \right)$, the result follows.
\end{proof}

Now we prove the claimed convergence.

\begin{lemma}\label{cv_onde_soliton}
There are positive constants $C>0$ and $\theta >0$ such that
\begin{equation*}
    \forall y \in \ff{X^{\ve}_l}{-X^{\ve}_l}, ~\norm{U^{\ve}(y + X^{\ve}_\ell) - U^{0}_\ell(y)} + \norm{\p_x U^{\ve}(y + X^{\ve}_\ell) - \p_x U^{0}_{\ell}(y)} \leq C\e^{\theta X^\ve_\ell},
\end{equation*}
and
\begin{equation*}
    \forall y \in \ff{- X^{\ve}_r}{X^{\ve}_r}, ~\norm{U^{\ve}(y + X^{\ve}_r) - U^{0}_r(y)} + \norm{\p_x U^{\ve}(y + X^{\ve}_r) - \p_x U^{0}_{r}(y)} \leq C\e^{-\theta  X^\ve_r}.
\end{equation*}
\end{lemma}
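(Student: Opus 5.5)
It suffices to treat the right-hand pulse; the left-hand one follows by the symmetry $x\mapsto -x$, $v_r\leftrightarrow v_\ell$. As in the proof of Lemma~\ref{limits_soliton}, it is enough to work with the planar first-order system~\eqref{EDO_soliton_onde_périodique} for $\mathcal{U}=(v,v_x)^{\T}$. The case $N=2$ then follows since $u=g(v;\lambda_2,c)$ is a smooth function of $v$. We write $\mathcal{U}^\ve(y)=\mathcal{U}(\un{U}^\ve(y+X_r^\ve))$ and $\mathcal{U}^0(y)=\mathcal{U}(U_r^0(y))$. Both solve the same autonomous ODE $\dd\mathcal{U}/\dd x=F(\mathcal{U})$: the parameters $(\lm,c)$ are unchanged and only the energy level $\mu$ differs. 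At $y=0$ we have $\mathcal{U}^\ve(0)=(v_r^\ve,0)$ and $\mathcal{U}^0(0)=(v_r^s,0)$. By Proposition~\ref{dl_racines}, $\abs{v_r^\ve-v_r^s}=\go(\ve^2)$. By Lemma~\ref{propriétés_periodes}, $\ve=\go(\e^{-\sigma X_r^\ve})$ with $\sigma=\sqrt{2}/(a_s\sqrt{\ka(v_s)})$, since $X_r^\ve=-\frac{1}{\sigma}\ln\ve+\go(1)$. Hence the initial mismatch is $\go(\e^{-2\sigma X_r^\ve})$.

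A plain Gronwall estimate on $\ff{-X_r^\ve}{X_r^\ve}$ would lose a factor $\e^{LX_r^\ve}$ with $L$ the global Lipschitz constant, which is too large. I will therefore split the interval. On a fixed compact $\abs{y}\le K$, Gronwall gives $\norm{\mathcal{U}^\ve-\mathcal{U}^0}\le C_K\ve^2$. On $\ff{K}{X_r^\ve}$, both trajectories lie in a small neighbourhood of the saddle $\mathcal{U}_s$. For $\mathcal{U}^0$ this follows from Lemma~\ref{limits_soliton}. For $\mathcal{U}^\ve$ it follows from continuous dependence at $y=K$ together with the saddle structure, or directly from the conserved energy $\frac12\ka v_x^2+\w=\mu_s+\ve^2$. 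There I use the same Duhamel splitting along $\mathcal{P}_\pm$ as in Lemma~\ref{limits_soliton}, applied to the difference $D=\mathcal{U}^\ve-\mathcal{U}^0$. The stable component is propagated forward from $y=K$, where it is $\go(\ve^2)$, and contracts. The unstable component is propagated backward from the endpoint $y=X_r^\ve$. At that endpoint $\mathcal{U}^\ve(X_r^\ve)$ equals $\mathcal{U}$ evaluated at $v=v_s$ on the level set, so $\mathcal{U}^\ve(X_r^\ve)-\mathcal{U}_s=(0,-\ve\sqrt{2/\ka(v_s)})^{\T}=\go(\ve)$. Meanwhile $\mathcal{U}^0(X_r^\ve)-\mathcal{U}_s=\go(\e^{-\nu X_r^\ve})$ by Lemma~\ref{limits_soliton}. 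The unstable part of $D$ is thus $\go(\ve+\e^{-\nu X_r^\ve})$ at the endpoint and decays like $\e^{-\sigma(X_r^\ve-y)}$ backward. The nonlinear remainder $G$ is quadratic near $\mathcal{U}_s$, so it can be absorbed by a contraction argument in the same weighted space $E_{M,\delta}$ as before, with weight chosen so the bound is uniform in $y$.

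Combining the two pieces gives $\sup_{\abs{y}\le X_r^\ve}\norm{D(y)}\le C(\ve+\e^{-\nu X_r^\ve})\le C\e^{-\theta X_r^\ve}$ with $\theta=\min(\nu,\sigma)/1$, after possibly shrinking $\theta$. The negative range $\ff{-X_r^\ve}{-K}$ is identical with the roles of $\mathcal{P}_\pm$ exchanged, using $y=-X_r^\ve$, where $v=v_s$ again. This holds because $\un{U}^\ve(0)$ has $v=v_s$ and $v_x^\ve(0)>0$ by the normalisation. Controlling $\mathcal{U}$ controls both $v$ and $v_x$, and in the case $N=2$ also $u$ and $u_x$ through $g$, which gives the stated bound on $U$ and $\p_xU$.

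The main obstacle is the middle step: obtaining an error that stays exponentially small uniformly across the long saddle passage, rather than one amplified by $\e^{L X_r^\ve}$. The dichotomy argument must be arranged with mixed boundary data, stable part given at the start and unstable part at the end. An alternative that avoids the ODE estimates is to compare the two solutions as functions of $v$ via the quadrature formulas~\eqref{definition_X_r}, using $\ve^2\ll$ the distance to $v_s$ except in an $\go(\ve)$ window.
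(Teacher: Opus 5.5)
Your proposal is correct and follows essentially the paper's proof: a Gr\"onwall estimate on a fixed compact piece gives an $\go(\ve^2)$ error, then a Duhamel formula around the saddle $\mathcal{U}_s$ with mixed boundary data (stable component from the inner point, unstable component from $y=X_r^\ve$, where $v=v_s$ and the mismatch is $\go(\ve+\e^{-\nu X_r^\ve})$) is closed by a contraction argument together with $\ve\lesssim \e^{-\sigma X_r^\ve}$. Your explicit a priori localization of both trajectories near $\mathcal{U}_s$ on $[K,X_r^\ve]$ via energy conservation is a useful addition, since it justifies identifying $\mathcal{U}^\ve$ with the unique fixed point (a step the paper leaves implicit), and you also treat the negative half-interval, which the paper only declares similar.
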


\begin{proof}
    As in the previous lemma, we only have to focus on the case $N=1$. We write down the proof for the right-hand soliton, and for $y \in \ff{0}{X_r^\ve}$, proofs for the other cases being similar. Firstly, we translate $v^\ve$ so that its value in 0 is close to the value in 0 of $v^0$, for this, we introduced $\tilde{v}^\ve = v(\cdot + X_r^\ve)$, so that $\tilde{v}^\ve(0) = v_r^\ve$ which is close to $v^0(0) = v_r^s$. We set $\mathcal{U}^\ve = \left(\begin{smallmatrix} \ti{v}^\ve \\ \ti{v}_x^\ve \end{smallmatrix} \right)$ and $\mathcal{U}^0 = \left(\begin{smallmatrix} v^0 \\ v_x^0 \end{smallmatrix} \right)$. By~\eqref{EDO_soliton_onde_périodique} 
    we have a first-order ODE 
    \begin{equation*}
        \frac{\dd}{\dd x}\left(\mathcal{U}^\ve - \mathcal{U}^0 \right) = F\left(\mathcal{U}^\ve\right) - F\left(\mathcal{U}^0\right),
    \end{equation*}
    Moreover by definition $\mathcal{U}^\ve(0) = \left(\begin{smallmatrix} v_r^\ve \\ 0 \end{smallmatrix} \right)$ 
    and $\mathcal{U}^\ve(X_r^\ve) =  \left(\begin{smallmatrix} v_s \\ v_x^\ve(X_r^\ve) \end{smallmatrix} \right) $. So using Proposition~\ref{dl_racines}, we get that there exists a constant $K_1 > 0$ such that 
    \begin{equation}
        \norm{\mathcal{U}^\ve(0) - \mathcal{U}^0(0)} \leq K_1 \ve^2. 
    \end{equation}
    By using that $v^\ve$ verifies~\eqref{eq_onde_périodique_1} and the previous lemma, one gets that there exists a constant $C>0$ such that
    \begin{equation*}
        \norm{\mathcal{U}^\ve(X_r^\ve) - \mathcal{U}^0(X_r^\ve)} \leq \norm{\mathcal{U}^\ve(X_r^\ve) - \mathcal{U}_s} + \norm{\mathcal{U}_s - \mathcal{U}^0(X_r^\ve)} \leq C\ve + C \e^{-\nu X_r^\ve}.
    \end{equation*}
    With the Grönwall lemma, we may show that there exist constants $K_1,~K_2 > 0$ such that for every $\delta \in \ff{0}{X_r^\ve}$, for every $x \in  \ff{0}{\delta}$,
    \begin{equation*}
        \norm{\mathcal{U}^\ve(x) - \mathcal{U}^0(x)} \leq K_1 \e^{K_2 \delta} \ve^2.
    \end{equation*}
    Finally as in the previous lemma, we linearize~\eqref{EDO_soliton_onde_périodique} around $\mathcal{U}_s$, and apply Duhamel's formula to get, for $x \in \ff{\delta}{X_r^\ve}$, 
    \begin{align*}
        \mathcal{U}^\ve(x) - \mathcal{U}^0(x)  =& \e^{-\sig(x-\delta)}\mathcal{P}_-\left(\mathcal{U}^\ve(\delta) - \mathcal{U}^0(\delta)\right) + \e^{\sig(x-X^\ve_r)}\mathcal{P}_+\left(\mathcal{U}^\ve(X_r^\ve) - \mathcal{U}^0(X_r^\ve)\right)\\ &+ 
                                                \int_\delta^x \e^{-\sig(x-y)} \mathcal{P}_- \mathcal{G}(\mathcal{U}^\ve(y),\mathcal{U}^0(y)) \dd y + \int_{X^\ve_r}^x \e^{\sig(x-y)} \mathcal{P}_+ \mathcal{G}(\mathcal{U}^\ve(y),\mathcal{U}^0(y)) \dd y.
    \end{align*}
    where $\mathcal{G}(\mathcal{U}^\ve(y),\mathcal{U}^0(y)) = F\left(\mathcal{U}^\ve(y)\right) - F\left(\mathcal{U}^0(y)\right) - \dd F(\mathcal{U}_s)\left(\mathcal{U}^\ve(y) - \mathcal{U}^0(y) \right)$. 
    In order to apply Banach's fixed point theorem, we first fix $\theta$. Now we observe that there exist $\delta \in \fo{0}{+\infty}$ large enough and $\ve_0 > 0$ such that for every $ \ve_0 \geq \ve > 0$, $X_r^\ve > \delta$ and the map
    \begin{equation*}
        \fonction{\mathcal{T}}{E_{\delta}}{E_{\delta}}{\mathcal{V}}{\mathcal{T}(\mathcal{V})},
    \end{equation*}
    where 
    \begin{align*}
        \mathcal{T}(\mathcal{V})(x) =& \mathcal{U}^0(x) + \e^{-\sig(x-\delta)}\mathcal{P}_-\left(\mathcal{U}^\ve(\delta) - \mathcal{U}^0(\delta)\right) + \e^{\sig(x-X^\ve_r)}\mathcal{P}_+\left(\mathcal{U}^\ve(X_r^\ve) - \mathcal{U}^0(X_r^\ve)\right)\\ &+ 
                                     \int_\delta^x \e^{-\sig(x-y)} \mathcal{P}_- \mathcal{G}(\mathcal{V}(y),\mathcal{U}^0(y)) \dd y + \int_{X^\ve_r}^x \e^{\sig(x-y)} \mathcal{P}_+ \mathcal{G}(\mathcal{V}(y),\mathcal{U}^0(y)) \dd y,
    \end{align*}
    and 
    \begin{equation*}
        E_{\delta} = \left\{ \mathcal{V} \in \co(\ff{\delta}{X_r^\ve});~\sup_{x \in \fo{\delta}{X^\ve_r}} \e^{\theta X^\ve_r}\norm{\mathcal{V}(x) - \mathcal{U}^0(x)} \leq 2C\right\},
    \end{equation*}
    is well defined and admits a unique fixed point. To prove this claim we used that Lemma~\ref{limits_soliton} gives that there exists a constant $K_3 > 0$ such that for all $y\in\ff{\delta}{X^\ve_r}$,
    \begin{equation*}
        \norm{\mathcal{G}(\mathcal{V}(y),\mathcal{U}^0(y))} \leq K_3 \norm{\mathcal{V}(y) - \mathcal{U}^0(y)}^2 + K_3 \e^{- \nu y} \norm{\mathcal{V}(y) - \mathcal{U}^0(y)},
    \end{equation*}
    and
    \begin{equation*}
        \norm{\mathcal{G}(\mathcal{V}(y),\mathcal{U}^0(y)) - \mathcal{G}(\mathcal{X}(y),\mathcal{U}^0(y))} \leq 2 K_3 \norm{\mathcal{V}(y) - \mathcal{X}(y)}^2 + K_3 \left(\e^{-\theta X^\ve_r} + \e^{-\nu y} \right)\norm{\mathcal{V}(y) - \mathcal{X}(y)}.
    \end{equation*}
    As a result, we have shown that there exists $\delta \in \fo{0}{+\infty}$ large enough such that for every $\ve > 0$ sufficiently small, we have for $x \in \ff{0}{\delta}$,
    \begin{equation*}
        \norm{\mathcal{U}^\ve(x) - \mathcal{U}^0(x)} \leq K_1 \e^{K_2 \delta} \ve^2,
    \end{equation*}
    and for every $x \in \ff{\delta}{X_r^\ve}$, 
    \begin{equation*}
        \norm{\mathcal{U}^\ve(x) - \mathcal{U}^0(x)} \leq 2 C \e^{-\theta X_r^\ve}.
    \end{equation*}
    Since $X^\ve_r = - \frac{1}{2}a_s\sqrt{2\ka(v_s)}\ln(\ve) + \go(1)$, we proved the claimed bound,
    \begin{equation*}
        \forall x \in \ff{0}{X^\ve_r},~\norm{\mathcal{U}^\ve(x) - \mathcal{U}^0(x)} \leq C \e^{-\theta X^\ve_r}.
    \end{equation*}
\end{proof}

\subsection{Spectral problems and reduction to a constant coefficient system}

We want to find the spectrum of the linearized equation in the mobile frame around the periodic traveling wave $ \un{U}^\ve$. For a traveling wave $\un{U}$ with speed $c$, the linearized equation in the mobile frame is 
\begin{equation*}
    \p_t V = B\p_x\left(\A(\un{U}) V\right) 
\end{equation*}
where
\begin{equation*}
    \A(\un{U}) = \hess\left(\Hm+cQ\right)[\un{U}].
\end{equation*}
In the case where $N=1$, we have
\begin{equation*}
    \A(\un{U}) =  -\p_x(\ka(\un{v})\p_x\cdot) + f''(\un{v}) + \frac{c}{b} + \frac{1}{2}\ka''(\un{v})\un{v}_x^2 - \p_x(\ka'(\un{v})\un{v}_x)
\end{equation*}
and if $N=2$,
\begin{equation*}
    \A(\un{U}) = \begin{pNiceArray}{c|c}
      -\p_x(\ka(\un{v})\p_x\cdot) + f''(\un{v}) + \frac{1}{2}\ka''(\un{v})\un{v}_x^2 - \p_x(\ka'(\un{v})\un{v}_x) + \frac{1}{2}\tau''(\un{v})\un{u}^2 & 
      \tau'(\un{v})\un{u}  + \frac{c}{b}\\
      \hline
      \tau'(\un{v})\un{u} + \frac{c}{b} & \tau(\un{v}) 
   \end{pNiceArray}
\end{equation*}
In what follows, we are interested in the following eigenvalue problems, for $z \in \C$, 
\begin{equation}\label{spectral_problems}
    z V = \Al^\ve V, \quad z V = \Al_\ell^0 V, \quad z V = \Al_r^0  V, \quad z V = \Al_\infty^0 V,
\end{equation}
where $\Al^\ve = B\p_x \A( \un{U}^\ve)$, $\Al_\ell^0 := B\p_x \A(U_\ell^0)$, $\Al_r^0 = B\p_x \A(U_r^0)$ and $\Al_\infty^0 = B\p_x \A(U_s)$. They are differential operators so, for each one, if $z \in \C$ is fixed, the spectral problem~\eqref{spectral_problems}, can be rewritten as a first-order ODE. For the right-hand soliton, we denote it by
\begin{equation}\label{eq_ordre_1_spectre_soliton_droite}
    W' = A^0_r(\cdot,z)W,
\end{equation}
for the left-hand soliton by
\begin{equation}\label{eq_ordre_1_spectre_soliton_gauche}
    W' = A^0_\ell(\cdot,z)W,
\end{equation}
and for the constant $U_s$ by
\begin{equation}\label{eq_ordre_1_spectre_constante}
    W' = A^0_\infty(z)W.
\end{equation}
It follows from Lemma~\ref{limits_soliton}, that the coefficients of differential operators $\Al_\ell^0$ and $\Al_r^0$ converge exponentially fast towards constant-coefficient operators

\begin{lemma}\label{cv_pb_spectral_soliton}
For $M>0$ there exist $C(M)>0$ and $\nu>0$ such that for $i\in \left\{r,\ell \right\}$ and $\abs{z} \leq M$,
    \begin{equation*}
        \forall x \in \R,~\norm{A_i^0(x,z) - A^0_\infty(z)} \leq C(M)\e^{-\nu\abs{x}}.
    \end{equation*}
\end{lemma}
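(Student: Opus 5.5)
The argument reduces to checking that the entries of $A_i^0(x,z)$ are smooth functions of finitely many local quantities of the profile together with $z$. Then Lemma~\ref{limits_soliton} and a mean value estimate finish it.

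\textbf{Step 1: structure of the first-order system.} We write $zV=\Al_i^0 V$ as a first-order system explicitly. For $N=1$, $\Al_i^0 V = b\,\p_x\big(-\p_x(\ka(\un{v})\p_x V) + m(x) V\big)$, with the multiplier
\[
m(x) = f''(\un{v}) + \tfrac{c}{b} + \tfrac12\ka''(\un{v})\un{v}_x^2 - \p_x(\ka'(\un{v})\un{v}_x),
\]
where $\un{v}=v_i^0$. We choose the unknown $W=(V,\,\ka(\un{v})V_x,\,-\p_x(\ka(\un{v})V_x)+mV)^{\T}$. The equation becomes
\[
W_1' = W_2/\ka(\un{v}),\qquad W_2' = mW_1 - W_3,\qquad W_3' = (z/b)W_1 .
\]
The last identity comes from $\p_x W_3 = z V/b$. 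In this formulation only $m$ appears, and $\p_x \un{v}$ never occurs undifferentiated.

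To check that $m$ depends only on $(\un{v},\un{v}_x)$, we remove $\un{v}_{xx}$ from $\p_x(\ka'(\un{v})\un{v}_x)$ using the profile equation~\eqref{EDO_soliton_onde_périodique}, namely $\un{v}_{xx} = -\ka^{-1}\big(\tfrac12\ka'\un{v}_x^2 + \p_v\w\big)$. Hence every entry of $A_i^0(x,z)$ has the form $\Phi(\un{v}(x),\un{v}_x(x),z)$ with $\Phi$ smooth, and even affine in $z$. The same $\Phi$ evaluated at $(v_s,0,z)$ gives $A^0_\infty(z)$, because $\A(U_s)$ is exactly the expression obtained with $\un{v}\equiv v_s$, $\un{v}_x\equiv 0$.

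For $N=2$ we do the same, with $\un{u}=g(\un{v})$ also a smooth function of $\un{v}$. The only issue is choosing a first-order reduction in which the nonsingularity of $B$ and $\tau>0$ allow us to solve for the highest derivatives. Concretely, we take the fluxes $\A(\un{U})V$ components as unknowns, plus $V_1$ and $\ka V_{1,x}$, and invert the algebraic block involving $\tau(\un{v})$. Again the coefficients are smooth functions of $(\un{v},\un{v}_x,z)$.

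\textbf{Step 2: the estimate.} By Lemma~\ref{limits_soliton}, the pair $(\un{v},\un{v}_x)(x)$ stays in a compact set $K$ inside the domain where $\ka>0$ and $\tau>0$. It also satisfies $|(\un{v},\un{v}_x)(x)-(v_s,0)|\le C e^{-\nu|x|}$.

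Let $L(M)$ be the supremum of $|\n_{(v,v_x)}\Phi|$ over the convex hull of $K\cup\{(v_s,0)\}$ times $\{|z|\le M\}$. This convex hull is still contained in the admissible domain once we shrink to an interval containing $[v_\ell^s,v_r^s]$, which lies in $I$. The mean value inequality then gives
\[
\norm{A_i^0(x,z)-A^0_\infty(z)}\le L(M)\,C\,e^{-\nu|x|}.
\]
We set $C(M)=L(M)\,C$, and the same $\nu$ serves for both $i=r$ and $i=\ell$.

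\textbf{Main obstacle.} The only delicate point is Step 1 for $N=2$: making sure the chosen reduction has coefficients that depend on the profile only through $(\un{v},\un{v}_x)$, without higher derivatives. This is resolved by the substitution of $\un{v}_{xx}$ from the profile ODE.
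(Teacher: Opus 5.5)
Your proof is correct and takes the same route as the paper, which only remarks that the lemma follows from Lemma~\ref{limits_soliton}. You fill in the omitted details: the coefficients of the first-order system are smooth functions of $(\un{v},\un{v}_x,z)$ once $\un{v}_{xx}$ is eliminated through the profile ODE, and a mean value estimate then gives the bound. The identification $\Phi(v_s,0,z)=A^0_\infty(z)$ silently uses $\p_v\w(v_s;\lm,c)=0$, i.e.\ that $U_s$ is an equilibrium of the profile ODE, so the substituted $\un{v}_{xx}$ vanishes there; this is worth stating explicitly.
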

In the same way we write the eigenvalue problem for the periodic wave as a first-order ODE
\begin{equation}\label{eq_ordre_1_spectre_onde_périodique}
    W' = A^\ve(\cdot,z)W.
\end{equation}
The convergence result given by Lemma~\ref{cv_onde_soliton} yields the following bounds. 

\begin{lemma}\label{cv_pb_spectral_periodique}
For $M>0$ there exists $C(M) > 0$ such that for $\abs{z} \leq M$, 
\begin{equation*}
    \forall y \in \ff{X^{\ve}_l}{-X^{\ve}_l}, ~\norm{A^{\ve}(y + X^{\ve}_r,z) - A^{0}_\ell(y,z)}  \leq C(M)\e^{\theta X^{\ve}_\ell},
\end{equation*}
and
\begin{equation*}
    \forall y \in \ff{- X^{\ve}_l}{X^{\ve}_r}, ~\norm{A^{\ve}(y + X^{\ve}_r,z) - A^{0}_r(y,z)} \leq C(M)\e^{-\theta X^{\ve}_r}.
\end{equation*}
\end{lemma}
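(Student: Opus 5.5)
We derive Lemma~\ref{cv_pb_spectral_periodique} directly from Lemma~\ref{cv_onde_soliton}. The observation is that $A^\ve(x,z)$, $A^0_\ell(x,z)$ and $A^0_r(x,z)$ are obtained from one and the same matrix-valued map evaluated along different profiles. Writing $\A(\un{U})$ out from its formula and putting $zV=B\p_x\A(\un{U})V$ into first-order form, the coefficients involve only $\ka,\ka',\ka'',\ka''',f'',f''',\tau,\tau',\tau''$ evaluated at $\un{v}$, together with $\un{v}_x$, $\un{v}_{xx}$, $\un{u}$, $\un{u}_x$, the constants $c,b$, and $z$ (which enters linearly). So there is a smooth map $\mathbb{A}(\mathcal{U},z)$ such that $A^\ve(x,z)=\mathbb{A}(\mathcal{U}^\ve(x),z)$ and $A^0_i(x,z)=\mathbb{A}(\mathcal{U}^0_i(x),z)$. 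Here $\mathcal{U}=(v,v_x)$, and, when $N=2$, $u$ is replaced by $g(v;\lambda_2,c)$. The first-order reduction divides by $\ka(v)$, and when $N=2$ also by $\tau(v)$; since both are positive by Assumption~\ref{hypothèse_Hm}, $\mathbb{A}$ is indeed smooth.

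To make this dependence hold on $(v,v_x)$ alone, we eliminate higher derivatives through the profile ODE~\eqref{EDO_soliton_onde_périodique}. We use $v_{xx}=-\ka(v)^{-1}\bigl(\tfrac12\ka'(v)v_x^2+\p_v\w(v)\bigr)$, and likewise for $u_x=g'(v)v_x$. The same parameters $(\lm,c)$ are used for the periodic wave and for the solitons, so only $\mu$ changes, and $\mu$ does not appear in the ODE. We check this directly against the formula for $\A(\un{U})$.

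Next we fix $M>0$. By Lemma~\ref{limits_soliton} and Lemma~\ref{cv_onde_soliton}, all the profiles $\mathcal{U}^\ve$ and $\mathcal{U}^0_i$ take values in a fixed compact set $K\subset I\times\R$ for $\ve$ small, since they stay near the bounded orbits of the phase portrait. The map $\mathbb{A}$ is smooth, so it is Lipschitz in $\mathcal{U}$ on $K\times\{\abs{z}\le M\}$, with some constant $L(M)$; in fact it is affine in $z$. This gives
\begin{equation*}
\norm{A^\ve(y+X^\ve_i,z)-A^0_i(y,z)}\le L(M)\,\norm{\mathcal{U}^\ve(y+X^\ve_i)-\mathcal{U}^0_i(y)}.
\end{equation*}
The right-hand side is bounded by Lemma~\ref{cv_onde_soliton}: by $C\e^{\theta X^\ve_\ell}$ for $i=\ell$ and by $C\e^{-\theta X^\ve_r}$ for $i=r$, on the stated ranges of $y$. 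This yields the result with $C(M)=L(M)C$.

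The shift for the left pulse has to be $X^\ve_\ell$, since that is the translation used in Lemma~\ref{cv_onde_soliton}; the statement's "$y+X^\ve_r$" in the first estimate is presumably a typo for $y+X^\ve_\ell$. The $y$-intervals should also be taken to match those of Lemma~\ref{cv_onde_soliton}.

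The main obstacle is bookkeeping rather than analysis. We must make sure that the first-order reformulation only involves derivatives of $\un{U}$ that can be controlled by $(v,v_x)$. For this we rewrite each $\un{v}_{xx}$, and any derivative of it coming from $\p_x(\ka'(\un{v})\un{v}_x)$ or from expanding $\p_x\A$, using the profile equation, which is legitimate because that equation holds identically. We also need $\ka$ and $\tau$ to stay bounded below on $K$, which holds by compactness.
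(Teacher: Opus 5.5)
Your argument is correct and is essentially the paper's own justification: the paper gives no separate proof and simply states that the bounds follow from Lemma~\ref{cv_onde_soliton}. You fill in the details the paper omits. The coefficients of the first-order system are one smooth function of $(\un{v},\un{v}_x)$ and $z$, obtained through the $\mu$-independent profile ODE, so the map is Lipschitz on a compact set uniformly for $\abs{z}\le M$. You are also right that the first estimate should use the shift $y+X^\ve_\ell$, and that the $y$-ranges should be those of Lemma~\ref{cv_onde_soliton}, namely $\ff{X^\ve_\ell}{-X^\ve_\ell}$ and $\ff{-X^\ve_r}{X^\ve_r}$.
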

 
The following lemmas correspond to Lemma~3.1 of \cite{yangConvergencePeriodGoes2019} but is adapted to the fact that here, each pulse of the periodic wave converge to a different soliton.

\begin{lemma}\label{coefficient_constant_droite}
    For all $\ve\geq0$, there exist in a neighborhood of any $\abs{z_0} \leq M$ some bounded and uniformly invertible transformations $P^\ve_{+,r}(y,z) $ and $P^\ve_{-,r}(y,z)$ defined on $y\leq 0$ and respectively $y\geq 0$, analytic in $z$ as functions into $L^{\infty}\left(\fo{0}{\pm \infty} \right)$ such that for any $0<\ti{\nu}<\min(\theta,\nu)$ and for some $C_r>0$
    \begin{equation*}
        P^\ve_{+,r}(X^\ve_r,z) = \id, \quad P^\ve_{-,r}(-X^\ve_r,z) = \id,
    \end{equation*}
    \begin{equation}\label{cv_matrice_P_r}
        \norm{P^\ve_{\pm,r} - P^0_{\pm,r}} \leq C_r \e^{-\ti{\nu} X^{\ve}_r},~\text{for}~y \gtrless 0 
    \end{equation}
    and the change of coordinates $W = P^\ve_{\pm,r}Z$ reduces 
    \begin{equation*}
        W' = A^{\ve}(\cdot + X^{\ve}_r,z) W,
    \end{equation*}
    to the constant-coefficient system
    \begin{equation*}
        Z' = A^0_\infty (z)Z,
    \end{equation*}
    for $y \gtrless 0$ and $y \in \ff{-X^{\ve}_r}{X^{\ve}_r}$.
\end{lemma}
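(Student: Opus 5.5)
We follow the conjugation lemma argument of \cite{yangConvergencePeriodGoes2019} (itself a variant of the gap lemma/conjugation lemma of Métivier--Zumbrun), applied separately to the two half-lines $y\ge 0$ and $y\le 0$ around the right-hand pulse. We treat $P^\ve_{+,r}$ on $\ff{0}{X^\ve_r}$; the case of $P^\ve_{-,r}$ on $\ff{-X^\ve_r}{0}$ is symmetric. For $\ve=0$ the interval is $\fo{0}{+\infty}$ and the normalization $P^0_{+,r}(+\infty,z)=\id$ is understood as a limit. Write $A^\ve(y):=A^\ve(y+X^\ve_r,z)$ and $\Delta^\ve(y):=A^\ve(y)-A^0_\infty(z)$. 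Lemmas~\ref{cv_pb_spectral_soliton} and~\ref{cv_pb_spectral_periodique}, combined through the triangle inequality, give
\begin{equation*}
\norm{\Delta^\ve(y)}\le C(M)\left(\e^{-\nu y}+\e^{-\theta X^\ve_r}\right)\le 2C(M)\,\e^{-\ti\nu\min(y,\,2X^\ve_r-y)}\quad\text{on } \ff{0}{X^\ve_r}.
\end{equation*}
Since $\theta$ may be taken smaller than the exponential rate of the soliton itself, we may use a decay weight adapted to the finite interval, for instance $\e^{-\ti\nu y}$ together with the uniform bound $\e^{-\ti\nu X^\ve_r}$ near $y=X^\ve_r$.

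The change of variables $W=PZ$ turns $W'=A^\ve W$ into $Z'=A^0_\infty Z$ exactly when
\begin{equation*}
P'=A^0_\infty P-PA^0_\infty+\Delta^\ve P .
\end{equation*}
This is a linear equation for $P$ with constant-coefficient principal part $\mathrm{ad}_{A^0_\infty}:P\mapsto[A^0_\infty,P]$. We rewrite it as the integral equation
\begin{equation*}
P(y)=\id-\int_y^{X^\ve_r}\e^{\mathrm{ad}_{A^0_\infty}(y-s)}\,\Pi\,\Delta^\ve(s)P(s)\,\dd s+\int_{y_0}^{y}\e^{\mathrm{ad}_{A^0_\infty}(y-s)}(\id-\Pi)\,\Delta^\ve(s)P(s)\,\dd s ,
\end{equation*}
where $\Pi$ is a spectral projection of $\mathrm{ad}_{A^0_\infty}$ chosen, locally near $z_0$, so that it captures the eigenvalues with real part $\ge -\eta$. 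Here $\eta<\ti\nu$ is a small spectral gap parameter; the gap lemma idea is to split the spectrum of $\mathrm{ad}$ at a cut where it has no eigenvalues, which is possible since there are finitely many eigenvalues and $\ti\nu<\nu$. With this splitting, $\e^{\mathrm{ad}(y-s)}\Pi$ is bounded by $C\e^{\eta(s-y)}$ for $s\ge y$, and $\e^{\mathrm{ad}(y-s)}(\id-\Pi)$ decays for $y\ge s$. The point $y_0$ is chosen large and independent of $\ve$, which makes the integral operator a contraction in $L^\infty(\ff{y_0}{X^\ve_r})$, uniformly in $\ve$ and in $z$ near $z_0$. On $\ff{0}{y_0}$ we then extend $P$ by solving the linear ODE, which is harmless on a bounded interval. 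The first integral forces $P(X^\ve_r)=\id$, up to adjusting the second integral so that it vanishes at $X^\ve_r$; equivalently, we take the base point of both integrals to be $X^\ve_r$ and place the contraction on $\ff{y_0}{X^\ve_r}$ with $y_0$ fixed large. Analyticity in $z$ follows because the fixed point is a uniform limit of analytic iterates. Uniform invertibility follows because $P$ stays close to $\id$ on $\ff{y_0}{X^\ve_r}$ and propagates through the linear flow on $\ff{0}{y_0}$, where it is controlled by Liouville's formula.

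For the estimate~\eqref{cv_matrice_P_r}, we subtract the fixed-point equations for $\ve$ and $0$ after restricting both to $\ff{0}{X^\ve_r}$. The difference $P^\ve-P^0$ satisfies the same contractive equation with source terms of two kinds. The first comes from $\Delta^\ve-\Delta^0=A^\ve-A^0_r$, which is $\go(\e^{-\theta X^\ve_r})$ by Lemma~\ref{cv_pb_spectral_periodique}. The second is the tail $\int_{X^\ve_r}^{\infty}$ of the $\ve=0$ equation, which is $\go(\e^{-(\nu-\eta)X^\ve_r})$ by Lemma~\ref{cv_pb_spectral_soliton}. The contraction then yields $\norm{P^\ve-P^0}\le C_r\e^{-\ti\nu X^\ve_r}$ on $\ff{y_0}{X^\ve_r}$, and Grönwall's lemma transfers this bound to $\ff{0}{y_0}$, at the cost of a constant $\e^{Ky_0}$ that does not depend on $\ve$.

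The main obstacle is the choice of the projection $\Pi$ and of the weights. The spectrum of $\mathrm{ad}_{A^0_\infty}$ consists of the differences of eigenvalues of $A^0_\infty(z)$, so it contains $0$ and possibly eigenvalues of small real part. The integrals must therefore be bounded using only the decay $\e^{-\ti\nu\cdot}$ of $\Delta$, with $\eta<\ti\nu$, and the bounds must stay uniform in $\ve$ even though the interval length $X^\ve_r$ is not fixed. I would first try the splitting $\Re\ge-\eta$ versus $\Re<-\eta$ with a Riesz projection, as in Lemma~3.1 of \cite{yangConvergencePeriodGoes2019}, and check that the $\ve$-dependent boundary at $X^\ve_r$ only introduces errors of size $\e^{-\ti\nu X^\ve_r}$.
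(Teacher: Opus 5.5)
Your overall route is the one the paper relies on. The paper omits the proof and refers to the conjugation argument of Lemma~3.1 in \cite{yangConvergencePeriodGoes2019}: the homological equation $P'=[A^0_\infty,P]+\Delta^\ve P$, a spectral splitting of $\mathrm{ad}_{A^0_\infty}$, a contraction on $\ff{y_0}{X^\ve_r}$ uniform in $\ve$, and a difference estimate fed by Lemmas~\ref{cv_pb_spectral_soliton} and~\ref{cv_pb_spectral_periodique}. For the fixed point you build with mixed base points ($X^\ve_r$ for $\Pi$, $y_0$ for $\id-\Pi$), boundedness, analyticity, invertibility and the bound \eqref{cv_matrice_P_r} go through as you describe.

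The step that fails is the normalization $P^\ve_{+,r}(X^\ve_r,z)=\id$. Your fixed point satisfies
\begin{equation*}
P(X^\ve_r)-\id=\int_{y_0}^{X^\ve_r}\e^{\mathrm{ad}_{A^0_\infty}(X^\ve_r-s)}(\id-\Pi)\,\Delta^\ve(s)P(s)\,\dd s,
\end{equation*}
which is $\go(\e^{-cX^\ve_r})$ for some $c>0$ but not zero. Moving the base point of this integral to $X^\ve_r$ is not an equivalent reformulation. On the range of $\id-\Pi$ the kernel $\e^{\mathrm{ad}(y-s)}$ with $s\geq y$ grows like $\e^{\abs{\Rel\mu}(s-y)}$, where $\mu$ runs over differences of eigenvalues of $A^0_\infty(z)$. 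Near $z=0$ these eigenvalues are close to $0,\pm\sigma$, so $\abs{\Rel\mu}$ reaches about $2\sigma>\nu>\ti\nu$. The integral is then of size $\e^{(\abs{\Rel\mu}-\ti\nu)X^\ve_r}$ at $y=y_0$, and there is no contraction uniform in $\ve$.

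This is not a defect of the particular scheme. A conjugator solves the linear equation $P'=A^\ve(\cdot+X^\ve_r,z)P-PA^0_\infty$, so $P(X^\ve_r)=\id$ determines it uniquely: $P(y)=\Phi^\ve(y+X^\ve_r,2X^\ve_r)\e^{A^0_\infty(X^\ve_r-y)}$, with $\Phi^\ve$ the propagator. This need not stay bounded. For a concrete example, take the following data.
\begin{itemize}
\item $A^0_\infty=\mathrm{diag}(k,-k)$ with $2k>\nu$.
\item The same coefficient matrix $A^0_\infty+\e^{-\nu\abs{y}}E_{21}$ for both the homoclinic and the shifted periodic problem on $\ff{0}{X}$, where $E_{21}$ is the matrix whose only nonzero entry is a $1$ in position $(2,1)$.
\end{itemize}
The conjugator normalized at $X$ has $P_{21}(0)=-\int_0^X\e^{(2k-\nu)s}\,\dd s$, which is unbounded as $X\to\infty$. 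Any conjugator normalized at $+\infty$ is bounded. Both convergence lemmas hold trivially here, yet \eqref{cv_matrice_P_r} fails at $y=0$.

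What the argument genuinely yields is $P^\ve_{+,r}(X^\ve_r)=\id+\go(\e^{-cX^\ve_r})$, and similarly at $-X^\ve_r$; the normalization in the statement should be weakened accordingly. The identities used in Proposition~\ref{cv_evans_away_essential_spectra}, such as $\Psi^\ve(X^\ve_r,z)\Psi^\ve(2X^\ve_r,z)^{-1}=P^\ve_{+,r}(0)\e^{-A^0_\infty X^\ve_r}$, then carry an extra factor $P^\ve_{+,r}(X^\ve_r)^{-1}$ that must be handled there. One way is to build a single conjugator on each stretch between consecutive pulse centres: $\ff{X^\ve_\ell}{X^\ve_r}$ and, by periodicity, $\ff{X^\ve_r}{X^\ve_\ell+X^\ve}$. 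The factors at the junction points $0$ and $2X^\ve_r$ are then common to both terms of $\ti{F}^\ve$ and pull out of the determinant.
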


\begin{proof}
With Lemmas~\ref{cv_pb_spectral_soliton} and~\ref{cv_pb_spectral_periodique} in hand, the proof is exactly the same as the one of Lemma~3.1 of \cite{yangConvergencePeriodGoes2019} and is therefore omitted.
\end{proof}

We have the same for the left-hand soliton.

\begin{lemma}\label{coefficient_constant_gauche}
    For all $\ve\geq0$, there exist in a neighborhood of any $\abs{z_0} \leq M$ some bounded and uniformly invertible transformations $P^\ve_{+,\ell}(y,z) $ and $P^\ve_{-,\ell}(y,z)$ defined on $y\leq 0$ and respectively $y\geq 0$, analytic in $z$ as functions into $L^{\infty}\left(\fo{0}{\pm \infty} \right)$ such that for any $0<\tilde{\nu}<\min(\theta,\nu)$ and for some $C_\ell >0$
    \begin{equation*}
        P^\ve_{+,\ell}(X^\ve_\ell,z) = \id, \quad P^\ve_{-,r}(-X^\ve_\ell,z) = \id,
    \end{equation*}
    \begin{equation}\label{cv_matrice_P_l}
        \norm{P^\ve_{\pm,\ell} - P^0_{\pm,\ell}} \leq C_\ell \e^{\ti{\nu} X^{\ve}_\ell},~\text{for}~y \gtrless 0 
    \end{equation}
    and the change of coordinates $W = P^\ve_{\pm,\ell}Z$ reduces 
    \begin{equation*}
        W' = A^{\ve}(\cdot + X^{\ve}_\ell,z) W,
    \end{equation*}
    to the constant-coefficient system
    \begin{equation*}
        Z' = A^0_\infty (z)Z,
    \end{equation*}
    for $y \gtrless 0$ and $y \in \ff{X^{\ve}_\ell}{-X^{\ve}_\ell}$.
\end{lemma}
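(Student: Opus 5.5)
The proof will follow exactly the proof of Lemma~\ref{coefficient_constant_droite}, that is, the conjugation (gap) lemma argument of Lemma~3.1 of \cite{yangConvergencePeriodGoes2019}, now run around the left pulse located at $X^\ve_\ell<0$. Since $X^\ve_\ell\to-\infty$ by Lemma~\ref{propriétés_periodes}, the small parameter is $\e^{\theta X^\ve_\ell}$ in place of $\e^{-\theta X^\ve_r}$. The two inputs are Lemma~\ref{cv_pb_spectral_soliton} for $i=\ell$, which gives $\norm{A^0_\ell(y,z)-A^0_\infty(z)}\le C(M)\e^{-\nu\abs{y}}$, and the left estimate of Lemma~\ref{cv_pb_spectral_periodique}. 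Read with the translation by $X^\ve_\ell$ as in Lemma~\ref{cv_onde_soliton}, the latter gives $\norm{A^\ve(y+X^\ve_\ell,z)-A^0_\ell(y,z)}\le C(M)\e^{\theta X^\ve_\ell}$ on $\ff{X^\ve_\ell}{-X^\ve_\ell}$. Combining the two, for $\abs{y}\le -X^\ve_\ell$ we get
\begin{equation*}
\norm{A^\ve(y+X^\ve_\ell,z)-A^0_\infty(z)}\le C(M)\left(\e^{-\nu\abs{y}}+\e^{\theta X^\ve_\ell}\right).
\end{equation*}

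\textbf{Step 1: the conjugation equation.} Writing $W=P Z$, the transformation must solve the matrix ODE
\begin{equation*}
P'=A^\ve(\cdot+X^\ve_\ell,z)P-PA^0_\infty(z)=:\left[A^0_\infty,P\right]+\left(A^\ve(\cdot+X^\ve_\ell,z)-A^0_\infty(z)\right)P .
\end{equation*}
This is a linear equation in the space of matrices, governed by the commutator operator $P\mapsto[A^0_\infty(z),P]$. The eigenvalues of this operator are the differences of eigenvalues of $A^0_\infty(z)$. Locally near $z_0$, I will use an analytic spectral splitting of this operator into parts whose decay and growth rates are separated by a gap larger than $\ti\nu$.

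\textbf{Step 2: the fixed point.} On $y\ge0$ the plan is to impose $P^\ve_{-,\ell}(-X^\ve_\ell)=\id$ and write $P=\id+\ti P$. I then solve for $\ti P$ with a Duhamel fixed point in $L^\infty(\ff{0}{-X^\ve_\ell})$. Following Yang, the neutral and unstable components of the commutator flow are integrated from the endpoint $-X^\ve_\ell$, and the stable components are integrated from $0$. The forcing is
\begin{equation*}
\left(A^\ve(\cdot+X^\ve_\ell,z)-A^0_\infty(z)\right)\left(\id+\ti P\right).
\end{equation*}
It has size $\e^{-\nu y}+\e^{\theta X^\ve_\ell}$, which is integrable against kernels growing at most like $\e^{\ti\nu\abs{\cdot}}$. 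This gives a contraction once the starting point is taken large. A standard piece of ODE flow is then added on a fixed compact interval to reach $y=0$. Boundedness and uniform invertibility follow because $\ti P$ is small. Analyticity in $z$ follows because the fixed point depends analytically on $z$, being a uniform limit of analytic iterates. The construction with $\ve=0$ gives $P^0_{-,\ell}$, with endpoint condition at $+\infty$. The case $y\le0$, with $P^\ve_{+,\ell}(X^\ve_\ell)=\id$, is symmetric.

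\textbf{Step 3 (main obstacle): the estimate \eqref{cv_matrice_P_l}.} I will take the difference of the two fixed-point equations, for $\ve>0$ and for $\ve=0$. This difference involves two types of terms. The first is the coefficient difference, bounded by $\e^{\theta X^\ve_\ell}$. The second is the mismatch of endpoints: the boundary condition is at $-X^\ve_\ell$ for $\ve>0$ versus at infinity for $\ve=0$. This mismatch is controlled by the tail $\int_{-X^\ve_\ell}^\infty \e^{\ti\nu(s-y)}\e^{-\nu s}\dd s$, which costs at most $\e^{(\nu-\ti\nu)X^\ve_\ell}$. Closing the estimate with the contraction property then gives a bound of order $\e^{\ti\nu X^\ve_\ell}$ for any $\ti\nu<\min(\theta,\nu)$, after shrinking the gap rate if needed. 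Controlling these rates uniformly in $z$ near $z_0$ is the delicate point, and it is handled exactly as in \cite{yangConvergencePeriodGoes2019}.
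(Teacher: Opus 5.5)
Your route is the paper's. The paper proves this lemma only by analogy with Lemma~\ref{coefficient_constant_droite}, whose own proof is just a pointer to the conjugation argument of \cite[Lemma~3.1]{yangConvergencePeriodGoes2019} fed by Lemmas~\ref{cv_pb_spectral_soliton} and~\ref{cv_pb_spectral_periodique}; your Steps~1--3 reproduce that argument, with the correct shift $y+X^\ve_\ell$.

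\textbf{The gap.} It is in Step~2, at the normalization you say you impose. Write $L:=-X^\ve_\ell$, $\Delta^\ve(s):=A^\ve(s+X^\ve_\ell,z)-A^0_\infty(z)$, $\mathrm{ad}:=[A^0_\infty(z),\cdot\,]$, and $\Pi_{\mathrm{st}}$ for the stable spectral projection of $\mathrm{ad}$. Since you integrate the stable modes forward from $0$ (or from $M$), your fixed point satisfies
\begin{equation*}
P(L)-\id=\int_M^L \e^{\mathrm{ad}\,(L-s)}\,\Pi_{\mathrm{st}}\big[\Delta^\ve(s)P(s)\big]\,\dd s .
\end{equation*}
This is small but in general nonzero, so $P(-X^\ve_\ell)=\id$ is not produced.

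Nor can it be enforced while keeping \eqref{cv_matrice_P_l}. A conjugator with $P(L)=\id$ is unique: $P(y)=\Phi^\ve(y,L)\e^{A^0_\infty(L-y)}$, with $\Phi^\ve$ the propagator. Its stable $\mathrm{ad}$-modes are therefore driven backward from $L$ by $-\int_y^L\e^{\mathrm{ad}\,(y-s)}\Pi_{\mathrm{st}}[\Delta^\ve P]\,\dd s$. A mode whose eigenvalue has real part $-\rho$ then generically blows up at $y=0$ as $L\to\infty$ once $\rho$ exceeds the decay rate of $\Delta^\ve$.
\begin{itemize}
\item Toy example: take $A^0_\infty=\mathrm{diag}(a,-b)$ with $a,b>0$, and put $\e^{-\nu s}$ in the $(2,1)$ entry of $\Delta$. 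One gets exactly $P_{21}(0)=-\int_0^L\e^{(a+b-\nu)s}\,\dd s$, which is unbounded when $a+b>\nu$, while the $\ve=0$ conjugator stays bounded.
\item This regime occurs here. For $N=1$ and $z\in\C_+$ near $0$, $A^0_\infty(z)$ has eigenvalues near $\pm\sigma$ ($\sigma$ as in the proof of Lemma~\ref{limits_soliton}). So $\rho\approx2\sigma$, whereas the coefficients generically converge only at rate $\sigma$.
\end{itemize}

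What your construction actually gives is normalization of the neutral and unstable part only: $P^\ve(-X^\ve_\ell)=\id+\delta^\ve$, with $\delta^\ve$ small and in the range of $\Pi_{\mathrm{st}}$. That factor must then be carried into $\Psi^\ve(0)\Psi^\ve(X^\ve_\ell)^{-1}=P^\ve(-X^\ve_\ell)\e^{-A^0_\infty X^\ve_\ell}P^\ve(0)^{-1}$ and its analogues in Proposition~\ref{cv_evans_away_essential_spectra}. The paper's citation-only proof does not address this either, so the defect sits in the statement as much as in your sketch. Still, as written, your Step~2 asserts a normalization it does not produce.

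\textbf{Two smaller points.}
\begin{itemize}
\item The rate $\e^{\ti\nu X^\ve_\ell}$ for every $\ti\nu<\min(\theta,\nu)$ comes from placing the splitting of $\mathrm{ad}$ just below $0$, so that the neutral/unstable kernel grows backward at an arbitrarily small rate. A gap ``larger than $\ti\nu$'' need not exist and is not needed.
\item The downstream identity $\Psi^\ve(0)\Psi^\ve(X^\ve_\ell)^{-1}=\e^{-A^0_\infty X^\ve_\ell}P^\ve_{+,\ell}(0)^{-1}$ shows that the conjugator on $y\ge0$ normalized at $-X^\ve_\ell$ is meant to be $P^\ve_{+,\ell}$. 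Your $\pm$ labels, taken from the garbled normalization line of the statement, are therefore swapped; this is harmless.
\end{itemize}
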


\subsection{The Evans functions for periodic and homoclinic orbits}

To introduce the Evans functions for the periodic wave $\un{U}^\ve$, we denote by $\Psi^\ve(\cdot,z)$ a solution of~\eqref{eq_ordre_1_spectre_onde_périodique} with an invertible initial data $\Psi^\ve(0,z)$. Using Floquet-Bloch theory, we know from \cite{gardnerStructureSpectraPeriodic1993} that for the periodic wave, the spectrum for localized perturbations is given by the union of $\gamma$-eigenvalues, where for $\gamma \in \C$ with $\abs{\gamma} = 1$, a $\gamma$-eigenvalue $z$ is a zero of the periodic Evans function
\begin{equation}
    E^{\ve}(z,\gamma) = \det\left(\Psi^{\ve}(X^{\ve},z)\Psi^{\ve}(0,z)^{-1} - \gamma \id \right).
\end{equation}
When restricting to co-periodic perturbations, the spectrum is the union of 1-eigenvalues.

For homoclinic orbits, we define their Evans functions as in \cite{yangConvergencePeriodGoes2019,gardnerGapLemmaGeometric1998,barkerMetastabilitySolitaryRoll2011,benzoni-gavageSpectralTransverseInstability2010}. Due to Lemma~\ref{limits_soliton}, we know that the essential spectra of $\Al^0_r$ and $\Al^0_\ell$ is exactly the spectrum of the constant-coefficients problems, which is the spectrum of $\Al^0_\infty$ for the two solitons. One can easily compute this spectrum using the Fourier transform. It has been shown in Appendix~A of \cite{benzoni-gavageStabilityPeriodicWaves2020} that in all cases $\sigma\left(\Al^0_\infty\right) \subset i\R$. Note that this computation relies on the fact that the constant $U_s$ must verify some sign condition to be the endpoint of a soliton. 

Thus we can define an Evans function for homoclinic orbits on $\C_+ = \left\{ z \in \C;~\Re(z) > 0 \right\} $. For $z \in \C_+$, $A^0_\infty(z)$ is hyperbolic, so we know, \cite[Lemma~I.4.10]{katoPerturbationTheoryLinear1995}, that the dimension of its stable and unstable subspace does not depend on $z$. So we denote by $n_s$ the number of negative real part eigenvalues of $A^0_\infty(z)$ which is the dimension of the stable subspace. We also denote by $R^-_\infty(z)$ a matrix whose columns are a basis for the unstable subspace of $A^0_\infty(z)$ and likewise $R^+_\infty(z)$ for the stable subspace of $A^0_\infty(z)$. Furthermore, the stable and unstable subspaces of $A^0_\infty(z)$ are in direct sum, so the matrix $\left(R^-_\infty(z),~R^+_\infty(z)\right)$ is invertible, we denote by $\left(\begin{smallmatrix} L^-_\infty(z) \\ L^+_\infty(z) \end{smallmatrix}\right)$ its inverse. In the foregoing, using Kato's perturbation method \cite[pp.~99-100]{katoPerturbationTheoryLinear1995}, everything can be chosen to be analytic in $z$, because $A^0_\infty$ is analytic in $z$. From now on we will omit the dependence on $z$ for the sake of readability.

To define the Evans function for solitons, we need bases of solutions to~\eqref{eq_ordre_1_spectre_soliton_droite} decaying as $x\rightarrow+\infty$ and $x\rightarrow-\infty$ and the same for~\eqref{eq_ordre_1_spectre_soliton_gauche}. For this, we may use the following matrices 
\begin{equation*}
    R^-_r(x) := P^0_{-,r}(x)\e^{A^0_\infty x}R^{-}_\infty,\quad R^+_r(x) := P^0_{+,r}(x)\e^{A^0_\infty x}R^{+}_\infty, 
\end{equation*}
for the right-hand soliton and
\begin{equation*}
    R^-_\ell(x) := P^0_{-,\ell}(x)\e^{A^0_\infty x}R^{-}_\infty,\quad R^+_\ell(x) := P^0_{+,\ell}(x)\e^{A^0_\infty x}R^{+}_\infty, 
\end{equation*}
for the left-hand soliton, where matrices $P^0_{-,\ell},~ P^0_{+,\ell},~P^0_{-,r},~P^0_{+,r}$ are given by Lemmas~\ref{coefficient_constant_droite} and~\ref{coefficient_constant_gauche}. We can now define the Evans functions for the right-hand soliton and for the left-hand soliton, as it is done, for instance, in \cite{yangConvergencePeriodGoes2019}.

\begin{definition}[\cite{gardnerGapLemmaGeometric1998}]
    On $\C_+$, the Evans function for the right-hand soliton is defined as 
    \begin{equation}\label{evans_soliton_droite}
        D^0_r(z) = \det\begin{pmatrix} L^-_\infty \\ L^+_\infty \end{pmatrix} \det(R^-_r,R^+_r)|_{x=0} = \det\begin{pmatrix} L^-_\infty \\ L^+_\infty \end{pmatrix}
         \det\Big(P^0_{-,r}(0)R^-_\infty,P^0_{+,r}(0)R^+_\infty\Big),
    \end{equation}
    and symmetrically for the left-hand soliton
    \begin{equation}\label{evans_soliton_gauche}
        D^0_\ell(z) = \det\begin{pmatrix} L^-_\infty \\ L^+_\infty \end{pmatrix} \det(R^-_\ell,R^+_\ell)|_{x=0} = \det\begin{pmatrix} L^-_\infty \\ L^+_\infty \end{pmatrix}
         \det\Big(P^0_{-,\ell}(0)R^-_\infty,P^0_{+,\ell}(0)R^+_\infty\Big).
    \end{equation}
\end{definition}

It is well known, \cite{gardnerGapLemmaGeometric1998,masciaPointwiseGreenFunction2003} and references therein, that on $\C_+$, for $i \in \left\{r, \ell\right\}$, $D^0_i$, is analytic and vanishes at $z_*$ if and only if $z_*$ is an eigenvalue of $\Al(U_i)$, furthermore the multiplicity coincide.

\subsection{Convergence of the periodic Evans function}

We are investigating the limit of the periodic Evans function on each compact of $\C_+$. This requires a suitable normalization, as in \cite{yangConvergencePeriodGoes2019,gardnerSpectralAnalysisLong1997}. By Abel's formula, 
\begin{equation*}
    E^{\ve}(z,\gamma) = \ti{E}^{\ve}(z,\gamma) \e^{\int_0^{2 X^{\ve}_r}\tr A^{\ve}(x,z)\dd x},
\end{equation*}
where
\begin{equation*}
    \ti{E}^{\ve}(z,\gamma) := \det\left(\Psi^{\ve}(0,z)\Psi^{\ve}(2 X^\ve_\ell,z)^{-1} - \gamma 
                            \Psi^{\ve}(0,z)\Psi^{\ve}(2 X^\ve_r,z)^{-1} \right).
\end{equation*}
And as in \cite{yangConvergencePeriodGoes2019} we define the rescaled balanced periodic Evans function as 
\begin{equation}\label{fonction_Evans_renormalisé}
    \ti{D}^{\ve}(z,\gamma) = \e^{\tr A_\infty^0 \Pi_s 2 X^\ve_r} \e^{- \tr A_\infty^0 \Pi_u 2 X^\ve_\ell}(-\gamma)^{-n_s}\ti{E}^{\ve}(z,\gamma),
\end{equation}
where $\Pi_u$ and $\Pi_s$ denote the unstable and stable eigenprojections associated with $A^0_\infty(z)$. We now introduce the determinant that appears at the limit.

\begin{definition}\label{objet_limite_fonction_evans}
    On $\C_+$ we define
    \begin{equation*}
        \ti{D}^0(z) = \det\Big(L^-_\infty P^0_{+,\ell}(0)^{-1} P^0_{-,\ell}(0) R^-_\infty\Big)\det\Big(L^+_\infty P^0_{-,r}(0)^{-1} P^0_{+,r}(0) R^+_\infty\Big).
    \end{equation*}
\end{definition}

The goal is to link the zeros of $D^0$ with the zeros of Evans functions for homoclinic orbits. It is done in the following proposition.

\begin{proposition}\label{zero_D}
    For $z_* \in \C_+$, $z_*$ is a zero of $\tilde{D}^0$ if and only if $z_*$ is an eigenvalue of $\A^0_r$ or $\A^0_\ell$. Moreover, the multiplicity of $z_*$ as a root of $\tilde{D}^0$ equals the sum of its multiplicities as an eigenvalue of $\A^0_r$ and $\A^0_\ell$.
\end{proposition}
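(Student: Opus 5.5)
The plan is to show that each factor of $\ti{D}^0$ is, up to a nonvanishing analytic factor, the Evans function of the corresponding soliton. Then $\ti{D}^0 = c(z)\,D^0_\ell(z)D^0_r(z)$ with $c$ analytic and nonvanishing on $\C_+$, and the statement follows from the known characterization of the zeros of $D^0_\ell$ and $D^0_r$ recalled just before Definition~\ref{objet_limite_fonction_evans}: zeros coincide with eigenvalues, with matching multiplicities.

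For the right-hand factor, set $M_\pm := P^0_{\pm,r}(0)$; these are invertible by Lemma~\ref{coefficient_constant_droite}. Factor
\begin{equation*}
\det\big(M_-R^-_\infty, M_+R^+_\infty\big) = \det(M_-)\,\det\big(R^-_\infty, M_-^{-1}M_+R^+_\infty\big).
\end{equation*}
Now multiply on the left by $\left(\begin{smallmatrix} L^-_\infty \\ L^+_\infty\end{smallmatrix}\right)$, which is the inverse of $(R^-_\infty,R^+_\infty)$. Since $L^\pm_\infty R^\pm_\infty=\id$ and $L^\mp_\infty R^\pm_\infty=0$, this gives
\begin{equation*}
\det\begin{pmatrix} L^-_\infty \\ L^+_\infty \end{pmatrix}\det\big(R^-_\infty, M_-^{-1}M_+R^+_\infty\big)=\det\begin{pmatrix} \id & L^-_\infty M_-^{-1}M_+R^+_\infty \\ 0 & L^+_\infty M_-^{-1}M_+R^+_\infty\end{pmatrix} = \det\big(L^+_\infty P^0_{-,r}(0)^{-1}P^0_{+,r}(0)R^+_\infty\big).
\end{equation*}
By~\eqref{evans_soliton_droite} this means $D^0_r(z)=\det(P^0_{-,r}(0))\cdot$ (right factor of $\ti{D}^0$).

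The left factor is handled symmetrically, this time factoring out $N_+:=P^0_{+,\ell}(0)$:
\begin{equation*}
\det\big(N_-R^-_\infty,N_+R^+_\infty\big)=\det(N_+)\det\big(N_+^{-1}N_-R^-_\infty,R^+_\infty\big).
\end{equation*}
After multiplying by $\left(\begin{smallmatrix} L^-_\infty \\ L^+_\infty\end{smallmatrix}\right)$, the matrix becomes block lower triangular with diagonal blocks $L^-_\infty N_+^{-1}N_-R^-_\infty$ and $\id$. By~\eqref{evans_soliton_gauche} this yields $D^0_\ell=\det(P^0_{+,\ell}(0))\cdot$ (left factor). Hence
\begin{equation*}
\ti{D}^0 = \frac{D^0_\ell D^0_r}{\det P^0_{+,\ell}(0)\,\det P^0_{-,r}(0)}.
\end{equation*}

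The one delicate point is that the prefactors $\det P^0_{+,\ell}(0,z)$ and $\det P^0_{-,r}(0,z)$ must be analytic and nonvanishing in $z$ on all of $\C_+$, not just near some $z_0$. Lemmas~\ref{coefficient_constant_droite} and~\ref{coefficient_constant_gauche} only provide these transformations locally around each $z_0$, as analytic and uniformly invertible maps. Since analyticity, the zero set, and multiplicities are all local properties, I would work in a neighborhood of a fixed $z_*\in\C_+$. There the same local choice of $P$'s enters both $\ti{D}^0$ and the soliton Evans functions, and uniform invertibility makes the prefactors analytic units. Consequently $\operatorname{ord}_{z_*}\ti{D}^0=\operatorname{ord}_{z_*}D^0_\ell+\operatorname{ord}_{z_*}D^0_r$, which equals the sum of the algebraic multiplicities of $z_*$ as an eigenvalue of $\Al^0_\ell$ and $\Al^0_r$.
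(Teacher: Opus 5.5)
Your proof is correct and follows the paper's argument essentially verbatim. The same block-triangular factorizations give $D^0_r=\det P^0_{-,r}(0)\,\det\big(L^+_\infty P^0_{-,r}(0)^{-1}P^0_{+,r}(0)R^+_\infty\big)$ and $D^0_\ell=\det P^0_{+,\ell}(0)\,\det\big(L^-_\infty P^0_{+,\ell}(0)^{-1}P^0_{-,\ell}(0)R^-_\infty\big)$, and uniform invertibility of $P^0_{+,\ell}$ and $P^0_{-,r}$ then concludes; your extra remark about working locally near $z_*$ with one fixed choice of the transformations makes explicit a point the paper leaves implicit.
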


\begin{proof}
    For the right-hand soliton, we have
    \begin{align*}
        \Big(P^0_{-,r}(0)R^-_\infty,P^0_{+,r}(0)R^+_\infty \Big) = P^0_{-,r}(0) \left(R^-_\infty,~R^+_\infty\right) \begin{pmatrix}\id & 
            L^-_\infty P^0_{-,r}(0)^{-1} P^0_{+,r}(0) R^+_\infty \\ 0 & L^+_\infty P^0_{-,r}(0)^{-1} P^0_{+,r}(0) R^+_\infty\end{pmatrix},
    \end{align*}
    so
    \begin{equation*}
        D^0_r(z) = \det P^0_{-,r}(0) \det\Big(L^+_\infty P^0_{-,r}(0)^{-1} P^0_{+,r}(0) R^+_\infty\Big).
    \end{equation*}
    It is the same for the left-hand soliton,
    \begin{equation*}
        \Big(P^0_{-,\ell}(0)R^-_\infty,P^0_{+,\ell}(0)R^+_\infty\Big) = P^0_{+,\ell}(0) \left(R^-_\infty,~R^+_\infty\right)  \begin{pmatrix}
        L^-_\infty P^0_{+,\ell}(0)^{-1} P^0_{-,\ell}(0) R^-_\infty & 0 \\ L^+_\infty P^0_{+,\ell}(0)^{-1} P^0_{-,\ell}(0) R^-_\infty & \id \end{pmatrix},
    \end{equation*}
    and 
    \begin{equation*}
        D^0_\ell(z) = \det P^0_{+,\ell}(0) \det\Big(L^-_\infty P^0_{+,\ell}(0)^{-1} P^0_{-,\ell}(0) R^-_\infty \Big).
    \end{equation*}
    Using that $P^0_{+,\ell}$ and $P^0_{-,r}$ are uniformly invertible, we can conclude the proof.
\end{proof}

The next step is to prove the convergence of the rescaled periodic Evans function. It is done in the following proposition.

\begin{proposition}\label{cv_evans_away_essential_spectra}
On each compact $K \subset \C_+$ there exists $C >0$ such that
\begin{equation*}
    \forall z \in K, \abs{\gamma} = 1, ~\abs{\ti{D}^{\ve}(z,\gamma) - D^0(z)} \leq C \e^{-\ti{\nu} \frac{X^\ve}{2}}.
\end{equation*}
for any $\ti{\nu} < \min\left(\theta,\nu,\sigma\right)$ where $\sigma$ is the spectral gap, defined as the minimum absolute value of the real parts of the eigenvalues of $A^0_{\infty}(z)$ for $z \in K$.
\end{proposition}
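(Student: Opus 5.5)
We follow the strategy of \cite[Theorem~1.1]{yangConvergencePeriodGoes2019}, adapted to two pulses. Here $D^0$ in the statement is read as $\ti{D}^0$ from Definition~\ref{objet_limite_fonction_evans}. The idea is to express $\Psi^\ve$ on each half-period through the conjugations $P^\ve_{\pm,\ell}$, $P^\ve_{\pm,r}$ of Lemmas~\ref{coefficient_constant_droite} and~\ref{coefficient_constant_gauche}. Since $\ti{E}^\ve$ does not depend on the choice of fundamental matrix, we choose $\Psi^\ve(0)$ conveniently.

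The point $0$ lies at the junction between the two pulses. It corresponds to $y=-X^\ve_r$ in the right-hand frame and to $y=-X^\ve_\ell$ in the left-hand frame. Writing $W=P^\ve_{-,r}Z$ on $[0,X^\ve_r]$ and $W=P^\ve_{+,r}Z$ on $[X^\ve_r,2X^\ve_r]$, with $Z'=A^0_\infty Z$, we get
\begin{equation*}
\Psi^\ve(2X^\ve_r)\Psi^\ve(0)^{-1}=P^\ve_{+,r}(X^\ve_r)\,\e^{A^0_\infty X^\ve_r}\,P^\ve_{+,r}(0)^{-1}P^\ve_{-,r}(0)\,\e^{A^0_\infty X^\ve_r}\,P^\ve_{-,r}(-X^\ve_r)^{-1}.
\end{equation*}
The same factorization holds for $\Psi^\ve(2X^\ve_\ell)\Psi^\ve(0)^{-1}$ with the $\ell$-matrices and the lengths $X^\ve_\ell<0$. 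The normalizations $P^\ve_{+,r}(X^\ve_r)=\id$ and $P^\ve_{-,r}(-X^\ve_r)=\id$ remove the outer factors up to an $\go(\e^{-\ti\nu X^\ve_r})$ error. One subtlety is that the endpoint matrices at the junction, $P^\ve_{-,r}(-X^\ve_r)$ and $P^\ve_{+,\ell}(-X^\ve_\ell)$, must agree up to exponentially small errors so that the two representations of $\Psi^\ve(0)$ are compatible. Here Lemma~\ref{cv_pb_spectral_periodique} and~\eqref{cv_demi_période} enter: the two frames differ by a shift $X^\ve_r+X^\ve_\ell=X^0+\go(\ve)$, which is harmless since both $P$'s are $\id+\go(\e^{-\ti\nu X^\ve/2})$ there.

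Next, insert $\id=(R^-_\infty,R^+_\infty)\begin{pmatrix}L^-_\infty\\ L^+_\infty\end{pmatrix}$ around the exponentials. On the right we have $\e^{A^0_\infty X^\ve_r}=R^-_\infty\e^{\Lambda_u X^\ve_r}L^-_\infty+R^+_\infty\e^{\Lambda_s X^\ve_r}L^+_\infty$. The unstable block grows and the stable block decays like $\e^{-\sigma X^\ve_r}$; on the left, with $X^\ve_\ell<0$, the roles are reversed. We then write $\ti E^\ve$ in the basis $(R^-_\infty,R^+_\infty)$. We factor out the large blocks, namely $\e^{\Lambda_u X^\ve_r}$-type factors on the right and $\e^{-\Lambda_s X^\ve_\ell}$-type factors on the left for the inverted flow, together with $(-\gamma)$ on the $n_s$ stable columns. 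These factors produce exactly $\e^{-\tr A^0_\infty\Pi_s2X^\ve_r}\e^{\tr A^0_\infty\Pi_u 2X^\ve_\ell}(-\gamma)^{n_s}$, which is cancelled by the prefactor in~\eqref{fonction_Evans_renormalisé}. This is a block-Gaussian elimination on a $2\times 2$ block determinant.

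After this rescaling, the matrix is block triangular up to errors $\go(\e^{-\sigma X^\ve_r})+\go(\e^{\sigma X^\ve_\ell})$. Its diagonal blocks are $L^-_\infty P^\ve_{+,\ell}(0)^{-1}P^\ve_{-,\ell}(0)R^-_\infty$ and $L^+_\infty P^\ve_{-,r}(0)^{-1}P^\ve_{+,r}(0)R^+_\infty$. The off-diagonal blocks carry $\gamma$, but with $|\gamma|=1$ they are multiplied by exponentially small factors, so the bound is uniform in $\gamma$. Replacing $P^\ve$ by $P^0$ costs $\go(\e^{-\ti\nu X^\ve_r})+\go(\e^{\ti\nu X^\ve_\ell})$ by~\eqref{cv_matrice_P_r} and~\eqref{cv_matrice_P_l}. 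By Lemma~\ref{propriétés_periodes}, both $X^\ve_r$ and $-X^\ve_\ell$ equal $X^\ve/2+\go(1)$. Multilinearity of the determinant, together with uniform bounds on the compact set $K$ (analytic dependence on $z$, and the spectral gap $\sigma$ bounded below), then gives the estimate $C\e^{-\ti\nu X^\ve/2}$ toward $\ti D^0$.

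The main obstacle is the bookkeeping in this Gaussian-elimination step. One has to check that after factoring, every residual entry is either one of the limiting diagonal blocks or carries an exponentially small factor. This is where the ordering of stable and unstable columns between the left and right halves matters. I would first work it out on the model case $P\equiv\id$, then treat the $P$'s as perturbations.
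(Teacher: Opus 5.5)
Your overall route is the paper's. You write the two half-period propagators through $P^\ve_{\pm,\ell}$ and $P^\ve_{\pm,r}$, split along $(R^-_\infty,R^+_\infty)$, strip off the exponentially large blocks, and read off $\ti{D}^0$ from the diagonal blocks $L^-_\infty Q_\ell R^-_\infty$ and $L^+_\infty Q_r R^+_\infty$, where $Q_\ell=P^\ve_{+,\ell}(0)^{-1}P^\ve_{-,\ell}(0)$ and $Q_r=P^\ve_{-,r}(0)^{-1}P^\ve_{+,r}(0)$.

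\textbf{The rate step fails.} It is false that $X^\ve_r$ and $-X^\ve_\ell$ equal $X^\ve/2+\go(1)$. Since $X^\ve_r-X^\ve_\ell=X^\ve/2$ and $X^\ve_r+X^\ve_\ell=X^0+\go(\ve)$, both are $X^\ve/4+\go(1)$. Compare $X^\ve_r=-\frac{1}{2}a_s\sqrt{2\ka(v_s)}\ln\ve+\go(1)$ with $X^\ve=-2a_s\sqrt{2\ka(v_s)}\ln\ve+\go(1)$. Each pulse fills only half a period, unlike the single-pulse setting of \cite{yangConvergencePeriodGoes2019}. Hence replacing $P^\ve$ by $P^0$ via \eqref{cv_matrice_P_r} and \eqref{cv_matrice_P_l} costs $\go(\e^{-\ti{\nu} X^\ve/4})$. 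Your argument therefore proves $\abs{\ti{D}^\ve-\ti{D}^0}\le C\e^{-\ti{\nu} X^\ve/4}$, not the stated bound.

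The paper's own last step has the same weak point. It asserts an error $\e^{-\ti{\nu}\min(X^\ve/2,\abs{2X^\ve_\ell})}$ right after invoking estimates that give only $\e^{-\ti{\nu} X^\ve_r}$ and $\e^{\ti{\nu} X^\ve_\ell}$, i.e. $\e^{-\ti{\nu} X^\ve/4}$. Any uniform exponential rate is enough for Corollary~\ref{spectrum_periodic_evans_function} and the instability result. Still, the exponent $X^\ve/2$ is not what this argument delivers.

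\textbf{The junction issue is spurious.} $\ti{E}^\ve$ involves only $\Psi^\ve(0)\Psi^\ve(2X^\ve_\ell)^{-1}$ and $\Psi^\ve(0)\Psi^\ve(2X^\ve_r)^{-1}$, each computed inside a single frame. The normalizations at $x=0$ and at $x=2X^\ve_\ell,2X^\ve_r$ are exact, as the paper's proof uses them. So the outer factors drop out with no error, and nothing has to be matched. In any case the two frames are shifted by $X^\ve/2$, not by $X^\ve_r+X^\ve_\ell$.

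\textbf{The elimination you postpone is one line if done symmetrically.} Set $T=(R^-_\infty,R^+_\infty)$, $G_u(t)=L^-_\infty\e^{A^0_\infty t}R^-_\infty$, $G_s(t)=L^+_\infty\e^{A^0_\infty t}R^+_\infty$, $\hat Q_i=T^{-1}Q_iT$ and $D=\operatorname{diag}(G_u(X^\ve_\ell),G_s(X^\ve_r))$. Then
\[
D\,T^{-1}\ti{F}^\ve T\,D=\begin{pmatrix}\id&0\\0&G_s(\frac{X^\ve}{2})\end{pmatrix}\hat Q_\ell\begin{pmatrix}\id&0\\0&G_s(\frac{X^\ve}{2})\end{pmatrix}-\gamma\begin{pmatrix}G_u(-\frac{X^\ve}{2})&0\\0&\id\end{pmatrix}\hat Q_r\begin{pmatrix}G_u(-\frac{X^\ve}{2})&0\\0&\id\end{pmatrix}.
\]
This equals $\operatorname{diag}(L^-_\infty Q_\ell R^-_\infty,\,-\gamma L^+_\infty Q_r R^+_\infty)+\go(\e^{-\ti{\nu} X^\ve/2})$, uniformly in $\abs{\gamma}=1$. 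The asymmetry $X^0$ (the paper's matrix $B$) never appears. The only $X^\ve/4$ loss comes from $P^\ve\to P^0$.

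\textbf{Sign of the prefactor.} Here $(\det D)^2=\e^{2\tr(A^0_\infty\Pi_u)X^\ve_\ell+2\tr(A^0_\infty\Pi_s)X^\ve_r}$. So the unstable factor stripped from $\ti{E}^\ve$ is $\e^{-\tr A^0_\infty\Pi_u 2X^\ve_\ell}$, which is large because $X^\ve_\ell<0$. The normalizing prefactor must therefore carry $\e^{+\tr A^0_\infty\Pi_u 2X^\ve_\ell}$, which is what the paper uses inside its proof. The displayed definition of $\ti{D}^\ve$ has a sign typo, and the stripped factor $\e^{\tr A^0_\infty\Pi_u 2X^\ve_\ell}$ you state inherits it with the wrong sign.
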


\begin{proof}
    First, it is easy to see that
    \begin{align*}
        \ti{E}^\ve(z,\gamma) &= \begin{aligned}[t] \det\Big(&\Psi^{\ve}(0,z)\Psi^{\ve}(X^\ve_\ell,z)^{-1}\Psi^{\ve}(X^\ve_\ell,z)\Psi^{\ve}(2 X^\ve_\ell,z)^{-1} - 
                                 \Psi^{\ve}(0,z)\Psi^{\ve}(X^\ve_\ell,z)^{-1}\Psi^{\ve}(X^\ve_\ell,z)\Psi^{\ve}(0,z)^{-1}\\ &+
                                 \Psi^{\ve}(0,z)\Psi^{\ve}(X^\ve_r,z)^{-1}\Psi^{\ve}(X^\ve_r,z)\Psi^{\ve}(0,z)^{-1} - 
                                 \gamma \Psi^{\ve}(0,z)\Psi^{\ve}(X^\ve_r,z)^{-1}\Psi^{\ve}(X^\ve_r,z)\Psi^{\ve}(2 X^\ve_r,z)^{-1} \Big)\end{aligned}\\
                              &:= \det\Big(\ti{F}^\ve(z,\gamma) \Big).
    \end{align*}
    But, due to Lemmas~\ref{coefficient_constant_droite} and~\ref{coefficient_constant_gauche}, we may express $\Psi^{\ve}$ with the constant-coefficient problem, 
    \begin{align*}
        \Psi^{\ve}(0,z)\Psi^{\ve}(X^\ve_\ell,z)^{-1} =& \e^{-A^0_\infty X^\ve_\ell}P^\ve_{+,\ell}(0)^{-1},\\
        \Psi^{\ve}(0,z)\Psi^{\ve}(X^\ve_r,z)^{-1} =& \e^{-A^0_\infty X^\ve_r}P^\ve_{-,r}(0)^{-1},\\
        \Psi^{\ve}(X^\ve_\ell,z)\Psi^{\ve}(2 X^\ve_\ell,z)^{-1} =& P^\ve_{-,\ell}(0) \e^{-A^0_\infty X^\ve_\ell},\\
        \Psi^{\ve}(X^\ve_r,z)\Psi^{\ve}(2 X^\ve_r,z)^{-1} =& P^\ve_{+,r}(0) \e^{-A^0_\infty X^\ve_r}.
    \end{align*}
    Thus
    \begin{equation*}
        \ti{F}^\ve(z,\gamma) =\begin{aligned}[t] &\e^{-A^0_\infty X^\ve_\ell}P^\ve_{+,\ell}(0)^{-1}\left(P^\ve_{-,\ell}(0) \e^{-A^0_\infty X^\ve_\ell} - P^\ve_{+,\ell}(0)\e^{A^0_\infty X^\ve_\ell}\right)\\ 
                                        &+ \e^{-A^0_\infty X^\ve_r}P^\ve_{-,r}(0)^{-1} \left(P^\ve_{-,r}(0)\e^{A^0_\infty X^\ve_r} - \gamma P^\ve_{+,r}(0) \e^{-A^0_\infty X^\ve_r} \right).
                                    \end{aligned}
    \end{equation*}
    Using the spectral expansion formula 
    \begin{equation}\label{espansion_spectra}
        \e^{A^0_\infty x } = \e^{A^0_\infty \Pi_u x }R^-_\infty L^-_\infty + \e^{A^0_\infty \Pi_s x }R^+_\infty L^+_\infty,
    \end{equation}
    and the fact that
    \begin{equation*}
        \norm{\e^{A^0_\infty \Pi_s x }} \leq C\e^{-\ti{\nu} x}~\text{for}~x>0\quad \text{and} \quad \norm{\e^{A^0_\infty \Pi_u x }} \leq C\e^{ \ti{\nu} x} ~ \text{for}~x<0,
    \end{equation*}
    we obtain that
    \begin{multline*}
        P^\ve_{-,\ell}(0) \e^{-A^0_\infty X^\ve_\ell} - P^\ve_{+,\ell}(0)\e^{A^0_\infty X^\ve_\ell} = P^\ve_{-,\ell}(0)\e^{- A^0_\infty \Pi_u X^\ve_\ell}R^-_\infty L^-_\infty - P^\ve_{+,\ell}(0)\e^{A^0_\infty \Pi_s X^\ve_\ell}R^+_\infty L^+_\infty  + \go(\e^{\ti{\nu} X^\ve_\ell}) \\
                         = \Big(P^\ve_{-,\ell}(0)R^-_\infty , P^\ve_{+,\ell}(0)R^+_\infty\Big)\Bigg[\begin{pmatrix} L^-_\infty \e^{- A^0_\infty \Pi_u X^\ve_\ell}R^-_\infty & 0 \\ 0 & - L^+_\infty \e^{A^0_\infty \Pi_s X^\ve_\ell}R^+_\infty \end{pmatrix}
                         + \go(\e^{\ti{\nu} X^\ve_\ell})\Bigg]\begin{pmatrix} L^-_\infty \\ L^+_\infty \end{pmatrix},
    \end{multline*}
    and similarly
    \begin{multline*}
        P^\ve_{-,r}(0)\e^{A^0_\infty X^\ve_r} - \gamma P^\ve_{+,r}(0) \e^{-A^0_\infty X^\ve_r} \\ = \Big(P^\ve_{-,r}(0)R^-_\infty , P^\ve_{+,r}(0)R^+_\infty\Big)\Bigg[\begin{pmatrix} L^-_\infty \e^{ A^0_\infty \Pi_u X^\ve_r}R^-_\infty & 0 \\ 0 & -\gamma L^+_\infty \e^{-A^0_\infty \Pi_s X^\ve_r}R^+_\infty \end{pmatrix}
                         + \go(\e^{-\ti{\nu} X^\ve_r})\Bigg]\begin{pmatrix} L^-_\infty \\ L^+_\infty \end{pmatrix}.
    \end{multline*}
    Using that $- X^\ve_\ell +  X^\ve_r = \frac{X^\ve}{2}$ and Abel's formula, we get 
    \begin{equation*}
        (-1)^{-n_s}\e^{\tr A_\infty^0 \Pi_s 2 X^\ve_r} \e^{ \tr A_\infty^0 \Pi_u 2 X^\ve_\ell} = \e^{\tr A_\infty^0 \Pi_s (2 X^\ve_r+X^\ve_\ell)} \e^{ \tr A_\infty^0 \Pi_u X^\ve_\ell } \det \begin{pmatrix} L^-_\infty \e^{ A^0_\infty \Pi_u X^\ve_\ell}R^-_\infty & 0 \\ 0 & - L^+_\infty \e^{-A^0_\infty \Pi_s X^\ve_\ell}R^+_\infty \end{pmatrix}.
    \end{equation*}
    Combining all together,
    \begin{multline*}
        \ti{D}^\ve(z,\gamma) =(\gamma)^{-n_s} \e^{\tr A_\infty^0 \Pi_s (2 X^\ve_r+X^\ve_\ell)} \e^{ \tr A_\infty^0 \Pi_u X^\ve_\ell } 
        \\ \begin{aligned}[t] \times \det \Bigg[& \e^{-A^0_\infty X^\ve_\ell}P^\ve_{+,\ell}(0)^{-1}\left(P^\ve_{-,\ell}(0)R^-_\infty , P^\ve_{+,\ell}(0)R^+_\infty\right)\left(\id + \go(\e^{2 \ti{\nu}  X^\ve_\ell})\right)
                    \\&+ \e^{-A^0_\infty X^\ve_r}P^\ve_{-,r}(0)^{-1} \left(P^\ve_{-,r}(0)R^-_\infty , P^\ve_{+,r}(0)R^+_\infty\right)
                                     \Big(\begin{pmatrix} L^-_\infty \e^{ A^0_\infty \Pi_u (X^\ve_r + X^\ve_\ell) }R^-_\infty & 0 \\ 0 & \gamma L^+_\infty \e^{-A^0_\infty \Pi_s (X^\ve_r + X^\ve_\ell)}R^+_\infty \end{pmatrix}
                                     \\ &+ \go\left(\e^{- \ti{\nu} \frac{X^\ve}{2}} \right) \Big)\Bigg]\det\begin{pmatrix} L^-_\infty \\ L^+_\infty \end{pmatrix}. \end{aligned} 
    \end{multline*}
    Due to Lemma~\ref{propriétés_périodes}, 
    \begin{align}
        \begin{pmatrix} L^-_\infty \e^{ A^0_\infty \Pi_u (X^\ve_r + X^\ve_\ell) }R^-_\infty & 0 \\ 0 & \gamma L^+_\infty \e^{-A^0_\infty \Pi_s (X^\ve_r + X^\ve_\ell)}R^+_\infty \end{pmatrix} &= \begin{pmatrix} L^-_\infty \e^{ A^0_\infty \Pi_u X^0 }R^-_\infty & 0 \\ 0 & \gamma L^+_\infty \e^{-A^0_\infty \Pi_s X^0 }R^+_\infty \end{pmatrix}  + \go\left(\e^{-\ti{\nu} \frac{X^\ve}{2}}\right) \nonumber
         \\ &:= B + \go\left(\e^{-\ti{\nu} \frac{X^\ve}{2}}\right), \label{def_B}
    \end{align}
    so we have
    \begin{multline*}
        \ti{D}^\ve(z,\gamma) =(\gamma)^{-n_s}  \e^{\tr A_\infty^0 \Pi_s (2 X^\ve_r+X^\ve_\ell)} \e^{ \tr A_\infty^0 \Pi_u X^\ve_\ell} \det\left(\id + \go(\e^{-\ti{\nu} \min\left(\frac{X^\ve}{2},\abs{2 X^\ve_\ell}\right)})\right) \det\begin{pmatrix} L^-_\infty \\ L^+_\infty \end{pmatrix} 
        \\ \times \det \Bigg[ \e^{-A^0_\infty X^\ve_\ell}\left( P^\ve_{+,\ell}(0)^{-1}P^\ve_{-,\ell}(0)R^-_\infty , R^+_\infty\right)
                             + \e^{-A^0_\infty X^\ve_r} \left(R^-_\infty , P^\ve_{-,r}(0)^{-1}P^\ve_{+,r}(0)R^+_\infty\right) B \Bigg]. 
    \end{multline*}
    By reusing the spectral expansion formula~\eqref{espansion_spectra},
    \begin{align*}
        \e^{-A^0_\infty X^\ve_\ell}\left( P^\ve_{+,\ell}(0)^{-1}P^\ve_{-,\ell}(0)R^-_\infty , R^+_\infty\right) =& \e^{-A^0_\infty \Pi_u X^\ve_\ell}R^{-}_\infty L^-_\infty\left( P^\ve_{+,\ell}(0)^{-1}P^\ve_{-,\ell}(0)R^-_\infty , R^+_\infty\right) + \go\left(\e^{\ti{\nu} X^\ve_\ell}\right)\\
                    =& \e^{-A^0_\infty \Pi_u X^\ve_\ell}R^{-}_\infty \left(L^-_\infty P^\ve_{+,\ell}(0)^{-1}P^\ve_{-,\ell}(0)R^-_\infty , 0\right) + \go\left(\e^{\ti{\nu} X^\ve_\ell}\right),
    \end{align*}
    and similarly
    \begin{equation*}
        \e^{-A^0_\infty X^\ve_r} \left(R^-_\infty , P^\ve_{-,r}(0)^{-1}P^\ve_{+,r}(0)R^+_\infty\right) B = \e^{-A^0_\infty \Pi_s X^\ve_r}R^+_\infty \left(0 , L^+_\infty P^\ve_{-,r}(0)^{-1}P^\ve_{+,r}(0)R^+_\infty\right) B + \go\left(\e^{- \ti{\nu} X^\ve_r}\right).
    \end{equation*}
    Thanks to~\eqref{def_B}, we obtain 
    \begin{multline*}
        \ti{D}^\ve(z,\gamma) = \gamma^{-n_s} \e^{\tr A_\infty^0 \Pi_s (2 X^\ve_r+X^\ve_\ell)} \e^{ \tr A_\infty^0 \Pi_u X^\ve_\ell }\det\left(R^-_\infty,R^+_\infty\right) \\ 
        \begin{aligned}[t] & \times  
            \det\Bigg[\begin{pmatrix}L^-_\infty\e^{-A^0_\infty \Pi_u X^\ve_\ell}R^{-}_\infty & 0 \\ 0 & L^+_\infty\e^{-A^0_\infty \Pi_s X^\ve_r}R^+_\infty \end{pmatrix}
             + \go\left(\e^{-\ti{\nu} \min(X^\ve_r,\abs{X^\ve_\ell})} \right) \Bigg]\\
            &\times \det\begin{pmatrix}L^-_\infty P^\ve_{+,\ell}(0)^{-1}P^\ve_{-,\ell}(0)R^-_\infty & 0 \\ 0 & L^+_\infty P^\ve_{-,r}(0)^{-1}P^\ve_{+,r}(0)R^+_\infty \end{pmatrix}\\
            &\times \det\begin{pmatrix}\id & 0 \\ 0 & \gamma L^+_\infty \e^{-A^0_\infty \Pi_s X^0 }R^+_\infty \end{pmatrix}
            \times \det\left(\id + \go(\e^{-\ti{\nu} \min\left(\frac{X^\ve}{2},\abs{2 X^\ve_\ell}\right)})\right) \det\begin{pmatrix} L^-_\infty \\ L^+_\infty \end{pmatrix} . \end{aligned}
    \end{multline*}
    So using the Abel's formula and $\left(\begin{smallmatrix} L^-_\infty \\ L^+_\infty\end{smallmatrix}\right) = \left(R^-_\infty,R^+_\infty \right)^{-1}$ we get 
    \begin{align*}
        \ti{D}^\ve(z,\gamma) =& \e^{\tr A_\infty^0 \Pi_s (X^\ve_r+X^\ve_\ell - X^0)}  \det\Big(L^-_\infty P^\ve_{+,\ell}(0)^{-1} P^\ve_{-,\ell}(0) R^-_\infty\Big)\det\Big(L^+_\infty P^\ve_{-,r}(0)^{-1} P^\ve_{+,r}(0) R^+_\infty\Big)\\ 
                                    &\times \det\left(\id + \go\left(\e^{-\ti{\nu} \min\left(\frac{X^\ve}{2},\abs{2 X^\ve_\ell}\right)}\right)\right)^2.
    \end{align*}
    With~\eqref{cv_matrice_P_l},~\eqref{cv_matrice_P_r}, Definition~\ref{objet_limite_fonction_evans} and~\eqref{cv_demi_période}, we deduce
    \begin{equation*}
        \ti{D}^\ve(z,\gamma) = \ti{D}^0(z,\gamma) + \go\left(\e^{-\ti{\nu} \min\left(\frac{X^\ve}{2},\abs{2 X^\ve_\ell}\right)}\right).
    \end{equation*}
    By using that $X^\ve_\ell = - \frac{1}{2}a_s\sqrt{2\ka(v_s)}\ln(\ve) + \go(1)$ and $X^\ve = - 2a_s\sqrt{2\ka(v_s)}\ln\ve + \go(1)$ we arrive at the claimed expansion 
    \begin{equation*}
        \ti{D}^\ve(z,\gamma) = \ti{D}^0(z,\gamma) + \go\left(\e^{-\ti{\nu} \frac{X^\ve}{2}}\right).
    \end{equation*}
\end{proof}

\begin{corollaire}\label{spectrum_periodic_evans_function}
    On each compact $K \subset \C_+$ such that $\ti{D}^0$ does not vanish on $\partial K$, the spectrum of $\Al^\ve$ for $X^\ve$ sufficiently large consists of loops of spectra $z^\ve_{\ast,k}(\gamma),~k=1,\dots,m_r$, within $\go\left(\e^{-\ti{\nu} \frac{X^\ve}{2 m_\ast}}\right)$ of an eigenvalue $z_\ast \in K$ of $\Al^0_r$ or $\Al^0_\ell$. Here $m_\ast$ is the sum of the multiplicities of $z_\ast$ as an eigenvalue of $\Al^0_r$ and $\Al^0_\ell$ and $\ti{\nu}$ is as in Proposition~\ref{cv_evans_away_essential_spectra}. 
\end{corollaire}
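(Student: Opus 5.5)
The plan is to deduce the statement from Proposition~\ref{cv_evans_away_essential_spectra} by Rouché's theorem applied, for each fixed Floquet parameter $\gamma$ with $\abs{\gamma}=1$, to the analytic function $z\mapsto \ti{D}^\ve(z,\gamma)$ on $K$. First we check that the zeros of $\ti{D}^\ve(\cdot,\gamma)$ are exactly the $\gamma$-eigenvalues of $\Al^\ve$. In $\ti{D}^\ve$ the factors $\e^{\tr A^0_\infty\Pi_s 2X^\ve_r}$, $\e^{-\tr A^0_\infty\Pi_u 2X^\ve_\ell}$, $(-\gamma)^{-n_s}$ never vanish. Moreover, Abel's formula links $\ti{E}^\ve$ to $E^\ve$ through a nonvanishing exponential, since $\Psi^\ve(0)\Psi^\ve(2X^\ve_\ell)^{-1}$ is conjugate to the monodromy over one period. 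So zeros, with multiplicities, of $\ti{D}^\ve(\cdot,\gamma)$ and $E^\ve(\cdot,\gamma)$ coincide on $\C_+$. By Floquet--Bloch theory \cite{gardnerStructureSpectraPeriodic1993}, the spectrum of $\Al^\ve$ in $K$ is then the union over $\gamma$ of these zeros. Analyticity in $z$ comes from that of $\Psi^\ve$ and of the Kato projections $\Pi_{s,u}$.

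Second, by Proposition~\ref{zero_D}, the zeros of $\ti{D}^0$ in $K$ are the finitely many eigenvalues $z_\ast$ of $\Al^0_r$ or $\Al^0_\ell$, each of order $m_\ast$ equal to the sum of its multiplicities. Fix such $z_\ast$ and write $\ti{D}^0(z)=(z-z_\ast)^{m_\ast}h(z)$ with $h$ nonvanishing near $z_\ast$. On $\partial K$ and outside small disks around the $z_\ast$, $\abs{\ti{D}^0}$ is bounded below by a positive constant $c_K$, since it is continuous and nonvanishing on a compact set. Proposition~\ref{cv_evans_away_essential_spectra} gives $\abs{\ti{D}^\ve-\ti{D}^0}\le C\e^{-\ti{\nu}X^\ve/2}$ uniformly in $z\in K$ and $\abs{\gamma}=1$. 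Hence for $X^\ve$ large there are no zeros of $\ti{D}^\ve(\cdot,\gamma)$ in $K$ away from the $z_\ast$.

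Third, the localization. Take the circle $\abs{z-z_\ast}=\rho_\ve:=A\,\e^{-\ti{\nu}X^\ve/(2m_\ast)}$ with $A$ large. On it $\abs{\ti{D}^0}\ge \inf\abs{h}\,A^{m_\ast}\e^{-\ti{\nu}X^\ve/2}$, which exceeds $C\e^{-\ti{\nu}X^\ve/2}$ once $A^{m_\ast}\inf\abs{h}>C$. Rouché then gives exactly $m_\ast$ zeros $z^\ve_{\ast,k}(\gamma)$, counted with multiplicity, inside this disk for every $\gamma$. This is the claimed $\go(\e^{-\ti{\nu}X^\ve/(2m_\ast)})$ proximity, uniform in $\gamma$.

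Fourth, the loop structure. As $\gamma$ runs over the unit circle, the unordered $m_\ast$-tuple of roots depends continuously on $\gamma$, by the argument principle or continuity of roots of analytic families. So these $\gamma$-eigenvalues trace closed curves, the loops of spectra. The point I expect to need most care is the uniformity of constants in $\gamma$ and $z$: the exponent in Proposition~\ref{cv_evans_away_essential_spectra} is only valid for $\ti{\nu}$ below the spectral gap over $K$, and the lower bound on $h$ must be uniform. Both hold because $K$ is compact in $\C_+$ and the estimate is already uniform in $\abs{\gamma}=1$. In particular every Floquet exponent yields an eigenvalue near $z_\ast\in\C_+$, which is what the second part of Theorem~\ref{main_results} requires.
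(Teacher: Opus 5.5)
Your argument is correct and is the paper's own: the paper proves this in one line, combining Proposition~\ref{zero_D}, the uniform-in-$\gamma$ estimate of Proposition~\ref{cv_evans_away_essential_spectra} and Rouché's theorem as in Corollary~5.1 of \cite{yangConvergencePeriodGoes2019}, and you have written out those details, including the radius $A\e^{-\ti{\nu}X^\ve/(2m_\ast)}$ that gives the $m_\ast$-th root in the error. One small slip: it is $\Psi^\ve(2X^\ve_r)\Psi^\ve(2X^\ve_\ell)^{-1}$, not $\Psi^\ve(0)\Psi^\ve(2X^\ve_\ell)^{-1}$, that is conjugate to the one-period monodromy, through the factorization $\ti{E}^\ve=\det\big(\Psi^\ve(0)\Psi^\ve(2X^\ve_r)^{-1}\big)\det\big(\Psi^\ve(2X^\ve_r)\Psi^\ve(2X^\ve_\ell)^{-1}-\gamma\id\big)$, but this does not change your conclusion that the zeros of $\ti{D}^\ve(\cdot,\gamma)$ and $E^\ve(\cdot,\gamma)$ coincide with multiplicities.
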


\begin{proof}
    It follows from Proposition~\ref{zero_D} and the Rouché theorem, like as for Corollary~5.1 in \cite{yangConvergencePeriodGoes2019}.
\end{proof}

With this result in hand we derive the second half of Theorem~\ref{main_results}. 

\begin{corollaire}
With Assumption~\ref{hypothèse_Hm} and~\ref{hypothèse_potentiel}, if one of the solitons is spectrally unstable, then two-pulse periodic waves of sufficiently large period are spectrally unstable.
\end{corollaire}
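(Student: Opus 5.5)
The plan is to combine Corollary~\ref{spectrum_periodic_evans_function} with Proposition~\ref{zero_D}. Suppose, say, that the right-hand soliton $U_r^0$ is spectrally unstable; the left-hand case is identical.

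\emph{Step 1: locate an unstable eigenvalue.} We need an eigenvalue $z_\ast$ of $\Al^0_r$ with $\Rel(z_\ast)>0$. The essential spectrum is $\sigma(\Al^0_\infty)\subset i\R$. Moreover, since the linearized operator $B\p_x\A(U^0_r)$ is Hamiltonian, its spectrum is symmetric under $z\mapsto -\bar z$. Hence any spectral instability is carried by an isolated eigenvalue, and we may take it in $\C_+$. By the remarks after the definition of $D^0_r$, this $z_\ast$ is a zero of the analytic function $D^0_r$ on $\C_+$. By Proposition~\ref{zero_D}, $z_\ast$ is then a zero of $\ti D^0$, with finite multiplicity $m_\ast\ge1$ equal to the sum of its multiplicities as an eigenvalue of $\Al^0_r$ and $\Al^0_\ell$.

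\emph{Step 2: isolate it.} Since $\ti D^0$ is analytic on $\C_+$ and not identically zero, its zeros are isolated. Choose a closed disc $K=\overline{B(z_\ast,\rho)}\subset\C_+$ with $\rho<\Rel(z_\ast)$, small enough that $\ti D^0$ has no zero on $\p K$ and none in $K$ other than $z_\ast$. The fact that $\ti D^0\not\equiv0$ needs a brief justification. It follows from the factorization $D^0_i = \det P^0\cdot(\cdots)$ in the proof of Proposition~\ref{zero_D}, together with the fact that $D^0_i$ does not vanish for large real $z$, since solitary-wave operators have no eigenvalues far out on the positive real axis. Alternatively, one can note directly that there are at most finitely many eigenvalues.

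\emph{Step 3: transfer to the periodic wave.} Fix a Floquet parameter $\gamma$ with $\abs{\gamma}=1$. Proposition~\ref{cv_evans_away_essential_spectra} gives $\ti D^\ve(\cdot,\gamma)\to\ti D^0$ uniformly on $K$. Since $\min_{\p K}\abs{\ti D^0}>0$, Rouché's theorem shows that, for $\ve$ small, that is, for $X^\ve$ large, $\ti D^\ve(\cdot,\gamma)$ has exactly $m_\ast$ zeros in $K$, uniformly in $\gamma$. This is the content of Corollary~\ref{spectrum_periodic_evans_function}. The prefactors in~\eqref{fonction_Evans_renormalisé} and in Abel's formula are nonvanishing exponentials and powers of $\gamma$. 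Therefore these zeros are zeros of $E^\ve(\cdot,\gamma)$, that is, $\gamma$-eigenvalues of $\Al^\ve$.

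\emph{Step 4: conclude.} Since $K\subset\C_+$, each of these eigenvalues has positive real part. So for every Floquet exponent the periodic linearized operator has spectrum in $\C_+$, which gives spectral instability under localized perturbations. Taking $\gamma=1$ gives instability under co-periodic perturbations as well.

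The main subtlety is in Step 1. We must make sure that "spectrally unstable soliton" really produces an eigenvalue in the open right half-plane $\C_+$, where the Evans functions are defined. This rests on the essential spectrum lying on $i\R$ and on the Hamiltonian symmetry of the spectrum. The remaining steps are a direct application of the preceding results.
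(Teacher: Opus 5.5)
Your proposal is correct and follows the paper's argument. An unstable eigenvalue $z_\ast\in\C_+$ of $\Al^0_r$ or $\Al^0_\ell$ is a zero of $\ti D^0$ by Proposition~\ref{zero_D}. Corollary~\ref{spectrum_periodic_evans_function}, which is Rouché applied to Proposition~\ref{cv_evans_away_essential_spectra} on a small disc around $z_\ast$, then gives $\gamma$-eigenvalues of $\Al^\ve$ in $\C_+$ for every $\gamma$ once $X^\ve$ is large. Your extra details (Hamiltonian symmetry, $\ti D^0\not\equiv 0$, nonvanishing prefactors) only make explicit what the paper's two-line proof leaves implicit.
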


\begin{proof}
    If one of the solitons is spectrally unstable, we know that $\Al^0_r$ or $\Al^0_\ell$ admits an eigenvalue $z_\ast \in \C_+$. So with Corollary~\ref{spectrum_periodic_evans_function}, for $\ve$ small enough, $\Al^\ve$ admits a loop of spectrum as close as we want to $z_\ast$. This tells us exactly that the wave is spectrally unstable.  
\end{proof}

\begin{remarque}
With Theorem~\ref{theorem_stability_instability_soliton} and this corollary, we conclude in the case left open in Section~\ref{double_bosse}. If $\dd_c^2 \mo_\ell + \dd_c^2 \mo_r < 0$, then two-pulse periodic waves of sufficiently large period are spectrally unstable.
\end{remarque}

\begin{remarque}
Though it is not useful in the stability analysis, the reader may wonder how the essential spectrum parts of the spectrum of the two solitons impacts the spectrum of the two-solitons. In the single pulse case this is analyzed in \cite{yangConvergencePeriodGoes2019} and the arguments may also be extended to the two-pulses case. Even we do not provide a proof of this claims let us describe the outcome. As for the spectrum under localized perturbations the curves of essential spectrum of the two solitons also perturb continuously so as to provide curves for the two-pulse periodic waves, even though the local parametrization by the Floquet multiplier $\gamma$ becomes singular. If one restricts to co-periodic perturbations, that is if one sets $\gamma\equiv1$, then one obtains a discrete sampling of the perturbed continuous curves.
\end{remarque}

\appendix
\section{Asymptotic behavior of integrals }\label{dl_intégrale}

\subsection{The general integrals}
We enforce Assumption~\ref{hypothèse_potentiel}. The goal is to compute the asymptotic expansion of  
\begin{equation*}
    I(\phi) = \int_{v_\ell}^{v_r} \frac{\phi(v)}{\sqrt{\mu - \w(v)}} \dd v.
\end{equation*}
when $\ve = \sqrt{\mu-\mu_s} \rightarrow 0$ and $\phi$ is a smooth function. \newline
One may check that for $\mu > \mu_s$, $\la(v) = \mu - \w(v)$ admits two simple roots, which are $v_\ell$ and $v_r$. But, at the limits, when $\mu = \mu_s$, $\la(v) = \mu_s - \w(v)$ admits two simple roots $v_\ell^s$ and $v_r^s$, and a double root $v_s$. So the difficulty of this calculation is around the double root that appears at the limit. To make the computation easier we split the integral into different parts, where each root appears separately. To do so, using Assumption~\ref{hypothèse_potentiel}, 
we first introduce $\eta > 0$ such that for all $(\mu,\lm,c) \in\Omega$, in a neighborhood of $(\mu_s,\lm,c)$, we have
\begin{equation}\label{def_eta_v_s}
    \forall v \in \ff{v_s(\lm,c)-\eta}{v_s(\lm,c)+\eta}, \quad \p_v^2\w(v;\lm,c) < 0,
\end{equation}
and
\begin{align}\label{def_eta_1}
    &\forall v \in \ff{v_\ell(\mu,\lm,c)}{v_\ell^s(\lm,c)+\eta}, \quad \p_v\w(v;\lm,c) < 0, \\
    &\forall v \in \ff{v_r^s(\lm,c)-\eta}{v_r(\mu,\lm,c)}, \quad \p_v\w(v;\lm,c) > 0.\label{def_eta_2}
\end{align}
Nonsingular parts are
\begin{equation*}
    I_{m,r}(\phi) = \int_{v_s+\eta}^{v_s^r-\eta} \frac{\phi(v)}{\sqrt{\mu - \w(v)}}\dd v \quad \text{and} \quad I_{m,\ell}(\phi) = \int_{v_\ell^s+\eta}^{v_s-\eta} \frac{\phi(v)}{\sqrt{\mu - \w(v)}}\dd v.
\end{equation*}
Asymptotic expansions for these integrals are calculated in Proposition~\ref{dl_mid} in the next subsection.\newline
Parts with a simple root are
\begin{equation*}
    I_r(\phi) := \int_{v_r^s-\eta}^{v_r} \frac{\phi(v)}{\sqrt{\mu-\w(v)}} \dd v \quad \text{and} \quad I_\ell(\phi) := \int_{v_\ell}^{v_\ell^s+\eta} \frac{\phi(v)}{\sqrt{\mu-\w(v)}} \dd v.
\end{equation*}
Their asymptotic expansion is provided in Propositions~\ref{dl_droite} and~\ref{dl_gauche}.\newline
The hardest part,
\begin{equation*}
    I_{\eta}(\phi) = \int_{v_s-\eta}^{v_s+\eta} \frac{\phi(v)}{\sqrt{\mu - \w(v)}}\dd v,
\end{equation*}
whose asymptotic expansion is given in Subsection~\ref{dl_singularité_milieux}.\newline
Gathering all the results, we obtain the following theorem, which gives the asymptotic behavior of the integral $I(\phi)$.

\begin{theorem}\label{dl_int_phi}
With Notation~\ref{note_lag},~\ref{note_dl} and Assumption~\ref{hypothèse_potentiel}, for $\ve = \sqrt{\mu - \mu_s} \rightarrow 0$, we have the following asymptotic expansion
\begin{equation*}
    I(\phi) = A_0(\phi)\ln(\ve) + B_0(\phi) + B_1(\phi)\ve + A_2(\phi)\ve^2 \ln(\ve) + B_2(\phi) \ve^2 + \go\left(\ve^3\ln(\ve)\right),
\end{equation*}
with
\begin{enumerate}[label=\roman*)]
    \item $A_0(\phi) = -\frac{2\phi_s}{\sqrt{-R_s}}$,
    \item $B_0(\phi) =  \begin{aligned}[t] &\int_{v_\ell^s}^{v_s-\eta} \frac{\phi(v)}{\abs{v-v_s}\sqrt{\abs{R(v)}}} \dd v + \int_{v_s+\eta}^{v_r^s} \frac{\phi(v)}{\abs{v-v_s}\sqrt{\abs{R(v)}}} \dd v
                        +2\frac{\phi_s}{\sqrt{-R_s}}\ln(2\eta\sqrt{-R_s})\\ &+ \int_{v_s}^{v_s+\eta}\frac{1}{v-v_s}\left(\frac{\phi(v)}{\sqrt{-R(v)}}-\frac{\phi_s}{\sqrt{-R_s}}\right) \dd v
                        + \int_{v_s-\eta}^{v_s}\frac{-1}{v-v_s}\left(\frac{\phi(v)}{\sqrt{-R(v)}}-\frac{\phi_s}{\sqrt{-R_s}}\right)\dd v,\end{aligned}$
    \item $B_1(\phi) = -2\frac{\phi_s}{(-R_s)}$,
    \item $A_2(\phi) = 3\frac{\phi_{v,s}R_{v,s}+\phi_sR_{vv,s}}{4(-R_s)^{5/2}} + \frac{15\phi_sR_{v,s}^2}{8(-R_s)^{7/2}}$,
    \item $B_2(\phi) = \begin{aligned}[t]
                        &-\int_{v_s+\eta}^{v_s^r-\eta} \frac{1}{2} \frac{\phi(v)}{(\mu_s - \w(v))^{3/2}}\dd v - \int_{v_\ell^s+\eta}^{v_s-\eta} \frac{1}{2} \frac{\phi(v)}{(\mu_s - \w(v))^{3/2}}\dd v\\
                        &+ \int_{v_\ell^s}^{v_\ell^s+\eta} \frac{\frac{v_\ell^s+\eta-v}{\eta} p_\ell^s\ti{f}_v(v,v_r^s,v_\ell^s) +\left(p_r^s + p_\ell^s \right)\ti{f}_z(v,v_r^s,v_\ell^s)}{\sqrt{-(v-v_r^s)(v - v_\ell^s)}}\dd v\\
                        &- \int_{v_\ell^s}^{v_\ell^s+\eta} \frac{\ti{f}(v,v_r^s,v_\ell^s)}{2}\frac{p_r^s+\delta^s p_\ell^s}{\sqrt{(v-v_\ell^s)}(v_r^s-v)^{3/2}} \dd v\\
                        &+ \int_{v_r^s-\eta}^{v_r^s} \frac{\frac{v-v_r^s+\eta}{\eta} p_r^s\ti{f}_v(v,v_r^s,v_\ell^s) +\left(p_r^s + p_\ell^s \right)\ti{f}_z(v,v_r^s,v_\ell^s)}{\sqrt{-(v-v_r^s)(v - v_\ell^s)}}\dd v\\ 
                        &+ \int_{v_r^s-\eta}^{v_r^s} \frac{\ti{f}(v,v_r^s,v_\ell^s)}{2}\frac{p_\ell^s+\delta^s p_r^s}{\sqrt{(v_r^s-v)}(v-v_\ell^s)^{3/2}} \dd v
                        + B_2^{\eta,-}(\phi) + B_2^{\eta,+}(\phi)\end{aligned}$
    \end{enumerate}
    where $B_2^{\eta,+}(\phi)$ and $B_2^{\eta,-}(\phi)$ are defined in Proposition~\ref{dl_int_positive} and~\ref{dl_int_negative}, $\ti{f}(v,w,z) = \frac{\phi(v)}{\sqrt{\ti{R}(v,w,z)}}$, $R(v) = \ti{R}(v,v_s,v_s)$, $R_s = R(v_s)$, $\phi_s = \phi(v_s)$ and
    \begin{equation*}
        \ti{R}(v,w,z) = \int_{0}^{1} \int_0^1 t\p_{v}^2\w\left(w+t(w-z)+t u(v-z)\right)\dd u \dd t.
    \end{equation*}
\end{theorem}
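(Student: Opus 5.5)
The plan is to split $I(\phi)=I_\ell(\phi)+I_{m,\ell}(\phi)+I_\eta(\phi)+I_{m,r}(\phi)+I_r(\phi)$ as announced above. Each piece will be expanded separately in powers $\ve^k$ and $\ve^k\ln\ve$ up to $\go(\ve^3\ln\ve)$, and the coefficients collected at the end.

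\textbf{Regular pieces.} On the pieces away from all roots, $I_{m,\ell}$ and $I_{m,r}$, the integrand $\phi(v)(\ve^2+\mu_s-\w(v))^{-1/2}$ is smooth in $\ve^2$ uniformly on the fixed interval. A Taylor expansion in $\ve^2$ then gives the constant terms $\int\phi/\sqrt{\mu_s-\w}$ and the $\ve^2$ terms $-\frac12\int\phi/(\mu_s-\w)^{3/2}$ appearing in $B_2$.

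\textbf{Simple-root pieces.} For $I_r$ and $I_\ell$ I would factor $\mu-\w(v)=-(v-v_r)(v-v_\ell)\ti R(v,v_r,v_\ell)$, where $\ti R$ is as in Notation~\ref{note_lag} but centred at the two simple roots. This leaves a square-root singularity only at the endpoint, which I would remove with the affine change of variable $v=v_r-\sig\cdot(\text{length})$. The only $\ve$-dependence is then through $v_r,v_\ell$, whose expansions $v_{r,\ell}=v_{r,\ell}^s+p_{r,\ell}^s\ve^2+\go(\ve^4)$ are given by Proposition~\ref{dl_racines}. Differentiating in these endpoints yields regular expansions in $\ve^2$, with first-order terms involving $p_r^s$, $p_\ell^s$ and the derivatives $\ti f_v,\ti f_z$, exactly as in $B_2$. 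In particular these pieces produce no logarithms and no odd powers of $\ve$.

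\textbf{Singular piece (main obstacle).} The hard part is $I_\eta$, where the double root at $v_s$ forms as $\ve\to0$. I would split it at $v_s$ into $[v_s,v_s+\eta]$ and $[v_s-\eta,v_s]$ and write $\mu-\w=\ve^2+(v-v_s)^2(-R(v))$, with $-R>0$ by \eqref{def_eta_v_s}. Next I would introduce the Morse-type coordinate $w=(v-v_s)\sqrt{-R(v)}$, a diffeomorphism near $v_s$ since $\ti R$ is smooth and nonvanishing. In this coordinate the integrand becomes $h(w)/\sqrt{\ve^2+w^2}$ with $h$ smooth and $h(0)=\phi_s/\sqrt{-R_s}$. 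I would then subtract the Taylor polynomial of $h$ to order two and integrate the explicit terms exactly. The monomial $\int_0^{W}dw/\sqrt{\ve^2+w^2}=\operatorname{arcsinh}(W/\ve)$ gives $-\ln\ve+\ln(2W)+\go(\ve^2)$. The linear monomial gives $W-\ve+\go(\ve^2)$, which is the source of the odd term $B_1\ve$; the two sides contribute with the same sign, so the constant is $-2\phi_s/(-R_s)$ after the Jacobian at $w=0$. The quadratic monomial produces the $\ve^2\ln\ve$ term. Its coefficient $A_2$ requires $h''(0)$, hence $R_{v,s}$, $R_{vv,s}$ and $\phi_{v,s}$; this is where the $\frac{15}{8}R_{v,s}^2(-R_s)^{-7/2}$ term arises. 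The remainder $(h-\text{Taylor})/\sqrt{\ve^2+w^2}$ is bounded by $|w|^3/|w|$, and dominated convergence together with a further Taylor step shows it contributes only to constants and $\ve^2$ terms, with error $\go(\ve^3\ln\ve)$. Finally I would rewrite the $\ve^0$ coefficient back in the variable $v$. This gives the two principal-value-type integrals and the term $2\frac{\phi_s}{\sqrt{-R_s}}\ln(2\eta\sqrt{-R_s})$ in $B_0$, the latter from $\ln(2W)$ with $W\approx\eta\sqrt{-R_s}$. The delicate bookkeeping in this step, carried out in Subsection~\ref{dl_singularité_milieux} (Propositions~\ref{dl_int_positive}, \ref{dl_int_negative}), is the main risk. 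Summing all pieces gives $A_0=-2\phi_s/\sqrt{-R_s}$ and the other stated coefficients.
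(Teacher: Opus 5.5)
\textbf{Verdict: there is a genuine gap, and it cannot be closed, because the stated $B_1$ and $A_2$ are not the correct coefficients.}

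Your decomposition and your treatment of $I_{m,\ell},I_{m,r},I_\ell,I_r$ match the paper's. For $I_\eta$ you take a different route. The paper Taylor-expands $\phi(v_s+h)/\sqrt{1-\sig^2R(v_s+h)}$ in $h=\ve\sig$ and then evaluates the explicit integrals of Lemmas~\ref{ln}--\ref{I_eta,+,0,2}. Your Morse coordinate $y=(v-v_s)\sqrt{-R(v)}$ is a sound and cleaner alternative.

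The gap is that you do not compute the coefficients from $h$; you assert they agree with the statement, and they do not. Since $y$ is a diffeomorphism through $v_s$, a single smooth $h(y)=h_0+h_1y+h_2y^2+\go(y^3)$ serves on both halves. The linear monomial then contributes $h_1\big(\sqrt{\ve^2+Y_+^2}-\sqrt{\ve^2+Y_-^2}\big)$. The two halves give $-h_1\ve$ and $+h_1\ve$, with opposite signs, so no $\ve^1$ term survives. Moreover $h_1=\frac{\phi_{v,s}}{-R_s}+\frac{\phi_sR_{v,s}}{(-R_s)^2}$, not $\frac{\phi_s}{-R_s}$.

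Carried out, your method gives $B_1(\phi)=0$; in fact only $\ve^{2k}$ and $\ve^{2k}\ln\ve$ occur. It also gives $A_2(\phi)=h_2=\frac{\phi_{vv,s}}{2(-R_s)^{3/2}}+\frac{3\phi_{v,s}R_{v,s}}{2(-R_s)^{5/2}}+\frac{3\phi_sR_{vv,s}}{4(-R_s)^{5/2}}+\frac{15\phi_sR_{v,s}^2}{8(-R_s)^{7/2}}$. This involves $\phi_{vv,s}$, which is missing from your list, and differs from the stated $A_2$. Two quick checks:
\begin{itemize}
\item Take $\w=\mu_s-(v-v_s)^2$ near $v_s$ and $\phi\equiv1$. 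Then $I_\eta(\phi)=2\operatorname{arcsinh}(\eta/\ve)=-2\ln\ve+2\ln(2\eta)+\frac{\ve^2}{2\eta^2}+\go(\ve^4)$, and all other pieces are smooth in $\ve^2$. The statement instead predicts a term $-2\ve$.
\item With $\phi=(v-v_s)^2$ instead, $\int_{-\eta}^{\eta}w^2(\ve^2+w^2)^{-1/2}\dd w$ has $\ve^2\ln\ve$-coefficient $1$, whereas the stated $A_2$ vanishes.
\end{itemize}

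So your step ``the two sides contribute with the same sign, so the constant is $-2\phi_s/(-R_s)$'' fails, and no correct argument reaches the stated $B_1$ and $A_2$. The discrepancy comes from the paper's proof of Proposition~\ref{dl_int_positive}: in $I^{0,1}_{\eta,+}(\phi)$ the factor $\phi_v(v_s+t\ve\sig)$ has been replaced by $\phi(v_s+t\ve\sig)$. This has three consequences.
\begin{itemize}
\item It puts $-\frac{\phi_s}{-R_s}$ into both $B_1^{\eta,\pm}$, which then add, instead of $\mp\frac{\phi_{v,s}}{-R_s}$, which cancel.
\item It puts $\pm\big(\frac{\phi_{v,s}}{4(-R_s)^{3/2}}+\frac{3\phi_sR_{v,s}}{8(-R_s)^{5/2}}\big)$ into $A_2^{\eta,\pm}$, which cancel, instead of $\frac{\phi_{vv,s}}{4(-R_s)^{3/2}}+\frac{3\phi_{v,s}R_{v,s}}{8(-R_s)^{5/2}}$ on each side, which add.
\item The corresponding parts of $B_2^{\eta,\pm}$ inherit the same slip.
\end{itemize}

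An internal check supports the correction. For $\phi=\n\la\sqrt{2\ka}$, the corrected $A_2$ contributes $\frac{a_s^2}{2}\y_s\,Z_s\otimes V_s$ to $C_2$ in Theorem~\ref{dl_hess_theta_2_bosse}. This matches the term $\frac{a_s^2}{2}\y_s\,V_s\otimes Z_s$ coming from $\ntot A_0$, as symmetry of $\n^2\Theta$ requires.

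Executed honestly, your argument proves the corrected expansion, not the stated one. The paper's downstream conclusions survive. $B_1$ and $A_2$ enter $\n^2\Theta$ only through terms $X\otimes V_s$. These touch only the first column of $\Ps^{\T}\n^2\Theta\Ps$, at subleading order, and vanish in $S_s\cdot D_2S_s$ because $S_s\cdot V_s=0$.
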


\subsubsection{Rootless parts}

\begin{proposition}\label{dl_mid}
    For $\ve\rightarrow0$ we have the following asymptotic expansion
    \begin{equation*}
        I_{m,r}(\phi) = \int_{v_s+\eta}^{v_s^r-\eta} \frac{\phi(v)}{\sqrt{\mu_s - \w(v)}} \dd v -\ve^2 \int_{v_s+\eta}^{v_s^r-\eta} \frac{1}{2} \frac{\phi(v)}{(\mu_s - \w(v))^{3/2}} \dd v + \go(\ve^4),
    \end{equation*}
    and
    \begin{equation*}
        I_{m,\ell}(\phi) = \int_{v_\ell^s+\eta}^{v_s-\eta} \frac{\phi(v)}{\sqrt{\mu_s - \w(v)}} \dd v -\ve^2 \int_{v_\ell^s+\eta}^{v_s-\eta} \frac{1}{2} \frac{\phi(v)}{(\mu_s - \w(v))^{3/2}}\dd v + \go(\ve^4).
    \end{equation*}
\end{proposition}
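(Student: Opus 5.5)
The plan is to expand the integrand in the parameter $\mu$ around $\mu_s$ on the fixed interval $\ff{v_s+\eta}{v_r^s-\eta}$ (and likewise on $\ff{v_\ell^s+\eta}{v_s-\eta}$). On these intervals the integrand has no singularity, and the argument is uniform in the remaining parameters $(\lm,c)$. The key observation comes from Assumption~\ref{hypothèse_potentiel}. Since $\w<\mu_s$ on $\oo{v_s}{v_r^s}$ and the interval $\ff{v_s+\eta}{v_r^s-\eta}$ is compact, there is a constant $m_\eta>0$ (uniform for $(\lm,c)$ in a small neighborhood of $(\lm_s,c_s)$, by continuity and smoothness of $v_s,v_r^s$) such that
\begin{equation*}
    \mu_s-\w(v;\lm,c)\geq m_\eta \quad \text{for all } v\in\ff{v_s+\eta}{v_r^s-\eta}.
\end{equation*}
Hence for $\mu=\mu_s+\ve^2$ the quantity $\mu-\w(v)=(\mu_s-\w(v))+\ve^2$ stays bounded below by $m_\eta$.

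Write $a(v)=\mu_s-\w(v)\geq m_\eta$ and $h=\ve^2\geq 0$. Apply Taylor's formula with integral remainder to $t\mapsto (a+t)^{-1/2}$ on $\ff{0}{h}$:
\begin{equation*}
    (a+h)^{-1/2}=a^{-1/2}-\tfrac12 h\,a^{-3/2}+\tfrac34\int_0^h (h-t)(a+t)^{-5/2}\dd t.
\end{equation*}
The remainder is bounded by $\tfrac38 h^2 m_\eta^{-5/2}$, uniformly in $v$ on the interval. We then multiply by $\phi(v)$, which is smooth and hence bounded on the compact interval, and integrate over $\ff{v_s+\eta}{v_r^s-\eta}$. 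This gives
\begin{equation*}
    I_{m,r}(\phi)=\int_{v_s+\eta}^{v_r^s-\eta}\frac{\phi(v)}{\sqrt{\mu_s-\w(v)}}\dd v-\ve^2\int_{v_s+\eta}^{v_r^s-\eta}\frac{1}{2}\frac{\phi(v)}{(\mu_s-\w(v))^{3/2}}\dd v+\go(\ve^4),
\end{equation*}
with the $\go$ constant equal to $\tfrac38 m_\eta^{-5/2}\norm{\phi}_{L^\infty}(v_r^s-v_s-2\eta)$. The left integral $I_{m,\ell}(\phi)$ is treated identically on $\ff{v_\ell^s+\eta}{v_s-\eta}$, using $\w<\mu_s$ on $\oo{v_\ell^s}{v_s}$.

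The only point needing care is the uniformity of the lower bound $m_\eta$ and of $\norm{\phi}_{L^\infty}$ when $(\lm,c)$ varies near $(\lm_s,c_s)$, with $\eta$ chosen as in \eqref{def_eta_v_s}--\eqref{def_eta_2}. This is the step I would check first. It follows from continuity of $\w$ and of the endpoints $v_s,v_r^s,v_\ell^s$, together with compactness: we restrict $(\lm,c)$ to a compact neighborhood $K$ of $(\lm_s,c_s)$ inside $\Lambda$ and take the minimum of the continuous positive function $\mu_s(\lm,c)-\w(v;\lm,c)$ over $\{(v,\lm,c): (\lm,c)\in K,\ v\in\ff{v_s(\lm,c)+\eta}{v_r^s(\lm,c)-\eta}\}$. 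That set is compact, and on it the function is strictly positive, provided $\eta$ is smaller than half the width of each interval. If one also wants to differentiate the expansion in $(\lm,c)$, as is done later for $\n^2\Theta$, the same argument applies term by term, since derivatives of the integrand are again smooth functions divided by positive powers of $a+h$, and $a+h$ stays bounded below.
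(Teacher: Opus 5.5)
Your proof is correct and takes the same route as the paper, which expands $(\ve^2+\mu_s-\w(v))^{-1/2}$ to first order in $\ve^2$ and integrates over the fixed interval. You also make explicit two points the paper leaves implicit: the integral form of the remainder, bounded by $\tfrac38\ve^4 m_\eta^{-5/2}$, and the uniform positive lower bound on $\mu_s-\w$ away from $v_s$ and $v_r^s$ (resp.\ $v_\ell^s$) that justifies this bound.
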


\begin{proof}
This follows readily by integration from
\begin{equation*}
    \frac{1}{\sqrt{\ve^2+\mu_s - \w(v)}} = \frac{1}{\sqrt{\mu_s - \w(v)}} -\frac{1}{2} \frac{\ve^2}{(\mu_s - \w(v))^{3/2}} + \go(\ve^4).
\end{equation*}
\end{proof}

\subsubsection{Single-root parts}

In the same spirit of \cite{benzoni-gavageStabilityPeriodicWaves2020}, using Taylor's formula we write
\begin{equation*}
    \mu - \w(v) = -(v-v_r)(v-v_\ell)\ti{R}(v,v_r,v_\ell),
\end{equation*}
where $\ti{R}(v,w,z) := \int_{0}^{1} \int_0^1 t\p_{v}^2\w\left(w+t(w-z)+tu(v-z)\right)\dd u \dd t$.\newline
First, due to~\eqref{def_eta_1},~\eqref{def_eta_2},we can assume that $\eta$ is such that  $$\forall v\in\ff{v_l}{v_l^s+\eta}\cup\ff{v_r^s-\eta}{v_r}, \quad  \ti{R}(v,v_r,v_l) >0.$$
Furthermore, with a change of variable, we have
\begin{align*}
    I_r(\phi) 
              = \int_0^1 \frac{\phi(\V_r)}{\sqrt{(1-\sig)(\delta_r+\sig)\ti{R}(\V_r,v_r,v_\ell)}}\dd \sig,
\end{align*}
where $\V_r = v_r^s - \eta + \sig  (v_r-v_r^s+\eta)$ and $\delta_r = \frac{v_r^s-\eta - v_\ell}{v_r-v_r^s+\eta}$.\newline
The same applies to the other integral, with $\V_\ell = v_\ell^s + \eta + \sig(v_\ell - v_\ell^s - \eta)$ and $\delta_\ell = \frac{v_r-v_\ell^s-\eta}{v_\ell^s+\eta-v_\ell}$,
\begin{equation*}
    I_\ell(\phi) = \int_0^1 \frac{\phi(\V_\ell)}{\sqrt{(1-\sig)(\delta_\ell+\sig)\ti{R}(\V_\ell,v_r,v_\ell)}}\dd \sig.
\end{equation*}
We begin with the right-hand integral, $I_r(\phi)$.

\begin{proposition}\label{dl_droite}
    The following asymptotic expansion holds 
    \begin{align*}
        I_r(\phi) =& \int_{v_r^s-\eta}^{v_r^s} \frac{\ti{f}(v,v_r^s,v_\ell^s)}{\sqrt{-(v-v_r^s)(v - v_\ell^s)}}\dd v + \ve^2 \int_{v_r^s-\eta}^{v_r^s} \frac{\frac{v-v_r^s+\eta}{\eta} p_r^s\ti{f}_v(v,v_r^s,v_\ell^s) +\left(p_r^s + p_\ell^s \right)\ti{f}_z(v,v_r^s,v_\ell^s)}{\sqrt{-(v-v_r^s)(v - v_\ell^s)}}\dd v\\ 
        &+ \ve^2 \int_{v_r^s-\eta}^{v_r^s} \frac{\ti{f}(v,v_r^s,v_\ell^s)}{2}\frac{p_\ell^s+\delta^s p_r^s}{\sqrt{(v_r^s-v)}(v-v_\ell^s)^{3/2}} \dd v+ \go(\ve^4),
    \end{align*}
    where $\ti{f}(v,w,z) = \frac{\phi(v)}{\sqrt{\ti{R}(v,w,z)}}$.
\end{proposition}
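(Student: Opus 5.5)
The plan is to work from the $\sig$-representation
\begin{equation*}
    I_r(\phi) = \int_0^1 \frac{\phi(\V_r)}{\sqrt{(1-\sig)(\delta_r+\sig)\ti{R}(\V_r,v_r,v_\ell)}}\dd \sig,
\end{equation*}
in which the integrable singularity $(1-\sig)^{-1/2}$ sits at a fixed point. All the dependence on $\ve$ then enters only through the smooth quantities $v_r$, $v_\ell$ (hence $\V_r$ and $\delta_r$). Proposition~\ref{dl_racines} gives $v_r = v_r^s + p_r^s\ve^2 + \go(\ve^4)$ and $v_\ell = v_\ell^s + p_\ell^s\ve^2+\go(\ve^4)$. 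Since $v_r,v_\ell$ are smooth in $\mu$, they are in fact smooth functions of $\ve^2$, which is why the expansion proceeds in powers of $\ve^2$.

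First I check that the integrand, apart from the factor $(1-\sig)^{-1/2}$, is a smooth function of $(\sig,\ve^2)$ on $\ff{0}{1}\times\ff{0}{\ve_0^2}$. The point is that $\delta_r+\sig \geq \delta_r$ stays bounded away from $0$, because $v_r^s-\eta-v_\ell$ is bounded below by a positive constant. Likewise $\ti{R}(\V_r,v_r,v_\ell)>0$ stays bounded below by the choice of $\eta$, and $\phi$ is smooth. A first-order Taylor expansion in $\ve^2$ with integral remainder, uniform in $\sig$, then gives
\begin{equation*}
    I_r(\phi) = I_r^0 + \ve^2 I_r^1 + \go(\ve^4),
\end{equation*}
where the remainder is controlled by $\int_0^1(1-\sig)^{-1/2}\dd\sig<\infty$.

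To identify the leading term, set $\ve=0$: then $\V_r^0 = v_r^s-\eta+\sig\eta$ and $\delta^s = (v_r^s-\eta-v_\ell^s)/\eta$. Undoing the change of variable $v=\V_r^0$ turns $(1-\sig)(\delta^s+\sig)\eta^2$ into $-(v-v_r^s)(v-v_\ell^s)$, and this recovers the first integral of the statement, with $\ti f = \phi/\sqrt{\ti R}$.

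The $\ve^2$ coefficient comes from differentiating with respect to $\ve^2$ at $0$. There are three contributions.
\begin{itemize}
    \item Through $\V_r$: $\p_{\ve^2}\V_r = \sig p_r^s = \frac{v-v_r^s+\eta}{\eta}p_r^s$. This produces the $\ti f_v$ term, where I interpret $\ti f$ as the combination $\phi/\sqrt{\ti R}$ with $\ti f_v$ the full derivative in the first slot.
    \item Through the second and third arguments of $\ti R$: these derivatives are $p_r^s$ and $p_\ell^s$. Since $\ti R$ is built from $\p_v^2\w$ along a segment, I would use its symmetry in the root arguments to merge them into the single term $(p_r^s+p_\ell^s)\ti f_z$, as the statement writes. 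I would double-check this merging against the definition of $\ti R$; if it fails, the coefficient should be written as $p_r^s\ti f_w+p_\ell^s\ti f_z$ instead.
    \item Through $\delta_r$: $\p_{\ve^2}\delta_r|_0 = -(p_\ell^s + \delta^s p_r^s)/\eta$. Differentiating $(\delta_r+\sig)^{-1/2}$ gives $\frac{1}{2}\frac{p_\ell^s+\delta^s p_r^s}{\eta}(\delta^s+\sig)^{-3/2}$. After the change of variable, using $\eta(\delta^s+\sig)=v-v_\ell^s$, this becomes $\frac{\ti f}{2}\frac{p_\ell^s+\delta^s p_r^s}{\sqrt{v_r^s-v}\,(v-v_\ell^s)^{3/2}}$.
\end{itemize}
The powers of $\eta$ cancel against the Jacobian $\dd v = \eta\,\dd\sig$.

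The main obstacle is justifying the uniform Taylor remainder near $\sig=1$ and keeping the bookkeeping of $\eta$-factors and signs straight. The singular factor $(1-\sig)^{-1/2}$ does not depend on $\ve$, so dominated convergence applied to the second $\ve^2$-derivative settles the remainder. What remains is a careful, essentially mechanical, matching of each term against the stated formula.
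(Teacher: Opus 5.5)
Your route is the paper's. Like the paper, you work in the $\sig$-variable, expand $\ti f(\V_r,v_r,v_\ell)$ and $(\delta_r+\sig)^{-1/2}$ to first order in $\ve^2$ using Proposition~\ref{dl_racines}, and then undo $v=v_r^s-\eta+\eta\sig$. Your leading term, the $\ti f_v$ term, the $\delta_r$ term, the $\eta$-bookkeeping and the uniform control of the remainder against $(1-\sig)^{-1/2}$ are all correct.

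On the step you left open: the merge fails, and this is exactly where the paper's own proof goes wrong. The paper uses the symmetry of $\ti f$ in $(w,z)$ to replace $(v_r-v_r^s)\ti f_w^{s,r}$ by $(v_r-v_r^s)\ti f_z^{s,r}$. Symmetry only gives $\ti f_w(v,a,b)=\ti f_z(v,b,a)$, which says nothing at the off-diagonal point $(a,b)=(v_r^s,v_\ell^s)$.

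Concretely, with the integral formula of Notation~\ref{note_lag}, $\ti R(v,w,z)$ is the second divided difference $\w[w,z,v]$ of $\w(\cdot;\lm,c)$ at the nodes $w,z,v$ (Hermite--Genocchi). Hence $\ti R_w-\ti R_z=(w-z)\,\w[w,w,z,z,v]$. For instance, $\w(v)=v^4$ gives $\ti R=v^2+w^2+z^2+vw+wz+zv$ and $\ti R_w-\ti R_z=w-z\neq 0$.

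Since $\ti f_w-\ti f_z=-\frac{\phi}{2}\ti R^{-3/2}(\ti R_w-\ti R_z)$, $p_r^s\neq 0$ and $\phi$ is arbitrary, the stated $\ve^2$ coefficient is false unless $\w$ coincides with a cubic on $[v_r^s-\eta,v_r^s]$. Your argument proves the corrected expansion, which is what should be stated: replace $(p_r^s+p_\ell^s)\ti f_z(v,v_r^s,v_\ell^s)$ by $p_r^s\ti f_w(v,v_r^s,v_\ell^s)+p_\ell^s\ti f_z(v,v_r^s,v_\ell^s)$. The same correction is needed in Proposition~\ref{dl_gauche}.

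This does not affect the rest of the paper. The term only feeds into $B_2$, whose explicit form is never used.
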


\begin{proof}
    We define
    \begin{equation*}
        \ti{f}(v,w,z) = \frac{\phi(v)}{\sqrt{\ti{R}(v,w,z)}},
    \end{equation*}
    and note that since $\ti{R}$ is a symmetric function of $(w,z)$, $\ti{f}$ is also symmetric in $(w,z)$. Then we introduce $\V_r^s = v_r^s - \eta + \eta \sig$, $\delta^s = \frac{v_r^s-\eta - v_\ell^s}{\eta}$ and for every function $g$ depending on $(v,w,z)$, $g^{s,r} := g(\V_r^s,v_r^s,v_\ell^s)$. We start by developing $\ti{f}$ in power of $\ve$. For this, we use the asymptotic expansion of $v_r$ and $v_\ell$ computed in Proposition~\ref{dl_racines}.
    \begin{align*}
        \ti{f}(\V_r,v_r,v_\ell) =& \ti{f}^{s,r} + (\V_r - \V_r^s)\ti{f}_v^{s,r} + (v_r-v_r^s)\ti{f}_w^{s,r} + (v_\ell - v_\ell^s)\ti{f}_z^{s,r} + \go\left((v_\ell - v_\ell^s)^2 + (v_r-v_r^s)^2 + (\V_r - \V_r^s)^2 \right)\\
                                =& \ti{f}^{s,r} + \sig(v_r-v_r^s)\ti{f}_v^{s,r} + \left((v_r-v_r^s) + (v_\ell - v_\ell^s) \right)\ti{f}_z^{s,r} +\go\left((v_\ell - v_\ell^s)^2 + (v_r-v_r^s)^2 + (\V_r - \V_r^s)^2 \right)\\
                                =& \ti{f}^{s,r} + \ve^2\sig p_r^s\ti{f}_v^{s,r} + \ve^2\left(p_r^s + p_\ell^s \right)\ti{f}_z^{s,r} +\go\left(\ve^4\right).
    \end{align*}
    We also need to calculate the asymptotic expansion of  $\delta_r$.
    \begin{align*}
        \delta_r =& \frac{v_r^s-\eta-v_\ell}{v_r-v_r^s+\eta}\\
                 =& \left(v_r^s-\eta-v_\ell^s-\ve^2p_\ell^s + \go(\ve^4)\right)\left(\eta + \ve^2p_r^s + \go(\ve^4)\right)^{-1}\\
                 =& \delta^s - \ve^2\frac{p_\ell^s+\delta^sp_r^s}{\eta}  + \go(\ve^4).
    \end{align*}
    With this in hands, we have
    \begin{align*}
        \frac{\ti{f}(\V_r,v_r,v_\ell)}{\sqrt{\delta_r + \sig}} =& \left(\ti{f}^{s,r} + \ve^2\sig p_r^s\ti{f}_v^{s,r} + \ve^2\left(p_r^s + p_\ell^s \right)\ti{f}_z^{s,r} +\go\left(\ve^4\right)\right)\left(\frac{1}{\sqrt{\delta^s + \sig}} + \frac{1}{2}\frac{p_\ell^s+\delta^sp_r^s}{\eta(\delta^s+\sig)^{3/2}}\ve^2 + \go(\ve^4)\right)\\
                =& \frac{\ti{f}^{s,r}}{\sqrt{\delta^s + \sig}} + \frac{\sig p_r^s\ti{f}_v^{s,r} +\left(p_r^s + p_\ell^s \right)\ti{f}_z^{s,r}}{\sqrt{\delta^s + \sig}}\ve^2 + \frac{\ti{f}^{s,r}}{2}\frac{p_\ell^s+\delta^sp_r^s}{\eta(\delta^s+\sig)^{3/2}}\ve^2 + \go(\ve^4).
    \end{align*}
    Thus we conclude
    \begin{multline*}
        I_r(\phi) = \int_0^1\frac{\ti{f}^{s,r}}{\sqrt{(1-\sig)(\delta^s + \sig)}}\dd \sig + \ve^2 \int_0^1 \frac{\sig p_r^s\ti{f}_v^{s,r} +\left(p_r^s + p_\ell^s \right)\ti{f}_z^{s,r}}{\sqrt{(1-\sig)(\delta^s + \sig)}}\dd \sig + \ve^2 \int_0^1 \frac{\ti{f}^{s,r}}{2}\frac{p_\ell^s+\delta^sp_r^s}{\eta\sqrt{1-\sigma}(\delta^s+\sig)^{3/2}} \dd \sig\\ + \go(\ve^4),
    \end{multline*}
    which, with the change of variable $v=v_r^s-\eta+\sig \eta$, yields the desired result.
\end{proof}

We do the same for the left-hand integral and obtain the following result.

\begin{proposition}\label{dl_gauche}
    We have the following asymptotic expansion
    \begin{align*}
        I_\ell(\phi) =& \int_{v_\ell^s}^{v_\ell^s+\eta} \frac{\ti{f}(v,v_r^s,v_\ell^s)}{\sqrt{-(v-v_r^s)(v - v_\ell^s)}}\dd v + \ve^2 
        \int_{v_\ell^s}^{v_\ell^s+\eta} \frac{\frac{v_\ell^s+\eta-v}{\eta} p_\ell^s\ti{f}_v(v,v_r^s,v_\ell^s) +\left(p_r^s + p_\ell^s \right)\ti{f}_z(v,v_r^s,v_\ell^s)}{\sqrt{-(v-v_r^s)(v - v_\ell^s)}}\dd v\\ 
        &- \ve^2 \int_{v_\ell^s}^{v_\ell^s+\eta} \frac{\ti{f}(v,v_r^s,v_\ell^s)}{2}\frac{p_r^s+\delta^s p_\ell^s}{\sqrt{(v-v_\ell^s)}(v_r^s-v)^{3/2}} \dd v+ \go(\ve^4).
    \end{align*}
    where $\ti{f}(v,w,z) = \frac{\phi(v)}{\sqrt{\ti{R}(v,w,z)}}$.
\end{proposition}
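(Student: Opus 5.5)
The plan mirrors the proof of Proposition~\ref{dl_droite}, with the roles of the two simple roots exchanged. Write
\begin{equation*}
I_\ell(\phi) = \int_0^1 \frac{\phi(\V_\ell)}{\sqrt{(1-\sig)(\delta_\ell+\sig)\ti{R}(\V_\ell,v_r,v_\ell)}}\dd \sig,
\end{equation*}
with $\V_\ell = v_\ell^s+\eta+\sig(v_\ell-v_\ell^s-\eta)$ and $\delta_\ell = \frac{v_r-v_\ell^s-\eta}{v_\ell^s+\eta-v_\ell}$. By the choice of $\eta$ in~\eqref{def_eta_1}, $\ti{R}(v,v_r,v_\ell)>0$ on $\ff{v_\ell}{v_\ell^s+\eta}$, uniformly for $\ve$ small. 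Hence $\ti{f}(v,w,z)=\phi(v)/\sqrt{\ti{R}(v,w,z)}$ is smooth near $(\V_\ell^s,v_r^s,v_\ell^s)$. Here I set $\V_\ell^s = v_\ell^s+\eta-\eta\sig$ and $\delta^s=\frac{v_r^s-v_\ell^s-\eta}{\eta}$. The only singularity in $\sig$ is then the integrable factor $(1-\sig)^{-1/2}$, and $\delta^s+\sig\geq\delta^s>0$ stays away from zero.

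The first step is to Taylor expand $\ti{f}(\V_\ell,v_r,v_\ell)$ using Proposition~\ref{dl_racines}. We have $\V_\ell-\V_\ell^s=\sig(v_\ell-v_\ell^s)=\sig p_\ell^s\ve^2+\go(\ve^4)$. Using the symmetry of $\ti f$ in $(w,z)$ to merge the $w$ and $z$ derivatives gives
\begin{equation*}
\ti{f}(\V_\ell,v_r,v_\ell)=\ti f^{s,\ell}+\ve^2\big(\sig p_\ell^s\ti f_v^{s,\ell}+(p_r^s+p_\ell^s)\ti f_z^{s,\ell}\big)+\go(\ve^4).
\end{equation*}
The second step is to expand $\delta_\ell$:
\begin{equation*}
\delta_\ell=\big(v_r^s-v_\ell^s-\eta+\ve^2p_r^s+\go(\ve^4)\big)\big(\eta-\ve^2p_\ell^s+\go(\ve^4)\big)^{-1}=\delta^s+\ve^2\frac{p_r^s+\delta^sp_\ell^s}{\eta}+\go(\ve^4).
\end{equation*}
The sign here is opposite to the right-hand case, which explains the minus sign in the statement. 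It follows that $(\delta_\ell+\sig)^{-1/2}=(\delta^s+\sig)^{-1/2}-\frac{\ve^2}{2}\frac{p_r^s+\delta^sp_\ell^s}{\eta(\delta^s+\sig)^{3/2}}+\go(\ve^4)$, uniformly in $\sig\in\ff01$.

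Next I multiply the two expansions and integrate against $(1-\sig)^{-1/2}$. The remainders are uniform in $\sig$ and $(1-\sig)^{-1/2}$ is integrable, so the $\go(\ve^4)$ passes through the integral. Finally I undo the change of variable with $v=v_\ell^s+\eta-\eta\sig$, so that $\sig=\frac{v_\ell^s+\eta-v}{\eta}$ and $\dd\sig=\dd v/\eta$. The relations $(1-\sig)\eta=v-v_\ell^s$ and $(\delta^s+\sig)\eta=v_r^s-v$ then produce $\sqrt{-(v-v_r^s)(v-v_\ell^s)}$ and $\sqrt{v-v_\ell^s}\,(v_r^s-v)^{3/2}$, which gives exactly the three stated terms.

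The main obstacle is bookkeeping rather than analysis. One must check the uniformity of the Taylor remainders, which holds since all arguments stay in a compact set where $\ti R>0$. One must also track the signs in $\delta_\ell$ and in the orientation of the change of variable, because the $\V_\ell$ parametrization runs from $v_\ell^s+\eta$ down to $v_\ell$.
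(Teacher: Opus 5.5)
You are following exactly the paper's route: the paper proves this statement only by saying it mirrors Proposition~\ref{dl_droite}, and that is what you do. Most of your bookkeeping is right:
\begin{itemize}
\item the parametrization by $\V_\ell$;
\item the expansion $\delta_\ell=\delta^s+\ve^2(p_r^s+\delta^sp_\ell^s)/\eta+\go(\ve^4)$, with its sign flip relative to the right-hand case;
\item the uniformity of the remainders against the integrable weight $(1-\sig)^{-1/2}$;
\item the back-substitution $v=v_\ell^s+\eta-\eta\sig$, which produces both denominators correctly.
\end{itemize}

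The step that fails is merging the $w$- and $z$-derivatives ``by symmetry''. The identity $\ti f(v,w,z)=\ti f(v,z,w)$ only yields $\ti f_w(v,a,b)=\ti f_z(v,b,a)$. That gives $\ti f_w=\ti f_z$ only on the diagonal $a=b$, but you evaluate at $(\V_\ell^s,v_r^s,v_\ell^s)$ with $v_r^s\neq v_\ell^s$. Here $\ti R(v,w,z)$ is the second divided difference of $\w$ at the nodes $w,z,v$; this is why $\mu-\w(v)=-(v-v_r)(v-v_\ell)\ti R(v,v_r,v_\ell)$ holds. Consequently $\ti R_w-\ti R_z$ equals $(w-z)$ times the fourth divided difference at $(w,w,z,z,v)$, which is generically nonzero. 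Take a quartic potential with leading coefficient $a_4\neq0$, as in the modified KdV example. Then $\ti R_w-\ti R_z=a_4(w-z)$, and with $\phi\equiv1$ the function $\ti f_w-\ti f_z=-\tfrac12\ti R^{-3/2}a_4(v_r^s-v_\ell^s)$ has a strict sign on $\ff{v_\ell^s}{v_\ell^s+\eta}$.

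What your computation actually establishes is the expansion with $(p_r^s+p_\ell^s)\ti f_z(v,v_r^s,v_\ell^s)$ replaced by $p_r^s\ti f_w(v,v_r^s,v_\ell^s)+p_\ell^s\ti f_z(v,v_r^s,v_\ell^s)$. The stated $\ve^2$ coefficient is therefore off by
\[
p_r^s\int_{v_\ell^s}^{v_\ell^s+\eta}\frac{\ti f_w-\ti f_z}{\sqrt{-(v-v_r^s)(v-v_\ell^s)}}\,\dd v ,
\]
which is nonzero in the example above because $p_r^s>0$. So the proposition as written is incorrect at order $\ve^2$.

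The paper's proof of Proposition~\ref{dl_droite} makes the same slip, with the same discrepancy $p_r^s(\ti f_w-\ti f_z)$. You have reproduced the paper's argument including its error. It has no downstream consequence, since $B_2$ is never used in the stability analysis. To obtain a true statement, however, you must keep $p_r^s\ti f_w$ and $p_\ell^s\ti f_z$ separate rather than merging them.
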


\subsubsection{The double-root part}\label{dl_singularité_milieux}

We are now interested in the part around the double root $v_s$, 
\begin{align*}
    I_{\eta}(\phi) =& \int_{v_s-\eta}^{v_s+\eta} \frac{\phi(v)}{\sqrt{\mu - \w(v)}}\dd v\\
    =& \int_{-\eta}^{\eta} \frac{\phi(v_s+w)}{\sqrt{\ve^2 - w^2 \ti{R}(v_s+w,v_s,v_s)}}\dd w \\
    =& \int_{\frac{-\eta}{\ve}}^{\frac{\eta}{\ve}} \frac{\phi(v_s+\ve\sig)}{\sqrt{1 - \sig^2 \ti{R}(v_s+\ve\sig,v_s,v_s)}} \dd \sig,
\end{align*}
where we have performed the change of variable $w=\ve\sig$ and used that $$\mu - \w(v) = \ve^2 + \mu_s -\w(v) = \ve^2 -(v-v_s)^2\ti{R}(v,v_s,v_s)$$
From now on, for the sake of concision, we will write $R(v) =\ti{R}(v,v_s,v_s) $. Thanks to~\eqref{def_eta_v_s}, we may assume that $\eta$ is such that $R(v)<0$ for $v \in \ff{v_s-\eta}{v_s+\eta}$. To simplify computations, we split the integral into
\begin{equation*}
    \intwp \frac{\phi(v_s+w)}{\sqrt{\ve^2-w^2R(v_s+w)}} \dd w = \intsigp \frac{\phi(v_s+\ve\sig)}{\sqrt{1 - \sig^2 \ti{R}(v_s+\ve\sig)}} \dd \sig,
\end{equation*}
and
\begin{equation*} 
    \intwn \frac{\phi(v_s+w)}{\sqrt{\ve^2-w^2R(v_s+w)}} \dd w = \intsign \frac{\phi(v_s+\ve\sig)}{\sqrt{1 - \sig^2 \ti{R}(v_s+\ve\sig)}} \dd \sig .
\end{equation*}
The expansion of the latter integral will be deduced from the expansion of the former integral. Thus we begin with the first integral. \newline

The expansion begins with a $\ln(\ve)$ term then an $\go(1)$ term follows. The goal is to identify them. For this, using Taylor's formula we get
\begin{equation*}
    \frac{\phi(v_s+h)}{\sqrt{ 1 - \sig^2 R(v_s+h)}} = \frac{\phi_s}{\sqrt{1-\sig^2R_s}} + h \int_{0}^{1} \frac{\phi_v(v_s+th)}{\sqrt{ 1 - \sig^2 R(v_s+th)}} + \phi(v_s+t h) \frac{\sig^2R_v(v_s+th)}{2(1-\sig^2R(v_s+th))^{3/2}}\dd t.
\end{equation*}
We deal with each term separately. The first term grows as $\ln(\ve)$. Indeed Lemma~\ref{ln}, gives
\begin{align*}
    I_{\eta,+}^{-1}(\phi) :=& \intsigp \frac{\phi_s}{\sqrt{1-\sig^2R_s}} \dd \sig \\
    =& \phi_s \int_{0}^{\frac{\eta\sqrt{-R_s}}{\ve}} \frac{1}{\sqrt{1+x^2}} \frac{\dd x}{\sqrt{-R_s}} \\
    =& -\frac{\phi_s}{\sqrt{-R_s}} \ln(\ve) + \frac{\phi_s}{\sqrt{-R_s}}\ln(2\eta\sqrt{-R_s}) + \frac{\phi_s}{\sqrt{-R_s}} \frac{\ve^2}{-4\eta^2R_s} + \go(\ve^3).
\end{align*}
The other term is bounded since
\begin{align*}
    I_{\eta,+}^0(\phi) :=& \intsigp \ve\sig \int_{0}^{1} \frac{\phi_v(v_s+t\ve\sig)}{\sqrt{ 1 - \sig^2 R(v_s+t\ve\sig)}} + \phi(v_s+t\ve\sig) \frac{\sig^2R_v(v_s+t\ve\sig)}{2(1-\sig^2R(v_s+t\ve\sig))^{3/2}}\dd t \dd\sig \\
    =& \intwp \int_{0}^{1} \frac{\phi_v(v_s+t w)w}{\sqrt{ \ve^2 - w^2 R(v_s+t w)}} + \phi(v_s+t w) \frac{w^3R_v(v_s+t w)}{2(\ve^2-w^2R(v_s+t w))^{3/2}}\dd t \dd w.
\end{align*}
and we can simply use the dominated convergence theorem to obtain
\begin{equation*}
    I_{\eta,+}^0(\phi) \tend{\ve}{0} I_{\eta,+}^{0,\text{lim}}(\phi) := \intwp \int_{0}^{1} \frac{\phi_v(v_s+t w)}{\sqrt{-R(v_s+t w)}} + \phi(v_s+t w) \frac{R_v(v_s+t w)}{2(-R(v_s+t w))^{3/2}} \dd t \dd w.
\end{equation*}
Thereby, we only have to find the asymptotic expansion of $I_{\eta,+}^0(\phi) - I_{\eta,+}^{0,\text{lim}}(\phi)$. This is done in the next proposition.

\begin{proposition}\label{dl_int_positive}
We have the following asymptotic expansion
\begin{equation*}
    \intwp \frac{\phi(v_s+w)}{\sqrt{\ve^2-w^2R(v_s+w)}} \dd w = A_0^{\eta,+}(\phi) \ln(\ve) + B_0^{\eta,+}(\phi) + B_1^{\eta,+}(\phi)\ve + A_2^{\eta,+}(\phi)\ve^2\ln(\ve) + B_2^{\eta,+}(\phi)\ve^2 + \go\left(\ve^3\ln(\ve)\right),
\end{equation*}
where each coefficient is given by
\begin{enumerate}[label=\roman*)]
    \item $A_0^{\eta,+}(\phi) =-\frac{\phi_s}{\sqrt{-R_s}}, $
    \item $B_0^{\eta,+}(\phi) = \frac{\phi_s}{\sqrt{-R_s}}\ln\left(2\eta\sqrt{-R_s}\right) +  \intwp \frac{1}{w}\left(\frac{\phi(v_s+w)}{\sqrt{-R(v_s+w)}} - \frac{\phi_s}{\sqrt{R_s}} \right)\dd w,$
    \item $B_1^{\eta,+}(\phi) = - \frac{\phi_s}{-R_s} - \frac{\phi_sR_{v,s}}{(-R_s)^2},$
    \item $A_2^{\eta,+}(\phi) = \frac{1}{4(-R_s)^{3/2}}\phi_{v,s} + \frac{3}{8(-R_s)^{5/2}}\phi_sR_{v,s} + 3\frac{\phi_{v,s}R_{v,s}+\phi_sR_{vv,s}}{8(-R_s)^{5/2}} + \frac{15\phi_sR_{v,s}^2}{16(-R_s)^{7/2}},$
    \item $\begin{aligned}[t] &B_2^{\eta,+}(\phi) = \frac{\phi_s}{4\eta^2(-R_s)^{3/2}} + \frac{\phi_s}{2\eta(-R_s)^{3/2}} +\frac{\phi_{v,s}}{2(-R_s)^{3/2}}\left(\frac{1}{4} - \frac{1}{2} \ln(2\eta\sqrt{-R_s})\right) + \frac{\phi_sR_{v,s}}{4(-R_s)^{5/2}}\left(\frac{5}{4} - \frac{3}{2} \ln\left(2\eta\sqrt{-R_s}\right)\right)\\
                        &+\frac{3}{4}\frac{\phi_sR_{v,s}}{\eta(-R_s)^{5/2}} + \frac{3\phi_sR_{v,s}^2}{8(-R_s)^{7/2}}\left(\frac{31}{12} - \frac{5}{2}        \ln\left(2\eta\sqrt{-R_s}\right)\right) + \frac{\phi_{v,s}R_{v,s}+\phi_sR_{vv,s}}{4(-R_s)^{5/2}}\left(\frac{5}{4} - \frac{3}{2} \ln\left(2\eta\sqrt{-R_s}\right)\right) \\
                        &+ \intwp \int_0^1 \int_0^1 t^2(1-u) \Bigg[-\frac{1}{2}\frac{\phi_{vv}}{(-R)^{3/2}} -\frac{3}{2}\frac{\phi_v R_v}{(-R)^{5/2}} - \frac{3}{4}\frac{\phi R_{vv}}{(-R)^{5/2}}- \frac{15}{8}\frac{\phi R_v^2}{(-R)^{7/2}}\Bigg](v_s+u t w) \dd u \dd t \dd w \end{aligned}$\end{enumerate}
                        $$+ \intwp\int_0^1 \int_0^1 t^2(1-u) \Bigg[-3\frac{\phi_{vv}R_{v}+2\phi_vR_{vv}+\phi R_{vvv}}{4(-R)^{5/2}} - \frac{15}{4}\frac{\phi_v R_v^2 + \frac{3}{2}\phi R_vR_{vv}}{(-R)^{7/2}} - \frac{105}{16}\frac{\phi R_v^3}{(-R)^{9/2}} \Bigg](v_s+u t w) \dd u \dd t \dd w.$$

\end{proposition}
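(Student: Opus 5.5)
We need the expansion of $J(\ve):=\intwp \frac{\phi(v_s+w)}{\sqrt{\ve^2-w^2R(v_s+w)}}\dd w$. We already have $J=I_{\eta,+}^{-1}(\phi)+I_{\eta,+}^{0}(\phi)$, and Lemma~\ref{ln} gives $I_{\eta,+}^{-1}(\phi)$ up to $\go(\ve^3)$. That expansion supplies the coefficient $A_0^{\eta,+}$, the $\ln(2\eta\sqrt{-R_s})$ part of $B_0^{\eta,+}$ and the term $\phi_s/(4\eta^2(-R_s)^{3/2})$ in $B_2^{\eta,+}$. What remains is to expand $I_{\eta,+}^{0}(\phi)-I_{\eta,+}^{0,\text{lim}}(\phi)$. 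First observe that $I_{\eta,+}^{0,\text{lim}}$ equals $\intwp \frac1w\big(\frac{\phi(v_s+w)}{\sqrt{-R(v_s+w)}}-\frac{\phi_s}{\sqrt{-R_s}}\big)\dd w$: the $t$-integrand is exactly $\frac{d}{dh}$ of $\phi/\sqrt{-R}$ at $v_s+tw$. This gives the rest of $B_0^{\eta,+}$.

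For the correction, I will not work with the $t$-integral. Instead I Taylor-expand the integrand of $J$ directly in $w$ to second order around $v_s$, with an integral remainder:
\begin{equation*}
\frac{\phi(v_s+w)}{\sqrt{\ve^2-w^2R(v_s+w)}}=\sum_{k=0}^{2} w^k\,c_k(\ve w)+w^3\,r(w,\ve).
\end{equation*}
Here the $c_k$ are explicit combinations of $\phi_s,\phi_{v,s},\phi_{vv,s},R_s,R_{v,s},R_{vv,s}$ divided by powers $(\ve^2-w^2R_s)^{j/2}$, $j=1,3,5,\dots$. The remainder is written with the kernel $\int_0^1\int_0^1 t^2(1-u)[\cdots](v_s+utw)\dd u\dd t$, which is precisely the form appearing in $B_2^{\eta,+}$. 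More concretely, I differentiate $h\mapsto \phi(v_s+h)(\ve^2-w^2R(v_s+h))^{-1/2}$ at fixed $w$ three times and set $h=w$. That produces the bracketed expressions $-\tfrac12\phi_{vv}/(-R)^{3/2}$, $-\tfrac{15}{8}\phi R_v^2/(-R)^{7/2}$, the third-derivative bracket, and so on. Each term of the form $w^{m}(\ve^2-w^2R_s)^{-j/2}$ is then integrated explicitly after the substitution $x=w\sqrt{-R_s}/\ve$. This reduces everything to the elementary integrals $\int_0^{\eta\sqrt{-R_s}/\ve}x^m(1+x^2)^{-j/2}\dd x$. Their large-upper-limit expansions produce the $\ln\ve$ terms (when $m=j-1$), constants such as $\tfrac14,\tfrac54,\tfrac{31}{12}$, the $\eta^{-1}$ and $\eta^{-2}$ boundary terms, and hence the listed $B_1^{\eta,+}$, $A_2^{\eta,+}$ and the explicit part of $B_2^{\eta,+}$.

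For the remainder term $\intwp w^3 r(w,\ve)\dd w$, I apply dominated convergence as $\ve\to0$. The limit is the double-integral part of $B_2^{\eta,+}$ (after rescaling by $\ve^2$, its coefficient is obtained by expanding $r$ in $\ve^2$). The error is controlled by $\int_0^\eta w^3(\ve^2+w^2)^{-5/2}\ve^2\dots$, which gives $\go(\ve^3\ln\ve)$.

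I expect the main obstacle to be the bookkeeping. I have to organize the expansion so that each power $\ve^k$ and $\ve^k\ln\ve$ with $k\le2$ is collected exactly once, and check that the remainder is genuinely $\go(\ve^3\ln\ve)$ uniformly rather than merely $o(\ve^2)$. To do this, I will bound $|w^3 r|\lesssim w^3(\ve^2+w^2)^{-3/2}$ times smooth factors, split the domain at $w=\ve$, and integrate each piece.
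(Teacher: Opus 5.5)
You have the right overall strategy, but the step you flag as bookkeeping fails as planned, at the remainder. You take the rest of $B_0^{\eta,+}$ from $I^{0,\text{lim}}_{\eta,+}$. Then you Taylor-expand the full integrand of $J$, not its difference with the $\varepsilon=0$ integrand, so nothing beyond $c_0$ is small. Since $(\varepsilon^2-w^2R)^{-1/2}\to(w\sqrt{-R})^{-1}$, two things happen:
\begin{itemize}
\item the pieces $\int_0^\eta w^kc_k\,dw$ for $k=1,2$ have $O(1)$ limits, e.g.\ $\phi_{v,s}\eta/\sqrt{-R_s}$;
\item $w^3r(w,0)=w^2\int_0^1\frac{(1-s)^2}{2}\bigl(\phi(-R)^{-1/2}\bigr)'''(v_s+sw)\,ds$, with primes denoting $v$-derivatives, so $\int_0^\eta w^3r(w,\varepsilon)\,dw$ tends to a nonzero constant in general. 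For $\phi(v)=(v-v_s)^3$ and $R\equiv-a^2$ the remainder is all of $J$ and tends to $\eta^3/(3a)$.
\end{itemize}
These constants add up exactly to $I^{0,\text{lim}}_{\eta,+}$. So you would count $B_0^{\eta,+}$ twice, and the dominated-convergence limit of the remainder belongs to $B_0^{\eta,+}$, not to the double integrals of $B_2^{\eta,+}$.

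Your error control has the same defect. The bound $|w^3r|\lesssim w^3(\varepsilon^2+w^2)^{-3/2}$ only shows the remainder is $O(1)$, and $\int_0^\eta w^3(\varepsilon^2+w^2)^{-5/2}\varepsilon^2\,dw$ is of order $\varepsilon$, not $\varepsilon^3\ln\varepsilon$.

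The missing idea is the one built into the paper's proof: it expands $I^{0,1}_{\eta,+}$ and $I^{0,2}_{\eta,+}$, which are integrands minus their $\varepsilon=0$ values. Every term then vanishes at $\varepsilon=0$. The double-integral part of $B_2^{\eta,+}$ becomes $\lim_{\varepsilon\to0}\varepsilon^{-2}\int_0^\eta w^3\bigl(r(w,\varepsilon)-r(w,0)\bigr)\,dw$; this quotient is bounded uniformly in $w$, so dominated convergence applies. The error to estimate is $w^3\bigl(r(w,\varepsilon)-r(w,0)-\varepsilon^2r_2(w)\bigr)$, where $\varepsilon^2r_2$ is the $\varepsilon^2$-term of $r$ at fixed $w$. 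It is $\lesssim\varepsilon^4w^{-2}$ for $w\ge\varepsilon$ and $\lesssim\varepsilon^2$ for $w\le\varepsilon$, so it integrates to $O(\varepsilon^3)$. A smaller slip: the $\varepsilon^2\ln\varepsilon$ terms come from $m=j+1$, via the $x^{-1}$ term of $x^{m-j}(1+x^{-2})^{-j/2}$, not from $m=j-1$.

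Once repaired, your route is the paper's computation in different form. Indeed $\int_0^1\int_0^1t^2(1-u)F(utw)\,du\,dt=\int_0^1\frac{(1-s)^2}{2}F(sw)\,ds$, so your single third-order remainder equals the paper's nested one. However, your claim that the computation ``produces the listed'' coefficients is unchecked and actually false. The paper's proof writes $I^{0,1}_{\eta,+}$ with $\phi$ where $\phi_v$ belongs, and the statement inherits this. Concretely:
\begin{itemize}
\item Your $k=1$ term $\phi_{v,s}\int_0^\eta w(\varepsilon^2-w^2R_s)^{-1/2}\,dw$ gives $-\phi_{v,s}\varepsilon/(-R_s)+\phi_{v,s}\varepsilon^2/(2\eta(-R_s)^{3/2})$; both are printed with $\phi_s$.
\item Your $k=2$ terms give $\phi_{vv,s}/(4(-R_s)^{3/2})$ and $3\phi_{v,s}R_{v,s}/(8(-R_s)^{5/2})$, where $A_2^{\eta,+}$ prints $\phi_{v,s}$ and $\phi_sR_{v,s}$. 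The same shift applies to the $(\frac14-\dots)$ term and the first $(\frac54-\dots)$ term of $B_2^{\eta,+}$.
\item The $\varepsilon^2$-part of your remainder is $-\frac12\bigl(\phi(-R)^{-3/2}\bigr)'''=-\frac12\bigl(\phi_v(-R)^{-3/2}\bigr)''-\frac34\bigl(\phi R_v(-R)^{-5/2}\bigr)''$. That is the paper's second bracket plus its first bracket with $\phi,\phi_v,\phi_{vv}$ replaced by $\phi_v,\phi_{vv},\phi_{vvv}$. A third $h$-derivative never yields $-\frac12\phi_{vv}(-R)^{-3/2}$ as its top-order term.
\end{itemize}
A one-line check: for $\phi\equiv1$ and $R\equiv-a^2$, $J=a^{-1}\operatorname{arsinh}(a\eta/\varepsilon)=a^{-1}\bigl(\ln(2a\eta/\varepsilon)+\varepsilon^2/(4a^2\eta^2)\bigr)+O(\varepsilon^4)$. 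This has no $\varepsilon$ term, whereas the printed $B_1^{\eta,+}$ equals $-1/a^2$.
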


\begin{proof}
As discussed above there only remains to expand
\begin{align*}
    I_{\eta,+}^0(\phi) - I_{\eta,+}^{0,\text{lim}}(\phi) =& \intwp \int_{0}^{1} \frac{\phi_v(v_s+t w)w}{\sqrt{ \ve^2 - w^2 R(v_s+t w)}} - \frac{\phi_v(v_s+t w)}{\sqrt{-R(v_s+t w)}} \dd t \dd w\\ &+ \intwp \int_{0}^{1} \phi(v_s+t w) \frac{w^3R_v(v_s+t w)}{2(\ve^2-w^2R(v_s+t w))^{3/2}} - \phi(v_s+t w) \frac{R_v(v_s+t w)}{2(-R(v_s+t w))^{3/2}} \dd t \dd w \\
    =&: I_{\eta,+}^{0,1}(\phi) + I_{\eta,+}^{0,2}(\phi)
\end{align*}
We begin by dealing with the first term, 
\begin{equation*}
    I_{\eta,+}^{0,1}(\phi) = \intsigp \int_0^1 \ve \phi(v_s + t\ve\sigma)\left(\frac{\sigma}{\sqrt{1-\sig^2R(v_s+t\ve\sig)}} - \frac{1}{\sqrt{-R(v_s+t\ve\sig)}}\right) \dd t \dd \sig.
\end{equation*}
As before, we use Taylor's formula,
\begin{align*}
    \frac{\sigma\phi(v_s + th)}{\sqrt{1-\sig^2R(v_s+th)}} - \frac{\phi(v_s + th)}{\sqrt{-R(v_s+th)}} =& \frac{\phi_s\sig}{\sqrt{1-\sig^2R_s}} - \frac{\phi_s}{\sqrt{-R_s}}\\ &+ t h\left(\frac{\phi_{v,s}\sig}{\sqrt{1-\sig^2R_s}} - \frac{\phi_{v,s}}{\sqrt{-R_s}} + \phi_s\frac{R_{v,s}\sig^3}{2(1-\sig^2R_s)^{3/2}} - \phi_s\frac{R_{v,s}}{2(-R_s)^{3/2}} \right)\\
    &+ (t h)^2 \!\begin{aligned}[t]
    \int_0^1 & (1-u)f_1(v_s + u t h,\sig) \dd u,
    \end{aligned}
\end{align*}
where
\begin{align*}
    f_1(\cdot,\sig) =& \frac{\phi_{vv}\sig}{\sqrt{1-\sig^2R}} - \frac{\phi_{vv}}{\sqrt{-R_s}} + 2\phi_v\left(\frac{R_v\sig^3}{2(1-\sig^2R)^{3/2}} - \frac{R_{v}}{2(-R)^{3/2}} \right)
    + \phi\left(\frac{R_{vv}\sig^3}{2(1-\sig^2R)^{3/2}} - \frac{R_{vv}}{2(-R)^{3/2}} \right)\\ &+ \frac{3}{4}\phi\left(\frac{R_v^2\sig^5}{(1-\sig^2R)^{5/2}} - \frac{R_{v}^2}{(-R)^{5/2}} \right).
\end{align*}
Thus
\begin{align*}
    I_{\eta,+}^{0,1}(\phi) =& \phi_s \ve \intsigp \frac{\sig}{\sqrt{1-\sig^2R_s}} - \frac{1}{\sqrt{-R_s}} \dd \sig + \frac{1}{2}\phi_{v,s} \ve^2 \intsigp \frac{\sig^2}{\sqrt{1-\sig^2R_s}} - \frac{\sig}{\sqrt{-R_s}} \dd \sig\\
    &+ \frac{1}{4}\phi_sR_{v,s} \ve^2 \intsigp \frac{\sig^4}{(1-\sig^2R_s)^{3/2}} - \frac{\sig}{(-R_s)^{3/2}} \dd \sig 
    +  \intsigp \int_0^1 \ve (t\ve\sig)^2 \int_0^1 (1-u)f(v_s + u t\ve\sig,\sig) \dd u \dd t \dd \sig.
\end{align*}
We can expand the first three terms in powers of $\ve$ using the asymptotic expansions given by Lemma~\ref{i_c,+,0,1}. For the first term, we have
\begin{align*}
    \phi_s \ve \intsigp \frac{\sig}{\sqrt{1-\sig^2R_s}} - \frac{1}{\sqrt{-R_s}} \dd \sig =& \frac{\phi_s}{\sqrt{-R_s}} \ve \intsigp \frac{\sqrt{-R_s}\sig}{\sqrt{1-\sig^2R_s}} - 1 \dd \sig\\
    =& \frac{\phi_s}{-R_s} \ve \int_0^{\frac{\eta\sqrt{-R_s}}{\ve}} \frac{x}{\sqrt{1+x^2}} - 1 \dd x\\
    =& \frac{\phi_s}{-R_s} \ve \left(- 1 +\frac{1}{2}\frac{\ve}{\eta\sqrt{-R_s}} + \go(\ve^3)\right)\\
    =& - \frac{\phi_s}{-R_s} \ve + \frac{1}{2}\frac{\phi_s}{\eta(-R_s)^{3/2}}\ve^2 + \go(\ve^4).
\end{align*}
For the second one, we have
\begin{align*}
    \frac{1}{2}\phi_{v,s} \ve^2 \intsigp \frac{\sig^2}{\sqrt{1-\sig^2R_s}} - \frac{\sig}{\sqrt{-R_s}} \dd \sig =& \frac{1}{2(-R_s)}\phi_{v,s} \ve^2 \intsigp \frac{(-R_s)\sig^2}{\sqrt{1-\sig^2R_s}} - \sig\sqrt{-R_s} \dd \sig\\
    =& \frac{1}{2(-R_s)^{3/2}}\phi_{v,s} \ve^2 \int_0^{\frac{\eta\sqrt{-R_s}}{\ve}} \frac{x^2}{\sqrt{1+x^2}} - x \dd x \\
    =& \frac{1}{2(-R_s)^{3/2}}\phi_{v,s}\ve^2 \left(\frac{1}{4} + \frac{1}{2}\ln(\ve) - \frac{1}{2} \ln(2\eta\sqrt{-R_s}) + \go(\ve^2) \right)\\
    =& \frac{1}{4(-R_s)^{3/2}}\phi_{v,s}\ve^2\ln(\ve) + \frac{1}{2(-R_s)^{3/2}}\phi_{v,s}\left(\frac{1}{4} - \frac{1}{2} \ln(2\eta\sqrt{-R_s})\right)\ve^2 + \go(\ve^4).
\end{align*}
At last, for the third one, we have
\begin{align*}
    \frac{1}{4}\phi_sR_{v,s} \ve^2 \intsigp \frac{\sig^4}{(1-\sig^2R_s)^{3/2}} - \frac{\sig}{(-R_s)^{3/2}} \dd \sig =& \frac{1}{4(-R_s)^{5/2}}\phi_sR_{v,s} \ve^2 \int_0^{\frac{\eta\sqrt{-R_s}}{\ve}} \frac{x^4}{(1+x)^{3/2}}-x \dd x \\
    =& \frac{1}{4(-R_s)^{5/2}}\phi_sR_{v,s} \ve^2 \left(\frac{3}{2}\ln(\ve) + \frac{5}{4} - \frac{3}{2} \ln\left(2\eta\sqrt{-R_s}\right) + \go(\ve^2). \right) \\
    =& \frac{3}{8(-R_s)^{5/2}}\phi_sR_{v,s} \ve^2\ln(\ve) + \frac{1}{4(-R_s)^{5/2}}\phi_sR_{v,s}\left(\frac{5}{4} - \frac{3}{2} \ln\left(2\eta\sqrt{-R_s}\right)\right) \ve^2\\ &+ \go(\ve^4)
\end{align*}
The last term is equivalent to a multiple of $\ve^2$. To obtain the corresponding constant we may use the dominated convergence theorem. First, we carry out the following change of variable, $w = \ve \sig$ and then we can take the limit $\ve \to 0$. This gives
\begin{multline*}
    \frac{1}{\ve^2} \intsigp \int_0^1 \ve (t\ve\sig)^2 \int_0^1 (1-u)f_1(v_s + u t\ve\sig,\sig) \dd u \dd t \dd \sig \\ = \intwp \int_0^1 \int_0^1 t^2(1-u) 
     \!\begin{aligned}[t] \frac{1}{\ve^2}\Bigg[&\frac{\phi_{vv}w^3}{\sqrt{\ve^2-w^2R}} - \frac{\phi_{vv}w^2}{\sqrt{-R}}\\
    &+ 2\phi_v\left(\frac{R_vw^5}{2(\ve^2-w^2R)^{3/2}} - \frac{R_{v}w^2}{2(-R)^{3/2}} \right)\\
    &+ \phi\left(\frac{R_{vv}w^5}{2(\ve^2-w^2R)^{3/2}} - \frac{R_{vv}w^2}{2(-R)^{3/2}} \right) \\
    &+ \frac{3}{4}\phi\left(\frac{R_v^2w^7}{(\ve^2-w^2R)^{5/2}} - \frac{R_{v}^2w^2}{(-R)^{5/2}} \right) \Bigg](v_s+u t w)\dd u \dd t \dd w\end{aligned}\\
    \tend{\ve}{0} \intwp \int_0^1 \int_0^1 t^2(1-u) 
    \Bigg[-\frac{1}{2}\frac{\phi_{vv}}{(-R)^{3/2}} -\frac{3}{2}\frac{\phi_v R_v}{(-R)^{5/2}} - \frac{3}{4}\frac{\phi R_{vv}}{(-R)^{5/2}}- \frac{15}{8}\frac{\phi R_v^2}{(-R)^{7/2}}\Bigg](v_s+u t w) \dd u \dd t \dd w.
\end{multline*}
To derive the next term in the expansion in powers of $\ve$, we study the difference with the limit. Going back to the $\sig$ variable, and as before, using Taylor's formula in $\ve\sig$, one shows that this term is $\go\left(\ve^3\ln(\ve)\right)$.\newline
By grouping all the terms together, we obtain the desired expansion of $I_{\eta,+}^{0,1}(\phi)$ up to $\go\left(\ve^3\ln(\ve)\right)$
\begin{align*}
    I_{\eta,+}^{0,1}(\phi) =& - \frac{\phi_s}{-R_s} \ve +  \left(\frac{1}{4(-R_s)^{3/2}}\phi_{v,s} + \frac{3}{8(-R_s)^{5/2}}\phi_sR_{v,s} \right)\ve^2\ln(\ve)\\ 
                        &+ \left(\frac{1}{2}\frac{\phi_s}{\eta(-R_s)^{3/2}} +\frac{1}{2(-R_s)^{3/2}}\phi_{v,s}\left(\frac{1}{4} - \frac{1}{2} \ln(2\eta\sqrt{-R_s})\right) + \frac{1}{4(-R_s)^{5/2}}\phi_sR_{v,s}\left(\frac{5}{4} - \frac{3}{2} \ln\left(2\eta\sqrt{-R_s}\right)\right)\right)\ve^2\\
                        &+ \intwp \int_0^1 \int_0^1 t^2(1-u) 
    \Bigg[-\frac{1}{2}\frac{\phi_{vv}}{(-R)^{3/2}} -\frac{3}{2}\frac{\phi_v R_v}{(-R)^{5/2}} - \frac{3}{4}\frac{\phi R_{vv}}{(-R)^{5/2}}- \frac{15}{8}\frac{\phi R_v^2}{(-R)^{7/2}}\Bigg](v_s+u t w) \dd u \dd t \dd w \ve^2\\ 
    &+ \go\left(\ve^3\ln(\ve)\right).
\end{align*}
To finish the proof we have to deal with
\begin{equation*}
    I_{\eta,+}^{0,2}(\phi) = \intsigp \int_{0}^{1} \ve \phi(v_s+t\ve\sig) \left(\frac{\sig^3R_v(v_s+t\ve\sig)}{2(1-\sig^2R(v_s+t\ve\sig))^{3/2}} - \frac{R_v(v_s+t\ve\sig)}{2(-R(v_s+t\ve\sig))^{3/2}}\right) \dd t \dd \sig.
\end{equation*}
The proof is essentially identical. We first begin by using Taylor's formula in $\ve\sigma$ and derive
 \begin{align*}
     I_{\eta,+}^{0,2}(\phi) =&  \frac{\ve}{2} \phi_sR_{v,s} \intsigp \frac{\sig^3}{(1-\sig^2R_s)^{3/2}} - \frac{1}{(-R_s)^{3/2}} \dd \sig \\
                          &+ \frac{\ve^2}{2}(\phi_{v,s}R_{v,s}+\phi_sR_{vv,s}) \intsigp \frac{\sig^4}{2(1-\sig^2R_s)^{3/2}} - \frac{\sig}{2(-R_s)^{3/2}} \dd \sig \\
                          &+ \frac{3}{4}\frac{\ve^2}{2}\phi_sR_{v,s}^2 \intsigp \frac{\sig^6}{(1-\sig^2R_s)^{5/2}} - \frac{\sig}{(-R_s)^{5/2}} \dd\sig\\
                          &+ \intsigp \int_0^1 \ve (t\ve\sig)^2 \int_0^1 (1-u)f_2(v_s + u t\ve\sig,\sig) \dd u \dd t \dd \sig.
 \end{align*}
 where 
 \begin{multline*}
     f_2(.,\sig) = \frac{\phi_{vv}R_{v}+2\phi_vR_{vv}+\phi R_{vvv}}{2(1-\sig^2R)^{3/2}}\sig^3 - \frac{\phi_{vv}R_{v}+2\phi_vR_{vv}+\phi R_{vvv}}{2(-R)^{3/2}}
                  + \frac{6}{4}\frac{\phi_v R_v^2 + \frac{3}{2}\phi R_vR_{vv}}{(1-\sig^2R)^{5/2}}\sig^5 \\
                   - \frac{6}{4}\frac{\phi_v R_v^2 + \frac{3}{2}\phi R_vR_{vv}}{(-R)^{5/2}}
                  + \frac{15}{8} \frac{\phi R_v^3}{(1-\sig^2R)^{7/2}}\sig^7 - \frac{15}{8} \frac{\phi R_v^3}{(-R)^{7/2}}.
 \end{multline*}
As before, we treat each integral separately. For the first three terms, we use the asymptotic expansions from Lemma~\ref{I_eta,+,0,2}. For the first integral, we have
\begin{align*}
     \frac{\ve}{2} \phi_sR_{v,s} \intsigp \frac{\sig^3}{(1-\sig^2R_s)^{3/2}} - \frac{1}{(-R_s)^{3/2}} \dd \sig
                                    =& - \frac{\phi_sR_{v,s}}{(-R_s)^2} \ve + \frac{3}{4}\frac{\phi_sR_{v,s}}{\eta(-R_s)^{5/2}}\ve^2 + \go(\ve^4).
\end{align*}
For the second one, we have
\begin{multline*}
    \frac{\phi_{v,s}R_{v,s}+\phi_sR_{vv,s}}{4}\ve^2 \intsigp \frac{\sig^4}{(1-\sig^2R_s)^{3/2}} - \frac{\sig}{(-R_s)^{3/2}} \dd \sig 
                    = 3\frac{\phi_{v,s}R_{v,s}+\phi_sR_{vv,s}}{8(-R_s)^{5/2}} \ve^2\ln(\ve) \\+ \frac{\phi_{v,s}R_{v,s}+\phi_sR_{vv,s}}{4(-R_s)^{5/2}}\left(\frac{5}{4} - \frac{3}{2} \ln\left(2\eta\sqrt{-R_s}\right)\right) \ve^2 + \go(\ve^4).
\end{multline*}
As for the third one, we have
\begin{multline*}
    \frac{3\phi_sR_{v,s}^2\ve^2}{8} \intsigp \frac{\sig^6}{(1-\sig^2R_s)^{5/2}} - \frac{\sig}{(-R_s)^{5/2}} \dd\sig
    = \frac{15\phi_sR_{v,s}^2}{16(-R_s)^{7/2}}\ve^2\ln(\ve) + \frac{3\phi_sR_{v,s}^2}{8(-R_s)^{7/2}}\left(\frac{31}{12} - \frac{5}{2} \ln\left(2\eta\sqrt{-R_s}\right)\right)\ve^2\\ + \go(\ve^4).
\end{multline*}
As before, the last term is of order $\ve^2$. We perform the change of variable $w = \ve \sig$ and use the dominated convergence theorem to compute the limit. We obtain 
\begin{multline*}
    \frac{1}{\ve^2} \intsigp \int_0^1 \ve (t\ve\sig)^2 \int_0^1 (1-u)f_2(v_s + u t\ve\sig,\sig) \dd u \dd t \dd \sig \\= \intwp \int_0^1 \int_0^1 t^2(1-u)\frac{1}{\ve^2} 
    \!\begin{aligned}[t] \Bigg[&\frac{\phi_{vv}R_{v}+2\phi_vR_{vv}+\phi R_{vvv}}{2(\ve^2-w^2R)^{3/2}}w^5 - \frac{\phi_{vv}R_{v}+2\phi_vR_{vv}+\phi R_{vvv}}{2(-R)^{3/2}}w^2\\
                  &+ \frac{6}{4}\frac{\phi_v R_v^2 + \frac{3}{2}\phi R_vR_{vv}}{(\ve^2-w^2R)^{5/2}}w^7 - \frac{6}{4}\frac{\phi_v R_v^2 + \frac{3}{2}\phi R_vR_{vv}}{(-R)^{5/2}}w^2\\
                  &+ \frac{15}{8} \frac{\phi R_v^3}{(\ve^2-w^2R)^{7/2}}w^9 - \frac{15}{8} \frac{\phi R_v^3}{(-R)^{7/2}}w^2
                    \Bigg](v_s+u t w)\dd u \dd t \dd w\end{aligned}\\
        \tend{\ve}{0} \!\begin{aligned}[t] \intwp \int_0^1 \int_0^1 & t^2(1-u) \Bigg[-3\frac{\phi_{vv}R_{v}+2\phi_vR_{vv}+\phi R_{vvv}}{4(-R)^{5/2}} - \frac{15}{4}\frac{\phi_v R_v^2 + \frac{3}{2}\phi R_vR_{vv}}{(-R)^{7/2}}- \frac{105}{16}\frac{\phi R_v^3}{(-R)^{9/2}} \Bigg](v_s+u t w)\\ &\dd u \dd t \dd w. 
         \end{aligned}
\end{multline*}
As before, continuing the expansion shows that the next term is $\go(\ve^3\ln(\ve))$.\newline
By grouping all the terms together, we obtain the expansion of $I_{\eta,+}^{0,2}(\phi)$ up to $\go(\ve^3\ln(\ve))$,
\begin{multline*}
    I_{\eta,+}^{0,2}(\phi) = - \frac{\phi_sR_{v,s}}{(-R_s)^2} \ve + \left(3\frac{\phi_{v,s}R_{v,s}+\phi_sR_{vv,s}}{8(-R_s)^{5/2}} + \frac{15\phi_sR_{v,s}^2}{16(-R_s)^{7/2}} \right) \ve^2\ln(\ve)\\
                        + \left(\frac{3}{4}\frac{\phi_sR_{v,s}}{\eta(-R_s)^{5/2}} + \frac{3\phi_sR_{v,s}^2}{8(-R_s)^{7/2}}\left(\frac{31}{12} - \frac{5}{2} \ln\left(2\eta\sqrt{-R_s}\right)\right) + \frac{\phi_{v,s}R_{v,s}+\phi_sR_{vv,s}}{4(-R_s)^{5/2}}\left(\frac{5}{4} - \frac{3}{2} \ln\left(2\eta\sqrt{-R_s}\right)\right) \right) \ve^2\\
                        + \ve^2 \intwp\int_0^1 \int_0^1\Bigg[-3\frac{\phi_{vv}R_{v}+2\phi_vR_{vv}+\phi R_{vvv}}{4(-R)^{5/2}} - \frac{15}{4}\frac{\phi_v R_v^2 + \frac{3}{2}\phi R_vR_{vv}}{(-R)^{7/2}} - \frac{105}{16}\frac{\phi R_v^3}{(-R)^{9/2}} \Bigg](v_s+u t w)
                         t^2(1-u)\dd u \dd t \dd w\\ + \go(\ve^3\ln(\ve)).
\end{multline*}
To conclude, we simply gather the various asymptotic expansions. Besides, to simplify notation, we observe that 
\begin{align*}
    I_{\eta,+}^{0,\text{lim}}(\phi) =& \intwp \int_{0}^{1} \frac{\phi_v(v_s+t w)}{\sqrt{-R(v_s+t w)}} + \phi(v_s+t w) \frac{R_v(v_s+t w)}{2(-R(v_s+t w))^{3/2}}\dd t \dd w\\
                                 =& \intwp \frac{1}{w}\left(\frac{\phi(v_s+w)}{\sqrt{-R(v_s+ w)}} - \frac{\phi_s}{\sqrt{R_s}} \right)\dd w.
\end{align*}
\end{proof}

We can now deal with the negative integral, whose expansion is given by the following proposition.

\begin{proposition}\label{dl_int_negative}
We have the following asymptotic expansion
\begin{equation*}
    \intwn \frac{\phi(v_s+w)}{\sqrt{\ve^2-w^2R(v_s+w)}} \dd w = A_0^{\eta,-}(\phi) \ln(\ve) + B_0^{\eta,-}(\phi) + B_1^{\eta,-}(\phi)\ve + A_2^{\eta,-}(\phi)\ve^2\ln(\ve) + B_2^{\eta,-}(\phi)\ve^2 + \go\left(\ve^3\ln(\ve)\right),
\end{equation*}
where each coefficient is given by
\begin{enumerate}[label=\roman*)]
    \item $A_0^{\eta,-}(\phi) =-\frac{\phi_s}{\sqrt{-R_s}} , $
    \item $B_0^{\eta,-}(\phi) = \frac{\phi_s}{\sqrt{-R_s}}\ln\left(2\eta\sqrt{-R_s}\right) +  \intwn -\frac{1}{w}\left(\frac{\phi(v_s+w)}{\sqrt{-R(v_s+w)}} - \frac{\phi_s}{\sqrt{R_s}} \right)\dd w ,$
    \item $B_1^{\eta,-}(\phi) = - \frac{\phi_s}{-R_s} + \frac{\phi_sR_{v,s}}{(-R_s)^2},$
    \item $A_2^{\eta,-}(\phi) = - \frac{\phi_{v,s}}{4(-R_s)^{3/2}} - \frac{3\phi_sR_{v,s}}{8(-R_s)^{5/2}} + 3\frac{\phi_{v,s}R_{v,s}+\phi_sR_{vv,s}}{8(-R_s)^{5/2}} + \frac{15\phi_sR_{v,s}^2}{16(-R_s)^{7/2}},$
    \item $\begin{aligned}[t] &B_2^{\eta,-}(\phi) = \frac{\phi_s}{4\eta^2(-R_s)^{3/2}} + \frac{\phi_s}{2\eta(-R_s)^{3/2}} - \frac{\phi_{v,s}}{2(-R_s)^{3/2}}\left(\frac{1}{4} - \frac{1}{2} \ln(2\eta\sqrt{-R_s})\right) - \frac{\phi_sR_{v,s}}{4(-R_s)^{5/2}}\left(\frac{5}{4} - \frac{3}{2} \ln\left(2\eta\sqrt{-R_s}\right)\right)\\
                        &-\frac{3}{4}\frac{\phi_sR_{v,s}}{\eta(-R_s)^{5/2}} + \frac{3\phi_sR_{v,s}^2}{8(-R_s)^{7/2}}\left(\frac{31}{12} - \frac{5}{2}\ln\left(2\eta\sqrt{-R_s}\right)\right) + \frac{\phi_{v,s}R_{v,s}+\phi_sR_{vv,s}}{4(-R_s)^{5/2}}\left(\frac{5}{4} - \frac{3}{2} \ln\left(2\eta\sqrt{-R_s}\right)\right) \\
                        &+ \intwn \int_0^1 \int_0^1 t^2(1-u) \Bigg[-\frac{1}{2}\frac{\phi_{vv}}{(-R)^{3/2}} -\frac{3}{2}\frac{\phi_v R_v}{(-R)^{5/2}} - \frac{3}{4}\frac{\phi R_{vv}}{(-R)^{5/2}}- \frac{15}{8}\frac{\phi R_v^2}{(-R)^{7/2}}\Bigg](v_s+u t w) \dd u \dd t \dd w \end{aligned}$\end{enumerate}
                        $$+ \intwn \int_0^1 \int_0^1 t^2(1-u)\Bigg[3\frac{\phi_{vv}R_{v}+2\phi_vR_{vv}+\phi R_{vvv}}{4(-R)^{5/2}} + \frac{15}{4}\frac{\phi_v R_v^2 + \frac{3}{2}\phi R_vR_{vv}}{(-R)^{7/2}} + \frac{105}{16}\frac{\phi R_v^3}{(-R)^{9/2}} \Bigg](v_s+u t w) \dd u \dd t \dd w.$$
\end{proposition}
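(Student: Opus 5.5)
The plan is to deduce Proposition~\ref{dl_int_negative} from Proposition~\ref{dl_int_positive} by a reflection, rather than by redoing the whole expansion. In the integral over $w\in\ff{-\eta}{0}$ we substitute $w=-w'$. This gives
\begin{equation*}
    \intwn \frac{\phi(v_s+w)}{\sqrt{\ve^2-w^2R(v_s+w)}} \dd w = \intwp \frac{\check\phi(v_s+w')}{\sqrt{\ve^2-w'^2\check R(v_s+w')}} \dd w',
\end{equation*}
where $\check\phi(v_s+h):=\phi(v_s-h)$ and $\check R(v_s+h):=R(v_s-h)$. The reflected functions are smooth near $v_s$, and $\check R<0$ on $\ff{v_s-\eta}{v_s+\eta}$ by the choice of $\eta$ in~\eqref{def_eta_v_s}. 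The proof of Proposition~\ref{dl_int_positive} only used these properties of the pair $(\phi,R)$, namely smoothness, negativity of $R$ on the window, and Taylor's formula around $v_s$. It never used that $R$ comes from $\w$. So that proposition applies verbatim to $(\check\phi,\check R)$, and the coefficients $A_0^{\eta,-},\dots,B_2^{\eta,-}$ are obtained by substituting the reflected data into $A_0^{\eta,+},\dots,B_2^{\eta,+}$.

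The substitution rules at $v_s$ are
\begin{equation*}
    \check\phi_s=\phi_s,\quad \check\phi_{v,s}=-\phi_{v,s},\quad \check\phi_{vv,s}=\phi_{vv,s},\quad \check R_s=R_s,\quad \check R_{v,s}=-R_{v,s},\quad \check R_{vv,s}=R_{vv,s},\quad \check R_{vvv,s}=-R_{vvv,s}.
\end{equation*}
Each monomial therefore picks up the sign $(-1)^{k}$, where $k$ is the total number of $v$-derivatives it carries. I will apply this term by term. $A_0$ is unchanged. In $B_1$ the sign of the $R_{v,s}$ term flips. In $A_2$ and $B_2$ the terms $\phi_{v,s}$, $\phi_sR_{v,s}$ and $\phi_sR_{v,s}/\eta$ flip sign. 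The terms $\phi_{v,s}R_{v,s}+\phi_sR_{vv,s}$ and $\phi_sR_{v,s}^2$ keep their sign. This reproduces exactly items iii) and iv) and the explicit part of v).

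For $B_0^{\eta,-}$, the remainder integral becomes $\intwp \frac{1}{w'}\bigl(\frac{\phi(v_s-w')}{\sqrt{-R(v_s-w')}}-\frac{\phi_s}{\sqrt{-R_s}}\bigr)\dd w'$. Changing variables back with $w=-w'$ yields the stated $\intwn -\frac1w(\cdots)\dd w$.

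For the triple integrals in $B_2$, each integrand is a function evaluated at $v_s+utw'$ built from derivatives of $\check\phi,\check R$. For example, $\check\phi_{vv}(v_s+x)=\phi_{vv}(v_s-x)$, while odd total derivative order brings a minus sign. Reverting to $w=-w'$ maps $\intwp$ to $\intwn$ with argument $v_s+utw$. The first bracket has even total order and keeps its sign. The second bracket has odd order and flips, which matches the sign pattern claimed in v).

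I expect the main obstacle to be this bookkeeping in $B_2$, not any analytic issue. One must check, bracket by bracket, the parity of the total derivative count in the integrands and the correct orientation after the back substitution. I would verify it on each monomial separately, e.g.\ $\phi_vR_v^2$ has order $3$, $\phi R_vR_{vv}$ has order $3$, and $\phi R_v^3$ has order $3$. Since the error term $\go(\ve^3\ln\ve)$ is inherited directly from Proposition~\ref{dl_int_positive}, no further estimate is needed.
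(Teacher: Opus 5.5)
Your proposal is correct and is essentially the paper's proof. The paper also substitutes $\tilde w=-w$, introduces the reflected data $\tilde\phi(v_s+h)=\phi(v_s-h)$ and $\tilde R(v)=R(2v_s-v)$, applies Proposition~\ref{dl_int_positive}, and undoes the change of variable. Your sign rule $(-1)^k$ in the number of $v$-derivatives, together with your observation that the limit reversal and $\dd w'=-\dd w$ cancel in the back-substitution, is the bookkeeping the paper leaves implicit, and it does reproduce the stated coefficients.
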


\begin{proof}
We use the expansion already computed in the case of the positive integral. To do so, we introduce $\ti{w} = - w$, and note that
\begin{equation*}
    \intwn \frac{\phi(v_s+w)}{\sqrt{\ve^2-w^2R(v_s+w)}} \dd w = \intwp \frac{\phi(v_s-\ti{w})}{\sqrt{\ve^2-\ti{w}^2R(v_s-\ti{w})}} \dd \ti{w}
\end{equation*}
We therefore define $\ti{R}(v_s+h) := R(v_s-h)$, that is $\ti{R}(v) = R(2v_s-v)$ and likewise $\ti{\phi}(v_s+h) = \phi(v_s-h)$. Thus, we obtain that
\begin{equation*}
    \intwn \frac{\phi(v_s+w)}{\sqrt{\ve^2-w^2R(v_s+w)}} \dd w = \intwp \frac{\ti{\phi}(v_s+\ti{w})}{\sqrt{\ve^2-\ti{w}^2\ti{R}(v_s+\ti{w})}} \dd \ti{w}
\end{equation*}
We can now use Proposition~\ref{dl_int_positive}, and undo the change of variable $\ti{w} = - w$ to conclude.
\end{proof}

\subsection{Asymptotic expansions of different integrals}

To finish, we gather here some asymptotic expansions of integrals that we have used in the previous section.

\begin{lemma}\label{ln}
When $\ve \rightarrow 0$, 
\begin{equation*}
    \int_{0}^{\frac{\eta\sqrt{-R_s}}{\ve}} \frac{1}{\sqrt{1+x^2}} \dd x = -\ln(\ve) + \ln(2\eta\sqrt{-R_s}) + \frac{\ve^2}{4\eta^2(-R_s)} + \go(\ve^4).
\end{equation*}
\end{lemma}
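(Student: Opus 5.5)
The integral is elementary: $\int_0^{A} \frac{\dd x}{\sqrt{1+x^2}} = \operatorname{arsinh}(A) = \ln\bigl(A+\sqrt{1+A^2}\bigr)$, taken at $A = \eta\sqrt{-R_s}/\ve$. The plan is to evaluate this closed form and expand it for large $A$, using only Taylor expansions of $\sqrt{1+u}$ and $\ln(1+u)$ for small $u$.

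\emph{Step 1: factor out $2A$.} We write
\begin{equation*}
A+\sqrt{1+A^2} = A\Bigl(1+\sqrt{1+A^{-2}}\Bigr),
\end{equation*}
and expand $\sqrt{1+A^{-2}} = 1+\tfrac{1}{2}A^{-2}-\tfrac18 A^{-4}+\go(A^{-6})$. This gives
\begin{equation*}
A+\sqrt{1+A^2} = 2A\Bigl(1+\tfrac14 A^{-2}+\go(A^{-4})\Bigr).
\end{equation*}

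\emph{Step 2: take the logarithm.} This yields $\ln(2A)+\tfrac14 A^{-2}+\go(A^{-4})$. Substituting $A=\eta\sqrt{-R_s}/\ve$ gives
\begin{equation*}
\ln(2A) = -\ln\ve+\ln\bigl(2\eta\sqrt{-R_s}\bigr), \qquad \tfrac14 A^{-2} = \frac{\ve^2}{4\eta^2(-R_s)},
\end{equation*}
and the remainder is $\go(\ve^4)$ since $A^{-4}\propto \ve^4$.

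These are exactly the three terms of the claimed expansion, together with its $\go(\ve^4)$ remainder.

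There is no real obstacle. The only point needing care is that the remainders are uniform, which holds because $-R_s>0$ and $\eta>0$ are fixed and bounded away from zero (locally in the parameters $(\lm,c)$). Hence $A\to\infty$ uniformly and the Taylor remainders of $\sqrt{1+u}$ and $\ln(1+u)$ at $u=A^{-2}$ are uniformly $\go(A^{-4})$.
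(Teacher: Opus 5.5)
Your proof is correct and follows the paper's route: evaluate the integral in closed form, then Taylor-expand in $\varepsilon$. The paper writes the antiderivative as $\frac{1}{2}\ln\bigl(\frac{\sqrt{1+x^2}+x}{\sqrt{1+x^2}-x}\bigr)$, which equals your $\ln(x+\sqrt{1+x^2})$ because $(\sqrt{1+x^2}+x)(\sqrt{1+x^2}-x)=1$, and your factorization $A+\sqrt{1+A^2}=2A\bigl(1+\tfrac14A^{-2}+\go(A^{-4})\bigr)$ is just a tidier way of organizing the same expansion.
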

\begin{proof}
    We compute explicitly the integral and then expand it in powers of $\ve$,
    \begin{align*}
             \int_{0}^{\frac{\eta\sqrt{-R_s}}{\ve}} \frac{1}{\sqrt{1+x^2}} \dd x =& \left[\frac{1}{2} \ln\left(\frac{\sqrt{1+x^2} + x}{\sqrt{1+x^2}-x}\right) \right]_{0}^{\frac{\eta\sqrt{-R_s}}{\ve}}\\
                                                                              =& \frac{1}{2} \ln\left(\frac{2\eta^2(-R_s) + 2 \eta\sqrt{-R_s}\sqrt{\ve^2 + \eta^2(-R_s)} + \ve^2}{\ve^2} \right) \\                                                                              
                                                                              =& -\ln(\ve) + \ln\left(2\eta\sqrt{-R_s}\right) + \frac{\ve^2}{4\eta^2(-R_s)} + \go(\ve^4).
         \end{align*}
\end{proof}

 \begin{lemma}\label{i_c,+,0,1}
    When $\ve \rightarrow 0$, 
     \begin{enumerate}[label=\roman*)]
         \item $$\int_0^{\frac{\eta\sqrt{-R_s}}{\ve}} \frac{x}{\sqrt{1+x^2}}\dd x = \frac{\eta\sqrt{-R_s}}{\ve} - 1 +\frac{1}{2}\frac{\ve}{\eta\sqrt{-R_s}} + \go(\ve^3), $$
         \item $$\int_{0}^{\frac{\eta\sqrt{-R_s}}{\ve}} \frac{x^2}{\sqrt{1+x^2}} \dd x = \frac{1}{2}\frac{\eta^2(-R_s)}{\ve^2} + \frac{1}{2}\ln(\ve) + \frac{1}{4} - \frac{1}{2} \ln(2\eta\sqrt{-R_s}) + \go(\ve^2),$$
         \item $$\int_0^{\frac{\eta\sqrt{-R_s}}{\ve}} \frac{x^4}{(1+x)^{3/2}} \dd x = \frac{1}{2}\left(\frac{\eta\sqrt{-R_s}}{\ve}\right)^2 + \frac{3}{2}\ln(\ve) + \frac{5}{4} - \frac{3}{2} \ln\left(2\eta\sqrt{-R_s}\right) + \go(\ve^2).$$
     \end{enumerate}
 \end{lemma}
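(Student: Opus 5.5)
Throughout, set $L := \eta\sqrt{-R_s}/\ve$, so that $L\to+\infty$ as $\ve\to0$ and $\ln(2L) = -\ln(\ve) + \ln(2\eta\sqrt{-R_s})$. The plan is to compute each of the three integrals in closed form using elementary antiderivatives, and then expand the result for large $L$, in the same way as in the proof of Lemma~\ref{ln}. Since every error term is a power of $L^{-1}$, the claimed remainders $\go(\ve^k)$ follow immediately. The one fact used repeatedly, already established in Lemma~\ref{ln}, is
\begin{equation*}
    \operatorname{arsinh}(L) = \ln\bigl(L+\sqrt{1+L^2}\bigr) = \ln(2L) + \go(L^{-2}),
\end{equation*}
together with $\sqrt{1+L^2} = L + \frac{1}{2L} + \go(L^{-3})$.

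For i), the antiderivative of $x/\sqrt{1+x^2}$ is $\sqrt{1+x^2}$. The integral therefore equals $\sqrt{1+L^2}-1 = L - 1 + \frac{1}{2L} + \go(L^{-3})$. Substituting $\frac{1}{2L} = \frac{\ve}{2\eta\sqrt{-R_s}}$ gives the stated expansion with remainder $\go(\ve^3)$.

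For ii), write $\frac{x^2}{\sqrt{1+x^2}} = \sqrt{1+x^2} - \frac{1}{\sqrt{1+x^2}}$. Using $\int_0^L\sqrt{1+x^2}\,\dd x = \frac12\bigl(L\sqrt{1+L^2}+\operatorname{arsinh}L\bigr)$, the integral equals $\frac12\bigl(L\sqrt{1+L^2}-\operatorname{arsinh}L\bigr)$. Since $L\sqrt{1+L^2} = L^2+\frac12+\go(L^{-2})$, this becomes $\frac{L^2}{2}+\frac14-\frac12\ln(2L)+\go(L^{-2})$. Rewriting $\ln(2L)$ in terms of $\ln(\ve)$ yields exactly the claimed formula, with remainder $\go(\ve^2)$.

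For iii), the integrand $x^4/(1+x)^{3/2}$ as printed should be read as $x^4/(1+x^2)^{3/2}$. This is how it arises in Proposition~\ref{dl_int_positive}, with $x=\sqrt{-R_s}\,\sig$, and only this reading is consistent with the $\frac12 L^2$ leading term. I would first point out this reading. Then I would use the decomposition $x^4 = (1+x^2)^2 - 2(1+x^2) + 1$, which gives
\begin{equation*}
    \frac{x^4}{(1+x^2)^{3/2}} = \sqrt{1+x^2} - \frac{2}{\sqrt{1+x^2}} + \frac{1}{(1+x^2)^{3/2}}.
\end{equation*}
The last term has antiderivative $x/\sqrt{1+x^2}$. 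The integral is therefore
\begin{equation*}
    \tfrac12\bigl(L\sqrt{1+L^2}+\operatorname{arsinh}L\bigr) - 2\operatorname{arsinh}L + \frac{L}{\sqrt{1+L^2}} = \frac{L^2}{2}+\frac14 + 1 - \frac32\ln(2L) + \go(L^{-2}),
\end{equation*}
which equals $\frac{L^2}{2}+\frac54+\frac32\ln(\ve)-\frac32\ln(2\eta\sqrt{-R_s})+\go(\ve^2)$.

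There is no real obstacle here. The only points needing care are identifying the misprinted denominator in iii) and keeping track of the constant contributions: the $\frac12$ coming from $L\sqrt{1+L^2}$, and the $1$ coming from $L/\sqrt{1+L^2}\to1$. These constants are precisely what produce the values $\frac14$ and $\frac54$.
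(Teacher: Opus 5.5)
Your proof is correct and follows the same route as the paper: closed-form antiderivatives, then a large-$L$ expansion. Your antiderivatives coincide with the paper's once you note $\frac14\ln\frac{\sqrt{1+x^2}+x}{\sqrt{1+x^2}-x}=\frac12\,\mathrm{arsinh}\,x$, and your combination in iii) simplifies to $\frac{x^3+3x}{2\sqrt{1+x^2}}-\frac32\,\mathrm{arsinh}\,x$; this is also the integrand the paper's antiderivative actually treats, so your reading of the denominator as $(1+x^2)^{3/2}$ is right and the printed $(1+x)^{3/2}$ is only a typo.
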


 \begin{proof}
     Again we compute explicitly the integrals and then use classical asymptotic expansions.
     \begin{enumerate}[label=\roman*)]
         \item \begin{align*}
             \int_0^{\frac{\eta\sqrt{-R_s}}{\ve}} \frac{x}{\sqrt{1+x^2}} \dd x =& \left[\sqrt{1+x^2}\right]_0^{\frac{\eta\sqrt{-R_s}}{\ve}}\\
                                                                            =& \frac{\eta\sqrt{-R_s}}{\ve} - 1 +\frac{1}{2}\frac{\ve}{\eta\sqrt{-R_s}} + \go(\ve^3).
         \end{align*}
         
         \item \begin{align*}
             \int_{0}^{\frac{\eta\sqrt{-R_s}}{\ve}} \frac{x^2}{\sqrt{1+x^2}} \dd x =& \left[\frac{1}{2}x\sqrt{x^2+1} - \frac{1}{4}\ln\left(\frac{\sqrt{1+x^2} + x}{\sqrt{1+x^2}-x}\right)\right]_{0}^{\frac{\eta\sqrt{-R_s}}{\ve}} \\
                                                                                =& \frac{1}{2}\left(\frac{\eta\sqrt{-R_s}}{\ve}\right)^2 + \frac{1}{2}\ln(\ve) +\frac{1}{4} - \frac{1}{2}\ln\left(2\eta\sqrt{-R_s}\right) + \go(\ve^2).
         \end{align*}
         \item \begin{align*}
             \int_0^{\frac{\eta\sqrt{-R_s}}{\ve}} \frac{x^4}{(1+x)^{3/2}} \dd x =& \left[\frac{x^3+3x}{2\sqrt{x^2+1}} - \frac{3}{4}\ln\left(\frac{\sqrt{1+x^2} + x}{\sqrt{1+x^2}-x}\right) \right]_0^{\frac{\eta\sqrt{-R_s}}{\ve}}\\
                                                                             =& \frac{1}{2}\left(\frac{\eta\sqrt{-R_s}}{\ve}\right)^2 + \frac{3}{2}\ln(\ve) + \frac{5}{4} - \frac{3}{2} \ln\left(2\eta\sqrt{-R_s}\right) + \go(\ve^2).
         \end{align*}
     \end{enumerate}
 \end{proof}

 \begin{lemma}\label{I_eta,+,0,2}
     When $\ve \rightarrow 0$, 
     \begin{enumerate}[label=\roman*)]
         \item $$\int_0^{\frac{\eta\sqrt{-R_s}}{\ve}} \frac{x^3}{(1+x^2)^{3/2}} \dd x = \frac{\eta\sqrt{-R_s}}{\ve} -2  + \frac{3}{2} \frac{\ve}{\eta\sqrt{-R_s}} + \go(\ve^3),$$
         \item $$\int_0^{\frac{\eta\sqrt{-R_s}}{\ve}} \frac{x^6}{(1+x^2)^{5/2}} \dd x = \frac{1}{2}\left(\frac{\eta\sqrt{-R_s}}{\ve}\right)^2 + \frac{5}{2}\ln(\ve) + \left(\frac{31}{12} - \frac{5}{2} \ln\left(2\eta\sqrt{-R_s}\right)\right) + \go(\ve^2).$$
     \end{enumerate}
 \end{lemma}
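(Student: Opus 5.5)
The plan is to proceed exactly as in Lemmas~\ref{ln} and~\ref{i_c,+,0,1}. For each integral I will find a closed-form primitive, evaluate it between $0$ and $X := \frac{\eta\sqrt{-R_s}}{\ve}$, and then expand for $X\to\infty$ using $X^{-1} = \ve/(\eta\sqrt{-R_s})$.

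For i), I will substitute $y = 1+x^2$. This gives $\frac{x^3\dd x}{(1+x^2)^{3/2}} = \frac{1}{2}\frac{y-1}{y^{3/2}}\dd y$, so a primitive is
\begin{equation*}
    F(x) = \sqrt{1+x^2} + \frac{1}{\sqrt{1+x^2}},
\end{equation*}
with $F(0) = 2$. At the upper endpoint, $\sqrt{1+X^2} = X + \frac{1}{2X} + \go(X^{-3})$ and $(1+X^2)^{-1/2} = \frac{1}{X} + \go(X^{-3})$. Hence the integral equals $X - 2 + \frac{3}{2X} + \go(X^{-3})$, which is the claimed expansion.

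For ii), I will avoid trigonometric substitutions by writing $x^6 = (1+x^2)^3 - 3(1+x^2)^2 + 3(1+x^2) - 1$. The integrand then becomes
\begin{equation*}
    (1+x^2)^{1/2} - 3(1+x^2)^{-1/2} + 3(1+x^2)^{-3/2} - (1+x^2)^{-5/2}.
\end{equation*}
Each term has a standard primitive:
\begin{itemize}
    \item $\frac{1}{2}x\sqrt{1+x^2} + \frac{1}{2}\operatorname{arcsinh} x$ for the first;
    \item $\operatorname{arcsinh} x$ for the second;
    \item $\frac{x}{\sqrt{1+x^2}}$ for the third;
    \item $\frac{x}{\sqrt{1+x^2}} - \frac{x^3}{3(1+x^2)^{3/2}}$ for the fourth.
\end{itemize}
All of them vanish at $0$.

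It remains to expand at $X$. I will use $\operatorname{arcsinh} X = \ln(2X) + \go(X^{-2})$, which is the same identity as the logarithmic primitive in Lemma~\ref{ln}. I will also use $\frac{1}{2}X\sqrt{1+X^2} = \frac{X^2}{2} + \frac{1}{4} + \go(X^{-2})$, $\frac{X}{\sqrt{1+X^2}} = 1 + \go(X^{-2})$ and $\frac{X^3}{(1+X^2)^{3/2}} = 1 + \go(X^{-2})$.

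Collecting terms, the total $\operatorname{arcsinh}$ coefficient is $\frac{1}{2} - 3 = -\frac{5}{2}$. Since $\ln(2X) = \ln\left(2\eta\sqrt{-R_s}\right) - \ln\ve$, this produces $\frac{5}{2}\ln\ve - \frac{5}{2}\ln\left(2\eta\sqrt{-R_s}\right)$. The constant terms add up to $\frac{1}{4} + 3 - \left(1 - \frac{1}{3}\right) = \frac{31}{12}$, and the leading term is $\frac{X^2}{2}$. All remainders are $\go(X^{-2}) = \go(\ve^2)$, which gives ii).

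No step is conceptually difficult. The only place needing care is the bookkeeping of the constant $\frac{31}{12}$: every primitive must be normalized to vanish at $0$, and the $\go(1)$ contributions of all four pieces must be tracked. I will double-check this constant by differentiating the assembled primitive back to the integrand.
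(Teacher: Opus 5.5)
Your proof is correct and follows the same route as the paper: you evaluate explicit primitives at $\frac{\eta\sqrt{-R_s}}{\ve}$ and expand at infinity. Your primitive in i) equals the paper's $\frac{x^2+2}{\sqrt{x^2+1}}$. Your four pieces in ii) sum exactly to the paper's $\frac{3x^5+20x^3+15x}{6(x^2+1)^{3/2}} - \frac{5}{4}\ln\left(\frac{\sqrt{1+x^2}+x}{\sqrt{1+x^2}-x}\right)$; expanding $x^6$ in powers of $1+x^2$ is simply a transparent way to find it and to check the constant $\frac{31}{12}$.
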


 \begin{proof}
    This follows from
     \begin{enumerate}[label=\roman*)]
        \item \begin{align*}
            \int_0^{\frac{\eta\sqrt{-R_s}}{\ve}} \frac{x^3}{(1+x^2)^{3/2}} \dd x =& \left[\frac{x^2+2}{\sqrt{x^2+1}} \right]_0^{\frac{\eta\sqrt{-R_s}}{\ve}}\\
                                                                              =& \frac{\eta\sqrt{-R_s}}{\ve} - 2 + \frac{3}{2} \frac{\ve}{\eta\sqrt{-R_s}} + \go(\ve^3).
        \end{align*}
     
        \item \begin{align*}
             \int_0^{\frac{\eta\sqrt{-R_s}}{\ve}} \frac{x^6}{(1+x^2)^{5/2}} \dd x =& \left[ \frac{3x^5+20x^3+15x}{6(x^2+1)^{3/2}} - \frac{5}{4}\ln\left(\frac{\sqrt{1+x^2} + x}{\sqrt{1+x^2}-x}\right)  \right]_0^{\frac{\eta\sqrt{-R_s}}{\ve}}\\
                                                                               =&\frac{1}{2}\left(\frac{\eta\sqrt{-R_s}}{\ve}\right)^2 + \frac{5}{2}\ln(\ve) + \left(\frac{31}{12} - \frac{5}{2} \ln\left(2\eta\sqrt{-R_s}\right)\right) + \go(\ve^2).
         \end{align*}
     \end{enumerate}
 \end{proof}

\addcontentsline{toc}{section}{Références}
\bibliographystyle{alphaabbr}
\bibliography{biblio.bib}

\begin{thebibliography}{DBGN15}

\bibitem[AP09]{angulopavaNonlinearDispersiveEquations2009}
J.~Angulo~Pava.
\newblock {\em Nonlinear Dispersive Equations. {{Existence}} and Stability of Solitary and Periodic Travelling Wave Solutions.}, volume 156 of {\em Math. {{Surv}}. {{Monogr}}.}
\newblock Providence, RI: American Mathematical Society (AMS), 2009.

\bibitem[AR22]{audiardPlanePeriodicWaves2022}
C.~Audiard and L.~M. Rodrigues.
\newblock About plane periodic waves of the nonlinear {{Schr\"odinger}} equations.
\newblock {\em Bulletin de la Soci\'et\'e Math\'ematique de France}, 150(1):111--207, 2022.

\bibitem[BJRZ11]{barkerMetastabilitySolitaryRoll2011}
B.~Barker, M.~A. Johnson, L.~M. Rodrigues, and K.~Zumbrun.
\newblock Metastability of solitary roll wave solutions of the {{St}}. {{Venant}} equations with viscosity.
\newblock {\em Physica D. Nonlinear Phenomena}, 240(16):1289--1310, 2011.

\bibitem[{Ben}10]{benzoni-gavageSpectralTransverseInstability2010}
S.~{Benzoni-Gavage}.
\newblock Spectral transverse instability of solitary waves in {{Korteweg}} fluids.
\newblock {\em Journal of Mathematical Analysis and Applications}, 361(2):338--357, 2010.

\bibitem[{Ben}13]{benzoni-gavagePlanarTravelingWaves2013}
S.~{Benzoni-Gavage}.
\newblock Planar traveling waves in capillary fluids.
\newblock {\em Differential and Integral Equations. An International Journal for Theory \& Applications}, 26(3-4):439--485, 2013.

\bibitem[BDD06]{benzoni-gavageWellposednessOnedimensionalKorteweg2006}
S.~{Benzoni-Gavage}, R.~Danchin, and S.~Descombes.
\newblock Well-posedness of one-dimensional {{Korteweg}} models.
\newblock {\em Electronic Journal of Differential Equations (EJDE)}, 2006:35, 2006.

\bibitem[BDD07]{benzoni-gavageWellposednessEulerKortewegModel2007}
S.~{Benzoni-Gavage}, R.~Danchin, and S.~Descombes.
\newblock On the well-posedness for the {{Euler-Korteweg}} model in several space dimensions.
\newblock {\em Indiana University Mathematics Journal}, 56(4):1499--1579, 2007.

\bibitem[BMR16]{benzoni-gavageCoperiodicStabilityPeriodic2016}
S.~{Benzoni-Gavage}, C.~Mietka, and L.~M. Rodrigues.
\newblock Co-periodic stability of periodic waves in some {{Hamiltonian PDEs}}.
\newblock {\em Nonlinearity}, 29(11):3241--3308, 2016.

\bibitem[BMR20]{benzoni-gavageStabilityPeriodicWaves2020}
S.~{Benzoni-Gavage}, C.~Mietka, and L.~M. Rodrigues.
\newblock Stability of periodic waves in {{Hamiltonian PDEs}} of either long wavelength or small amplitude.
\newblock {\em Indiana University Mathematics Journal}, 69(2):545--619, 2020.

\bibitem[BMR21]{benzoni-gavageModulatedEquationsHamiltonian2021}
S.~{Benzoni-Gavage}, C.~Mietka, and L.~M. Rodrigues.
\newblock Modulated equations of {{Hamiltonian PDEs}} and dispersive shocks.
\newblock {\em Nonlinearity}, 34(1):578--641, 2021.

\bibitem[BNR14a]{benzoni-gavageSlowModulationsPeriodic2014}
S.~{Benzoni-Gavage}, P.~Noble, and L.~M. Rodrigues.
\newblock Slow {{Modulations}} of {{Periodic Waves}} in {{Hamiltonian PDEs}}, with {{Application}} to {{Capillary Fluids}}.
\newblock {\em Journal of Nonlinear Science}, 24(4):711--768, August 2014.

\bibitem[BNR14b]{benzoni-gavageStabilityPeriodicWaves2014}
S.~{Benzoni-Gavage}, P.~Noble, and L.~M. Rodrigues.
\newblock Stability of periodic waves in {{Hamiltonian PDEs}}.
\newblock {\em Journ\'ees \'equations aux d\'eriv\'ees partielles}, pages 1--22, August 2014.

\bibitem[BSS87]{bonaStabilityInstabilitySolitary1987}
J.~L. Bona, P.~E. Souganidis, and W.~A. Strauss.
\newblock Stability and instability of solitary waves of {{Korteweg-de Vries}} type.
\newblock {\em Proceedings of the Royal Society. London. Series A. Mathematical, Physical and Engineering Sciences}, 411(1841):395--412, 1987.

\bibitem[BHJ16]{bronskiModulationalInstabilityEquations2016}
J.~C. Bronski, V.~M. Hur, and M.~A. Johnson.
\newblock Modulational instability in equations of {{KdV}} type.
\newblock In {\em New Approaches to Nonlinear Waves}, volume 908 of {\em Lecture {{Notes}} in {{Phys}}.}, pages 83--133. Springer, Cham, 2016.

\bibitem[BJK11]{bronskiIndexTheoremStability2011}
J.~C. Bronski, M.~A. Johnson, and T.~Kapitula.
\newblock An index theorem for the stability of periodic travelling waves of {{Korteweg-de Vries}} type.
\newblock {\em Proceedings of the Royal Society of Edinburgh. Section A. Mathematics}, 141(6):1141--1173, 2011.

\bibitem[BGdR25]{bukiedaOrbitalStabilityPlane2025}
E.~Bukieda, L.~Gar{\'e}naux, and B.~de~Rijk.
\newblock Orbital {{Stability}} of {{Plane Waves}} in the {{Klein-Gordon Equation}} against {{Localized Perturbations}}, June 2025.

\bibitem[CP25]{cuiInstabilityBandsPeriodic2025}
S.~Cui and D.~E. Pelinovsky.
\newblock Instability bands for periodic travelling waves in the modified {{Korteweg}}--de {{Vries}} equation.
\newblock {\em Proceedings A}, 481(2320):Paper No. 20240993, 18, 2025.

\bibitem[DBGN15]{debievreOrbitalStabilityAnalysis2015}
S.~De~Bi{\`e}vre, F.~Genoud, and S.~R. Nodari.
\newblock Orbital stability: Analysis meets geometry.
\newblock In {\em Nonlinear Optical and Atomic Systems. {{At}} the Interface of Physics and Mathematics. {{Based}} on Lecture Notes given at the 2013 {{Painlev\'e-CEMPI-PhLAM}} Thematic Semester.}, pages 147--273. Cham: Springer; Lille: Centre Europ\'een pour les Math\'ematiques, la Physiques et leurs Interactions (CEMPI), 2015.

\bibitem[Gar93]{gardnerStructureSpectraPeriodic1993}
R.~A. Gardner.
\newblock On the structure of the spectra of periodic travelling waves.
\newblock {\em Journal de Math\'ematiques Pures et Appliqu\'ees. Neuvi\`eme S\'erie}, 72(5):415--439, 1993.

\bibitem[Gar97]{gardnerSpectralAnalysisLong1997}
R.~A. Gardner.
\newblock {Spectral analysis of long wavelength periodic waves and applications.}
\newblock {\em Journal f\"ur die reine und angewandte Mathematik}, 491:149--182, 1997.

\bibitem[GZ98]{gardnerGapLemmaGeometric1998}
R.~A. Gardner and K.~Zumbrun.
\newblock The gap lemma and geometric criteria for instability of viscous shock profiles.
\newblock {\em Communications on Pure and Applied Mathematics}, 51(7):797--855, July 1998.

\bibitem[GSS87]{grillakisStabilityTheorySolitary1987}
M.~Grillakis, J.~Shatah, and W.~Strauss.
\newblock Stability theory of solitary waves in the presence of symmetry. {{I}}.
\newblock {\em Journal of Functional Analysis}, 74:160--197, 1987.

\bibitem[HK08]{haragusSpectraPeriodicWaves2008}
M.~H{\v a}r{\v a}gu{\c s} and T.~Kapitula.
\newblock On the spectra of periodic waves for infinite-dimensional {{Hamiltonian}} systems.
\newblock {\em Physica D. Nonlinear Phenomena}, 237(20):2649--2671, 2008.

\bibitem[Ian22]{iandoliCauchyProblemQuasilinear2022}
F.~Iandoli.
\newblock On the {{Cauchy}} problem for quasi-linear {{Hamiltonian KdV-type}} equations.
\newblock In {\em Qualitative Properties of Dispersive {{PDEs}}}, volume~52 of {\em Springer {{INdAM Ser}}.}, pages 167--186. Springer, Singapore, 2022.

\bibitem[JLL19]{jinNonlinearModulationalInstability2019}
J.~Jin, S.~Liao, and Z.~Lin.
\newblock Nonlinear {{Modulational Instability}} of {{Dispersive PDE Models}}.
\newblock {\em Archive for Rational Mechanics and Analysis}, 231(3):1487--1530, March 2019.

\bibitem[Joh09]{johnsonNonlinearStabilityPeriodic2009}
M.~A. Johnson.
\newblock Nonlinear stability of periodic traveling wave solutions of the generalized {{Korteweg-de Vries}} equation.
\newblock {\em SIAM Journal on Mathematical Analysis}, 41(5):1921--1947, 2009.

\bibitem[JNRZ14]{johnsonBehaviorPeriodicSolutions2014}
M.~A. Johnson, P.~Noble, L.~M. Rodrigues, and K.~Zumbrun.
\newblock Behavior of periodic solutions of viscous conservation laws under localized and nonlocalized perturbations.
\newblock {\em Inventiones Mathematicae}, 197(1):115--213, 2014.

\bibitem[KD15]{kapitulaSpectralOrbitalStability2015}
T.~Kapitula and B.~Deconinck.
\newblock On the spectral and orbital stability of spatially periodic stationary solutions of generalized {{Korteweg-de Vries}} equations.
\newblock In {\em Hamiltonian Partial Differential Equations and Applications. {{Selected}} Papers Based on the Presentations at the Conference on {{Hamiltonian PDEs}}: Analysis, Computations and Applications, {{Toronto}}, {{Canada}}, {{January}} 10--12, 2014}, pages 285--322. Toronto: The Fields Institute for Research in the Mathematical Sciences; New York, NY: Springer, 2015.

\bibitem[KKS05]{kapitulaCountingEigenvaluesKrein2005}
T.~Kapitula, P.~G. Kevrekidis, and B.~Sandstede.
\newblock Counting eigenvalues via the {{Krein}} signature in infinite-dimensional {{Hamiltonian}} systems.
\newblock {\em Physica D: Nonlinear Phenomena}, 201(1):199--201, February 2005.

\bibitem[KP13]{kapitulaSpectralDynamicalStability2013}
T.~Kapitula and K.~Promislow.
\newblock {\em Spectral and {{Dynamical Stability}} of {{Nonlinear Waves}}}, volume 185 of {\em Applied {{Mathematical Sciences}}}.
\newblock Springer New York, New York, NY, 2013.

\bibitem[Kat95]{katoPerturbationTheoryLinear1995}
T.~Kato.
\newblock {\em Perturbation {{Theory}} for {{Linear Operators}}}, volume 132 of {\em Classics in {{Mathematics}}}.
\newblock Springer Berlin Heidelberg, Berlin, Heidelberg, 1995.

\bibitem[MZ03]{masciaPointwiseGreenFunction2003}
C.~Mascia and K.~Zumbrun.
\newblock Pointwise {{Green}} function bounds for shock profiles of systems with real viscosity.
\newblock {\em Archive for Rational Mechanics and Analysis}, 169(3):177--263, 2003.

\bibitem[Mie17]{mietkaWellposednessQuasilinearKortewegde2017}
C.~Mietka.
\newblock On the well-posedness of a quasi-linear {{Korteweg-de Vries}} equation.
\newblock {\em Annales Math\'ematiques Blaise Pascal}, 24(1):83--114, 2017.

\bibitem[PW92]{pegoEigenvaluesInstabilitiesSolitary1992}
R.~L. Pego and M.~I. Weinstein.
\newblock Eigenvalues, and instabilities of solitary waves.
\newblock {\em Philosophical Transactions of the Royal Society of London. Series A. Mathematical, Physical Sciences and Engineering}, 340(1656):47--94, 1992.

\bibitem[SS01]{sandstedeStabilityPeriodicTravelling2001}
B.~Sandstede and A.~Scheel.
\newblock On the {{Stability}} of {{Periodic Travelling Waves}} with {{Large Spatial Period}}.
\newblock {\em Journal of Differential Equations}, 172(1):134--188, May 2001.

\bibitem[YZ19]{yangConvergencePeriodGoes2019}
Z.~Yang and K.~Zumbrun.
\newblock Convergence as period goes to infinity of spectra of periodic traveling waves toward essential spectra of a homoclinic limit.
\newblock {\em Journal de Math\'ematiques Pures et Appliqu\'ees}, 132:27--40, December 2019.

\end{thebibliography}

\end{document}